\documentclass[10pt,oneside,shortlabels]{amsart}
\usepackage[T1]{fontenc}
\usepackage[utf8]{inputenc}
\usepackage{amsthm,amsmath}
\usepackage{amsbsy}
\usepackage{amstext}
\usepackage{amssymb}
\usepackage{esint}

\makeatletter
\numberwithin{equation}{section}
\numberwithin{figure}{section}

\usepackage{mathtools}

\newtheorem {theo} {Theorem} [section]
\newtheorem {prop} [theo] {Proposition}
\newtheorem {cory} [theo] {Corollary}
\newtheorem {lem} [theo] {Lemma}
\newtheorem {defi} [theo] {Definition}

\newtheorem*{thmA}{Theorem~A}
\newtheorem*{thmB}{Theorem~B}
\newtheorem*{thmC}{Theorem~C}
\theoremstyle{definition}
\newtheorem{example}{Example}
\newtheorem {obs} [theo] {Remark}


\def\sideremark#1{\ifvmode\leavevmode\fi\vadjust{\vbox to0pt{\vss 
    \hbox to 0pt{\hskip\hsize\hskip1em           
 \vbox{\hsize2cm\tiny\raggedright\pretolerance10000
 \noindent #1\hfill}\hss}\vbox to8pt{\vfil}\vss}}}%

\subjclass[2010]{}
\keywords{Dulac map, formal and analytic invariants, fractal properties of orbits, asymptotic expansions, transseries}

\usepackage[all]{xy}        
%
%
\newcommand{\xyL}[1]{%
	\xydef@\xymatrixrowsep@{#1}
} 
\newcommand{\xyC}[1]{%
	\xydef@\xymatrixcolsep@{#1}
} 
%

\makeatother

\begin{document}

\title[Length of $\varepsilon$-neighborhoods of orbits of Dulac maps]{Length of $\varepsilon$-neighborhoods of orbits of Dulac maps}

\author{P. Marde\v si\'c$^{1}$, M. Resman$^{2}$, J.-P. Rolin$^{3}$,
V. \v Zupanovi\'c$^{4}$}
\begin{abstract} We consider a class of parabolic Dulac germs of hyperbolic polycycles. In view of formal or analytic characterization of such a germ $f$ by fractal properties of several of its orbits, we study the length $A_f(x_0,\varepsilon)$ of $\varepsilon$-neighborhoods of orbits of $f$ with initial points $x_0$. We show that, even if $f$ is an analytic germ, $\varepsilon\mapsto A_f(x_0,\varepsilon)$ does not have a full asymptotic expansion in $\varepsilon$ in a scale of powers and (iterated) logarithms. This result is already stated in \cite{nonlin} for complex analytic germs. This partial asymptotic expansion cannot contain necessary information for analytic classification.

Hence, we introduce a new notion: the \emph{continuous time length of\linebreak the $\varepsilon$-neighborhood} $A^c_f(x_0,\varepsilon)$. We show that this function has a full asymptotic expansion in $\varepsilon$ in the power, iterated logarithm scale.
Moreover, its asymptotic expansion extends the initial, existent part of the asymptotic expansion of the classical length $\varepsilon\mapsto A_f(x_0,\varepsilon)$. 

Finally, we prove that this initial part of the asymptotic expansion determines the class of formal conjugacy of the Dulac map $f$.

\end{abstract}
\maketitle
\noindent \emph{Acknowledgement}. This research was supported by:
Croatian Science Foundation (HRZZ) project no. 2285, Croatian Unity Through Knowledge Fund (UKF) \emph{My first collaboration grant} project no. 7, French ANR project
STAAVF. Part of the research was made during the $6$-month stay of $^2$ at University of Burgundy in 2018, financed by the UKF project.

\section{\label{sec:introduction}Introduction}

We are interested here in the informal question: \emph{can we ``see'' a diffeomorphism by observing some of its orbits?} This work is a continuation of the study of one-dimensional
discrete dynamical systems, based on fractal properties of their
orbits \cite{mrz,resman,nonlin,belgproc}.   Recall
that fractal properties of a bounded subset $U$ of $\mathbb{R}$
or $\mathbb{C}$ reflect the asymptotic behavior at $0$ of the function
$\varepsilon\mapsto A\left(U_{\varepsilon}\right)$, for $\varepsilon>0$,
where $A\left(U_{\varepsilon}\right)$ denotes the Lebesgue measure
of the $\varepsilon$-neighborhood of $U$. In particular, the \emph{box
dimension} of $U$ (see \cite{falconer} for a precise definition) gives
the growth of $A\left(U_{\varepsilon}\right)$, when $\varepsilon$
tends to $0$.\\

Consider a germ $f$ in one variable with a fixed
point $a$, and a point $x_{0}$ close to $a$. We denote by $A_{f}\left(x_{0},\varepsilon\right)$
the Lebesgue measure of the $\varepsilon$-neighborhood of the orbit
of $x_{0}$ (or a \emph{directed} version of it in the complex case,
see \cite{resman}). It is proved in \cite{elezovic_zupanovic_zubrinic}
that, for a differentiable germ with an attracting fixed point $a$, the multiplicity of $a$ is determined
by the box dimension of \emph{any} attracted orbit. This result has
been generalized to a class of non--differentiable germs in \cite{mrz},
where the asymptotic behavior of $A_{f}(x_{0},\varepsilon)$ is given explicitely as that of $g^{-1}(\varepsilon)$, $g=\textrm{id}-f$, and related to the multiplicity of $f$ in a given Chebyshev scale.

In the same spirit, it is proved in \cite{resman} that the class
of \emph{formal} conjugacy of a (real or complex) parabolic analytic
germ is determined by an initial part of the asymptotic expansion
of $A_{f}\left(x_{0},\varepsilon\right)$ in power-log monomials.
Describing the class of \emph{analytic} conjugacy of $f$ would
require not only an initial part, but a full asymptotic expansion
of $A_{f}\left(x_{0},\varepsilon\right)$ in power-log monomials. Unfortunately, it is
proved in \cite{resman} that such a complete expansion does not exist.
The reason is that the computation of $A_{f}\left(x_{0},\varepsilon\right)$
needs the determination of an appropriate critical iterate $f^{n_{\varepsilon}}$
of $f$, where $n_{\varepsilon}$ is obtained using the
integer part function. One goal of the present paper is to correct
this flaw by proposing a convenient modification of the definition
of $A_{f}\left(x_{0},\varepsilon\right)$. This modification involves, if they exist,
the \emph{fractional iterates} $f^{t}$ of $f$, for $t\in\mathbb{R}$,
instead of the usual integer iterates of $f$. They are related to an embedding of a germ in a flow. We call the new function thus obtained the \emph{continuous
time length of $\varepsilon$-neighborhoods of orbits of $f$}, and
denote it by $A_{f}^{c}(x_{0},\varepsilon)$. For the germs we consider (analytic germs or Dulac germs), we show that it generalizes the classical, \emph{discrete time}
length $A_{f}\left(x_{0},\varepsilon\right)$ of the $\varepsilon$-neighborhood
of the orbits of $f$ in the following sense: for any orbit, the beginning
of the asymptotic expansion of $A_{f}^c(x_{0},\varepsilon)$ coincides with the existing beginning of the asymptotic
expansion of $A_{f}(x_{0},\varepsilon)$.\\

In this paper we work with \emph{Dulac germs}. By a Dulac germ, we mean an analytic germ on some open interval $(0,d)$, admitting a non-trivial power-log Dulac asymptotic expansion at $0$ which can moreover be extended to an analytic and bounded germ on a bigger complex domain containing $(0,d)$ (Definition~\ref{def:dulac_germ}), see e.g. \cite{ilyalim} for precise definition. It was shown in \cite{Dulac,ilyalim} that the Poincar\' e germs of hyperbolic polycycles are Dulac germs.

We restrict moreover to parabolic Dulac maps. It is the most interesting case in the study of the \emph{cyclicity} of a polycyle. The cyclicity is the maximal number of limit cycles which can appear in a neighborhood of the polycycle in an analytic deformation of the original vector field. For this purpose, we have to study zeros of the \emph{displacement map} $\Delta(x)$, that is the Dulac map minus the identity. In the case of a non-parabolic Dulac map, the displacement map is of the form $\Delta(x)=ax+o(x)$, with $a\ne0$ \cite{cherkas}. In the parabolic case the displacement function is flatter, of order higher then linear, so one should expect higher cyclicity. This is indeed a theorem in the case of \emph{hyperbolic loops} (\emph{i.e.} polycycles with one vertex): there exist parabolic Dulac maps corresponding to homoclinic loops of arbitrary high cyclicity \cite{roussarie_number}, whereas in the non-parabolic case the cyclicity is one \cite{cherkas}. In \cite{mourtada}, universal bounds for cyclicity of hyperbolic polycyles of $2$, $3$ and $4$ vertices are given under some generic conditions. They imply that the corresponding Dulac map is non--parabolic. The cyclicity problem, even for polycycles with such low number of vertices in the parabolic case, is widely open.\\

The results of this paper rely on \cite{mrrz2} and \cite{MRRZ2Fatou}. 
In \cite{mrrz2}, we introduced an algebra $\mathcal L$ of formal \emph{power-log transseries} which contains the Dulac asymptotic
expansions of Dulac germs. Here, this algebra is denoted by $\widehat{\mathcal L}$, to distinguish from algebras of germs. In particular, we have proved in \cite{mrrz2} that \emph{parabolic} (tangent to the identity) elements of $\widehat{\mathcal{L}}$ can be embedded into (unique) formal flows, and hence admit formal fractional
iterates. It is standard that the existence of a Fatou coordinate for a germ $f$, which conjugates $f$ to the shift $w\mapsto w+1$, is equivalent to the existence of an embedding. In \cite{MRRZ2Fatou}, we have explicitely constructed a Fatou coordinate $\Psi $ of a Dulac germ $f$ and the formal Fatou coordinate $\widehat\Psi$ of its Dulac expansion $\widehat f$. We have proved that there exists a unique Fatou coordinate of Dulac germ $f$ which admits asymptotic expansion in the class $\widehat{\mathfrak L}$ of transseries in power-iterated log monomials of finite depth in iterated logarithms. Moreover, we have proved that this expansion is equal, up to a constant, to the unique formal Fatou coordinate $\widehat\Psi$ of $\widehat f$.

Given a parabolic element $\widehat{f}\in\widehat{\mathcal{L}}$, 
we define in Section~\ref{sec:continuous_time_length} a \emph{formal continuous time length of $\varepsilon$-neighborhoods
of orbits}, denoted by $\widehat{A}_{\widehat{f}}^{c}(\varepsilon)$. Similarly, given a germ $f$ which embeds in a flow and an initial point $x_0$, we define the \emph{continuous time length of $\varepsilon$-neighborhood
of orbit of $x_0$}, denoted by $A_f^{c}(\varepsilon,x_0)$. Both notions rely on embeddings. Here we prove three main results:
\smallskip

\textbf{1. Theorem A.} In Theorem~A, we show that, for a parabolic
transseries $\widehat{f}\in\widehat{\mathcal{L}},$ the \emph{formal} continuous
time length $\widehat{A}_{\widehat{f}}^{c}(\varepsilon)$ is a transseries
with a well-ordered support made of monomials of the type: 
\[
x^{\alpha}\left(\frac{1}{-\log x}\right)^{\gamma_{1}}\Big(\frac{1}{\log(-\log x)}\Big)^{\gamma_{2}},\ \alpha\geq0,\ \gamma_{1},\,\gamma_{2}\in\mathbb{R}.
\]
In particular, it belongs to $\widehat{\mathfrak{L}}$.
\smallskip

\textbf{2. Theorem B.} Embedding of a parabolic Dulac germ $f$ 
in the flow $\{f^{t}\}$ corresponding to the Fatou coordinate constructed in the Theorem in \cite{MRRZ2Fatou} allows to define the function of the continuous time length of $\varepsilon$-neighborhood of orbit of $x_0$, 
$\varepsilon\mapsto A_{f}^{c}(x_{0},\varepsilon)$. In Theorem~B, we show that the function $\varepsilon\mapsto A_{f}^{c}(x_{0},\varepsilon)$ admits a particular type of (trans)asymptotic expansion   in $\widehat{\mathfrak L}$, which is equal, up to a term $C\varepsilon$, $C\in\mathbb R$,  to the formal
length $\widehat{A}_{\widehat{f}}^{c}(\varepsilon)$ for the Dulac expansion $\widehat f$. Moreover, the expansions of $A_{f}(x_{0},\varepsilon)$ and $A_{f}^{c}(x_{0},\varepsilon)$ are equal up to the order $O(\varepsilon)$. Thus, the expansion of $A_{f}^{c}(x_{0},\varepsilon)$ continues the power-log expansion of $A_{f}(x_{0},\varepsilon)$ after it ceases to exist. In this sense, this paper extends the result from \cite{mrz} for Dulac maps, which gives only the leading term in the asymptotic expansion of $A_{f}(x_{0},\varepsilon)$ to be equal to the leading term of $g^{-1}(\varepsilon)$, $\varepsilon\to 0$, where $g=\mathrm{id}-f$.
\smallskip

\textbf{3. Theorem C. }
In Theorem C we show that for a parabolic Dulac germ $f$ the initial (existent) part of the (trans)asymptotic expansion of $A_f(x_0,\varepsilon)$ determines the class of \emph{formal} conjugacy of $f$. This generalizes the result from \cite{resman} for parabolic analytic germs.
\medskip

Having ensured the existence of a complete (trans)asymptotic expansion
for the function $\varepsilon\mapsto A_{f}^{c}(x_{0},\varepsilon)$, we plan in the future to define Ecalle-Voronin-like moduli for analytic classification of parabolic Dulac germs and show that they can be read by comparison of sectorial functions $A_f^c(x_0,\varepsilon)$. Note that such new classification moduli would be defined in the $\varepsilon$-plane and not in the phase $x$-plane.
\\

\paragraph{\textbf{Organization of the paper}}

In Section \ref{sec:continuous_time_length},
we introduce the notion of the continuous time length $A_{f}^{c}\left(x_{0},\varepsilon\right)$
of $\varepsilon$-neighborhoods of orbits for a germ, and of its formal
analogue for power-log transseries. We also recall the notion of \emph{Fatou
coordinate} for a germ $f$, which essentially conjugates $f$ to
the translation by $1$. It allows to give an explicit formula for
the continuous time length $A_{f}^{c}\left(x_{0},\varepsilon\right)$ for a germ $f$ which
embeds as the time-one map of a flow $\left\{f^{t}\right\} $. 
Note that having an equality is much more convenient than having to work with inequalities, as is the case for computing $A_f(x_0,\varepsilon)$ in e.g. \cite{mrz} or \cite{resman}. The price to pay is searching for the Fatou coordinate, which is for Dulac maps done in \cite{MRRZ2Fatou}. We
also state Theorems A, B and C. 

In Section~\ref{sec:asymptotic_expansions} we recall the notion of \emph{sectional asymptotic expansions}. The term was introduced in \cite{MRRZ2Fatou}, and was necessary to uniquely define the asymptotic expansion of a germ in the class of transseries $\widehat{\mathfrak L}$. The expansion was dependent on the choice of the so-called \emph{section function}, which assigns a unique germ to an asymptotic (trans)series. In short, the germs chosen are dictated by the solution of the Abel equation. In \cite{MRRZ2Fatou}, a special type of \emph{integral section function} was introduced to be adapted to such equation. Here, the integral section function is generalized to be adapted to the continuous length of the $\varepsilon$-neighborhoods of orbits as well.

In Section~\ref{sec:examples} we compute formal continuous lengths of $\varepsilon$-neighborhoods of orbits in some simple cases: for a simple Dulac germ and for regular germs. For regular germs, as already mentioned, this formal length is a continuation of the power-log asymptotic expansion of $A_f(x_0,\varepsilon)$ which by \cite{nonlin} does not exist after $O(\varepsilon)$, as $\varepsilon\to 0$. We also demonstrate that in the Dulac case the formal length is indeed transfinite.

In Section~\ref{sec:inverse} we describe the formal inverse of a Dulac series and show that it is equal to the (trans)asymptotic expansion of the inverse of a Dulac germ. It is worth noticing that we do not impose here to the Dulac series or germ to be parabolic. It obviously makes the computations more involved, but this generality is required by the definition of the continuous time lengths $\widehat{A}_{\widehat{f}}^{c}\left(\varepsilon\right)$ and $A_{f}^{c}\left(\varepsilon,x_0\right)$. Although the proof that the inverse of a transseries is also a transseries is given in full generality in \cite{dries}, we need in our framework an explicit
description of the monomials. This does not follow easily from \cite{dries}.

In Section \ref{sec:proof_theorem_A} we prove Theorem~A 
and in Section \ref{sec:proof_theorem_B} we prove Theorems~B and C, using the results from \cite {MRRZ2Fatou} about the Fatou coordinate  and from Section~\ref{sec:inverse} about the inverse. Finally, Section \ref{sec:appendix} is dedicated to technical proofs and definitions.

\section{Continuous time length of $\varepsilon$-neighborhoods of orbits
and main results}

\label{sec:continuous_time_length}

\subsection{The continuous time length of $\varepsilon$-neighborhoods of orbits
of germs}

Suppose that a germ $f$ is analytic on $(0,d)$, $d>0,$ has zero as a fixed point and that the function $\mathrm{id}-f$ is increasing and strictly positive. Let $x_{0}$ belong
to the basin of attraction of $0$, so that the orbit $\mathcal{O}^{f}(x_{0})=\{f^{\circ n}(x_{0}):\, n\in\mathbb{N}_{0}\}$
tends to zero. Recall that the function 
\[
\varepsilon\mapsto A_{f}(x_{0},\varepsilon)
\]
denotes the 1-dimensional Lebesgue measure of the $\varepsilon$-neighborhood
of the orbit. By \cite{tricot}, $A_{f}(x_{0},\varepsilon)$ is calculated
by decomposing the $\varepsilon$-neighborhood of $\mathcal{O}^{f}(x_{0})$
in two parts: the \emph{nucleus} $N(x_{0},\varepsilon)$, and the
\emph{tail} $T(x_{0},\varepsilon)$. The nucleus is the overlapping
part of the $\varepsilon$-neighborhood, and the tail is the union
of the disjoint intervals of length $2\varepsilon$. They are determined
in function of the \emph{discrete critical time} $n_{\varepsilon}(x_{0})$,
which is described by the condition: 
\begin{equation}
\begin{cases}
f^{n_{\varepsilon}(x_{0})}(x_{0})-f^{n_{\varepsilon}(x_{0})+1}(x_{0})\leq2\varepsilon,\\
f^{n_{\varepsilon}(x_{0})-1}(x_{0})-f^{n_{\varepsilon}(x_{0})}(x_{0})>2\varepsilon.
\end{cases}\label{neps}
\end{equation}
Then 
\begin{equation}
A_{f}(x_{0},\varepsilon)=|N(x_{0},\varepsilon)|+|T(x_{0},\varepsilon)|=\big(f^{n_{\varepsilon}(x_{0})}(x_{0})+2\varepsilon\big)+n_{\varepsilon}(x_{0})\cdot2\varepsilon.\label{nhood}
\end{equation}
The function $\varepsilon\mapsto A_f(x_0,\varepsilon)$ describes the density of the orbit $\mathcal O^f(x_0)$ of $x_0$.

Now suppose additionally that $f$ can be embedded as the time-one
map in a flow $\{f^{t}\}$, $f^{t}$ analytic on $(0,d)$, of class
$C^{1}$ in $t\in\mathbb{R}$. An embedding
in a flow allows us to define the \emph{continuous critical time with
respect to the flow $\{f^{t}\}$}, denoted by $\tau_{\varepsilon}(x_0)$,
in analogy to \eqref{neps} in the discrete case. Furthermore, in analogy to \eqref{nhood}, we define the \emph{continuous time length of
the $\varepsilon$-neighborhood of orbit $\mathcal{O}^{f}(x_{0})$
with respect to the flow $\{f^{t}\}$}, $A_f^c(\varepsilon,x_0)$ (see Definition~\ref{cont}). As will be shown in Proposition
\ref{prop:integer_part}, it turns out that the discrete critical time $n_{\varepsilon}(x_{0})$ in the standard definition \eqref{nhood}
is the \emph{ceiling function} of the continuous critical time $\tau_{\varepsilon}(x_{0})$
(the smallest integer bigger than or equal to $\tau_{\varepsilon}\left(x_{0}\right)$).

\begin{defi}[The continuous time length of \(\varepsilon\)-neighborhoods of orbits]\label{cont}
Assume that $f$ embeds as the time-one map in a flow $\{f^{t}\}$,
$f^{t}$ analytic on $(0,d)$. The \emph{continuous time length of
the $\varepsilon$-neighborhood of orbit $\mathcal{O}^{f}(x_{0})$
with respect to the flow $\{f^{t}\}$} is defined as: 
\begin{equation}
A_{f}^{c}(x_{0},\varepsilon)=\big(f^{\tau_{\varepsilon}(x_{0})}(x_{0})+2\varepsilon\big)+\tau_{\varepsilon}(x_{0})\cdot2\varepsilon.\label{Ac}
\end{equation}
Here, $\tau_{\varepsilon}(x_{0})$ is the \emph{continuous critical
time} for the $\varepsilon$-neighborhood of the orbit $\mathcal{O}^{f}(x_{0})$,
defined by the equation: 
\begin{equation}
f^{\tau_{\varepsilon}(x_{0})}(x_{0})-f^{\tau_{\varepsilon}(x_{0})+1}(x_{0})=2\varepsilon.\label{tau}
\end{equation}
\end{defi} 

We have shown in \cite[Section 4]{MRRZ2Fatou} that the embedding in a flow is closely related to the existence of a \emph{Fatou
coordinate} for $f$, which translates $f$ to a shift by $1$ and which is defined as follows:

\begin{defi}[Fatou coordinate]\label{def:ffatou}~

1. Let $f$ be an analytic germ on $(0,d)$, $d>0$. We say that a strictly monotone
analytic germ $\Psi$ on $(0,d)$ is a \emph{Fatou coordinate} for
$f$ if 
\begin{equation}
\Psi(f(x))-\Psi(x)=1,\ x\in(0,d),\ d>0.\label{eq:fat0}
\end{equation}

2. Let $\widehat{f}\in\widehat{\mathcal{L}}$ be parabolic. We say
that $\widehat{\Psi}\in\widehat{\mathfrak{L}}$ is a \emph{formal
Fatou coordinate} for $\widehat{f}$ if the following equation is
satisfied formally in $\widehat{\mathfrak{L}}$ : 
\begin{equation*}
\widehat{\Psi}(\widehat{f})-\widehat{\Psi}=1.
\end{equation*}
\end{defi}

\begin{prop}[Reformulation of definition~\eqref{Ac} of $A_{f}^{c}(x_{0},\varepsilon)$]\label{cor} Assume that the germ $f$ embeds as the
time-one map in a $C^1$-flow $\{f^{t}\}$, $f^{t}$ analytic on $(0,d)$. Let the germ $\xi:=\frac{d}{dt}f^t\big|_{t=0}\big.$ be non-oscillatory (no accumulation of zero points at $0$). Let $g=\mathrm{id}-f$. Then
\begin{equation}
A_{f}^{c}(x_{0},\varepsilon)=\big(g^{-1}(2\varepsilon)+2\varepsilon\big)+2\varepsilon\cdot\big[\Psi\big(g^{-1}(2\varepsilon)\big)-\Psi(x_{0})\big].\label{eq:germac}
\end{equation}
\end{prop}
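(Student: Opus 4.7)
The plan is to directly unwind the definition \eqref{Ac}--\eqref{tau} and substitute two key identities: one that identifies $f^{\tau_\varepsilon(x_0)}(x_0)$ with $g^{-1}(2\varepsilon)$, and one that expresses the flow-time $\tau_\varepsilon(x_0)$ through the Fatou coordinate $\Psi$.

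First, I would rewrite the defining equation \eqref{tau} as
\[
g\bigl(f^{\tau_\varepsilon(x_0)}(x_0)\bigr)=f^{\tau_\varepsilon(x_0)}(x_0)-f\bigl(f^{\tau_\varepsilon(x_0)}(x_0)\bigr)=2\varepsilon.
\]
Since $g=\mathrm{id}-f$ is strictly increasing and strictly positive on $(0,d)$ by the standing assumption of Section~\ref{sec:continuous_time_length}, $g$ is a bijection onto its image, and $g^{-1}$ is an analytic germ at $0$. This immediately gives the key identity
\begin{equation}
f^{\tau_\varepsilon(x_0)}(x_0)=g^{-1}(2\varepsilon). \tag{$\ast$}
\end{equation}
The assumption that $\xi=\frac{d}{dt}f^t|_{t=0}$ is non-oscillatory guarantees that $t\mapsto f^t(x_0)$ is strictly monotone near $0$, so that $\tau_\varepsilon(x_0)$ is uniquely determined by $(\ast)$.

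Next, I would use the fact that the Fatou coordinate $\Psi$ of $f$ is also a Fatou coordinate for the whole flow, in the sense that
\begin{equation}
\Psi\bigl(f^t(x)\bigr)=\Psi(x)+t\quad\text{for all } t\in\mathbb R. \tag{$\ast\ast$}
\end{equation}
This is the content (in the analytic setting) of Section~4 of \cite{MRRZ2Fatou}: starting from $\Psi\circ f-\Psi=1$ and the group law $f^{t+1}=f\circ f^t$, the function $h(t):=\Psi(f^t(x))-\Psi(x)-t$ is $1$-periodic and continuous; combined with the monotonicity of both $t\mapsto f^t(x)$ (via non-oscillation of $\xi$) and $\Psi$, one concludes $h\equiv 0$. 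Applying $(\ast\ast)$ at $t=\tau_\varepsilon(x_0)$ and using $(\ast)$ yields
\[
\tau_\varepsilon(x_0)=\Psi\bigl(f^{\tau_\varepsilon(x_0)}(x_0)\bigr)-\Psi(x_0)=\Psi\bigl(g^{-1}(2\varepsilon)\bigr)-\Psi(x_0).
\]

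Finally, I would substitute $(\ast)$ and the expression for $\tau_\varepsilon(x_0)$ into the definition \eqref{Ac}:
\[
A^c_f(x_0,\varepsilon)=\bigl(g^{-1}(2\varepsilon)+2\varepsilon\bigr)+2\varepsilon\cdot\bigl[\Psi\bigl(g^{-1}(2\varepsilon)\bigr)-\Psi(x_0)\bigr],
\]
which is \eqref{eq:germac}.

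The main obstacle, and the only non-routine point, is the promotion of the discrete Abel equation $\Psi\circ f=\Psi+1$ to the continuous identity $(\ast\ast)$. Everything else is algebraic manipulation of the definition. This step relies on: (i) the non-oscillation of $\xi$ to ensure the flow orbit through $x_0$ is an injective curve, and (ii) the uniqueness of the continuous embedding once the time-one map, the flow class, and the $C^1$ regularity are fixed. Both ingredients are available in the framework of \cite{MRRZ2Fatou}, so $(\ast\ast)$ can be invoked as established there.
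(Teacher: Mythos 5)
Your proof is correct and follows essentially the same route as the paper's: invert $g$ in the defining equation \eqref{tau} to get $f^{\tau_\varepsilon(x_0)}(x_0)=g^{-1}(2\varepsilon)$, express $\tau_\varepsilon(x_0)$ through the Fatou coordinate, and substitute into \eqref{Ac}. Your extra care in justifying the continuous Abel identity $\Psi(f^t(x))=\Psi(x)+t$ is welcome (the paper simply invokes the discrete equation \eqref{eq:fat0} at this step); just note that $1$-periodicity of $h(t)=\Psi(f^t(x))-\Psi(x)-t$ together with monotonicity does not by itself force $h\equiv 0$ (compare $t+\tfrac{1}{10}\sin(2\pi t)$), so the clean justification is that the Fatou coordinate furnished by \cite[Proposition 4.4]{MRRZ2Fatou} is constructed as the time function of the flow, $\Psi'=1/\xi$, for which the identity holds by construction.
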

We have proven in \cite[Proposition 4.4]{MRRZ2Fatou} that the embedding
of $f$ as the time-one map of an analytic $C^1$-flow with non-oscillatory $\xi$ implies the existence
of an analytic Fatou coordinate for $f$. Therefore $\Psi$ in \eqref{eq:germac} exists by the assumptions of Proposition~\ref{cor}. The monotonicity of $\Psi$ is equivalent to the non-oscillatority
at $0$ of the vector field $\xi$ for the flow $\{f^{t}\}$ (see
the proof of Proposition~4.4 in \cite{MRRZ2Fatou}).

\begin{proof} Using $g=\mathrm{id}-f$ and $g$ strictly increasing,
\eqref{tau} simplifies to 
\[
g\big(f^{\tau_{\varepsilon}(x_{0})}(x_{0})\big)=2\varepsilon,\ f^{\tau_{\varepsilon}(x_{0})}(x_{0})=g^{-1}(2\varepsilon).
\]
The equation \eqref{eq:fat0} for the Fatou coordinate now gives:
\[
\tau_{\varepsilon}(x_{0})=\Psi(f^{\tau_{\varepsilon}(x_{0})}(x_{0}))-\Psi(x_{0})=\Psi(g^{-1}(2\varepsilon))-\Psi(x_{0}).
\]
Putting this into definition \eqref{Ac}, we get the desired formula.
\end{proof}

\begin{prop} \label{prop:integer_part}Let $f$ and $\mathcal{O}^{f}(x_{0})$
be as above. Let $\{f^{t}\}$ and $\Psi$ be as in Proposition~\ref{cor}. Then: 
\[
n_{\varepsilon}(x_{0})=\lceil\tau_{\varepsilon}(x_{0})\rceil,\ \varepsilon>0.
\]
Here, we denote by $\lceil a\rceil:=\min\{k\in\mathbb{Z}:\, a\leq k\}$,
$a\in\mathbb{R}$. \end{prop}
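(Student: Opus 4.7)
The plan is to translate the discrete condition \eqref{neps} defining $n_\varepsilon(x_0)$ and the continuous equation \eqref{tau} defining $\tau_\varepsilon(x_0)$ into statements about the Fatou coordinate $\Psi$ supplied by Proposition~\ref{cor}, and then read off the ceiling relation by direct comparison.

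First, using that $g = \mathrm{id} - f$ is strictly increasing on $(0,d)$, I would invert $g$ to rewrite \eqref{neps} as
\[
f^{n_\varepsilon(x_0)}(x_0) \leq g^{-1}(2\varepsilon) < f^{n_\varepsilon(x_0) - 1}(x_0),
\]
and likewise rewrite \eqref{tau} as $f^{\tau_\varepsilon(x_0)}(x_0) = g^{-1}(2\varepsilon)$, exactly as in the proof of Proposition~\ref{cor}. Next, because $f(x) < x$ on $(0,d)$ and $\Psi(f(x)) = \Psi(x)+1$, the Fatou coordinate must be strictly decreasing on $(0,d)$. Applying $\Psi$ to the discrete double inequality and using the semiconjugacy $\Psi(f^n(x_0)) = \Psi(x_0) + n$ reverses the order and gives
\[
\Psi(x_0) + n_\varepsilon(x_0) - 1 < \Psi\bigl(g^{-1}(2\varepsilon)\bigr) \leq \Psi(x_0) + n_\varepsilon(x_0).
\]
Applying $\Psi$ to the continuous equality and using its real-time analogue $\Psi(f^t(x_0)) = \Psi(x_0) + t$ yields $\tau_\varepsilon(x_0) = \Psi(g^{-1}(2\varepsilon)) - \Psi(x_0)$, which is precisely the formula already established in Proposition~\ref{cor}. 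Subtracting $\Psi(x_0)$ from the displayed inequalities then produces $n_\varepsilon(x_0) - 1 < \tau_\varepsilon(x_0) \leq n_\varepsilon(x_0)$, which is by definition $n_\varepsilon(x_0) = \lceil \tau_\varepsilon(x_0) \rceil$.

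The only non-routine ingredient is the real-time semiconjugacy $\Psi \circ f^t = \Psi + t$ for every $t \in \mathbb{R}$ (and not merely $t \in \mathbb{Z}$). This is the standard correspondence between a Fatou coordinate and the flow it generates, namely $f^t = \Psi^{-1}\circ(\Psi + t)$, and it is precisely what is set up in \cite[Section~4]{MRRZ2Fatou} under the hypotheses of Proposition~\ref{cor}; the non-oscillation assumption on $\xi$ is exactly what is needed to ensure that $\Psi$ is a bijection onto its image, so that this identity makes sense. Granted this, the remainder of the argument is a brief chain of monotone substitutions, and I do not anticipate any genuine obstacle.
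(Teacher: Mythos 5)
Your proposal is correct and follows essentially the same route as the paper: compare $f^{n_\varepsilon}(x_0)$, $f^{\tau_\varepsilon}(x_0)$ and $f^{n_\varepsilon-1}(x_0)$ via the strict monotonicity of $g$, then apply the (decreasing) Fatou coordinate together with the semiconjugacy $\Psi\circ f^{t}=\Psi+t$ to obtain $n_\varepsilon(x_0)-1<\tau_\varepsilon(x_0)\le n_\varepsilon(x_0)$. The only cosmetic difference is that you pass through $g^{-1}(2\varepsilon)$ explicitly, whereas the paper compares the values of $g$ at the three points directly; the content is identical.
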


\begin{proof} Let $g=\mathrm{id}$. The relations \eqref{neps} and
\eqref{tau} which define $n_{\varepsilon},\,\tau_{\varepsilon}$
give: 
\[
g\big(f^{n_{\varepsilon}(x_{0})}(x_{0})\big)\leq2\varepsilon,\ g\big(f^{\tau_{\varepsilon}(x_{0})}\big)(x_{0})=2\varepsilon,\ g\big(f^{n_{\varepsilon}(x_{0})-1}(x_{0})\big)>2\varepsilon.
\]
Since $g$ is a strictly increasing germ (since $f$ is such), we
get that 
\[
f^{n_{\varepsilon}(x_{0})}(x_{0})\leq f^{\tau_{\varepsilon}(x_{0})}(x_{0})<f^{n_{\varepsilon}(x_{0})-1}(x_{0}).
\]
By monotonicity of $\Psi$, we get: 
\[
n_{\varepsilon}(x_{0})\geq\tau_{\varepsilon}(x_{0})>n_{\varepsilon}(x_{0})-1,
\]
which proves the result.\end{proof}

\begin{obs}
It is proven in \cite[Theorem A]{MRRZ2Fatou} that a parabolic Dulac germ $f$ embeds as the time
one map in a \emph{unique} flow which admits a sectional asymptotic expansion in $\mathcal{\mathfrak L}$. For
Dulac germs, this very flow is used in the definition of $A^c_f(\varepsilon,x_0)$ without explicit reference. 
\end{obs}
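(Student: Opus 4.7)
The assertion has two parts, both essentially restatements of results from the companion paper \cite{MRRZ2Fatou}. First, I would invoke \cite[Theorem A]{MRRZ2Fatou}: for a parabolic Dulac germ $f$, its formal Fatou coordinate $\widehat\Psi\in\widehat{\mathfrak L}$ is unique up to an additive constant, and there is a corresponding analytic Fatou coordinate $\Psi$ on $(0,d)$, unique up to an additive constant, admitting $\widehat\Psi$ as its sectional asymptotic expansion in $\widehat{\mathfrak L}$. Setting $f^t:=\Psi^{-1}\circ(\Psi+t)$ then produces a flow whose time-one map is $f$; conversely, any flow whose generator admits a sectional asymptotic expansion in $\widehat{\mathfrak L}$ must arise from such a $\Psi$, which is why the flow is unique.

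Second, I would verify that this canonical flow satisfies the hypotheses of Proposition~\ref{cor}: each $f^t$ is analytic on $(0,d)$, depends $C^1$-smoothly on $t$, and its infinitesimal generator $\xi=\frac{d}{dt}f^t\big|_{t=0}$ is non-oscillatory at $0$. Analyticity and $C^1$-regularity in $t$ follow from the analyticity of $\Psi$, and non-oscillatority of $\xi$ is equivalent to strict monotonicity of $\Psi$, as recalled immediately after Proposition~\ref{cor} and established in \cite[Proposition~4.4]{MRRZ2Fatou}. Consequently equation~\eqref{tau} has a unique solution $\tau_\varepsilon(x_0)$, so that \eqref{Ac} determines $A^c_f(x_0,\varepsilon)$ unambiguously.

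The only subtlety worth spelling out, and the one place where the argument is not purely a citation, is the reconciliation of the up-to-constant ambiguity in $\Psi$ with the canonicity of $A^c_f(x_0,\varepsilon)$. The flow $f^t=\Psi^{-1}\circ(\Psi+t)$ is manifestly invariant under replacing $\Psi$ by $\Psi+c$; and the Fatou coordinate enters the explicit formula \eqref{eq:germac} only through the difference $\Psi(g^{-1}(2\varepsilon))-\Psi(x_0)$, which is likewise translation-invariant. No substantive obstacle remains: the remark is a bookkeeping identification of the canonical flow provided by \cite{MRRZ2Fatou} with the one tacitly used in \eqref{Ac}, justifying the suppression of the flow from the notation $A^c_f(x_0,\varepsilon)$ throughout the rest of the paper.
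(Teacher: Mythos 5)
Your proposal is correct and matches the paper's treatment: the remark is proved in the paper simply by citation of \cite[Theorem A]{MRRZ2Fatou}, and your unpacking (uniqueness of the Fatou coordinate up to a constant, verification of the hypotheses of Proposition~\ref{cor}, and the observation that both the flow $f^t=\Psi^{-1}\circ(\Psi+t)$ and the difference $\Psi(g^{-1}(2\varepsilon))-\Psi(x_0)$ in \eqref{eq:germac} are invariant under $\Psi\mapsto\Psi+c$) is exactly the bookkeeping the authors leave implicit. No gap.
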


\subsection{The formal continuous time length of $\varepsilon$-neighborhoods
of orbits}

We define here an analogue of the continuous time length of $\varepsilon$-neighborhoods
of orbits \emph{in the formal setting}.
\medskip

We recall the necessary classes of transseries already introduced in \cite{MRRZ2Fatou}.  We put $\boldsymbol{\ell}_{0}:=x$, $\boldsymbol{\ell}:=\boldsymbol\ell_1:=\frac{1}{-\log x}$, 
and define inductively $\boldsymbol{\ell}_{j+1}=\boldsymbol{\ell}\circ\boldsymbol{\ell}_{j}$,
$j\in\mathbb{N}$, as symbols for iterated logarithms. 

\begin{defi}[The classes $\widehat {\mathcal L}_j^\infty$ and $\widehat{\mathfrak L}$]\label{def:eljot}
Denote by $\widehat{\mathcal{L}}_{j}^{\infty}$, $j\in\mathbb N_0$, the set of all transseries of the
type: 
\begin{equation*}\label{summable}
\widehat f(x)=
\sum_{i_{0}=0}^{\infty}\sum_{i_{1}=0}^{\infty}\cdots\sum_{i_{j}=0}^{\infty}a_{i_{0}\ldots i_{j}}x^{\alpha_{i_{0}}}\boldsymbol{\ell}^{\alpha_{i_{0}i_{1}}}\cdots\boldsymbol{\ell}_{j}^{\alpha_{i_{0}\cdots i_{j}}},\  a_{i_0\cdots i_j}\in\mathbb R,\ x>0,
\end{equation*}
where $\left(\alpha_{i_{0}\cdots i_{k}}\right)_{i_{k}\in\mathbb{N}}$
is a strictly increasing sequence of real numbers tending to $+\infty$, for
every $k=0,\ldots,j$. If moreover $\alpha_0> 0$ $($the infinitesimal cases$)$, we denote the class by $\widehat{\mathcal L}_j$. The subset of $\widehat{\mathcal L}_1$ resp. $\widehat{\mathcal L}_1^\infty$ of transseries with only integer powers of $\boldsymbol\ell$ will be denoted by $\widehat{\mathcal L}$ resp. $\widehat{\mathcal L}^\infty$. 

Put
$$
\widehat{\mathfrak L}:=\bigcup_{j\in\mathbb N_0} \widehat{\mathcal L}_j^\infty
$$
for the class of all power-iterated logarithm transseries of finite depth in iterated logarithms.\end{defi}

The classes $\widehat{\mathcal{L}}_{j}^{\infty}$, $j\in\mathbb{N}_{0}$, are the sub-classes of power-iterated logarithm transseries, whose support
is any well-ordered subset of $\mathbb{R}^{j+1}$ (for the lexicographic
order). We restrict only to the subclass with strictly increasing exponents. For Dulac germs and their expansions 
this condition is satisfied. 

Notice that $x=\boldsymbol{\ell}_{0}$. The classes $\widehat{\mathcal{L}}_{0}$ or $\widehat{\mathcal{L}}_{0}^{\infty}$
are made of formal power series: 
\[
\widehat{f}(x)=\sum_{i\in\mathbb N}a_{i}x^{\alpha_i},\ a_{i}\in\mathbb{R},\ x>0,
\]
such that $(\alpha_i)$ is a strictly increasing real sequence tending to $+\infty$. \\
\smallskip

For $\widehat f\in \widehat{\mathcal L}_j^\infty$, we denote by $\mathrm{Lt}(\widehat f)$ its \emph{leading term} $a_{\gamma_{0},\gamma_{1},\ldots,\gamma_{j}}\, x^{\gamma_{0}}\boldsymbol{\ell}^{\gamma_{1}}\boldsymbol{\ell}_{2}^{\gamma_{2}}\cdots\boldsymbol{\ell}_{j}^{\gamma_{j}}$. The tuple $(\gamma_0,\gamma_{1},\ldots,\gamma_{j})$ is called \emph{the
order} of $\widehat f$, and is denoted by $\text{ord}(\widehat f)=(\gamma_0,\gamma_{1},\ldots,\gamma_{j})$. The transseries $\widehat f\in\widehat{\mathcal{L}}_{j}$, $j\in\mathbb N_0$, is called \emph{parabolic} if $\text{ord}(\widehat f)=(1,0,\ldots,0).$
\bigskip

Let $\widehat{f}\in\widehat{\mathcal{L}}$ be parabolic. Unlike a
germ, a transseries does not have orbits. Indeed, evaluating a transseries
$\widehat{f}$ at a point different from zero is senseless. Therefore, Definition~\ref{cont}
of the continuous time length of $\varepsilon$-neighborhoods of orbits
cannot be directly transported to the formal setting. Fortunately,
we can use the \emph{equivalent} definition \eqref{eq:germac}
from Proposition~\ref{cor}. Note that all the functions used in \eqref{eq:germac}
have their direct formal analogues. The formal definition is independent of the orbit.

Moreover, we have shown in \cite{mrrz2} and we recall in
\cite{MRRZ2Fatou} that any parabolic transseries $\widehat{f}\in\widehat{\mathcal{L}}$
can be embedded in a \emph{unique} $C^{1}$-flow $\{\widehat{f}^{t}\}_{t\in\mathbb{R}}$
of elements of $\widehat{\mathcal{L}}$. That is, its formal Fatou
coordinate $\widehat{\Psi}$ in $\widehat{\mathcal{L}}_{2}^{\infty}$,
as defined in Definition~\ref{def:ffatou}, is \emph{unique} in $\widehat{\mathfrak L}$ (up
to an additive constant) (see \cite[Proposition 4.4]{MRRZ2Fatou}). Therefore,
unlike for germs, the definition of the emph{formal} continuous time length
in the formal setting is unambiguous (it does not depend on the chosen
flow).


\begin{defi}[The formal continuous time length of \(\varepsilon\)-neighborhoods of orbits]\label{fa}
Let $\widehat{f}\in\widehat{\mathcal{L}}$ be parabolic. Let $\widehat{g}=\mathrm{id}-\widehat{f}$.
We define the \emph{formal continuous time length of $\varepsilon$-neighborhoods
of orbits of $\widehat{f}$} by: 
\begin{equation}
\widehat{A}_{\widehat{f}}^c(\varepsilon)=\widehat{g}^{-1}(2\varepsilon)+2\varepsilon\cdot\widehat{\Psi}\big(\widehat{g}^{-1}(2\varepsilon)\big).\label{Acf}
\end{equation}        
Here, $\widehat{g}^{-1}$ denotes the formal inverse of $\widehat{g}$,
$\widehat{\Psi}$ is the formal Fatou coordinate for $\widehat{f}$,
which exists and is unique by the main Theorem of \cite{MRRZ2Fatou}. \end{defi}

Note that $\widehat{A}_{\widehat{f}}^{c}(\varepsilon)$ defined by \eqref{Acf} is unique up to a term $K\cdot\varepsilon$, $K\in\mathbb R$, due to the fact that the formal Fatou coordinate $\widehat \Psi$ is unique up to an additive constant.
\smallskip
 
\begin{thmA}Let $\widehat{f}\in\widehat{\mathcal{L}}$ be parabolic. Then the formal continuous time length of $\varepsilon$-neighborhoods of orbits $\widehat{A}_{\widehat{f}}^{c}(\varepsilon)$ belongs to $\widehat{\mathcal{L}}_{2}$.
\end{thmA}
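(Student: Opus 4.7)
The plan is to analyze the two summands of
\[
\widehat{A}_{\widehat{f}}^{c}(\varepsilon)=\widehat{g}^{-1}(2\varepsilon)+2\varepsilon\cdot\widehat{\Psi}\bigl(\widehat{g}^{-1}(2\varepsilon)\bigr)
\]
separately and verify that each is a transseries in $\widehat{\mathcal L}_2^\infty$ in the variable $\varepsilon$, and that the smallest $\varepsilon$-exponent in the support of their sum is strictly positive. Since $\widehat{f}\in\widehat{\mathcal L}$ is parabolic, the displacement $\widehat{g}=\mathrm{id}-\widehat{f}$ has a leading monomial $a\,x^{\alpha}\boldsymbol{\ell}^{k}$ with $(\alpha,k)>(1,0)$ lexicographically, $\alpha\geq 1$ real and $k\in\mathbb Z$. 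Invoking Section~\ref{sec:inverse}, one obtains $\widehat{g}^{-1}$ as an element of $\widehat{\mathcal L}_1$ with leading monomial of the form $c\,\varepsilon^{1/\alpha}\boldsymbol{\ell}(\varepsilon)^{-k/\alpha}$; in particular $\widehat{g}^{-1}(2\varepsilon)$ is infinitesimal, with leading $\varepsilon$-exponent $1/\alpha>0$ when $\alpha>1$, respectively $1$ (with a $\boldsymbol{\ell}(\varepsilon)$-factor) when $\alpha=1$.

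Next, the main theorem of \cite{MRRZ2Fatou} furnishes the formal Fatou coordinate $\widehat{\Psi}\in\widehat{\mathcal L}_2^\infty$ of $\widehat{f}$, with explicit control on its leading part: $x^{1-\alpha}\boldsymbol{\ell}^{-k}$ up to a constant when $\alpha>1$, and $\boldsymbol{\ell}^{-k-1}$ up to a constant when $\alpha=1$. I would evaluate $\widehat{\Psi}\bigl(\widehat{g}^{-1}(2\varepsilon)\bigr)$ by termwise substitution
\[
x\mapsto\widehat{g}^{-1}(2\varepsilon),\quad
\boldsymbol{\ell}\mapsto\bigl(-\log\widehat{g}^{-1}(2\varepsilon)\bigr)^{-1},\quad
\boldsymbol{\ell}_{2}\mapsto\bigl(\log(-\log\widehat{g}^{-1}(2\varepsilon))\bigr)^{-1},
\]
and reexpand each substituted atom as a transseries in $\widehat{\mathcal L}_2^\infty$ in the variable $\varepsilon$, using the explicit form of $\widehat{g}^{-1}$. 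The main technical obstacle is precisely this composition step: one must verify that $\widehat{\Psi}\circ\widehat{g}^{-1}(2\varepsilon)$ is a well-defined element of $\widehat{\mathcal L}_2^\infty$, with support satisfying the nested strictly increasing conditions of Definition~\ref{def:eljot} and with only finitely many monomials of $\widehat{\Psi}$ contributing to each monomial of the composition. This reduces to controlling the expansions of $-\log\widehat{g}^{-1}(2\varepsilon)$ and $\log(-\log\widehat{g}^{-1}(2\varepsilon))$ in the scale $(\varepsilon,\boldsymbol{\ell}(\varepsilon),\boldsymbol{\ell}_{2}(\varepsilon))$, via standard substitution rules for power-iterated-logarithm transseries.

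Once $\widehat{\Psi}(\widehat{g}^{-1}(2\varepsilon))\in\widehat{\mathcal L}_2^\infty$ is established, multiplication by $2\varepsilon$ and addition of $\widehat{g}^{-1}(2\varepsilon)$ keep the result in $\widehat{\mathcal L}_2^\infty$, so it remains to read off the leading $\varepsilon$-order. When $\alpha>1$, $\widehat{\Psi}(\widehat{g}^{-1}(2\varepsilon))$ has leading $\varepsilon$-exponent $(1-\alpha)/\alpha=1/\alpha-1$, hence $2\varepsilon\cdot\widehat{\Psi}(\widehat{g}^{-1}(2\varepsilon))$ has leading $\varepsilon$-exponent $1/\alpha$, matching that of $\widehat{g}^{-1}(2\varepsilon)$. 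When $\alpha=1$, both summands have leading $\varepsilon$-exponent $1$, the smaller lex one coming from $2\varepsilon\cdot\widehat{\Psi}(\widehat{g}^{-1}(2\varepsilon))\sim 2C\,\varepsilon\,\boldsymbol{\ell}(\varepsilon)^{-k-1}$. In either case the smallest $\varepsilon$-exponent of $\widehat{A}_{\widehat{f}}^{c}$ is strictly positive, placing $\widehat{A}_{\widehat{f}}^{c}$ in the infinitesimal subclass $\widehat{\mathcal L}_2$, as claimed.
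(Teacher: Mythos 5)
Your proposal follows essentially the same route as the paper's proof: decompose $\widehat{A}_{\widehat{f}}^{c}$ into the two summands of \eqref{Acf}, invoke the results of Section~\ref{sec:inverse} for $\widehat{g}^{-1}$ and of \cite{MRRZ2Fatou} for $\widehat{\Psi}$, and finish by formal composition plus a leading-order check. One misstatement should be corrected: Proposition~\ref{th1} places $\widehat{g}^{-1}$ in $\widehat{\mathcal{L}}_{2}$, not $\widehat{\mathcal{L}}_{1}$ --- when the leading term of $\widehat{g}$ carries a logarithm ($k\neq 0$), the inverse genuinely contains double logarithms (already $g_{\alpha,m}^{-1}$ does, through the factor $F\big(\boldsymbol{\ell}_2,\frac{\boldsymbol{\ell}}{\boldsymbol{\ell}_2}\big)$ of Lemma~\ref{onne}); the correct and relevant fact is only that the \emph{leading term} of $\widehat{g}^{-1}$ is free of $\boldsymbol{\ell}_2$. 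This slip is harmless for the conclusion, since $\widehat{\mathcal{L}}_{2}$ is the target class anyway. For the composition step that you rightly identify as the main technical obstacle, the paper's argument rests on the structural fact from \cite[Remark~7.1]{MRRZ2Fatou} that $\widehat{\Psi}-\rho\boldsymbol{\ell}_2^{-1}\in\widehat{\mathcal{L}}^{\infty}$, i.e.\ the Fatou coordinate carries a single double-logarithmic monomial; combined with the absence of $\boldsymbol{\ell}_2$ in the leading term of $\widehat{g}^{-1}$, this is what guarantees that $\widehat{\Psi}\circ\widehat{g}^{-1}$ stays at logarithmic depth two (no $\boldsymbol{\ell}_3$ appears) --- a point your sketch leaves implicit but which is the substance of the ``formal composition'' step.
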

We prove that the formal Fatou coordinate
$\widehat{\Psi}$ and the formal inverse $\widehat{g}^{-1}$ exist in $\widehat{\mathfrak L}$, and that $\widehat{A}_{\widehat{f}}^{c}(\varepsilon)$ defined by \eqref{Acf} belongs to $\widehat{\mathcal L}_2$.
The proof of Theorem~A and a precise form of transseries $\widehat{A}_{\widehat{f}}^{c}(\varepsilon)$
is given in Section~\ref{sec:proof_theorem_A}. 

In particular, it will be proved that, if the leading term of $\widehat{g}$
does not involve a logarithm, then there exists at most one term
of $\widehat{A}_{\widehat{f}}^{c}(\varepsilon)$ which
contains a ``double logarithm''.

\subsection{Theorem B}

By $\mathcal G$ we denote the set of all one-dimensional germs defined on some positive open neighborhood of the origin. Furthermore, by $\mathcal G_{AN}\subset \mathcal{G}$ we denote the set of all germs which are moreover analytic on some positive open neighborhood of the origin.
\smallskip

\begin{defi}[Dulac germs, definition from \cite{mrrz2}]\label{def:dulac_germ} \  

1. We say that $\widehat{f}\in\widehat{\mathcal{L}}$ is a \emph{Dulac
series} $($\cite{Dulac}, \cite{ilya}, \cite{roussarie}$)$ if it is of the form: 
\begin{equation*}
\widehat{f}=\sum_{i=1}^{\infty}P_{i}(\boldsymbol{\ell})x^{\alpha_{i}},
\end{equation*}
where $P_{i}$ is a sequence of polynomials and $(\alpha_{i})_{i}$, $\alpha_{i}>0,$ is a strictly increasing sequence,
finite or finitely generated tending to $+\infty$.

2. We say that $f\in \mathcal G_{AN}$ is a \emph{Dulac germ} if:
\begin{itemize}
\item there exists a sequence of \emph{polynomials} and a strictly increasing, finitely generated sequence $(\alpha_i)$ tending to $+\infty$ or finite, such that
$$
f-\sum_{i=1}^{n} P_i(\boldsymbol\ell) x^{\alpha_i}=o(x^{\alpha_{n}}),\ n\in\mathbb N,
$$ 
\item $f$ is \emph{quasi-analytic}: it can be extended to an analytic, bounded function to a standard quadratic domain in $\mathbb C$, as precisely defined by Ilyashenko, see \cite{ilya}, \cite{roussarie}. 
\end{itemize}
If moreover $P_1\equiv1$, $\alpha_1=1$, and at least one of the polynomials $P_i$, $i>1$, is not zero, then $f$ is called a \emph{parabolic Dulac germ}. 
\end{defi}
Note that the germs of first return maps of hyperbolic polycycles of planar analytic vector fields are Dulac germs in the sense of Definition~\ref{def:dulac_germ}, see e.g. \cite{Dulac,ilya}.
\medskip

Let $f\in\mathcal G_{AN}$ be a parabolic Dulac germ and let $\mathcal{O}^{f}(x_{0})$
be an orbit of $f$ accumulating at $0$. Let $\widehat{f}\in\widehat{\mathcal{L}}$
be its Dulac expansion.
\medskip

Theorem~B is twofold. On one hand, it expresses an equality, up to a certain order,
between the classical (discrete time) length of the $\varepsilon$-neighborhood
of the orbit $\mathcal{O}^{f}(x_{0})$ and its continuous time
length as defined in Definition~\ref{cont}. On the other hand, it states that the continuous time
length of the $\varepsilon$-neighborhood of the orbit $\mathcal{O}^{f}(x_{0})$
admits the unique (trans)asymptotic expansion of a particular type in $\widehat{\mathfrak L}$ (what we call \emph{the sectional asymptotic expansion with respect to integral sections}, see \cite[Section 3]{MRRZ2Fatou} and Section~\ref{sec:asymptotic_expansions} here). Moreover, this expansion is equal to the formal continuous time length of $\varepsilon$-neighborhoods
of orbits for $\widehat{f}$, up to a term $\varepsilon K$, where $K$ is an arbitrary
constant. 
\medskip

Let us explain shortly the importance of Theorem~B. It was shown
in \cite{nonlin} that, for analytic parabolic germs, the classical
length of the $\varepsilon$-neighborhood of an orbit $\varepsilon\mapsto A_f(x_0,\varepsilon)$
does not have a complete asymptotic expansion in any scale of continuous functions, as $\varepsilon\to 0$. Moreover, the germ $\varepsilon\mapsto A_f(x_0,\varepsilon)$ does not belong to $\mathcal G_{AN}$ (there is an accumulation of singularities at $\varepsilon=0$). 
The continuous time length of the $\varepsilon$-neighborhood
of an orbit $\varepsilon\mapsto A_f^c(x_0,\varepsilon)$ is an \emph{analytic} generalization of the standard length and belongs to $\mathcal G_{AN}$. Theorem B shows that, for parabolic Dulac germs, its asymptotic expansion as $\varepsilon\to 0$ gives a continuation of the asymptotic
expansion of the classical length from the moment where the former
expansion ceases to exist. 
\medskip

For a Dulac germ $f\in \mathcal G_{AN}$, we consider a function $\varepsilon\mapsto A^c_f(x_0,\varepsilon)\in \mathcal G_{AN}$ defined in \eqref{eq:germac}. We have shown in the Theorem in \cite{MRRZ2Fatou} that such a Fatou coordinate exists and is unique up to an additive constant. Consequently, $\varepsilon\mapsto A^c_f(x_0,\varepsilon)$ is unique up to an additive term $\varepsilon\cdot K$, $K\in\mathbb R$.

We have noticed in \cite[Section~3]{MRRZ2Fatou} that the (trans)asymptotic expansion in $\widehat{\mathfrak L}$ is not uniquely defined. This is due to the non-unique summability of asymptotic series at limit ordinal steps, due to the fact that $x=e^{-\frac{1}{\boldsymbol\ell}},\ \boldsymbol\ell=e^{-\frac{1}{\boldsymbol\ell_2}}$ etc, see the example in \cite[Section~3]{MRRZ2Fatou} for better explanation. To ensure uniqueness of the asymptotic expansion of the Fatou coordinate in $\widehat{\mathfrak L}$, we have introduced in \cite{MRRZ2Fatou} the notion of \emph{sectional asymptotic expansions with respect to an integral section}. The idea was to define a \emph{section function} which prescribes a unique choice of the sum at limit ordinal steps, dictated by the Abel equation for the Fatou coordinate. In Section~\ref{sec:asymptotic_expansions} of this paper, we will extend the \emph{integral section function} to a wider class of transseries in $\widehat{\mathfrak L}$ which include also ${\widehat A}_{\widehat f}^{c}(\varepsilon)$.

\begin{thmB} Let $f\in \mathcal G_{AN}$, $f(x)=x-ax^\alpha\boldsymbol\ell^m+o(x^\alpha\boldsymbol\ell^m),\ a>0,\ \alpha>1,\ m\in\mathbb N_0^-$, be a parabolic Dulac germ . Let $\widehat{f}\in\widehat{\mathcal{L}}$ be its Dulac expansion. Let $\mathcal{O}^{f}(x_{0})$
be any orbit of $f$. Let $\varepsilon\mapsto A_f(x_0,\varepsilon)$, $\varepsilon\mapsto A_f^c(x_0,\varepsilon)$ and $\widehat{A}_{\widehat{f}}^{c}(\varepsilon)$ be defined as above. Then:

$(i)$ $\widehat{A}_{\widehat{f}}^{c}(\varepsilon)\in\widehat{\mathcal L}_2$.

$(ii)$ The sectional asymptotic expansion of $A_{f}^{c}(x_{0},\varepsilon)$ with respect to integral sections is unique in $\widehat{\mathfrak L}$ up to a term $\varepsilon K$, $K\in\mathbb{R}$. Different choices of integral sections correspond to different constants $K$ in the expansion. Moreover, up to $\varepsilon K$, the expansion is equal to $\widehat{A}_{\widehat{f}}^{c}(\varepsilon).$ 

$(iii)$ $A_{f}^{c}(x_{0},\varepsilon)-A_{f}(x_{0},\varepsilon)=\varepsilon^{2-\frac{1}{\alpha}}\boldsymbol\ell^{\frac m \alpha} h(\varepsilon), $ where $h(\varepsilon)=O(1),\ \varepsilon\to 0,$ is a high-amplitude oscillatory\footnote{$\varepsilon\mapsto h(\varepsilon)$ \emph{high-amplitude oscillatory at $0$} means that there exist two sequences $(\varepsilon_n^1)\to 0$ and $(\varepsilon_n^2)\to 0$ with \emph{strongly separated} values of $h(\varepsilon)$. That is, if there exist $A,\ B\in\mathbb R,\ A<B,$ such that $h(\varepsilon_n^1)<A<B<h(\varepsilon_n^2),\ n\in\mathbb N$.} function at $0$ which does not admit a power-log asymptotic behavior.
\end{thmB}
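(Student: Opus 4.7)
The plan is to attack the three parts via the explicit formulas
\[
A_f^c(x_0,\varepsilon) \;=\; \bigl(g^{-1}(2\varepsilon) + 2\varepsilon\bigr) + 2\varepsilon\bigl[\Psi(g^{-1}(2\varepsilon)) - \Psi(x_0)\bigr]
\]
from Proposition~\ref{cor} and $\widehat A_{\widehat f}^c(\varepsilon) = \widehat g^{-1}(2\varepsilon) + 2\varepsilon\,\widehat\Psi(\widehat g^{-1}(2\varepsilon))$ from Definition~\ref{fa}, matching the analytic objects $g^{-1}$ and $\Psi$ to their formal counterparts. Part $(i)$ is then immediate: Theorem~A applied to the parabolic Dulac series $\widehat f\in\widehat{\mathcal L}$ gives $\widehat A_{\widehat f}^c\in\widehat{\mathcal L}_2$, and the explicit form displayed in Section~\ref{sec:proof_theorem_A} confirms that no deeper iterated logarithm appears.

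For $(ii)$ I would first invoke Section~\ref{sec:inverse} to conclude that the Dulac germ $g=\mathrm{id}-f$ has an analytic inverse $g^{-1}$ whose sectional asymptotic expansion with respect to integral sections is $\widehat g^{-1}$. Next, the main theorem of \cite{MRRZ2Fatou} provides that $\Psi$ admits a sectional asymptotic expansion in $\widehat{\mathfrak L}$, with respect to integral sections, equal to $\widehat\Psi$ up to an additive constant. Using the extension of the integral section function constructed in Section~\ref{sec:asymptotic_expansions}, I would then show that $\varepsilon\mapsto\Psi(g^{-1}(2\varepsilon))$ admits $\widehat\Psi(\widehat g^{-1}(2\varepsilon))$ as its sectional asymptotic expansion (up to the same constant). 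After multiplying by $2\varepsilon$ and adding back the leftover $g^{-1}(2\varepsilon)+2\varepsilon-2\varepsilon\Psi(x_0)$, the additive constant in the Fatou coordinate, the summand $2\varepsilon$, and the term $-2\varepsilon\Psi(x_0)$ merge into a single $\varepsilon K$; uniqueness of $K$ (once an integral section is fixed) follows from the uniqueness of sectional expansions.

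For $(iii)$, the starting point is Proposition~\ref{prop:integer_part}, which gives $n_\varepsilon(x_0)=\lceil\tau_\varepsilon(x_0)\rceil$. Setting $\delta(\varepsilon):=n_\varepsilon(x_0)-\tau_\varepsilon(x_0)\in[0,1)$ and $\eta:=g^{-1}(2\varepsilon)=f^{\tau_\varepsilon}(x_0)$, one has $f^{n_\varepsilon}(x_0)=f^\delta(\eta)$, hence
\[
A_f^c(x_0,\varepsilon)-A_f(x_0,\varepsilon)=\bigl(\eta-f^\delta(\eta)\bigr)-2\varepsilon\delta.
\]
I would then Taylor-expand $f^\delta(\eta)$ in $\delta$ via the flow identity $\partial_t f^t=\xi\circ f^t$, obtaining $f^\delta(\eta)=\eta+\delta\xi(\eta)+\tfrac{\delta^2}{2}\xi'(\eta)\xi(\eta)+O(\delta^3)$. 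The leading relation $\xi(\eta)\sim -g(\eta)=-2\varepsilon$ causes the first-order term to cancel the $-2\varepsilon\delta$, leaving $-\tfrac{\delta^2}{2}\xi(\eta)\xi'(\eta)(1+o(1))$. A direct computation using $\xi\sim -g$, $g(x)\sim ax^\alpha\boldsymbol\ell^m$, and $\eta\sim(2\varepsilon/a)^{1/\alpha}\boldsymbol\ell(\varepsilon)^{-m/\alpha}$ then yields the announced order $\varepsilon^{2-1/\alpha}\boldsymbol\ell^{m/\alpha}$ with prefactor a bounded function of $\delta(\varepsilon)$. Boundedness and high-amplitude oscillation of $h$ follow from $\delta\in[0,1)$ and from the monotone blow-up $\tau_\varepsilon\to+\infty$ as $\varepsilon\to 0$: by continuity, $\delta(\varepsilon)$ traverses $[0,1)$ infinitely often, which gives two sequences $\varepsilon_n^1,\varepsilon_n^2\to0$ on which $h$ is separated by a gap; the absence of a power-log asymptotic behavior is then automatic, since any non-trivial power-log monomial is eventually monotone.

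The main obstacle will be in $(ii)$: one must justify that the sectional asymptotic expansion of the composition $\Psi\circ g^{-1}$ is obtained by formal substitution of the individual sectional expansions. This forces one to extend the integral section function of \cite{MRRZ2Fatou} to a class of transseries containing $\widehat\Psi(\widehat g^{-1}(2\varepsilon))$ and to verify that this section is compatible with substitution. The technical groundwork for this is the reason why integral sections have to be revisited in Section~\ref{sec:asymptotic_expansions}; by contrast, the computations in $(iii)$ are sidestepped thanks to the Fatou coordinate, which replaces all manipulation of the vector field $\xi$ by a one-variable Taylor expansion in $\delta$.
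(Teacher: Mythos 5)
Parts $(i)$ and $(iii)$ of your proposal are essentially the paper's own arguments. For $(i)$, deferring to Theorem~A is exactly what the proof of Lemma~\ref{lema:prvaB} does for the membership statement. For $(iii)$, writing $f^{\delta}(\eta)=\Psi^{-1}\bigl(\delta+\tau_{\varepsilon}+\Psi(x_0)\bigr)$ identifies your Taylor expansion in $\delta=n_\varepsilon-\tau_\varepsilon$ with the paper's Taylor expansion of $\Psi^{-1}$ in Lemma~\ref{lem:aepsi}; the cancellation of the first-order term against $-2\varepsilon\delta$ via $\xi=-g-\tfrac12\xi'\xi-\cdots$, the resulting prefactor $\delta-\delta^{2}$ times $\xi'\xi(g^{-1}(2\varepsilon))\asymp\varepsilon^{2-1/\alpha}\boldsymbol\ell^{m/\alpha}$, and the oscillation argument (compare $\varepsilon$ with $\delta=\tfrac12$ against the critical sequence where $\delta$ jumps from $1$ to $0$ and $\delta-\delta^{2}$ vanishes) all coincide with the paper's computation.

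The genuine gap is in $(ii)$, and it is twofold. First, the step you yourself flag as ``the main obstacle'' is not a compatibility check one can defer: it is the core of the proof (Lemmas~\ref{lema:prvaB} and~\ref{lema:dvaB}). Before one can even assert that an integral section applies to $\widehat\Psi(\widehat g^{-1}(2\varepsilon))$, one must show that, after regrouping by powers of $\varepsilon$, each coefficient lies in the class $\widehat{\mathcal L}_1^I$ of Definition~\ref{diifi}, i.e.\ has a decomposition $\widehat G_1\cdot\widehat f\bigl(\boldsymbol\ell(e^{-\gamma/y}\widehat h)\bigr)+\widehat G_0$ with $\widehat f$ integrally summable and $\widehat G_0,\widehat G_1,\widehat h$ convergent; this is obtained in \eqref{joj} by Taylor-expanding $\widehat\Psi_1\circ\widehat g^{-1}$ and $\boldsymbol\ell_2^{-1}\circ\widehat g^{-1}$ and tracking the generalized Laurent structure of the blocks of $\widehat g^{-1}$, and the parallel germ-level expansion must be shown to produce exactly the integral sums of those coefficients. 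Second, your accounting of the $\varepsilon K$ ambiguity covers only the additive constant of $\Psi$ and the term $-2\varepsilon\Psi(x_0)$, whereas the theorem also asserts that \emph{changing the integral section} alters the expansion only by $\varepsilon K$. This is Lemma~\ref{lema:triB}: the integral sums of the coefficients are non-unique only at the finitely many negative exponents $\alpha_i$ in the $x$-support of the Fatou coordinate, and there the ambiguity $K\bigl(g_1(\boldsymbol\ell(\varepsilon))\bigr)^{\alpha_i}\bigl(\varepsilon^{\beta_1}g_1(\boldsymbol\ell(\varepsilon))\bigr)^{-\alpha_i}=K\varepsilon^{-\beta_1\alpha_i}$, multiplied by the accompanying power $\varepsilon^{1+\beta_1\alpha_i}$, collapses to exactly $K\varepsilon$. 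Without that cancellation the ``unique up to $\varepsilon K$'' claim is unproved.
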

Note that, since $\alpha>1$ ($f$ is Dulac), $2-\frac{1}{\alpha}>1$. Therefore, the two lengths still coincide at the order $\varepsilon^1$ in $\varepsilon$.
\medskip 

A direct consequence of Theorem B $(iii)$ is the following statement, which motivates the introduction of the continuous time length of the $\varepsilon$-neighborhood of orbits of Dulac maps, as a natural generalization of the standard length $\varepsilon\mapsto A_f(x_0,\varepsilon)$. 
\begin{cory}
The formal continuous length of $\varepsilon$-neighborhoods of orbits $\widehat A_{\widehat f}^{c}(\varepsilon)$ is a continuation of the asymptotic expansion of $\varepsilon\mapsto A_{f}(x_{0},\varepsilon)$ after its remainder term $\varepsilon^{2-\frac{1}{\alpha}}\boldsymbol\ell^m r(\varepsilon)$, $r(\varepsilon)=O(1)$ high-amplitude oscillatory,  which does not admit a power-log asymptotic behavior. That is, after the point where an asymptotic expansion of $\varepsilon\mapsto A_f(x_0,\varepsilon)$ in a power-logarithm scale ceases to exist. 
\end{cory}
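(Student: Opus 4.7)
The plan is to deduce this corollary directly from parts $(ii)$ and $(iii)$ of Theorem~B. From $(ii)$, the function $\varepsilon\mapsto A_f^c(x_0,\varepsilon)$ admits, up to a term $\varepsilon K$, the formal transseries $\widehat A_{\widehat f}^c(\varepsilon)$ as its sectional asymptotic expansion in $\widehat{\mathfrak L}$. From $(iii)$, we have the exact pointwise identity
\[
A_f(x_0,\varepsilon)=A_f^c(x_0,\varepsilon)-\varepsilon^{2-\frac{1}{\alpha}}\boldsymbol\ell^{\frac{m}{\alpha}}h(\varepsilon),
\]
with $h$ bounded, high-amplitude oscillatory at $0$, and admitting no power-log asymptotic behavior. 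Since $\alpha>1$, the exponent $2-\frac{1}{\alpha}$ is strictly greater than $1$, so the term $\varepsilon K$ belongs to the portion of the expansion that strictly dominates the oscillatory correction.

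First, I would isolate the common initial part of the two expansions. Let $S_0(\varepsilon)$ denote the truncation of $\widehat A_{\widehat f}^c(\varepsilon)+\varepsilon K$ collecting all monomials strictly dominating $\varepsilon^{2-\frac{1}{\alpha}}\boldsymbol\ell^{\frac{m}{\alpha}}$ as $\varepsilon\to 0$. By Theorem~A the transseries $\widehat A_{\widehat f}^c(\varepsilon)$ lies in $\widehat{\mathcal L}_2$, so this truncation is well-defined and contains only finitely many monomials above any fixed order. By the definition of sectional asymptotic expansion with respect to integral sections (Section~\ref{sec:asymptotic_expansions}) used in $(ii)$, one has
\[
A_f^c(x_0,\varepsilon)-S_0(\varepsilon)=O\!\left(\varepsilon^{2-\frac{1}{\alpha}}\boldsymbol\ell^{\frac{m}{\alpha}}\right).
\]
Substituting into $(iii)$ gives
\[
A_f(x_0,\varepsilon)=S_0(\varepsilon)+\varepsilon^{2-\frac{1}{\alpha}}\boldsymbol\ell^{\frac{m}{\alpha}}r(\varepsilon),
\]
for some bounded $r(\varepsilon)$. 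Since $h$ is high-amplitude oscillatory and the contribution to $r$ coming from $(A_f^c-S_0)/(\varepsilon^{2-1/\alpha}\boldsymbol\ell^{m/\alpha})$ is a bounded remainder, $r$ inherits the high-amplitude oscillation of $-h$, giving the claimed form of the initial part of the expansion of $A_f(x_0,\varepsilon)$.

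Finally, I would show that $r$ prevents $A_f(x_0,\varepsilon)$ from admitting any further power-log asymptotic expansion beyond $S_0(\varepsilon)$. If such a continuation existed, there would be a power-log monomial $c\,\varepsilon^{\beta}\boldsymbol\ell^{\gamma}$ of the same order as $\varepsilon^{2-\frac{1}{\alpha}}\boldsymbol\ell^{\frac{m}{\alpha}}$ such that $A_f(x_0,\varepsilon)-S_0(\varepsilon)-c\,\varepsilon^{\beta}\boldsymbol\ell^{\gamma}$ is of strictly smaller order. Dividing through by $\varepsilon^{2-\frac{1}{\alpha}}\boldsymbol\ell^{\frac{m}{\alpha}}$ and passing to the limit as $\varepsilon\to 0$ would force $r(\varepsilon)\to c$, contradicting the high-amplitude oscillation of $r$. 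The main obstacle in this argument is not conceptual but one of careful bookkeeping: aligning the order of the sectional remainder of $A_f^c$ with the precise order of the oscillatory correction $\varepsilon^{2-\frac{1}{\alpha}}\boldsymbol\ell^{\frac{m}{\alpha}}h(\varepsilon)$ supplied by $(iii)$. Once this alignment is established, the corollary follows immediately.
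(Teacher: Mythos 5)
Your proposal is correct and follows essentially the same route as the paper, which deduces the corollary in one line from Theorem~B $(iii)$ together with the fact that $A_f^c(x_0,\varepsilon)$ admits the complete expansion $\widehat A_{\widehat f}^{c}(\varepsilon)\in\widehat{\mathcal L}_2$; your write-up merely makes the truncation and the no-further-expansion argument explicit. The only cosmetic caveat is that the truncation $S_0$ need not consist of finitely many monomials (a power of $\varepsilon$ below $2-\tfrac1\alpha$ may carry an infinite block in $\boldsymbol\ell,\boldsymbol\ell_2$), but this is exactly what the sectional (block-by-block) expansion of Theorem~B $(ii)$ handles, so the argument stands.
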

The statement follows directly from Theorem~B $(iii)$ and $(i)$, since $\varepsilon\mapsto A_f(x_0,\varepsilon)$ admits a complete power-log asymptotic expansion, $\widehat A_{\widehat f}^c(\varepsilon)\in\widehat{\mathcal L}_2$.

\bigskip

Theorem~B $(i),\,(ii)$ can be resumed in the following commutative diagram (Figure~\ref{diag}). In the diagram, $\Psi\in \mathcal G_{AN}$ denotes the Fatou coordinate of the Dulac germ $f$ and $\widehat\Psi$ the formal Fatou coordinate of its Dulac expansion $\widehat f$. Recall from \cite{MRRZ2Fatou} that the (unique up to an additive constant) sectional asymptotic expansion in $\widehat{\mathfrak L}$ of $\Psi$ with respect to any integral section is equal to the formal Fatou coordinate $\widehat\Psi$.
\begin{figure}[h!]
	
	\[
	\xyC{0.3cm}\xymatrix{ & f\ar@{->}[d]\ar@{->}[rrrrrrr]\sp-{\text{Dulac expansion}}\ar@{->}[d] &  &  &  &  &  &  & \widehat{f}\ar@{->}[d]\\
		\text{in }\mathcal{G}_{\mathrm{AN}} & \psi\ar@{->}[d]\ar@{->}[rrrrrrr]\sp-{\text{asymptotic expansion}}\sb-{\text{w.r.t. an integral section}} &  &  &  &  &  &  & \widehat{\psi}\ar@{->}[d] & \text{in }\widehat{\mathfrak L}\\
		& A_{f}^{c}\left(x_{\text{0}},\varepsilon\right)\ar@{->}[rrrrrrr]\sp-{\text{asymptotic expansion}}\sb-{\text{w.r.t. an integral section}} &  &  &  &  &  &  & \widehat{A}_{\widehat{f}}^{c}\left(\varepsilon\right)
	}
	\]
	\label{diag} \caption{The commutative diagram for a Dulac germ $f\in\mathcal{G}_{AN}$.} \end{figure}
\medskip

The proof of Theorem~B is given in Section~\ref{sec:proof_theorem_B}. 

\subsection{Theorem C}\label{sec:app}

Theorem~C expresses the formal class of a Dulac germ $f\in\mathcal G_{AN}$ from the initial (i.e. existent) part of the sectional asymptotic expansion of the length of the $\varepsilon$-neighborhood $A_f(x_0,\varepsilon)$ of one of its orbits. This is a generalization of the result from \cite{resman} for regular parabolic germs belonging to $\mathbb R\{x\}$. 

The proof of Theorem~C is given in Section~\ref{sec:ptC}. It relies on Theorem~B, which allows to work with the initial part of the expansion of the continuous time length $\varepsilon\mapsto A_f(x_0,\varepsilon)$ instead of the initial part of the discrete time length $\varepsilon\mapsto A_f(x_0,\varepsilon)$. The terms are given by the beginnning of the formal continuous length of the $\varepsilon$-neighborhoods of orbits, $\widehat A_{\widehat f}(\varepsilon)$. The expression of the terms in the continuous time length is in terms of the Fatou coordinate and the inverse, which can be explicitely determined. Working with inequalities in the definition of the discrete time length to estimate its initial terms would be much more cumbersome.

It was proved in \cite[Theorem~A]{mrrz2} that a formal normal form
in $\widehat{\mathcal{L}}$ for a (normalized) parabolic Dulac germ $f\in\mathcal G_{AN}$ of the form
\begin{equation}
f(x)=x-x^{\alpha}\boldsymbol\ell^m+o(x^\alpha\boldsymbol\ell^m),\ \alpha>1,\ m\in\mathbb N_0^{-},\ a\in\mathbb R, \label{eq:normdul}
\end{equation}
is given by 
\[
f_{0}(x)=x-x^{\alpha}\boldsymbol\ell^m+\rho x^{2\alpha-1}\boldsymbol{\ell}^{2m+1},\ \rho\in\mathbb R.
\]
The formal invariants are $(\alpha,m,\rho)$. Note that, since $f$ is Dulac, necessarily $m\in\mathbb N_0^{-}$ and $\alpha>1$.

\begin{thmC}[Formal normal form of a Dulac germ from fractal properties of orbits] Let $f(x)=x-x^\alpha\boldsymbol\ell^m+o(x^\alpha\boldsymbol\ell^m),\ \alpha>1,\ m\in\mathbb N_0^-,$ be a parabolic Dulac germ. Let $\mathbf s$ be an integral section. The formal invariants $(\alpha,m,\rho)$ can be expressed from finitely many terms from the initial (existent) part of the sectional asymptotic expansion with respect to $\mathbf s$ of the length of the $\varepsilon$-neighborhood of one of its orbits, $\varepsilon\mapsto A_f(x_0,\varepsilon)$. More precisely, we read $\alpha$ and $m$ from the exponents of the leading monomial in the expansion of $\varepsilon\mapsto A_f(x_0,\varepsilon/2)$:
 $$c \varepsilon^{1/\alpha}\boldsymbol\ell^{-m/\alpha},\ c\in\mathbb R.$$ The formal invariant $\rho$ is a polynomial function in the coefficients of the monomials $\varepsilon^1\boldsymbol\ell^r\boldsymbol\ell_2^s$, $r,s\in\mathbb Z$ and $(r,s)\prec (0,0)$, in the expansion of $\varepsilon\mapsto A_f(x_0,\varepsilon/2)$. The coefficients of the polynomial are explicit universal functions of $\alpha,\,m,\,r,\,s$ and depend on the germ $f$ only through $\alpha$ and $m$.  
\end{thmC}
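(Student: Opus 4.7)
\medskip

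The plan is to transfer the analysis, via Theorem~B, from the discrete time length $A_f(x_0,\varepsilon)$ to the formal continuous length $\widehat A_{\widehat f}^{\,c}(\varepsilon)$, and then read off the invariants $(\alpha,m,\rho)$ from the explicit formula
$\widehat A_{\widehat f}^{\,c}(\varepsilon)=\widehat g^{-1}(2\varepsilon)+2\varepsilon\,\widehat\Psi\bigl(\widehat g^{-1}(2\varepsilon)\bigr)$ of Definition~\ref{fa}. Since $2-1/\alpha>1$, every monomial $\varepsilon\boldsymbol{\ell}^r\boldsymbol{\ell}_2^s$ with $(r,s)\prec(0,0)$ is strictly larger than the oscillatory remainder $\varepsilon^{2-1/\alpha}\boldsymbol{\ell}^{m/\alpha}h(\varepsilon)$ of Theorem~B~$(iii)$; consequently, by Theorem~B~$(ii)$, each such coefficient in the sectional expansion of $\varepsilon\mapsto A_f(x_0,\varepsilon/2)$ exists and coincides, up to the ambiguous $K\varepsilon$, with the corresponding coefficient of $\widehat A_{\widehat f}^{\,c}(\varepsilon/2)$. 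It therefore suffices to identify the invariants from $\widehat A_{\widehat f}^{\,c}(\varepsilon/2)$.

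The reading of $\alpha$ and $m$ is then immediate: using the description of the formal inverse in Section~\ref{sec:inverse}, the leading monomial of $\widehat g^{-1}(\varepsilon)$ is $\alpha^{-m/\alpha}\varepsilon^{1/\alpha}\boldsymbol{\ell}^{-m/\alpha}$, and the term $\varepsilon\widehat\Psi(\widehat g^{-1}(\varepsilon))$ contributes at the same order through the leading monomial $\tfrac{1}{1-\alpha}x^{1-\alpha}\boldsymbol{\ell}^{-m}$ of $\widehat\Psi$. The two combine to give a leading term $c\,\varepsilon^{1/\alpha}\boldsymbol{\ell}^{-m/\alpha}$ for $\widehat A_{\widehat f}^{\,c}(\varepsilon/2)$, from whose exponents $\alpha$ and $m$ are uniquely recovered.

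For $\rho$, the essential observation is that in the term-by-term integration producing $\widehat\Psi=\int (1/\widehat g)\,dx$, each monomial $x^{-\beta}\boldsymbol{\ell}^{k}$ with $(\beta,k)\neq(1,1)$ integrates back into the scale $\{x^{a}\boldsymbol{\ell}^{b}\}$, so the only contribution involving $\boldsymbol{\ell}_2$ comes from the resonant monomial $\rho\,x^{-1}\boldsymbol{\ell}$, whose coefficient in $1/\widehat g$ equals the invariant $\rho$ by definition of the formal normal form \cite[Theorem~A]{mrrz2}. Its antiderivative is $\rho\,\boldsymbol{\ell}_2^{-1}$, and under the substitution $x=\widehat g^{-1}(\varepsilon)$ followed by multiplication by $\varepsilon$ it deposits a series of $\varepsilon\boldsymbol{\ell}^r\boldsymbol{\ell}_2^s$ terms, $(r,s)\prec(0,0)$, whose coefficients are polynomials in $\rho$ and in finitely many coefficients of $\widehat g$ at orders preceding $x^{2\alpha-1}\boldsymbol{\ell}^{2m+1}$. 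The non-resonant part of $\widehat\Psi$, composed with $\widehat g^{-1}$ and multiplied by $\varepsilon$, also contributes to these coefficients but without introducing $\rho$. Since the lower-order coefficients of $\widehat g$ are themselves encoded --- by explicit universal formulas in $\alpha,m$ --- in the higher monomials $\varepsilon^\beta\boldsymbol{\ell}^\delta$ with $\beta<1$ of the expansion, one inverts the triangular system to express $\rho$ as a polynomial in the $\varepsilon\boldsymbol{\ell}^r\boldsymbol{\ell}_2^s$-coefficients, with universal coefficients in $\alpha,m,r,s$.

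The main technical obstacle is the careful bookkeeping of the nested substitution $\widehat\Psi\circ\widehat g^{-1}$ when the "double logarithm" term $\rho\,\boldsymbol{\ell}_2^{-1}$ is developed into an actual series in $\widehat{\mathfrak L}$: one must verify that at least one monomial $\varepsilon\boldsymbol{\ell}^r\boldsymbol{\ell}_2^s$ carries a genuinely $\rho$-linear coefficient (so that the polynomial relation is invertible in $\rho$) and that the resulting formulas are universal, depending on the specific Dulac germ only through the coefficients of $\widehat g$. Both points rely on the explicit form of the inverse from Section~\ref{sec:inverse} and on the uniqueness of the sectional asymptotic expansion with respect to integral sections developed in Section~\ref{sec:asymptotic_expansions}.
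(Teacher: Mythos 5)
Your reduction to the formal length via Theorem~B and the reading of $\alpha$ and $m$ from the leading monomial match the paper's argument. The gap is in the treatment of $\rho$. Your bookkeeping runs through the composition $\widehat\Psi\circ\widehat g^{-1}$, lets the resonant term deposit coefficients at level $\varepsilon^1$ that mix $\rho$ with lower-order coefficients of $\widehat g$, and then proposes to recover those lower-order coefficients from the monomials $\varepsilon^\beta\boldsymbol\ell^\delta$ with $\beta<1$. But then the formula you obtain for $\rho$ is a polynomial in the $\varepsilon^1\boldsymbol\ell^r\boldsymbol\ell_2^s$-coefficients \emph{and} in the sub-$\varepsilon^1$ coefficients, whereas the theorem asserts dependence on the former only, with coefficients that are universal in $\alpha,m,r,s$. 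Nothing in your argument shows that the dependence on the sub-$\varepsilon^1$ data cancels; you assert the conclusion in your last sentence but the preceding sentence contradicts it. The paper obtains this cancellation by never composing $\widehat\Psi$ with $\widehat g^{-1}$ termwise: it first proves $\widehat A^c_{\widehat f}(\varepsilon/2)=-\varepsilon\int^\varepsilon \widehat g^{-1}(t)t^{-2}\,dt-\varepsilon\boldsymbol\ell(\varepsilon)^{-1}+o(\varepsilon^{1+\delta})$ (Proposition~\ref{prop:izvod}), so that the $\varepsilon^1$-level coefficients $B_{p,q}$ of the length are explicit universal triangular combinations of exactly the coefficients $A_{r,s}(f)=[\boldsymbol\ell^r\boldsymbol\ell_2^s]\big(\widehat g^{-1}(t)/t\big)$; and separately, via $\rho=-[\boldsymbol\ell_2^{-1}]\int^x ds/\widehat g(s)$ and the substitution $s=\widehat g^{-1}(2t)$, it shows that $\rho$ is a universal linear combination of those same $A_{r,s}(f)$, because every monomial $t^\beta\boldsymbol\ell^r\boldsymbol\ell_2^s$ of $\widehat g^{-1}(t)/t$ with $\beta\neq0$ drops out after the substitution $t=x^\alpha\boldsymbol\ell^m$ (Proposition~\ref{lem:jen}, Lemma~\ref{lem:dva}). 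That elimination of both $\widehat\Psi$ and the lower-order data of $\widehat g$ from the relation between $\rho$ and the $\varepsilon^1$-level coefficients is the step your proposal is missing; without it you prove only the weaker qualitative claim that $\rho$ is determined by finitely many initial terms.

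A secondary inaccuracy: $\widehat\Psi=\int dx/\widehat\xi$, not $\int dx/\widehat g$; since $-\widehat g=\widehat\xi+\tfrac12\widehat\xi'\widehat\xi+\ldots$, the coefficient of $\boldsymbol\ell_2^{-1}$ in $\widehat\Psi$ is $\tfrac m2+\rho$ rather than $\rho$. This correction is universal and hence repairable, but it also shows that identifying the resonant coefficient of $1/\widehat g$ with the formal invariant is not ``by definition of the formal normal form'': it requires the invariance of $[\boldsymbol\ell/x](1/\widehat g)$ under formal changes of variables, which is the content of Proposition~\ref{prop:rho}.
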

Note that, by Theorem~B $(iii)$, the order at which the power-log asymptotic expansion of $\varepsilon\mapsto A_f(x_0,\varepsilon)$ ceases to exist is $O(\varepsilon^{1+\delta})$ for some $\delta>0$. Therefore, the power-logarithm terms with power of $\varepsilon$ equal to $\varepsilon^1$, which are needed for reading the formal invariant $\rho$, are still contained in the existent initial part of the expansion. Since $(r,s)\prec (0,0)$, their orders are even strictly lower than $\varepsilon$. The last monomial whose coefficient is used to read $\rho$ is $\varepsilon\boldsymbol\ell_2^{-1}$, for which $(r,s)=(0,-1)$.\\

Note also that in Theorem~C, we do not use the coefficient of the monomial $\varepsilon$ in the expansion of $\varepsilon\mapsto A_f(x_0,\varepsilon)$ for expressing the formal invariant $\rho$ (since $(r,s)\prec (0,0)$ and $r=s=0$ is not used). This is important since, by Theorem~B $(i)$, this coefficient is not unique in the expansion.\\
                 
In the special case when the leading term of $g=\mathrm{id}-f$ \emph{does not contain a logarithm in the leading term}, Theorem~C significantly simplifies and the formal invariants are read from just two terms in the asymptotic expansion of $\varepsilon\mapsto A_f(x_0,\varepsilon)$:
\begin{cory}[Theorem~C in the special case $m=0$]\label{cor:nolog}
Let $f=x-ax^\alpha\boldsymbol+o(x^\alpha),\ \alpha>1,$ be a parabolic Dulac germ. Let $\mathbf s$ be an integral section. The formal invariants $(\alpha,\rho)$ of $f$ can be expressed by two terms in the initial part of the asymptotic expansion of $\varepsilon\mapsto A_f(x_0,\varepsilon/2)$ of any orbit. More precisely, $\alpha$ is recovered from the exponent of the leading term $c\varepsilon^{1/\alpha}$, $c\in\mathbb R$, and $\rho$ is the coefficient of the monomial $\varepsilon\boldsymbol\ell_2^{-1}$ in the expansion. Additionally, this is the only monomial in the expansion containing the double logarithm.
\end{cory}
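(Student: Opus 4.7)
The plan is to derive this corollary as the $m=0$ specialization of Theorem~C, taking advantage of the fact that when $\widehat{g}=\mathrm{id}-\widehat{f}$ has no logarithm in its leading term, the polynomial relation of Theorem~C collapses to a single affine equation in $\rho$ and the two invariants can be read from only two monomials of the initial part of the expansion.

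The first step is to reduce to the formal setting by Theorem~B. Since $\alpha>1$, the remainder $A_f^c(x_0,\varepsilon/2)-A_f(x_0,\varepsilon/2)$ is of order $\varepsilon^{2-1/\alpha}$, strictly smaller than any monomial $\varepsilon\boldsymbol{\ell}^r\boldsymbol{\ell}_2^s$ with $(r,s)\prec(0,0)$ in the lexicographic order on $\mathbb{Z}^2$. Hence, by Theorem~B(ii), the coefficients of such monomials in the sectional asymptotic expansion of $A_f(x_0,\varepsilon/2)$ agree with those of the formal series
$$\widehat{A}_{\widehat{f}}^{c}(\varepsilon/2)=\widehat{g}^{-1}(\varepsilon)+\varepsilon\,\widehat{\Psi}\bigl(\widehat{g}^{-1}(\varepsilon)\bigr),$$
the only ambiguity being an additive term $K\varepsilon$. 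Reading $\alpha$ and $\rho$ thus reduces to inspecting this explicit formal expression.

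I would then compute each factor explicitly. By Section~\ref{sec:inverse}, since the leading term $ax^{\alpha}$ of $\widehat{g}$ carries no logarithm, $\widehat{g}^{-1}(\varepsilon)=a^{-1/\alpha}\varepsilon^{1/\alpha}+(\rho/\alpha)\,\varepsilon+\cdots$, and the inversion remains confined to the $\boldsymbol{\ell}_2$-free class $\widehat{\mathcal{L}}^{\infty}$. By \cite{MRRZ2Fatou}, the formal Fatou coordinate has leading part $\widehat{\Psi}(x)=\tfrac{1}{(\alpha-1)a\,x^{\alpha-1}}+(\rho-\alpha/2)\log(1/x)+C+\cdots$ (the $\log(1/x)$-coefficient being forced by balancing the Abel equation through the quadratic Taylor term), and $\widehat{\Psi}$ belongs to $\widehat{\mathcal{L}}_{1}^{\infty}$, so again contains no $\boldsymbol{\ell}_2$-monomials. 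Substituting and multiplying by $\varepsilon$ yields first the leading monomial $\tfrac{\alpha\,a^{-1/\alpha}}{\alpha-1}\,\varepsilon^{1/\alpha}$, from which $\alpha$ is recovered from the exponent. Expanding $\log\bigl(1/\widehat{g}^{-1}(\varepsilon)\bigr)$ via the integral-section prescription of Section~\ref{sec:asymptotic_expansions} then produces a single $\boldsymbol{\ell}_2$-bearing monomial $\varepsilon\boldsymbol{\ell}_2^{-1}$, whose coefficient is an affine function of $\rho$ with $\alpha$- and $a$-dependent slope; inverting this relation yields $\rho$.

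The uniqueness claim — that $\varepsilon\boldsymbol{\ell}_2^{-1}$ is the only monomial in the expansion carrying a double logarithm — follows from the sub-class structure just recalled: since neither $\widehat{g}^{-1}$ nor $\widehat{\Psi}$ contains $\boldsymbol{\ell}_2$-monomials, the only point at which $\boldsymbol{\ell}_2$ can enter the composition is the integral-section expansion of $\log\bigl(1/\widehat{g}^{-1}(\varepsilon)\bigr)$. The main obstacle is the associated bookkeeping: one has to verify that no further $\boldsymbol{\ell}_2$-contributions arise at any $\varepsilon\boldsymbol{\ell}^r\boldsymbol{\ell}_2^s$-level with $(r,s)\prec(0,0)$ from higher-order terms in $\widehat{\Psi}\circ\widehat{g}^{-1}$, and to make the affine dependence on $\rho$ explicit enough to guarantee non-degeneracy.
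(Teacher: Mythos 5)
Your reduction to the formal object $\widehat{A}^c_{\widehat f}(\varepsilon/2)=\widehat g^{-1}(\varepsilon)+\varepsilon\widehat\Psi(\widehat g^{-1}(\varepsilon))$ via Theorem~B, and the observation that for $m=0$ the inverse $\widehat g^{-1}$ stays in $\widehat{\mathcal L}_1$ (no double logarithms), both match the paper. But the central mechanism you propose is wrong. You assert that $\widehat\Psi\in\widehat{\mathcal L}_1^\infty$ "contains no $\boldsymbol\ell_2$-monomials" and that the unique $\varepsilon\boldsymbol\ell_2^{-1}$ term is created by the integral-section expansion of $\log\bigl(1/\widehat g^{-1}(\varepsilon)\bigr)$. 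Neither claim holds. By \cite[Remark 7.1]{MRRZ2Fatou} (used in the proof of Theorem~A and in Remark~\ref{obs:normal}), one has $\widehat\Psi-\bigl(\tfrac m2+\rho\bigr)\boldsymbol\ell_2^{-1}\in\widehat{\mathcal L}^\infty$: the Fatou coordinate of a Dulac germ \emph{does} carry exactly one double-logarithmic term, with coefficient $\rho$ when $m=0$, and this term is the \emph{only} possible source of $\boldsymbol\ell_2$ in the composition. Conversely, composing $\log(1/x)$ with $\widehat g^{-1}(\varepsilon)=c\,\varepsilon^{1/\alpha}(1+u)$, $u\in\widehat{\mathcal L}_1$ infinitesimal, gives $\tfrac1\alpha\boldsymbol\ell(\varepsilon)^{-1}-\log c-\log(1+u)$, which remains in $\widehat{\mathcal L}_1^\infty$; no double logarithm can be produced there. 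So if your two premises were true, $\widehat A^c_{\widehat f}$ would contain no $\boldsymbol\ell_2$-monomial at all, contradicting the very statement you are proving (except when $\rho=0$). The correct argument is the opposite composition: $\boldsymbol\ell_2^{-1}(\widehat g^{-1}(\varepsilon))=\boldsymbol\ell_2^{-1}(\varepsilon)+(\text{terms in }\widehat{\mathcal L}_1^\infty)$ because the leading term of $\widehat g^{-1}$ is a pure power, whence $\rho\,\varepsilon\boldsymbol\ell_2^{-1}$ is the unique double-log monomial of $\widehat A^c_{\widehat f}(\varepsilon/2)$.

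The source of the confusion appears to be a conflation of two different residual invariants. Your formula $\widehat\Psi(x)=\tfrac{1}{(\alpha-1)ax^{\alpha-1}}+(\rho-\alpha/2)\log(1/x)+C+\cdots$ is the one for the \emph{Taylor-class} invariant of a regular analytic germ (cf.\ \eqref{eq:psi-1}), read off the single-logarithm coefficient. The $\rho$ of the corollary is the coefficient of $x^{2\alpha-1}\boldsymbol\ell^{2m+1}$ in the Dulac normal form $f_0$ — the $\widehat{\mathcal L}$-class invariant — and it sits in the $\boldsymbol\ell_2^{-1}$ coefficient of $\widehat\Psi$, not in the $\log(1/x)$ coefficient; the paper's remark on analytic germs ($\rho$ versus $\tilde\rho=0$) makes precisely this distinction. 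Relatedly, your claim that $\rho/\alpha$ is the coefficient of $\varepsilon^1$ in $\widehat g^{-1}(\varepsilon)$ is off: by \eqref{eq:sc}, for $m=0$ one has $\rho=[\boldsymbol\ell]\bigl(\widehat g^{-1}(t)/t\bigr)$, i.e.\ $\rho$ is the coefficient of the monomial $\varepsilon\boldsymbol\ell$. To repair the proof, replace your second and third steps by the decomposition $\widehat\Psi=\widehat\Psi_1+\rho\boldsymbol\ell_2^{-1}$ with $\widehat\Psi_1\in\widehat{\mathcal L}^\infty$ and track the single term $\rho\boldsymbol\ell_2^{-1}$ through the composition, or use the paper's alternative route via Proposition~\ref{lem:jen} and \eqref{eq:jaoj}.
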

\noindent The proof of Theorem C and of Corollary~\ref{cor:nolog} is given in Section~\ref{sec:ptC}. 
\medskip

\begin{obs}[\emph{The subclass of analytic germs}] In the \emph{regular} case of parabolic analytic germs we can consider different formal normal forms, depending on the class in which we allow formal changes of variables. Let $f(x)=x-x^{k+1}+o(x^{k+1})$ be analytic. Its standard formal normal form with respect to formal Taylor series changes of variables is $f_0(x)=x-x^{k+1}+\rho x^{2k+1}$, with formal invariants $(k,\rho)$. Its formal normal form with respect to power-log formal changes of variables belonging to $\widehat{\mathcal L}$ is by \cite[Theorem~A, Example 6.3]{mrrz2} $\tilde f_0(x)=x-x^{k+1}+0\cdot x^{2k+2}\boldsymbol\ell$, with the formal invariants $(k,\tilde\rho=0)$. On the other hand, the formal Fatou coordinate $\widehat\Psi$ of $f$ is \emph{unique} (up to $K\varepsilon,\ K\in\mathbb R$) in the class $\widehat{\mathfrak{L}}$ (see \cite[Theorem]{MRRZ2Fatou}), as is consequently also the series $\widehat A_{\widehat f}^c$. The series $\widehat A_{\widehat f}^c$ for analytic germs is computed in Example~\ref{parabolic-analytic}. It is an universal object that does not depend on the chosen class for the change of variables. We can read both formal classes from appropriate terms in this series: 

$(i)$ We read the formal invariants $(k,\rho)$ with respect to formal Taylor series changes of variables in the exponent of the leading term and in the coefficient of the residual term $\varepsilon\boldsymbol\ell^{-1}$ of $\widehat A_{\widehat f}^c$, see Example~\ref{parabolic-analytic}. This is coherent with the previous results in \cite{resman}.

$(ii)$ Parabolic analytic germs are a subclass of Dulac germs with no logarithm in the leading term of $\mathrm{id}-f$, so Corollary~\ref{cor:nolog} is applicable. Therefore, the formal invariant $k$ is read from the exponent of the first term $c\varepsilon^{\frac{1}{k+1}},\ c\in\mathbb R$, and the formal invariant $\tilde{\rho}=0$ is the coefficient of the term $\varepsilon\boldsymbol\ell_2^{-1}$.  By Example~\ref{parabolic-analytic}, the term $\varepsilon \boldsymbol\ell_2^{-1}$does not exist in $\widehat A_{\widehat f}^c$, i.e., its coefficient is equal to $0$. Therefore, $\tilde{\rho}=0$.

\end{obs}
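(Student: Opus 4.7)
The observation is really two verifications, both to be read off from an explicit formula for $\widehat{A}_{\widehat{f}}^{c}(\varepsilon)$ attached to a parabolic analytic germ $f(x) = x - x^{k+1} + a_{k+2} x^{k+2} + \ldots$. My plan is to build this formula from the two ingredients that enter Definition~\ref{fa} (the inverse $\widehat{g}^{-1}$ and the formal Fatou coordinate $\widehat{\Psi}$), and then show that claims (i) and (ii) are just two different slices of the same series, matching the two normal forms $f_{0}$ and $\tilde f_{0}$.

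First, I would set $\widehat{g} = \mathrm{id} - \widehat{f} = x^{k+1} - a_{k+2} x^{k+2} - \ldots$ and invert it formally. Since the leading term is a pure monomial $x^{k+1}$ and all lower terms are analytic, the general result from Section~\ref{sec:inverse} gives $\widehat{g}^{-1}(y)$ as a Puiseux series in $y^{1/(k+1)}$ with no logarithms at all, starting with $(y)^{1/(k+1)}(1 + O(y^{1/(k+1)}))$. After the substitution $y = 2\varepsilon$, the leading term of $\widehat{A}_{\widehat{f}}^{c}$ is $(2\varepsilon)^{1/(k+1)}$; this gives $k$ in both claims (i) and (ii).

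Next, I would use the classical computation of the formal Fatou coordinate for a parabolic analytic germ, namely
\[
\widehat{\Psi}(x) = \frac{1}{k\, x^{k}} + c_\rho \log x + \widehat{h}(x), \qquad \widehat{h}\in x\,\mathbb{R}[\![x]\!],
\]
where $c_\rho$ is an explicit universal affine function of the Taylor invariant $\rho$. Substituting $x = \widehat{g}^{-1}(2\varepsilon)$ and multiplying by $2\varepsilon$ as in \eqref{Acf}, the middle term $2\varepsilon\cdot c_\rho \log\!\big(\widehat{g}^{-1}(2\varepsilon)\big)$ produces a unique monomial $C_{k}\,\rho\,\varepsilon\,\boldsymbol\ell^{-1}$, with $C_k$ a nonzero constant depending only on $k$. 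Reading off the coefficient of $\varepsilon\boldsymbol\ell^{-1}$ therefore recovers $\rho$; this is the claim (i) and is consistent with \cite{resman}.

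For claim (ii), I would apply Corollary~\ref{cor:nolog}, which is legitimate because a parabolic analytic germ is a Dulac germ with $m=0$. It tells me that $\tilde\rho$ is the coefficient of $\varepsilon\boldsymbol\ell_2^{-1}$, and that this is the \emph{only} monomial of $\widehat{A}_{\widehat{f}}^{c}$ containing $\boldsymbol\ell_2$. The explicit shape of $\widehat{\Psi}$ above, having no iterated logarithms, combined with the logarithm-free $\widehat{g}^{-1}$, forces every monomial of $\widehat{A}_{\widehat{f}}^{c}$ to lie in the $\boldsymbol\ell_2$-free subclass. Therefore the coefficient of $\varepsilon\boldsymbol\ell_2^{-1}$ is $0$, and Corollary~\ref{cor:nolog} delivers $\tilde\rho = 0$, matching the power-log normal form $\tilde f_{0}$ of \cite[Theorem~A, Example~6.3]{mrrz2}.

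The only genuinely delicate step is justifying that $\widehat{\Psi}$ of a parabolic analytic germ contains at most a single ordinary logarithm and no $\boldsymbol\ell_j$ for $j\geq 2$; this is where the proof really lives, but it is exactly the content of the computation carried out in Example~\ref{parabolic-analytic}, so the remark amounts to assembling that example together with Corollary~\ref{cor:nolog} and the $\rho$-extraction from \cite{resman}.
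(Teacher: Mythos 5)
Your proposal is correct and follows essentially the paper's own route: the Remark is established by assembling the explicit computation of $\widehat A_{\widehat f}^c$ in Example~\ref{parabolic-analytic} (built from the log-free Puiseux inverse $\widehat g^{-1}$ and the classical Fatou coordinate, whose single $\log$ term has coefficient $\frac{k+1}{2}-\rho$ and which contains no iterated logarithms) together with Corollary~\ref{cor:nolog}. Two harmless imprecisions: the leading coefficient of $\widehat A_{\widehat f}^c$ is $\frac{k+1}{k}2^{1/(k+1)}$ rather than $2^{1/(k+1)}$, since $2\varepsilon\widehat\Psi\big(\widehat g^{-1}(2\varepsilon)\big)$ contributes at the same order as $\widehat g^{-1}(2\varepsilon)$, and the coefficient of $\varepsilon\boldsymbol\ell^{-1}$ is affine in $\rho$ (namely $-(k+1-2\rho)$) rather than proportional to it; neither affects reading $k$ from the exponent or recovering $\rho$ from that coefficient.
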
                                     

\begin{example} The example shows that, in the \emph{general} case of a Dulac germ, \emph{general} meaning that:

(a) $g=\mathrm{id}-f$ contains a logarithm in the leading term, and

(b) $f$ is not already in the formal normal form,

\noindent only the coefficient of the term $\varepsilon\boldsymbol\ell_2^{-1}$ in $\widehat A_{\widehat f}^c(\varepsilon)$ is not sufficient to read the formal invariant $\rho$.  

Take a Dulac germ which is already in the formal normal form $f(x)=x-x^\alpha\boldsymbol\ell^m+\rho x^{2\alpha-1}\boldsymbol\ell^{2m+1}$. It can be computed that
\begin{equation}\label{eq:putti}
\widehat g^{-1}(y)=\alpha^{-\frac{m}{\alpha}}y^{\frac{1}{\alpha}}\boldsymbol\ell^{-\frac{m}{\alpha}}\big(1+\widehat F(\boldsymbol\ell_2,\frac{\boldsymbol\ell}{\boldsymbol\ell_2})\big)+\rho x\boldsymbol\ell+o(x\boldsymbol\ell),
\end{equation}
where $F$ is a Taylor expansion of a function of two variables.
By Proposition~\ref{prop:izvod}, putting \eqref{eq:putti} in \eqref{eq:aepsimp} we get that $\rho$ is exactly the coefficient of $\varepsilon\boldsymbol\ell_2^{-1}$ in $\widehat A_{\widehat f}^c(\varepsilon)$. 

However, if the germ is not in the formal normal form, the formal invariant may \emph{spread along} finitely many terms of the form $\varepsilon\boldsymbol\ell^r\boldsymbol\ell_2^s$, $(r,s)\preceq (0,-1)$, as described in Theorem~C. For example, if we add one additional term before the residual in the formal normal form, the new germ is:
$$
f_1(x)=x-x^\alpha\boldsymbol\ell^m+ax^{2\alpha-1}\boldsymbol\ell^{2m}+bx^{2\alpha-1}\boldsymbol\ell^{2m+1}.
$$
Here, we choose $a\in\mathbb R$ completely freely, and then choose $b$ such that $f_1$ belongs to the same formal class $(\alpha, \rho)$ as $f$. That is, we choose $b$ such that the coefficient in front of the term $x^{2\alpha-1}\boldsymbol\ell^{2m+1}$ after the change of variables that eliminates $ax^{2\alpha-1}\boldsymbol\ell^{2m}$ becomes equal to $\rho$. We compute:
$$
g_1^{-1}(y)=\alpha^{-\frac{m}{\alpha}}y^{\frac{1}{\alpha}}\boldsymbol\ell^{-\frac{m}{\alpha}}\big(1+F(\boldsymbol\ell_2,\frac{\boldsymbol\ell}{\boldsymbol\ell_2})\big)+\frac{a}{\alpha}x+d x\boldsymbol\ell+o(x\boldsymbol\ell),
$$
where $d\in\mathbb R$ depends on $a,\ b,\ m,\ \alpha$. By Proposition~\ref{lem:jen}, \begin{equation}\label{eq:ned}\rho=\frac{m}{\alpha}a+d.\end{equation} Then, by Proposition~\ref{prop:izvod} \eqref{eq:aepsimp}, the formal invariant is a combination of coefficients in front of $\varepsilon\boldsymbol\ell^{-1}$ and $\varepsilon\boldsymbol\ell_2^{-1}$ in $\widehat A_{\widehat f}^c(\varepsilon)$. The coefficient in front of $\varepsilon\boldsymbol\ell^{-1}$ is just $d$ and obviously by \eqref{eq:ned} not sufficient to read $\rho$. Recall that $a$ can be chosen arbitrarily. 

Furthermore, if we add more terms to $f$, $\rho$ will be a combination of finitely many coefficients of the monomials $\varepsilon\boldsymbol\ell^r\boldsymbol\ell_2^s$ up to $\varepsilon\boldsymbol\ell_2^{-1}$ in $\widehat A_{\widehat f}^c(\varepsilon)$.
\end{example}

\section{Sectional asymptotic expansions}
\label{sec:asymptotic_expansions}The Poincar\' e algorithm gives the unique asymptotic expansion $\widehat f$ of a Dulac germ $f$, called the Dulac expansion. The expansion is unique due to the fact that every power of $x$ is multiplied by \emph{finitely many} powers of logarithm. However, this is not the case neither for its Fatou coordinate nor for its continuous time length of $\varepsilon$-neighborhoods of orbits. Their expansions in $\widehat{\mathfrak L}$ are transfinite. Due to the fact that $x=e^{-\frac{1}{\boldsymbol\ell}}$, at every limit ordinal step the sum of the asymptotic series is not unique, leading to the non-uniqueness of the asymptotic expansion. This problem is illustrated in \cite[Section 3]{MRRZ2Fatou}. The problem of non-uniqueness of an asymptotic expansion in $\widehat{\mathfrak L}$ was solved in \cite{MRRZ2Fatou} by fixing a so-called \emph{section function} that attributes a unique germ to the asymptotic series on limit ordinal steps and thus defines the limit ordinal steps in the \emph{transfinite Poincar\' e algorithm}. For the Fatou coordinate of a Dulac germ we have introduced in \cite{MRRZ2Fatou} the appropriate \emph{integral section functions} based on solutions of the Abel equation.

Let us recall that the integral section function in \cite{MRRZ2Fatou} was defined only by its restriction on a subset of power series $\widehat{\mathcal L}_0^\infty$, which appeared in the algorithm for the formal Fatou coordinate of a Dulac series and which we have  named the \emph{integrally summable series} in $\widehat{\mathcal L}_0^\infty$. We repeat the definition of integrally summable series in $\widehat{\mathcal L}_0^\infty$ in Definition~\ref{defi} below. Then in Definition~\ref{diifi} we expand this definition to a subset of power-logarithm transseries $\widehat{\mathcal L}_1^\infty$ which we will call \emph{integrally summable in $\widehat{\mathcal L}_1^\infty$}. Such series appear in the formal continuous time lengths of $\varepsilon$-neighborhoods of orbits $\widehat A_{\widehat f}^c(\varepsilon)$ of Dulac maps. We then define the extension of an integral section function from the set of integrally summable series in $\widehat{\mathcal L}_0^\infty$ to a set of integrally summable series in $\widehat{\mathcal L}_1^\infty$.

After fixing the section function, the asymptotic expansions in $\widehat{\mathfrak L}$ with respect to the fixed section are unique. We consider the sectional asymptotic expansions of $\Psi$ and $A^c_f(x_0,\varepsilon)$ with respect to \emph{integral sections} defined in Definition~\ref{def:is} below. 

\begin{defi}[Integrally summable series in $\widehat {\mathcal L}_0^\infty$, Definition 3.9 in \cite{MRRZ2Fatou}]\label{defi}\ 

$(1)$ By $\widehat{\mathcal L}_0^I\subset \widehat{\mathcal L}_0^\infty$ we denote the set of all formal series $$\widehat f(y)=\sum_{n=N}^{\infty}a_n y^n\in\widehat{\mathcal L}_0^\infty,\ N\in\mathbb Z,\ a_n\in\mathbb R,$$ 
which are either: 
\begin{enumerate}
\item[(i)] convergent on an open interval $(0,\delta)$, $\delta>0$, or 

\item[(ii)] divergent and such that
 there exists $\alpha\in\mathbb R$, $\alpha\neq 0$, for which 
\begin{equation}\label{deff}
\frac{\mathrm d}{\mathrm dx}\Big(x^ {\alpha} \widehat f(\boldsymbol\ell)\Big)=x^{\alpha-1} R(\boldsymbol\ell),
\end{equation}
 formally in $\widehat{\mathcal L}^\infty$,
where $R$ is a convergent Laurent series.

\end{enumerate}

\noindent We call $\widehat {\mathcal L}_0^I$ the set of \emph{integrally summable series of $\widehat{\mathcal L}_0^\infty$}.
\medskip

$(2)$ \begin{enumerate}
\item[(i)] If $\widehat{f}$ is convergent, we define its \emph{integral sum} as the usual sum $($on an interval $(0,\delta))$.

\item[(ii)] If $\widehat f\in\widehat {\mathcal L}_0^I$ is divergent, we define its \emph{integral sum} $f\in\mathcal G_{AN}$ by:
\begin{equation}\label{eq:joj}
f(y):=
\frac{\int_d^{e^{-1/y}} s^{\alpha-1}R\big(\boldsymbol\ell(s)\big)\,\mathrm ds}{e^{-\frac{\alpha}{y}}},
\end{equation}
where $\boldsymbol\ell(s)=-\frac{1}{\log s}$. We put $d=0$ if $s^{\alpha-1}R\big(\boldsymbol\ell(s)\big)$ is integrable at $0$ $($i.e. $\alpha>0)$ and $d>0$ otherwise $($i.e. $\alpha<0)$.
\end{enumerate}
\end{defi} 

It was proved in \cite{MRRZ2Fatou} that the exponent $\alpha\neq 0$ in Definition~\ref{defi} $(1)\,(ii)$ is unique. Such $\alpha$ is called the \emph{exponent of integration of $\widehat f$}. Note that putting $\alpha=0$ in \eqref{deff} would imply $\widehat f$ convergent.

\begin{obs}(\cite{MRRZ2Fatou})
\begin{enumerate}
\item The integral sum $f$ of $\widehat f\in\widehat{\mathcal L}_0^I$, as defined by \eqref{eq:joj}, is 
unique for $\alpha>0$. It is 
\emph{unique only up to exponentially small term $Ce^{\frac{\alpha}{y}}$, $C\in\mathbb R$}, for $\alpha<0$, due to the possible choice of $d>0$. If $\widehat f$ is convergent, the integral sum is the standard sum, thus unique.

\item $\widehat f\in\widehat{\mathcal L}_0^I$ is the power asymptotic expansion of its integral sums $f$.
\end{enumerate}
\end{obs}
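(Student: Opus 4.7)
The plan is to exploit the fact that the integral sum $f$ defined in \eqref{eq:joj} satisfies, by construction, the ODE
\begin{equation*}
\frac{d}{dx}\bigl(x^{\alpha}f(\boldsymbol\ell(x))\bigr)=x^{\alpha-1}R(\boldsymbol\ell(x)),
\end{equation*}
whose homogeneous solutions are exactly $Cx^{-\alpha}=Ce^{\alpha/y}$. Both parts of the statement reduce to analyzing how adding such a homogeneous term affects $f$: part~(1) quantifies the non-uniqueness of $f$ as precisely the addition of $Ce^{\alpha/y}$, and part~(2) controls the error between $f$ and $\widehat f$.

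For part~(1) with $\alpha>0$, I would first verify that the integrand is absolutely integrable at $0$. Writing the convergent Laurent series as $R(u)=\sum_{k\geq-M}r_{k}u^{k}$, for small $s>0$ one has $|R(\boldsymbol\ell(s))|\leq C(1+\log(1/s))^{M}$, so $|s^{\alpha-1}R(\boldsymbol\ell(s))|$ is integrable near $0$ since $\alpha-1>-1$. Hence $d=0$ is forced by Definition~\ref{defi} and the integral sum is unique. When $\alpha<0$, $d>0$ is required, and any two choices $d_{1},d_{2}>0$ give
\begin{equation*}
f_{2}(y)-f_{1}(y)=e^{\alpha/y}\int_{d_{2}}^{d_{1}}s^{\alpha-1}R(\boldsymbol\ell(s))\,ds=Ce^{\alpha/y},\ C\in\mathbb{R},
\end{equation*}
exponentially small as $y\to 0^{+}$. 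The convergent case is immediate since the classical sum is unique.

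For part~(2) I would show $f(y)-\sum_{n=N}^{K}a_{n}y^{n}=O(y^{K+1})$ for every $K\geq N$. Using $d\boldsymbol\ell/dx=\boldsymbol\ell^{2}/x$ and comparing coefficients of $x^{\alpha-1}\boldsymbol\ell^{n}$ in \eqref{deff} produces the recursion $\alpha a_{n}+(n-1)a_{n-1}=r_{n}$. Letting $R_{K}$ denote the Laurent truncation of $R$ up to order $K$ and $P_{K}(y)=\sum_{n=N}^{K}a_{n}y^{n}$, the same termwise computation shows that $P_{K}(\boldsymbol\ell(x))$ satisfies \eqref{deff} with $R$ replaced by $R_{K}$. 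Subtracting from the ODE satisfied by $f$ and integrating from $d$ to $x$ yields
\begin{equation*}
x^{\alpha}\bigl(f-P_{K}\bigr)(y)=\int_{d}^{x}s^{\alpha-1}\bigl(R-R_{K}\bigr)(\boldsymbol\ell(s))\,ds+c_{K},
\end{equation*}
with $c_{K}\in\mathbb{R}$ coming from the lower limit: $c_{K}=0$ when $\alpha>0$, and $c_{K}x^{-\alpha}=c_{K}e^{\alpha/y}$ is exponentially small when $\alpha<0$. The integral is to be estimated from $|R(u)-R_{K}(u)|\leq Cu^{K+1}$ near $u=0$, giving $O(x^{\alpha}\boldsymbol\ell(x)^{K+1})$ and hence $f(y)-P_{K}(y)=O(y^{K+1})$.

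The main obstacle will be the Laplace-type bound on the error integral when $\alpha<0$, where the integrand is not absolutely integrable at $0$ and one must show that the $\boldsymbol\ell^{K+1}$ gain in the Laurent remainder survives integration down to $d$. The substitution $s=e^{-1/u}$, $ds=(s/u^{2})\,du$, recasts the integrand in the variable $u=\boldsymbol\ell(s)$, where successive integrations by parts mirror the formal recursion $\alpha a_{n}+(n-1)a_{n-1}=r_{n}$ and convert the $O(u^{K+1})$ Laurent remainder into the required $O(y^{K+1})$ decay after dividing by $x^{\alpha}$.
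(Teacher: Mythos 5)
The paper does not prove this remark; it is quoted from \cite{MRRZ2Fatou}. Your argument is a correct self-contained proof and follows the natural route dictated by Definition~\ref{defi}: the non-uniqueness is exactly the kernel $Cx^{-\alpha}=Ce^{\alpha/y}$ of the operator $u\mapsto \frac{d}{dx}(x^{\alpha}u)$, and the asymptotic expansion is obtained by integrating the truncated equation. Part~(1) is complete as written: the integrability check for $\alpha>0$ (using $|R(\boldsymbol\ell(s))|\le C(1+\log(1/s))^{M}$ and $\alpha-1>-1$) and the computation $f_{2}-f_{1}=Ce^{\alpha/y}$ for two admissible lower limits are both right. In part~(2) two small points should be tightened, neither of which affects the conclusion. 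First, differentiating $x^{\alpha}P_{K}(\boldsymbol\ell)$ termwise via $\frac{d}{dx}(x^{\alpha}\boldsymbol\ell^{n})=x^{\alpha-1}(\alpha\boldsymbol\ell^{n}+n\boldsymbol\ell^{n+1})$ produces $x^{\alpha-1}\bigl(R_{K}(\boldsymbol\ell)+Ka_{K}\boldsymbol\ell^{K+1}\bigr)$, not $x^{\alpha-1}R_{K}(\boldsymbol\ell)$ exactly; the leftover term is itself $O(\boldsymbol\ell^{K+1})$, so the error integrand is still $O(s^{\alpha-1}\boldsymbol\ell(s)^{K+1})$. Second, for $\alpha<0$ the bound $|R(u)-R_{K}(u)|\le Cu^{K+1}$ holds only for $u$ near $0$, i.e.\ for $s$ below some $d'$; you must split $\int_{d}^{x}=\int_{d}^{d'}+\int_{d'}^{x}$, the first piece contributing only a further constant absorbed into the exponentially small term $c_{K}x^{-\alpha}$. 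With that, the Laplace-type estimate $\int_{d'}^{x}s^{\alpha-1}\boldsymbol\ell(s)^{K+1}\,ds\sim\frac{1}{\alpha}x^{\alpha}\boldsymbol\ell(x)^{K+1}$ follows most quickly from L'H\^opital (the derivative of $x^{\alpha}\boldsymbol\ell^{K+1}$ is $\alpha x^{\alpha-1}\boldsymbol\ell^{K+1}(1+O(\boldsymbol\ell))$), which is simpler than the integration-by-parts scheme you sketch and works uniformly for $\alpha>0$ and $\alpha<0$.
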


In the Theorem in \cite{MRRZ2Fatou}, we have shown that there exists a unique (up to an additive constant) Fatou coordinate $\Psi$ for a Dulac germ $f$ with an asymptotic expansion in the class $\widehat{\mathfrak L}$ and the unique (up to an additive constant) formal Fatou coordinate $\widehat\Psi\in\widehat{\mathfrak L}$ for its Dulac expansion $\widehat f$. We have also proved that the Fatou coordinate $\Psi$ of a Dulac germ $f$ admits the formal Fatou coordinate $\widehat\Psi\in\widehat{\mathcal L}_2^\infty$ as its (unique) sectional asymptotic expansion with respect to any integral section, up to an additive constant term. The definition of the Fatou coordinate is given in Definition~\ref{def:ffatou}.

\begin{defi}[Integrally summable series in $\widehat{\mathcal L}_1^\infty$]\label{diifi}\

1. By $\widehat{\mathcal L}_1^I\subset \widehat{\mathcal L}_1^\infty$, we denote the set of all transseries $\widehat F\in \widehat{\mathcal L}_1^\infty$ which can be regrouped as:
\begin{equation}\label{dvaa}
\widehat F(y)=\widehat G_1(y)\widehat f\big(\boldsymbol\ell (e^{-\frac{\gamma}{y}} \widehat h(y))\big)+\widehat G_0(y),
\end{equation}
where $\widehat f\in \widehat{\mathcal L}_0^I$, and $\widehat G_0,\ \widehat G_1,\ \widehat h$ and their first derivatives $\widehat G_0',\ \widehat G_1',\ \widehat h'$ are convergent transseries in $\widehat{\mathcal L}_1^\infty$ in the sense of \cite[Definition 3.7]{MRRZ2Fatou}, with the sums commuting with the derivative. Moreover, $\widehat h$ does not contain $\boldsymbol\ell(y)$ in the first term and $\gamma> 0$.

\noindent We call the class $\widehat{\mathcal L}_1^I\subset\widehat{\mathcal L}_1^\infty$ the class of \emph{integrally summable series in $\widehat{\mathcal L}_1^\infty$}. \smallskip

2. Let $f$ be \emph{the integral sum} of $\widehat f$ from \cite[Definition 3.9]{MRRZ2Fatou} $($defined up to a certain exponential factor$)$ and let $h,\ G_0,\ G_1$ be the sums of $\widehat h,\ \widehat G_0,\ \widehat G_1$ respectively. We call $F\in\mathcal G_{AN}$ given by: \begin{equation}\label{intsum}F(y)=G_1(y)f\big(\boldsymbol\ell (e^{-\frac{\gamma}{y}}h(y))\big)+G_0(y)\end{equation} \emph{the integral sum} of $\widehat F$. 
\end{defi}

\noindent Note that $\widehat{\mathcal L}_0^I\subset \widehat{\mathcal L}_1^I$ (put $\widehat G_1\equiv 1,\ \gamma=1, \ \widehat h\equiv 1,\ \widehat G_0\equiv 0$). Note also that the set of all convergent transseries from $\widehat{\mathcal L}_1^\infty$ is a subset of $\widehat{\mathcal L}_1^I$ (put $\widehat G_1\equiv 0$).

\medskip
Note that in general the decomposition~\eqref{dvaa} of $\widehat F\in\widehat{\mathcal L}_1^I$ is not unique, so its integral sum $F$ is not well-defined. Let $\alpha\in\mathbb R$ be the exponent of integration of $\widehat f$ in decomposition~\ref{dvaa}, where $\alpha=0$ if and only if $\widehat f$ is convergent. However, we prove in Proposition~\ref{difi} in the Appendix that the integral sum $F$ of $\widehat F$ is \emph{unique}, if $\alpha>0$, or \emph{unique up to an additive term} $c G_1(y)\cdot \big(e^{-\frac{\gamma}{y}} h(y)\big)^{-\alpha},\ c\in\mathbb R$, if $\alpha<0$. If $\alpha=0$, that is, if $\widehat f$ is convergent, $\widehat F$ is a convergent series in $\widehat{\mathcal L}_1^\infty$ with the unique sum. Here,  $\alpha,\ \gamma,\ \widehat h,\ \widehat G_1$ are elements of an arbitrary decomposition \eqref{dvaa} of $\widehat F$.
\medskip

Before stating the definition of integral sections, recall the definition of \emph{coherent} sections as section functions that attribute to any convergent transseries from $\widehat{\mathfrak L}$ its sum, see \cite[Definition 3.8]{MRRZ2Fatou}. As in \cite{MRRZ2Fatou}, for a given section function $\mathbf s$, let $\mathcal T^{\mathbf s}\subset\mathcal G_{AN}$ denote the set of all transseries which admit the sectional asymptotic expansion with respect to $\mathbf s$, and by $\widehat{\mathcal T}^s\subset\widehat{\mathfrak L}$ their expansions. See \cite[Section 3]{MRRZ2Fatou} for more details on section functions and sectional asymptotic expansions.

\begin{defi}[The integral sections]\label{def:is} Every coherent section $\mathbf s:\widehat{\mathcal T}^{\mathbf s}\to \mathcal T^{\mathbf s}$ whose restriction to $\widehat {\mathcal L}_1^I\subset\widehat{\mathcal L}_1^\infty$ is given by $\mathbf{s}\big|_{\widehat{\mathcal L}_1^I}(\widehat F)=F$, where $F$ is one integral sum of $\widehat F$ given in \eqref{intsum}, is called an \emph{integral section}. 
\end{defi}

Recall that the integral sections defined here are in fact restrictions of the integral sections defined in \cite[Definition 3.14]{MRRZ2Fatou}, since $\widehat{\mathcal L}_0^I\subset \widehat{\mathcal L}_1^I$, but we keep the same name for simplicity.
\medskip
 

Note that the request $\mathbf{s}\big|_{\widehat{\mathcal L}_1^I}(\widehat F)=F$ in Definition~\ref{def:is} does not contradict the definition of section functions \cite[Definition~3.2]{MRRZ2Fatou} and of coherence \cite[Definition~3.8]{MRRZ2Fatou}. By Proposition~\ref{integralsects} in the Appendix $\widehat {\mathcal L}_1^I\subset \widehat{\mathcal S}_0\cup\widehat{\mathcal S}_1^{\mathbf s}\subset \widehat {\mathcal T}^\mathbf s$ for $\mathbf s$ coherent, where $\widehat{\mathcal S}_0\subset \widehat{\mathcal L}_0^\infty$ denotes the set of all power asymptotic expansions of germs in $\mathcal G_{AN}$ and $\widehat{\mathcal S}_1^{\mathbf s}\subset \widehat{\mathcal L}_1^\infty\setminus \widehat{\mathcal L}_0^\infty$ denotes the sets of all sectional asymptotic expansions with respect to $\mathbf s$ of germs from $\mathcal G_{AN}$ in $\widehat{\mathcal L}_1^\infty\setminus \widehat{\mathcal L}_0^\infty$. This boils down to proving that integral sums $F$ defined in Definition~\ref{diifi} admit $\widehat F\in\widehat{\mathcal L}_1^I$ as their unique asympotic expansion with respect to coherent sections $\mathbf s$ by the transfinite Poincar\' e algorithm . Moreover, by definition of integral sums, if $\widehat F\in\widehat{\mathcal L}_1^I$ is convergent, then $F$ is its sum.
\smallskip 

In Proposition~\ref{difi} in the Appendix, we prove the following result about the non-uniqueness of integral sums:
\begin{obs}\label{rem:helpdif}Let $\mathbf s_1$ and $\mathbf s_2$ be two different integral sectons from Definition~\ref{def:is}. Let $\widehat F\in\widehat{\mathcal L}_1^I$. Then $F_1:=\mathbf{s}_1(\widehat F)$ and $F_2:=\mathbf{s}_2(\widehat F)$ differ by: $$F_1(y)-F_2(y)=\begin{cases}c\cdot G_1(y)\cdot\big(e^{-\frac{\gamma}{y}} h(y)\big)^{-\alpha},\ c\in\mathbb R,&\alpha<0,\\0, &\alpha\geq 0.\end{cases}$$ Here, $\alpha\in\mathbb R$ (the exponent of integration of $\widehat f$) ,\ $\gamma>0,\ \widehat h$ and $\widehat G_1$ are elements of an arbitrary decomposition \eqref{dvaa} of $\widehat F$. For $\alpha<0$, the difference is non-zero, but it is exponentially small.
\end{obs}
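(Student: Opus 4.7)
The plan is to reduce the comparison of $F_1=\mathbf{s}_1(\widehat F)$ and $F_2=\mathbf{s}_2(\widehat F)$ to the (already understood) non-uniqueness of the integral summation at the ground level $\widehat{\mathcal L}_0^I$. Fix any decomposition $\widehat F(y)=\widehat G_1(y)\widehat f\bigl(\boldsymbol\ell(e^{-\gamma/y}\widehat h(y))\bigr)+\widehat G_0(y)$ as in Definition~\ref{diifi}. Since $\widehat G_0,\widehat G_1,\widehat h$ are convergent in $\widehat{\mathcal L}_1^\infty$ and every coherent section sends a convergent transseries to its usual sum, the pieces $G_0,G_1,h$ are identical for $\mathbf s_1$ and $\mathbf s_2$. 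Thus all the ambiguity is isolated in the choice of an integral sum $f$ of $\widehat f\in\widehat{\mathcal L}_0^I$.

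Next I would invoke the known non-uniqueness structure of integral sums on $\widehat{\mathcal L}_0^I$ recalled after Definition~\ref{defi}: when $\widehat f$ is convergent (equivalently $\alpha=0$) or when $\alpha>0$, formula \eqref{eq:joj} produces a unique integral sum, so $f_1=f_2$ and hence $F_1=F_2$. When $\alpha<0$, two admissible integral sums $f_1,f_2$ differ only by the choice of the lower limit $d$ in \eqref{eq:joj}; a direct computation gives
\begin{equation*}
f_1(z)-f_2(z)=\frac{-\int_{d_1}^{d_2}s^{\alpha-1}R(\boldsymbol\ell(s))\,\mathrm ds}{e^{-\alpha/z}}=c\cdot e^{\alpha/z},\ \ c\in\mathbb R.
\end{equation*}

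To conclude, substitute the argument $z=\boldsymbol\ell(e^{-\gamma/y}h(y))$. From $e^{-1/z}=e^{-\gamma/y}h(y)$ one has $e^{\alpha/z}=\bigl(e^{-\gamma/y}h(y)\bigr)^{-\alpha}$, whence, using the common factor $G_1(y)$,
\begin{equation*}
F_1(y)-F_2(y)=G_1(y)\bigl(f_1-f_2\bigr)\bigl(\boldsymbol\ell(e^{-\gamma/y}h(y))\bigr)=c\,G_1(y)\bigl(e^{-\gamma/y}h(y)\bigr)^{-\alpha}.
\end{equation*}
Since $-\alpha>0$ and $e^{-\gamma/y}\to 0$ as $y\to 0^{+}$, this difference is exponentially small in $y$, as asserted; for $\alpha\geq 0$ the difference vanishes.

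The only delicate point is that the decomposition \eqref{dvaa} of $\widehat F$ is not unique, so one must check that the expression $c\,G_1(y)(e^{-\gamma/y}h(y))^{-\alpha}$ is well-defined up to the constant $c$ regardless of which decomposition is chosen. This is exactly what Proposition~\ref{difi} in the Appendix is set up to prove, and I would invoke it rather than reprove it inside this observation. With that proposition in hand, the statement is a direct consequence of the restriction requirement $\mathbf s\big|_{\widehat{\mathcal L}_1^I}(\widehat F)=F$ in Definition~\ref{def:is} together with the single-level ambiguity of integral summation just described.
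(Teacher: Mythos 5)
Your argument is correct and follows essentially the same route as the paper: the paper proves this remark via Proposition~\ref{difi} in the Appendix, whose first part is precisely your reduction to the choice of the lower integration limit $d$ in \eqref{eq:joj} (yielding the $c\,e^{\alpha/z}$ ambiguity at level $\widehat{\mathcal L}_0^I$, which after composition with $z=\boldsymbol\ell(e^{-\gamma/y}h(y))$ and multiplication by the uniquely determined $G_1$ becomes $c\,G_1(y)\bigl(e^{-\gamma/y}h(y)\bigr)^{-\alpha}$), and whose second, harder part is exactly the decomposition-independence that you correctly isolate as the delicate point and delegate to that proposition. Since the paper itself defers the entire proof of this remark to Proposition~\ref{difi}, your division of labour matches the paper's.
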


Let $\mathbf s$ be an integral section, as in Definition~\ref{def:is}. Obviously, if a germ $f\in\mathcal G$ admits a sectional asymptotic expansion in $\widehat{\mathcal L}_2^\infty$ with respect to the integral section $\mathbf s$, the expansion is \emph{unique}.
\bigskip

We show in Section~\ref{sec:inverse} that the sectional asymptotic expansion of the inverse $g^{-1}\in\mathcal G_{AN}$, $g=\mathrm{id}-f$, with respect to any fixed integral section $\mathbf s$ is equal to the formal inverse $\widehat g^{-1}\in\widehat{\mathcal L}_2$. Similarly, we have shown in \cite{MRRZ2Fatou}  that the sectional asymptotic expansion of the Fatou coordinate for a Dulac germ, $\Psi\in \mathcal G_{AN},$ with respect to any fixed integral section $\mathbf s$ is, \emph{up to a constant}, equal to the formal Fatou coordinate $\widehat\Psi\in \widehat{\mathcal L}_2^\infty$. In Section~\ref{sec:proof_theorem_B} we prove Theorem~B. We show that  the unique sectional asymptotic expansion  with respect to any fixed integral section $\mathbf s$ of the function of the continuous time length of $\varepsilon$-neighborhoods of orbits for $f$ Dulac, $\varepsilon\mapsto A^c_f(x_0,\varepsilon)\in \mathcal G_{AN}$, is, \emph{up to a term $C\varepsilon$, $C\in\mathbb R,$} equal to the formal continuous time length of $\varepsilon$-neighborhoods of orbits $\widehat A^c_{\widehat f}(\varepsilon)\in\widehat{\mathcal L}_2$ of its Dulac expansion $\widehat f$.

Moreover, we show that the sectional asymptotic expansions of $\Psi$ resp. $A^c_f(x_0,\varepsilon)$ with respect to different integral sections lead to different choices of constants  $C\in\mathbb R$ resp. $C\varepsilon$, $C\in\mathbb R$.
This does not cause problems, since the formal Fatou coordinate $\widehat\Psi$ is defined only up to an additive constant term. As a consequence, $\widehat A_c^{\widehat f}(\varepsilon)$ is defined only up to $C\varepsilon$, $C\in\mathbb R$, due to the term $\varepsilon\cdot \widehat\Psi\big(\widehat g^{-1}(2\varepsilon)\big)$ in its definition. Therefore, any choice of the integral section for the sectional asymptotic expansions is \emph{equally good}.

\section{Examples}

\label{sec:examples}

\medskip

Consider a formal Dulac series $\widehat{f}\in\widehat{\mathcal L}$. It is proven in
Proposition~\ref{th1} in Section \ref{sec:inverse} that, if the leading term of $\widehat{g}=\mathrm{id}-\widehat{f}$
involves a logarithm, then its formal inverse contains a double logarithm.
This will imply that it is also the case for the formal continuous
time length $\widehat{A}_{\widehat{f}}^{c}\left(\varepsilon\right)$ defined in Definition~\ref{fa}.
The following example shows another possible cause of the presence
of a double logarithm in $\widehat{A}_{\widehat{f}}^{c}\left(\varepsilon\right)$
-- the existence of a \emph{nonzero residual invariant} of $\widehat{f}$.
\begin{example}[The \emph{residual} double logarithm in the formal continuous time length $\widehat{A}_{\widehat{f}}^{c}\left(\varepsilon\right)$ of a Dulac series]\label{dubl}

According to the results of \cite{mrrz2}, every parabolic element
$\widehat f\in\widehat{\mathcal{L}}$ admits a normal form which is the exponential
of a simple vector field. It is convenient to work with an example
which is already given in this normal form. Consider the vector field
$X=\xi\left(x\right)\frac{\mathrm{d}}{\mathrm{d}x}$, where $\xi\left(x\right)=-\frac{x^{2}}{1-x+bx\boldsymbol{\ell}}$
with $b\in\mathbb{R}\setminus\left\{ 0\right\} $. Its formal time-one
map $\widehat{f}\in\widehat{\mathcal{L}}$ is given by the formula:
\begin{align*}
\widehat{f}\left(x\right) & =\mathrm{Exp}\left(X\right)\cdot\mathrm{id}=x+\xi\left(x\right)+\frac{1}{2}\xi'\left(x\right)\xi\left(x\right)+\text{h.o.t.}\\
 & =x-x^{2}+bx^{3}\boldsymbol{\ell}+\text{h.o.t.}
\end{align*}
The coefficient $b\in\mathbb{R}$ is the \emph{residual invariant}
of $\widehat{f}$.\emph{ }In order to compute the formal continuous
time length $\widehat A_{\widehat{f}}^{c}\left(\varepsilon\right)$ given by
Definition \ref{fa}, we need to compute the formal Fatou coordinate
$\widehat{\Psi}$ of $\widehat{f}$, and
the formal inverse of $\widehat{g}=\mathrm{id}-\widehat{f}$.
By the proof of \cite[Proposition~4.3]{MRRZ2Fatou}, we have:
\begin{align*}
&\widehat{\Psi}'\left(x\right) =\frac{1}{\widehat\xi\left(x\right)}=-\frac{1}{x^{2}}+\frac{1}{x}+\frac{b}{x\log x},\text{ hence}\\
&\widehat{\Psi}\left(x\right) =\frac{1}{x}+\log x+b\,\log(-\log x)+C,\ C\in\mathbb R.
\end{align*}
On the other hand:
\begin{align*}
\widehat{g}\left(x\right) & =x-\widehat{f}\left(x\right)=x^{2}-bx^{3}\boldsymbol{\ell}+\text{h.o.t.}\\
 & =x^{2}\circ\widehat{\varphi}\left(x\right),
\end{align*}
for some parabolic element $\widehat{\varphi}\left(x\right)\in\widehat{\mathcal{L}}$ of the form $\widehat\varphi(x)=x-\frac{b}{2}x^2\boldsymbol\ell+h.o.t$.
Hence:
\[
\widehat{g}^{-1}\left(x\right)=\widehat{\varphi}^{-1}\left(x^{1/2}\right)=x^{1/2}+bx\boldsymbol\ell+\text{h.o.t.}
\]
belongs to $\widehat {\mathcal{L}}$. In particular, by Proposition~\ref{th1} (2), it does not contain any double
logarithm. Let us now compute the following component of $\widehat{A}_{\widehat{f}}^{c}\left(\varepsilon\right)$:
\begin{align*}
\widehat{\Psi}\left(\widehat{g}^{-1}\left(2\varepsilon\right)\right) & =\frac{1}{\sqrt 2 \varepsilon^{1/2}(1+o(1))}+\log\left(\sqrt 2 \varepsilon^{1/2}(1+o(1))\right)+\\
&\hspace{4cm} +b\log\left(-\log\big(\sqrt 2\varepsilon^{1/2}(1+o(1))\big)\right)=\\
 & =\varepsilon^{-1/2}+\frac{1}{2}\log\varepsilon+b\log\left(-\log\varepsilon\right)+\text{h.o.t.}
\end{align*}
Here, $o(1)\in\widehat{\mathcal L}$ is of order strictly bigger than $\varepsilon^0$ in $\varepsilon$, and \emph{h.o.t.} belongs to $\widehat {\mathcal{L}}$ (no double logarithms).
Hence, we observe the presence of a \emph{unique} iterated logarithm in
$\widehat{A}_{\widehat{f}}^{c}\left(\varepsilon\right)$, caused by
the presence of a \emph{residual term} $bx^{3}\boldsymbol{\ell}$ (that is, $b\neq 0$) in the formal normal form of $\widehat f$.
\end{example}

\begin{example}[$\widehat{A}_{\widehat{f}}^{c}\left(\varepsilon\right)$ for Dulac series is in general transfinite]\label{parabolic-dulac-transfinite}

By \cite{mrrz2}, a parabolic transseries $\widehat{f}\in\mathcal{\widehat{L}}$
can be reduced to its normal form $\widehat{f}_{0}$ given as a formal exponential
by an action of a parabolic change of variables $\widehat{\varphi}\in\widehat{\mathcal{L}}$,
whose support in general has the order type strictly bigger than $\omega$. If
$\widehat{\Psi}_{0}$ is the formal Fatou coordinate of $\widehat{f}_{0}$ in $\widehat{\mathfrak L}$,
then $\widehat{\Psi}=\widehat{\Psi}_{0}\circ\widehat{\varphi}$ is
the formal Fatou coordinate of $\widehat{f}$. We deduce that the formal Fatou coordinate of $\widehat f$ and, consequently, its 
continuous time length $\widehat{A}_{\widehat{f}}^{c}\left(\varepsilon\right)$
are transseries whose support has the order type strictly bigger
than $\omega$.

Consider for example the Dulac series $\widehat f(x)=x-x^{2}\boldsymbol{\ell}^{-1}+x^{2}.$
We have shown in Example~6.2 in \cite{mrrz2} that the change of variables
reducing $\widehat f$ to its formal normal form $f_{0}(x)=\exp\Big(\frac{-x^2\boldsymbol\ell^{-1}}{1-x\boldsymbol\ell^{-1}+\frac{x}{2}}\Big).\mathrm{id}=x-x^{2}\boldsymbol{\ell}^{-1}+\rho x^{3}\boldsymbol{\ell}+\ldots$
is a transseries $\widehat{\varphi}\in\widehat{\mathcal{L}}$ whose
support is indexed by an ordinal \emph{strictly} bigger than $\omega$. Hence, it applies also to $\widehat\Psi$, as described here, and then to the formal continuous area $\widehat A^c_{\widehat f}(\varepsilon)$, by its definition.
\end{example}

\begin{example}[The \emph{regular} case, parabolic germs analytic at the origin]\label{parabolic-analytic}

Let $f(x)=x-x^{k+1}+a_{2}x^{k+2}+\ldots$, $k\in\mathbb{N}$, $a_{i}\in\mathbb{R},\ i\geq 2,$
be a germ of a parabolic diffeomorphism on $\mathbb{R}_{+}$ (prenormalized
for simplicity). Recall that its formal normal form is given by
$$
f_0(x)=x-x^{k+1}+\rho x^{2k+1}.
$$
Here, $(k,\rho)$ are the formal invariants.

Let $\mathcal{O}^{f}(x_{0})$ be the orbit of $f$ with initial
point $x_{0}>0$ close to the origin. Put $g:=\mathrm{id}-f$. 

The germ $f$ can be considered as the restriction to the positive
real axis of a complex parabolic germ $f(z)$ with real coefficients. It is known
that there exist two sectorial Fatou coordinates $\Psi_{\pm}(z)$,
with the common asymptotic expansion: 
\begin{equation}
\widehat{\Psi}(z)=\Big(\frac{1}{kz^{k}}+\big(\frac{k+1}{2}-\rho\big)\log z\Big)\circ\widehat{\varphi},\ \ \rho\in\mathbb{R},\ \widehat{\varphi}\in z+z^{2}\mathbb{R}[[z]].\label{eq:psi-1}
\end{equation}
The series $\widehat{\Psi}$ has real coefficients. Note that on the real line
there exists an analytic Fatou coordinate $\Psi:\mathbb{R}_{>0}\to\mathbb{R}_{>0}$,
with the asymptotic expansion $\widehat{\Psi}(x)$, as $x\to 0$, which is the restriction to $\mathbb{R}_{>0}$
of the complex sectorial Fatou coordinate $\Psi_+$, analytic on the attracting petal centered at $\mathbb R_+$.

The asymptotic expansion of $g^{-1}(2\varepsilon)$,
as $\varepsilon\to0$, is easily computed (write $g(x)=x^{k+1}\circ\varphi_{1}(x),\ \varphi_{1}\in x+x^{2}\mathbb{R}\{x\}$):
\begin{equation}
\widehat g^{-1}(2\varepsilon)=\widehat \varphi_1^{-1}\big((2\varepsilon)^{\frac{1}{k+1}}\big),\ \widehat\varphi_{1}^{-1}\in x+x^{2}\mathbb{R}[[x]].\label{eq:g}
\end{equation}

Using \eqref{eq:psi-1} and \eqref{eq:g} and Definition~\ref{fa} of $\widehat A_{\widehat f}^{c}(\varepsilon)$, we compute the formal length $\widehat A_{\widehat f}^{c}(\varepsilon)$ for the formal Taylor expansion $\widehat f\in\mathbb R[[x]]$ of $f$:
\begin{align}\label{eq:asyy}
\widehat A_{\widehat f}^{c}(\varepsilon)& =\frac{k+1}{k}2^{\frac{1}{k+1}}\varepsilon^{\frac{1}{k+1}}+b_{2}\varepsilon^{\frac{2}{k+1}}+\cdots+b_{k}\varepsilon^{\frac{k}{k+1}}+\big(k+1-2\rho\big)\varepsilon\log\varepsilon+\\
 &+b_{k+2}\varepsilon+b_{k+3}\varepsilon^{1+\frac{1}{k+1}}+\cdots+ b_{2k+1}\varepsilon^{1+\frac{k-1}{k+1}}+b_{2k+2}\varepsilon^{1+\frac{k}{k+1}}+\ldots.\nonumber
\end{align}
The formal length is unique up to a term $K\varepsilon$, $K\in\mathbb R$. Here, $b_{i}\in\mathbb{R}$ are real numbers depending only on the coefficients of $\widehat f$ and not on the initial condition. It belongs to $\widehat{\mathcal L}_1$, but the term $\varepsilon\log\varepsilon$ is \emph{the only} logarithmic term, all other terms are just powers. Note that in this analytic case the formal series \eqref{eq:asyy} is not really transfinite, but just a formal series indexed by $\omega$. By Theorem~B $(ii)$, \eqref{eq:asyy} is  the complete asymptotic expansion in $\widehat{\mathfrak L}$ of $\varepsilon\mapsto A_f^c(x_0,\varepsilon)$ for any orbit, up to an additive term $K \varepsilon$, $K\in\mathbb R$. 

On the other hand, by Lemma~\ref{lem:aepsi} applied in the regular case, we conclude that 
$$
A_f(x_0,\varepsilon)-A_f^c(x_0,\varepsilon)=\varepsilon^{1+\frac{k}{k+1}}k(\varepsilon),
$$
where $k(\varepsilon)=O(1)$ is \emph{high-amplitude oscillatory} and does not have an asymptotic behavior in $\widehat{\mathfrak L}$.
That is, the complete asymptotic expansion of the standard length of the $\varepsilon$-neighborhood of orbits $\varepsilon\mapsto A_f(x_0,\varepsilon)$ does not exist. It corresponds with the expansion \eqref{eq:asyy} of the continuous time length $\varepsilon\mapsto A_f^c(x_0,\varepsilon)$ up to the order $\varepsilon^{1+\frac{k-1}{k+1}}$. Its  remainder term $\varepsilon^{k+\frac{1}{k+1}} h(\varepsilon)=O(\varepsilon^{k+\frac{1}{k+1}})$ does not admit an asymptotic behavior in a power-logarithm scale:
\begin{align*}
A_{f}(x_{0},\varepsilon)&=\frac{k+1}{k}2^{\frac{1}{k+1}}\varepsilon^{\frac{1}{k+1}}+b_{2}\varepsilon^{\frac{2}{k+1}}+\cdots+b_{k}\varepsilon^{\frac{k}{k+1}}+\big(k+1-2\rho\big)\varepsilon\log\varepsilon+\\
 &\ +b_{k+2}\varepsilon+b_{k+3}\varepsilon^{1+\frac{1}{k+1}}+\cdots+ b_{2k+1}\varepsilon^{1+\frac{k-1}{k+1}}+\varepsilon^{1+\frac{k}{k+1}} h(\varepsilon),\ h(\varepsilon)=O(1).
\end{align*}
Note that this is a preciser statement of the result in \cite[Proposition 3]{nonlin}, where it was concluded that the asymptotic expansion exists up to the order $O(\varepsilon)$ and fails somewhere later.

A similar expansion of $A_{f}(x_{0},\varepsilon)$
was obtained in \cite{resman}, but for diffeomorphisms in $\mathbb{C}$.  In that case,
the area is computed instead of the length and the exponents are bigger
by $1$. 
\end{example}

\section{The formal inverse of a transseries\label{sec:inverse}}

We recall that $\widehat{\mathcal{L}}$ is the class of transseries
in $\widehat{\mathcal{L}}_{1}$ which involve only integer powers
of the variable $ $$\boldsymbol{\ell}$. Let $\widehat{g}\in\widehat{\mathcal{L}}$,
\begin{equation}\label{uvod}
\widehat{g}=ax^{\alpha}\boldsymbol{\ell}^{m}+\mathrm{h.o.t.},\ a\in\mathbb{R},\ \alpha>0,\ m\in\mathbb{Z}.
\end{equation}
Let us define the set $\widetilde{\mathcal{R}_{\widehat{g}}}$ as
the sub-semigroup of $\mathbb{R}_{\geq0}\times\mathbb{Z}\times\mathbb{Z}$,
resp. of $\mathbb{R}_{\geq0}\times\mathbb{Z}$, generated by: 
\begin{enumerate}
\item $(\beta-\alpha,\ell-m,0)$ for $(\beta,\ell)\in\mathcal{S}(\widehat{g})\setminus\{(\alpha,m)\}$,
$(0,1,-1)$ and $(0,0,1)$, if $m\neq0$, 
\item $(\beta-\alpha,\ell)$ for $(\beta,\ell)\in\mathcal{S}(\widehat{g})\setminus\{(\alpha,0)\}$ and $(0,1)$, if $m=0.$ 
\end{enumerate}
Let $\mathcal{R}_{\widehat{g}}=\begin{cases}
\widetilde{\mathcal{R}_{\widehat{g}}}+(1,0,0), & m\neq0,\\
\widetilde{\mathcal{R}_{\widehat{g}}}+(1,0), & m=0.
\end{cases}$

\noindent Here, $\mathcal S(\widehat g)$ denotes the support of $\widehat g$ \cite{mrrz2}, that is, the set of all pairs $(\beta,\ell)\in \mathbb R_{> 0}\times\mathbb Z$ that appear as exponents of power-log monomials in $\widehat g$. 

Note that the sets $\mathcal{R}_{\widehat{g}}$ and $\widetilde{\mathcal{R}_{\widehat{g}}}$ are well-ordered by Neumann's lemma.

\begin{prop}[Inverse of a transseries from $\widehat{\mathcal{L}}$]\label{th1}\ Let
$\widehat{g}\in\widehat{\mathcal{L}}$ be as in \eqref{uvod}. Then its formal inverse $\widehat{g}^{-1}$
belongs to $\widehat{\mathcal{L}}_{2}$. If $\widehat g$ moreover contains
no logarithm in the leading term $($$m=0$$)$, then its formal inverse $\widehat{g}^{-1}$
belongs to $\widehat{\mathcal{L}}_{1}$. More precisely, 
\begin{enumerate}
\item $m\neq 0$
\[
\widehat{g}^{-1}(x)=(a\alpha^{m})^{-\frac{1}{\alpha}}(x\boldsymbol{\ell}^{-m})^{\frac{1}{\alpha}}\cdot\Big(1+\widehat{R}(x)\Big),
\]
where $\widehat{R}\in\widehat{\mathcal{L}}_{2}$ with $\mathrm{ord}(\widehat{R})\succ(0,0,0)$ 
and its support $\mathcal{S}(\widehat{R})$ is made of monomials of
the type 
$$ 
(x\boldsymbol{\ell}^{-m})^{\frac{\gamma}{\alpha}}\boldsymbol{\ell}^{r}\boldsymbol{\ell}_{2}^{s},\ \ (\gamma,r,s)\in\widetilde{\mathcal{R}_{\widehat{g}}}.
$$
\item $m=0$
\[
\widehat{g}^{-1}(x)=a^{-\frac{1}{\alpha}}x^{\frac{1}{\alpha}}\cdot\Big(1+\widehat{R}(x)\Big),
\]
where $\widehat{R}\in\widehat{\mathcal{L}}$ with $\mathrm{ord}(\widehat{R})\succ(0,0)$ 
and $\mathcal{S}(\widehat{R})$ is made of monomials of
the type 
$$
x^{\frac{\gamma}{\alpha}}\boldsymbol{\ell}^{r},\ (\gamma,r)\in\widetilde{\mathcal{R}_{\widehat{g}}}.
$$
\end{enumerate}
Here, $\widetilde{\mathcal{R}_{\widehat{g}}}$ is as defined above.
\end{prop}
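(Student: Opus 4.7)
The plan is to solve $\widehat{g}\bigl(\widehat{g}^{-1}(y)\bigr)=y$ by a formal fixed-point iteration in $\widehat{\mathcal{L}}_{2}$ (in $\widehat{\mathcal{L}}$ when $m=0$), after normalizing
\[
\widehat{g}(x)=ax^{\alpha}\boldsymbol{\ell}^{m}\bigl(1+\widehat{h}(x)\bigr),
\]
where $\widehat{h}\in\widehat{\mathcal{L}}$ has order $\succ(0,0)$ and support indexed by $\mathcal{S}(\widehat{g})\setminus\{(\alpha,m)\}$ translated by $(-\alpha,-m)$. I would then make the ansatz
\[
\widehat{g}^{-1}(y)=c\,y^{1/\alpha}\boldsymbol{\ell}(y)^{-m/\alpha}\bigl(1+\widehat{R}(y)\bigr),
\]
and use the identities $\log y=-1/\boldsymbol{\ell}(y)$ and $\log\boldsymbol{\ell}(y)=-1/\boldsymbol{\ell}_{2}(y)$ to compute, to leading order, $\boldsymbol{\ell}(x)=\alpha\boldsymbol{\ell}(y)+\mathrm{h.o.t.}$ Matching the leading monomial in $\widehat{g}(x)=y$ then forces $ac^{\alpha}\alpha^{m}=1$, hence $c=(a\alpha^{m})^{-1/\alpha}$.

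After dividing out the leading factor and setting $v:=\boldsymbol{\ell}(x)/(\alpha\boldsymbol{\ell}(y))$, the equation $\widehat{g}(x)=y$ becomes the fixed-point equation
\[
\widehat{R}=v^{-m/\alpha}\bigl(1+\widehat{h}(x)\bigr)^{-1/\alpha}-1,
\]
where $v$ is obtained explicitly from
\[
v=\Bigl(1-\alpha\boldsymbol{\ell}(y)\log c-\tfrac{m\boldsymbol{\ell}(y)}{\boldsymbol{\ell}_{2}(y)}-\alpha\boldsymbol{\ell}(y)\log\bigl(1+\widehat{R}\bigr)\Bigr)^{-1},
\]
and both $v-1$ and $\widehat{h}(x)$ depend on $\widehat{R}$ only through terms of strictly greater order than the one currently being added. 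Iterating from $\widehat{R}_{0}=0$, the sequence stabilizes monomial-by-monomial and defines $\widehat{R}$ as a transseries; existence in an ambient transseries field is also guaranteed by \cite{dries}. What remains is the explicit description of the support of $\widehat{R}$.

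I would track this support by induction on the order of iteration. New monomials in $\widehat{R}$ come from $(i)$ the composition $\widehat{h}(x)$, whose exponents lie in the sub-semigroup generated by $(\beta-\alpha,\ell-m,0)$ for $(\beta,\ell)\in\mathcal{S}(\widehat{g})\setminus\{(\alpha,m)\}$, and $(ii)$ the explicit series $v-1$. For $m\neq 0$, the leading monomial of $v-1$ is $m\boldsymbol{\ell}(y)/\boldsymbol{\ell}_{2}(y)$ with exponent $(0,1,-1)$; further geometric and logarithmic expansions bring in products of $\boldsymbol{\ell}(y)$ and $\boldsymbol{\ell}_{2}(y)^{-1}$ together with pure $\boldsymbol{\ell}(y)^{k}$ terms coming from $\log c$ and from $\log(1+\widehat{R})$, and the extra generator $(0,0,1)$ is precisely what is required to close the sub-semigroup under these combinations (for example $(0,1,0)=(0,1,-1)+(0,0,1)$). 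For $m=0$, the factor $m/\boldsymbol{\ell}_{2}(y)$ vanishes, no $\boldsymbol{\ell}_{2}$ ever appears, and only the additional generator $(0,1)$ coming from the pure $\boldsymbol{\ell}(y)$-corrections is required; this yields $\widehat{g}^{-1}\in\widehat{\mathcal{L}}$.

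The main obstacle will be the bookkeeping in step $(ii)$: verifying that the generators $(0,1,-1)$ and $(0,0,1)$ (resp.\ $(0,1)$ when $m=0$), together with those of type $(\beta-\alpha,\ell-m,0)$, genuinely suffice, i.e.\ that no additional generator is produced by the infinitely many compositions, powers, and logarithmic expansions involved. A clean way is to first check directly that $v-1$ has support contained in the semigroup generated by $(0,1,-1)$ and $(0,0,1)$ when $\widehat{R}$ is treated as a formal variable with support in $\widetilde{\mathcal{R}_{\widehat{g}}}$, and then to observe that the formal operations $(1+\cdot)^{r}$ and $\log(1+\cdot)$ preserve this sub-semigroup property when applied to arguments of positive order. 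Well-orderedness of $\widetilde{\mathcal{R}_{\widehat{g}}}$ by Neumann's lemma ensures that the induction closes.
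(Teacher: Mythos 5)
Your proposal is correct, but it takes a genuinely different route from the paper. The paper factors $\widehat{g}=g_{\alpha,m}\circ\widehat{\varphi}$ with $g_{\alpha,m}(x)=ax^{\alpha}\boldsymbol{\ell}^{m}$ and $\widehat\varphi$ parabolic, and then inverts the two factors separately: $g_{\alpha,m}^{-1}$ is computed exactly via the analytic implicit function theorem (Lemma~\ref{onne}, producing the factor $1+F(\boldsymbol{\ell}_2,\boldsymbol{\ell}/\boldsymbol{\ell}_2)$), while $\widehat\varphi^{-1}$ is obtained from the Neumann series $\sum_k(-1)^kH_{\widehat\varphi}^k$ of the Schr\"oder operator (Lemma~\ref{three}), with the support of each factor controlled separately and then composed. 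You instead make a single ansatz $\widehat g^{-1}(y)=c\,y^{1/\alpha}\boldsymbol{\ell}(y)^{-m/\alpha}(1+\widehat R)$ and solve one fixed-point equation for $\widehat R$; your equation for $v$ is essentially the paper's Lemma~\ref{onne} computation extended to carry the full perturbation $\widehat h$ rather than only the leading monomial, so your argument merges the paper's three lemmas into one induction. Your well-foundedness argument is sound: the feedback of $\widehat R$ into $v-1$ enters only through $\alpha\boldsymbol{\ell}(y)\log(1+\widehat R)$, which shifts orders by at least $(0,1,0)$, and its feedback into $\widehat h(x)$ is multiplied by monomials of strictly positive order (note that the terms of $\mathcal S(\widehat g)$ with $\beta=\alpha$, $\ell>m$ contribute only through $v^{\ell-m}$, which is covered by the same shift); your identification of the generators $(0,1,-1)$, $(0,0,1)$ (resp.\ $(0,1)$) from $v-1$, with $(0,1,0)=(0,1,-1)+(0,0,1)$ absorbing the $\log c$ and $\log(1+\widehat R)$ contributions, matches the semigroup $\widetilde{\mathcal R_{\widehat g}}$ exactly. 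What the paper's decomposition buys is modularity: the intermediate objects $g_{\alpha,m}^{-1}$, $\widehat\varphi$, $\widehat\varphi^{-1}$ and the Neumann partial sums are reused verbatim in Subsection~\ref{sub:Dulac} (the Dulac refinement, where the polynomial "block" structure of the coefficients must be tracked) and in Proposition~\ref{lem:haa} (transferring the expansion to the germ $g^{-1}$). Your direct iteration is more self-contained and avoids the small-operator formalism, but it would have to be redone or supplemented to obtain those later refinements.
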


Note that a similar theorem about the formal inverse in a more general
setting of transseries was proved in \cite{dries}. \\

\emph{Outline of the proof. }Write 
\begin{equation}
\widehat{g}=g_{\alpha,m}\circ\widehat{\varphi},\label{comp}
\end{equation}
where $g_{\alpha,m}(x)=ax^{\alpha}\boldsymbol{\ell}^{m}$ and $\widehat{\varphi}(x)=x+\mathrm{h.o.t.}$ If $\widehat{g}(x)=ax+\text{h.o.t.}$ with
$a\ne0$, we simply have $g_{1,0}=a\cdot id$. Therefore, 
\begin{equation}
\widehat{g}^{-1}=\widehat{\varphi}^{-1}\circ\widehat{g}_{\alpha,m}^{-1}.\label{inv}
\end{equation}
Thus, in order to compute the formal inverse of the initial transseries
$\widehat{g}$, one needs to compute the formal inverse of the monomial
$g_{\alpha,m}$ and the formal inverse of the parabolic transseries
$\widehat{\varphi}$. In the following three auxiliary lemmas, we
control the support in each step. The proof of Proposition~\ref{th1}
is finally given at the end of the section.

\begin{lem}\label{onne} Let $g_{\alpha,m}(x)=ax^{\alpha}\boldsymbol{\ell}^{m}$,
$a\in\mathbb{R},\ \alpha>0,\ m\in\mathbb{Z}$. Then: 
\begin{align*}
g_{\alpha,m}^{-1}(x)&=(a\alpha^{m})^{-\frac{1}{\alpha}}\cdot x^{\frac{1}{\alpha}}\,\boldsymbol{\ell}^{-\frac{m}{\alpha}}\Big(1+F\big(\boldsymbol{\ell}_2,\frac{\boldsymbol{\ell}}{\boldsymbol{\ell}_{2}}\big)\Big)\in\mathcal G_{AN},\\
\widehat g_{\alpha,m}^{-1}(x)&=(a\alpha^{m})^{-\frac{1}{\alpha}}\cdot x^{\frac{1}{\alpha}}\,\boldsymbol{\ell}^{-\frac{m}{\alpha}}\Big(1+\widehat F\big(\boldsymbol{\ell}_2,\frac{\boldsymbol{\ell}}{\boldsymbol{\ell}_{2}}\big)\Big)\in\widehat {\mathcal{L}}_{2}.
\end{align*}
Here, $F(t,s)$ is a germ of two variables analytic at $(0,0)$, $F(0,0)=0$, with Taylor expansion $\widehat F$.
 
In particular, if $m=0$, then \begin{align*}&g_{\alpha,0}^{-1}(x)=\widehat g_{\alpha,0}^{-1}(x)=a^{-\frac{1}{\alpha}}x^{\frac{1}{\alpha}}.\end{align*}
\end{lem}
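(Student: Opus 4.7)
The plan is to invert $y = g_{\alpha,m}(x) = a x^{\alpha}\boldsymbol{\ell}^{m}$ by taking logarithms and reducing the resulting transcendental equation to an analytic implicit equation in well-chosen double-logarithmic variables. Setting $u = -\log x$ and $v = -\log y$, the defining relation rewrites as
\[
\alpha u + m\log u = v + \log a.
\]
Guided by the leading order $u \sim v/\alpha$, I introduce $w = \alpha u / v$ and $z = w - 1$; substituting $\log u = \log v + \log w - \log \alpha$ and rearranging gives
\[
z = \frac{A}{v} - \frac{m \log v}{v} - \frac{m}{v}\log(1+z), \qquad A := \log a + m\log\alpha.
\]

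The decisive step is the change of variables $T := \boldsymbol{\ell}_{2}(y) = 1/\log v$ and $S := \boldsymbol{\ell}(y)/\boldsymbol{\ell}_{2}(y) = \log v / v$, both of which tend to $0$ as $y \to 0^{+}$ and satisfy $1/v = T S$. In these coordinates the equation becomes
\[
\Phi(z, T, S) := z + mS - A T S + m T S \log(1+z) = 0,
\]
which is real analytic near $(0, 0, 0)$ with $\Phi(0,0,0) = 0$ and $\partial_{z}\Phi(0,0,0) = 1$. The analytic implicit function theorem then produces a unique analytic solution $z = \zeta(T, S)$ with $\zeta(0,0) = 0$. Along the slice $T = 0$ the equation collapses to $z = -mS$, so $\zeta(0, S) = -mS$, which already foreshadows the presence of the factor $\boldsymbol{\ell}/\boldsymbol{\ell}_{2} = S$ whenever $m \neq 0$.

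To finish, from $\alpha u = v + A - m\log v - m\log(1+z)$ one has
\[
-u = -\frac{v}{\alpha} - \frac{A}{\alpha} + \frac{m}{\alpha}\log v + \frac{m}{\alpha}\log(1+z),
\]
and exponentiating---using $e^{-v/\alpha} = y^{1/\alpha}$, $v^{m/\alpha} = \boldsymbol{\ell}(y)^{-m/\alpha}$, and $e^{-A/\alpha} = (a\alpha^{m})^{-1/\alpha}$---yields
\[
g_{\alpha,m}^{-1}(y) = (a\alpha^{m})^{-1/\alpha}\, y^{1/\alpha}\, \boldsymbol{\ell}^{-m/\alpha}\, \bigl(1 + \zeta(T, S)\bigr)^{m/\alpha}.
\]
Setting $F(T, S) := (1 + \zeta(T, S))^{m/\alpha} - 1$ produces an analytic germ at the origin vanishing there, matching the form in the lemma. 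The formal identity then follows by passing to Taylor series of $F$: each monomial $T^{j} S^{k}$ becomes the transseries $\boldsymbol{\ell}^{k}\boldsymbol{\ell}_{2}^{\,j-k}$, so $\widehat{g}_{\alpha,m}^{-1} \in \widehat{\mathcal{L}}_{2}$. The case $m = 0$ is immediate: the relation $y = a x^{\alpha}$ inverts algebraically to $x = a^{-1/\alpha}y^{1/\alpha} \in \widehat{\mathcal{L}}_{0} \subset \widehat{\mathcal{L}}_{1}$.

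The main obstacle is the identification of the correct coordinates $(T, S)$ in the second step. The choice $T = \boldsymbol{\ell}_{2}$, $S = \boldsymbol{\ell}/\boldsymbol{\ell}_{2}$ is forced by the factorisations $1/v = TS$ and $\log v / v = S$: they convert the a priori singular term $-(m/v)\log v$ into the benign linear expression $-mS$, which is precisely what ensures that $\Phi$ is analytic at the origin and that the implicit function theorem applies. This choice is also the one that matches the target class $\widehat{\mathcal{L}}_{2}$ and thus explains the nested logarithmic structure of the inverse.
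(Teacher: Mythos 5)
Your proof is correct and follows essentially the same route as the paper's: both isolate the leading term $(a\alpha^{m})^{-1/\alpha}x^{1/\alpha}\boldsymbol{\ell}^{-m/\alpha}$, reduce the correction factor to an implicit equation that is analytic in the variables $(\boldsymbol{\ell}_2,\boldsymbol{\ell}/\boldsymbol{\ell}_2)$, and invoke the analytic implicit function theorem. Working on the logarithmic side with $u,v,z$ and exponentiating back, rather than substituting the ansatz into $g_{\alpha,m}\circ g_{\alpha,m}^{-1}=\mathrm{id}$ as the paper does, is only a cosmetic difference.
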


\begin{proof} First, we estimate the leading term of $g_{\alpha,m}^{-1}$.
Put 
\begin{equation}
y=ax^{\alpha}\boldsymbol{\ell}(x)^{m}.\label{el}
\end{equation}
Applying the logarithm function to both sides of this equality
leads to: 
\[
\log y=\log a+\alpha\log x+m\log\boldsymbol{\ell}(x).
\]
It follows that $\log y\sim\alpha\log x$ when $y\rightarrow0$, and,
consequently, that $\boldsymbol{\ell}(x)\sim\alpha\boldsymbol{\ell}(y)$.
From \eqref{el}, 
\[
x(y)\sim(a\alpha^{m})^{-\frac{1}{\alpha}}y^{\frac{1}{\alpha}}\boldsymbol{\ell}(y)^{-\frac{m}{\alpha}},\ y\to0.
\]
Therefore, 
\begin{equation}
g_{\alpha,m}^{-1}(x)=(a\alpha^{m})^{-\frac{1}{\alpha}}x^{\frac{1}{\alpha}}\boldsymbol{\ell}^{-\frac{m}{\alpha}}\big(1+h(x)\big),\ h(x)=o(1),\ x\to0.\label{eq2}
\end{equation}
Putting \eqref{eq2} in the equation $g_{\alpha,m}(g_{\alpha,m}^{-1}(x))=x$,
after some simplification, we get 
\[
\big(1+h(x)\big)^{\alpha}\cdot\Big(\frac{1}{1+\log(a\alpha^{m})\cdot\boldsymbol{\ell}-m\frac{\boldsymbol{\ell}}{\boldsymbol{\ell}_{2}}-\alpha\boldsymbol{\ell}\log(1+h(x))}\Big)^{m}=1.
\]
By the \emph{analytic implicit function theorem}, 
\[
h(x)=F_1\Big(\boldsymbol{\ell},\frac{\boldsymbol{\ell}}{\boldsymbol{\ell}_{2}}\Big)=F\Big(\boldsymbol{\ell}_2,\frac{\boldsymbol{\ell}}{\boldsymbol{\ell}_{2}}\Big),\ F_1,\ F\text{ analytic germs at \ensuremath{(0,0)}.}
\]
Since $h(x)=o(1)$ and $\boldsymbol{\ell}_2,\ \frac{\boldsymbol{\ell}}{\boldsymbol{\ell}_{2}}\to0$,
as $x\to0$, we conclude that $F(0,0)=0$. 
\end{proof}

\begin{lem}\label{twwo} Let $\widehat{\varphi}$ be as defined in
\eqref{comp}. Then $\widehat{\varphi}$ is parabolic and $\widehat{\varphi}\in\widehat{\mathcal{L}}_{2}$.
In particular, if $m=0$, then $\widehat{\varphi}\in\widehat{\mathcal{L}}$.
Moreover, 
\[
\mathcal{S}(\widehat{\varphi}-\mathrm{id})\subseteq\mathcal{R}_{\widehat{g}}.
\]
\end{lem}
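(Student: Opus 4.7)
The approach is to read off $\widehat{\varphi}$ directly from \eqref{comp} by inverting the leading monomial, namely $\widehat{\varphi} = \widehat{g}_{\alpha,m}^{-1} \circ \widehat{g}$, and then to apply Lemma~\ref{onne} to the outer factor. I first factor
\[
\widehat{g}(x) = a x^{\alpha} \boldsymbol{\ell}^{m}\bigl(1 + \widehat{\eta}(x)\bigr),\quad
\widehat{\eta}(x) = \sum_{(\beta,\ell)\in\mathcal{S}(\widehat{g})\setminus\{(\alpha,m)\}} \frac{c_{\beta,\ell}}{a}\, x^{\beta-\alpha} \boldsymbol{\ell}^{\ell-m},
\]
noting that $\widehat{\eta}$ is infinitesimal and its support lies in the sub-semigroup of $\mathbb{R}_{\geq 0}\times\mathbb{Z}$ generated by the tuples $(\beta-\alpha,\ell-m)$, which is well-ordered by Neumann's lemma.

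To establish parabolicity, I track only the leading monomial when substituting $y=\widehat{g}(x)$ into the formula of Lemma~\ref{onne}. The factor $y^{1/\alpha}$ contributes $a^{1/\alpha} x\,\boldsymbol{\ell}^{m/\alpha}$ times an infinitesimal perturbation; for the factor $\boldsymbol{\ell}(y)^{-m/\alpha}$ I use the identities $\log x = -1/\boldsymbol{\ell}$ and $\log\boldsymbol{\ell}=-1/\boldsymbol{\ell}_{2}$ to obtain
\[
-\log y = \frac{\alpha}{\boldsymbol{\ell}}\,V,\qquad V := 1 + \frac{m}{\alpha}\,\frac{\boldsymbol{\ell}}{\boldsymbol{\ell}_{2}} - \frac{\log a}{\alpha}\,\boldsymbol{\ell} - \frac{\boldsymbol{\ell}}{\alpha}\log(1+\widehat{\eta}),
\]
so that $\boldsymbol{\ell}(y)=(\boldsymbol{\ell}/\alpha)V^{-1}$ and $\boldsymbol{\ell}(y)^{-m/\alpha}=\alpha^{m/\alpha}\boldsymbol{\ell}^{-m/\alpha}V^{m/\alpha}$. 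The constants then collapse, $(a\alpha^{m})^{-1/\alpha}\cdot a^{1/\alpha}\cdot \alpha^{m/\alpha}=1$, and the leading monomials multiply to $x\cdot\boldsymbol{\ell}^{m/\alpha}\cdot\boldsymbol{\ell}^{-m/\alpha}=x$. The remaining factor $1+\widehat{F}(\boldsymbol{\ell}_{2}(y),\boldsymbol{\ell}(y)/\boldsymbol{\ell}_{2}(y))$ contributes only an infinitesimal perturbation, since $\widehat{F}(0,0)=0$. Therefore $\widehat{\varphi}=x\cdot(1+\widehat{R})$ with $\mathrm{ord}(\widehat{R})\succ(0,0,0)$, so $\widehat{\varphi}$ is parabolic.

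To complete the proof I track the support of $\widehat{R}$ factor by factor. The binomial expansion of $(1+\widehat{\eta})^{1/\alpha}$ contributes monomials with exponents in the semigroup generated by $(\beta-\alpha,\ell-m,0)$; expanding $V^{m/\alpha}$ adds the generators $(0,1,-1)$ coming from $\boldsymbol{\ell}/\boldsymbol{\ell}_{2}$ and $(0,1,0)=(0,1,-1)+(0,0,1)$ coming from the $\boldsymbol{\ell}$ and $\boldsymbol{\ell}\log(1+\widehat{\eta})$ terms; and rewriting $\boldsymbol{\ell}_{2}(y)=\boldsymbol{\ell}_{2}\bigl(1+\boldsymbol{\ell}_{2}\log\alpha+\boldsymbol{\ell}_{2}\log V\bigr)^{-1}$ via $\log\boldsymbol{\ell}(y)=-\log\alpha-1/\boldsymbol{\ell}_{2}-\log V$ introduces the generator $(0,0,1)$, exactly matching the list appearing in $\widetilde{\mathcal{R}_{\widehat{g}}}$. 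The analyticity of $\widehat{F}$ at the origin ensures that its composition with the infinitesimal transseries $\boldsymbol{\ell}_{2}(y)$ and $\boldsymbol{\ell}(y)/\boldsymbol{\ell}_{2}(y)$ remains in the same semigroup. Multiplying the resulting correction by the prefactor $x$ shifts every exponent by $(1,0,0)$, yielding $\mathcal{S}(\widehat{\varphi}-\mathrm{id})\subseteq \mathcal{R}_{\widehat{g}}$. In the special case $m=0$, Lemma~\ref{onne} reduces to $\widehat{g}_{\alpha,0}^{-1}(y)=a^{-1/\alpha}y^{1/\alpha}$, so the entire computation collapses to $\widehat{\varphi}=x(1+\widehat{\eta})^{1/\alpha}$, which involves no double logarithm, and hence $\widehat{\varphi}\in\widehat{\mathcal{L}}$.

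The main technical obstacle is the bookkeeping: verifying that the binomial expansions of $(1+\widehat{\eta})^{1/\alpha}$ and $V^{m/\alpha}$, together with the composition with the analytic germ $\widehat{F}$, produce well-ordered transseries whose supports are contained in the declared finitely generated semigroup. This is where Neumann's lemma and the standard machinery for substituting infinitesimal transseries into convergent series, as developed in \cite{mrrz2,MRRZ2Fatou}, are invoked.
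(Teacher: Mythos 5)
Your proof is correct and follows essentially the same route as the paper: the decomposition $\widehat{\varphi}=\widehat{g}_{\alpha,m}^{-1}\circ\widehat{g}$ with $\widehat{g}=ax^{\alpha}\boldsymbol{\ell}^{m}(1+\widehat{\eta})$, substitution into the formula of Lemma~\ref{onne}, and the expansions of $\boldsymbol{\ell}(\widehat{g}(x))$ and $\boldsymbol{\ell}_{2}(\widehat{g}(x))$ as infinitesimal perturbations. Your explicit tracking of the semigroup generators $(0,1,-1)$ and $(0,0,1)$ is in fact more detailed than the paper's proof, which compresses this bookkeeping into the statement that the support inclusion is ``obvious.''
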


\begin{proof} \emph{The case $m\neq0$.} By \eqref{comp}, 
\[
\widehat{\varphi}=\widehat g_{\alpha,m}^{-1}\circ\widehat{g}.
\]
Let us write $\widehat{g}(x)=ax^{\alpha}\boldsymbol{\ell}^{m}\big(1+\widehat{T}(x)\big)$.
Then $\widehat{T}\in\widehat{\mathcal{L}}$, $\text{ord}(\widehat{T})\succ(0,0,0)$
and 
\[
\mathcal{S}(\widehat{T})=\Big\{\big(\beta-\alpha,\ell-m\big)\big|\big.\ (\beta,\ell)\in\mathcal{S}(\widehat g)\Big\}.
\]
We now compute 
\begin{align*}
\widehat{\varphi}(x) & =\widehat g_{\alpha,m}^{-1}\big(\widehat{g}(x)\big)=\widehat g_{\alpha,m}^{-1}\Big(ax^{\alpha}\boldsymbol{\ell}^{m}\big(1+\widehat{T}(x)\big)\Big)=\nonumber \\
 & =x\cdot\Big(1+F_{2}(\boldsymbol{\ell}_{2},\frac{\boldsymbol{\ell}}{\boldsymbol{\ell}_{2}},\widehat{T})\Big),
\end{align*}
where $F_{2}$ is an analytic germ of three variables vanishing at $0$.

In the computation we use the expansions: 
\begin{align*}
 & \boldsymbol{\ell}(\widehat{g}(x))=\frac{1}{\alpha}\boldsymbol{\ell}\cdot\Big(1+F_{3}\big(\boldsymbol{\ell}_2,\frac{\boldsymbol{\ell}}{\boldsymbol{\ell}_{2}},\widehat{T}\big)\Big),\\
 & \boldsymbol{\ell}_{2}(\widehat{g}(x))=\boldsymbol{\ell}_{2}\Big(1+F_{4}\big(\boldsymbol{\ell}_2,\frac{\boldsymbol{\ell}}{\boldsymbol{\ell}_{2}},\widehat{T}\big)\Big),
\end{align*}
where $F_3,\ F_4$ are analytic germs of three variables vanishing at $0$.

Obviously, $\mathcal{S}(\widehat{\varphi}-\mathrm{id})\subseteq\mathcal{R}_{\widehat{g}}$,
with $\mathcal{R}_{\widehat{g}}$ defined at the beginning of the
section, so it is well-ordered. Therefore, $\widehat{\varphi}\in\widehat{\mathcal{L}}_{2}$.
\medskip

\emph{The case $m=0$.} As the proof is similar and simpler, we
omit it. \end{proof}

\begin{lem}\label{three} \

1. Let $\widehat{\varphi}\in\widehat{\mathcal{L}}_{2}$
be parabolic. The formal inverse $\widehat{\varphi}^{-1}$ is parabolic and belongs
to $\widehat{\mathcal{L}}_{2}$. Moreover, let $R\subseteq\mathbb{R}_{\geq 0}\times\mathbb{Z}\times\mathbb{Z}$
be the semigroup generated by $(\beta-1,p,q)$ for $(\beta,p,q)\in\mathcal{S}(\widehat{\varphi}-\mathrm{id})$,
$(0,1,0)$ and $(0,1,1)$. Then $\mathcal{S}(\widehat{\varphi}^{-1}-\mathrm{id})-(1,0,0)\subseteq R$.
\medskip

2. Let $\widehat{\varphi}\in\widehat{\mathcal{L}}$
be parabolic. The formal inverse $\widehat{\varphi}^{-1}$ is parabolic and belongs
to $\widehat{\mathcal{L}}$. Moreover, let $R\subseteq\mathbb{R}_{\geq 0}\times\mathbb{Z}$
be the semigroup generated by $(\beta-1,p)$ for $(\beta,p)\in\mathcal{S}(\widehat{\varphi}-\mathrm{id})$ and $(0,1)$.  Then $\mathcal{S}(\widehat{\varphi}^{-1}-\mathrm{id})-(1,0)\subseteq R$.
\end{lem}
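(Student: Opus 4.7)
The plan is a fixed-point construction. Setting $\widehat{\psi}:=\widehat{\varphi}-\mathrm{id}$, so $\mathrm{ord}(\widehat{\psi})\succ(1,0,0)$, and $\widehat{\chi}:=\widehat{\varphi}^{-1}-\mathrm{id}$, the identity $\widehat{\varphi}\circ\widehat{\varphi}^{-1}=\mathrm{id}$ rewrites as the functional equation
\begin{equation*}
\widehat{\chi}(x)=-\widehat{\psi}\bigl(x+\widehat{\chi}(x)\bigr).
\end{equation*}
We construct $\widehat{\chi}$ iteratively, starting from $\widehat{\chi}_0:=0$ and setting $\widehat{\chi}_{n+1}:=-\widehat{\psi}\circ(\mathrm{id}+\widehat{\chi}_n)$, and prove by induction that $\mathcal{S}(\widehat{\chi}_n)-(1,0,0)\subseteq R$. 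Since $(1,0,0)+R$ is well-ordered by Neumann's lemma, the coefficients stabilize and the limit $\widehat{\chi}\in\widehat{\mathcal{L}}_2$ solves the equation; parabolicity is immediate as the leading term of $\widehat{\chi}$ equals that of $-\widehat{\psi}$.

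The heart of the argument is the analysis of the substitution $x\mapsto x+w$ in a monomial $c\,x^\beta\boldsymbol{\ell}^p\boldsymbol{\ell}_2^q$ of $\widehat{\psi}$, under the assumption that $w$ has support in $(1,0,0)+R$, so that $u:=w/x$ has support in $R$. The three elementary expansions
\begin{align*}
(x+w)^\beta &= x^\beta(1+u)^\beta, \\
\boldsymbol{\ell}(x+w) &= \boldsymbol{\ell}\cdot\bigl(1-\boldsymbol{\ell}\log(1+u)\bigr)^{-1}, \\
\boldsymbol{\ell}_2(x+w) &= \boldsymbol{\ell}_2\cdot\bigl(1+\boldsymbol{\ell}_2\log(1-\boldsymbol{\ell}\log(1+u))\bigr)^{-1},
\end{align*}
expanded as generalized binomial or geometric series, show that every monomial produced has a shift from $(\beta,p,q)$ which is a non-negative integer combination of three ingredients: elements of $\mathcal{S}(u)\subseteq R$; the generator $(0,1,0)$, tracking each extra factor of $\boldsymbol{\ell}$ arising in the $\boldsymbol{\ell}$-expansion; and the generator $(0,1,1)$, tracking each extra factor $\boldsymbol{\ell}\boldsymbol{\ell}_2$ coming from the leading contribution $-\boldsymbol{\ell}_2\cdot\boldsymbol{\ell}\log(1+u)$ inside the $\boldsymbol{\ell}_2$-expansion. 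Summing over the monomials of $\widehat{\psi}$ and using $\mathcal{S}(\widehat{\psi})\subseteq(1,0,0)+R$ gives $\mathcal{S}\bigl(\widehat{\psi}(\mathrm{id}+w)\bigr)\subseteq(1,0,0)+R$.

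Plugging $w=\widehat{\chi}_n$ into this analysis closes the induction. The difference $\widehat{\chi}_{n+1}-\widehat{\chi}_n$ is strictly higher order than $\widehat{\chi}_n-\widehat{\chi}_{n-1}$, because a formal Taylor expansion of $\widehat{\psi}$ shows that the leading part of $\widehat{\chi}_{n+1}-\widehat{\chi}_n$ is $\widehat{\psi}'(\mathrm{id}+\widehat{\chi}_{n-1})\cdot(\widehat{\chi}_n-\widehat{\chi}_{n-1})$, and $\mathrm{ord}(\widehat{\psi}')\succ(0,0,0)$ since $\widehat{\psi}$ is of order strictly above the identity. Well-orderedness of $(1,0,0)+R$ then guarantees that each coefficient at a fixed $\sigma$ is affected only finitely often, making the limit well-defined in $\widehat{\mathcal{L}}_2$.

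The main anticipated obstacle is tracking the $\boldsymbol{\ell}_2$-expansion precisely enough to confirm that no spurious iterated logarithm $\boldsymbol{\ell}_3$ is introduced; this rests on the identity $\log(1-\boldsymbol{\ell}\log(1+u))=-\boldsymbol{\ell}\log(1+u)+O(\boldsymbol{\ell}^2)$, which keeps every output inside the algebra generated by $x,\boldsymbol{\ell},\boldsymbol{\ell}_2$, and on isolating $\boldsymbol{\ell}\boldsymbol{\ell}_2$ as precisely the combined perturbation factor, hence the generator $(0,1,1)$. Part~2, the case $m=0$, is then a strict restriction of the same argument: no $\boldsymbol{\ell}_2$ is ever produced, the generator $(0,1,1)$ disappears, the computation remains inside $\widehat{\mathcal{L}}$, and the semigroup is generated by $(\beta-1,p)$ for $(\beta,p)\in\mathcal{S}(\widehat{\varphi}-\mathrm{id})$ together with $(0,1)$.
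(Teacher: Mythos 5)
Your proof is correct, and it takes a genuinely different route from the paper's. The paper inverts the Schr\"oder operator $\Phi_{\widehat{\varphi}}\cdot\widehat f=\widehat f\circ\widehat\varphi$ through its Neumann series $\sum_k(-1)^kH_{\widehat\varphi}^k$ with $H_{\widehat\varphi}=\Phi_{\widehat\varphi}-\mathrm{Id}$, and controls the support via the formal Taylor identity $H_{\widehat\varphi}\cdot\widehat f=\widehat f'\widehat h+\tfrac{1}{2!}\widehat f''\widehat h^2+\cdots$ combined with the derivative formula for a monomial $x^{\gamma}\boldsymbol{\ell}^{m}\boldsymbol{\ell}_{2}^{n}$: the terms $x^{\gamma-1}\boldsymbol{\ell}^{m+1}\boldsymbol{\ell}_{2}^{n}$ and $x^{\gamma-1}\boldsymbol{\ell}^{m+1}\boldsymbol{\ell}_{2}^{n+1}$ are the source of the generators $(0,1,0)$ and $(0,1,1)$, while multiplication by $\widehat h=\widehat\varphi-\mathrm{id}$ supplies the generators $(\beta-1,p,q)$. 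You instead solve the equation $\widehat{\chi}=-\widehat{\psi}(\mathrm{id}+\widehat{\chi})$ by fixed-point iteration and extract the same three families of generators from the direct expansions of $(x+w)^{\beta}$, $\boldsymbol{\ell}(x+w)$ and $\boldsymbol{\ell}_2(x+w)$; your bookkeeping of the $\boldsymbol{\ell}_2$-expansion (the factor $\boldsymbol{\ell}\boldsymbol{\ell}_2\log(1+u)$ as the leading perturbation, higher terms absorbed via extra copies of $(0,1,0)$) is exactly right and explains the generator $(0,1,1)$ for the same structural reason as the paper's derivative computation. What the paper's route buys is that formal convergence is delegated to the small-operator formalism of \cite{dries} and \cite{mrrz2}; your route is more self-contained but must justify convergence by hand, which you do: the essential point is not well-orderedness of $(1,0,0)+R$ alone but the uniform gain $\mathrm{ord}(\widehat{\chi}_{n+1}-\widehat{\chi}_n)\succeq\mathrm{ord}(\widehat{\psi}')+\mathrm{ord}(\widehat{\chi}_n-\widehat{\chi}_{n-1})$ with $\mathrm{ord}(\widehat{\psi}')\succ(0,0,0)$, and your Taylor-expansion step supplies exactly this. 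Two cosmetic remarks: your iteration produces a right compositional inverse, which is the two-sided inverse since parabolic elements form a group under composition; and the reduction of Part~2 to the same argument with the generator $(0,1,1)$ discarded matches the paper's (omitted) treatment of the case $m=0$.
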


\begin{proof} We prove the statement $1$ for an element of $\widehat{\mathcal{L}}_{2}$.
The statement $2.$ for an element of $\widehat{\mathcal{L}}$ is handled
in the same way.

Let $\widehat{\varphi}\in\widehat{\mathcal{L}}_{2}$ be parabolic.
Put $\widehat{h}=\widehat{\varphi}-\mathrm{id}$. Then $\text{ord}(\widehat{h})\succ(1,0,0)$.
Let us define the \emph{Schröder operator} $\Phi_{\widehat{\varphi}}:\widehat{\mathcal{L}}_{2}\to\widehat{\mathcal{L}}_{2}$,
\begin{equation*}
\Phi_{\widehat{\varphi}}\cdot\widehat{f}=\widehat{f}\circ\widehat{\varphi},\ \widehat{f}\in\widehat{\mathcal{L}}_{2}.
\end{equation*}
Put $\Phi_{\widehat{\varphi}}=Id+H_{\widehat{\varphi}}$, $H_{\widehat{\varphi}}:\widehat{\mathcal{L}}_{2}\to\widehat{\mathcal{L}}_{2}$.
By formal Taylor expansion (see \cite{dries}, $\widehat{\varphi}$
parabolic): 
\begin{equation}
H_{\widehat{\varphi}}\cdot\widehat{f}=\Phi_{\widehat{\varphi}}\cdot\widehat{f}-\widehat{f}=\widehat{f}\circ\widehat{\varphi}-\widehat{f}=\widehat{f}'\widehat{h}+\frac{1}{2!}\widehat{f}''\widehat{h}^{2}+\cdots,\ \widehat{f}\in\widehat{\mathcal{L}}_{2}.\label{eq:supp}
\end{equation}
Since $\text{ord}(\widehat{h})\succ(1,0,0)$, the operator $H_{\widehat{\varphi}}$
is a \emph{small operator}, see \cite{dries} or \cite[Section 5.1]{mrrz2}
for definition and properties of small operators. Therefore, the inverse
operator $\Phi_{\widehat{\varphi}}^{-1}:\widehat{\mathcal{L}}_{2}\to\widehat{\mathcal{L}}_{2}$
is well-defined by the series (formally convergent in the product topology with respect to the discrete topology, see \cite{mrrz2}) : 
\begin{equation}
\Phi_{\widehat{\varphi}}^{-1}=(\mathrm{Id}+H_{\widehat{\varphi}})^{-1}:=\sum_{k=0}^{\infty}(-1)^{k}H_{\widehat{\varphi}}^{k}\label{eq:svei}
\end{equation}
Consequently, $\widehat{\varphi}^{-1}=\Phi_{\widehat{\varphi}}^{-1}\cdot\mathrm{id}\in\widehat{\mathcal{L}}_{2}$.

We analyze now the support of the formal inverse $\widehat{\varphi}^{-1}=\Phi_{\widehat{\varphi}}^{-1}\cdot\mathrm{id}$
more precisely. Differentiating a monomial from the support of $\widehat{f}\in\widehat{\mathcal L}_2$, we
get: 
\[
(x^{\gamma}\boldsymbol{\ell}^{m}\boldsymbol{\ell}_{2}^{n})'=\gamma x^{\gamma-1}\boldsymbol{\ell}^{m}\boldsymbol{\ell}_{2}^{n}+mx^{\gamma-1}\boldsymbol{\ell}^{m+1}\boldsymbol{\ell}_{2}^{n}+nx^{\gamma-1}\boldsymbol{\ell}^{m+1}\boldsymbol{\ell}_{2}^{n+1},\ \gamma>0,\ m,\ n\in\mathbb{Z}.
\]
It is then easy to deduce from \eqref{eq:supp} that: 
\begin{equation}
\mathcal{S}(H_{\widehat{\varphi}}\cdot\widehat{f})\subseteq R+\mathcal{S}(\widehat{f}),\ f\in\widehat{\mathcal{L}}_{2},\label{son}
\end{equation}
where $R\subseteq\mathbb{R}_{\geq 0}\times\mathbb{Z}\times\mathbb{Z}$
is a sub-semigroup generated by $(\beta-1,p,q)$ for $(\beta,p,q)\in\mathcal{S}(\widehat{h})$
and $(0,1,0)$ and $(0,1,1)$. Iterating \eqref{son} we get that
$\mathcal{S}(H_{\widehat{\varphi}}^{n}\cdot\widehat{f})\subseteq R+\mathcal{S}(\widehat{f}),\ n\in\mathbb{N}$.
Consequently, by \eqref{eq:svei}, 
\[
\mathcal{S}\Big(\Phi_{\widehat{\varphi}}^{-1}\cdot\widehat{f}-\widehat{f}\Big)\subseteq R+\mathcal{S}(\widehat{f}),\ \widehat{f}\in\widehat{\mathcal{L}}_{2}.
\]
Since $\widehat{\varphi}^{-1}=\Phi_{\widehat{\varphi}}^{-1}\cdot\mathrm{id}$,
we get that $\mathcal{S}(\widehat{\varphi}^{-1}-\mathrm{id})-(1,0,0)\subseteq R$.
\end{proof}

\begin{cory}\label{cor:four} Let $\widehat{\varphi}\in\widehat{\mathcal{L}}_{2}$
$($resp. $\widehat{\mathcal{L}}$, if $m=0$$)$ be as in \eqref{comp}.
Let $\mathcal{R}_{\widehat{g}}$ be as defined at the beginning of
the section. Then $\widehat{\varphi}^{-1}\in\widehat{\mathcal{L}}_{2}$ $($resp.
$\widehat{\mathcal{L}}$, if $m=0$$)$ and 
\[
\mathcal{S}(\widehat{\varphi}^{-1}-\mathrm{id})\subseteq\mathcal{R}_{\widehat{g}}.
\]
\end{cory}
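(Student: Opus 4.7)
The plan is to simply combine Lemma~\ref{twwo} and Lemma~\ref{three}, tracking supports carefully. I focus on the case $m\neq 0$; the case $m=0$ is identical modulo dropping one coordinate.

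First, Lemma~\ref{twwo} provides the input we need: $\widehat{\varphi}$ is parabolic in $\widehat{\mathcal{L}}_2$, with $\mathcal{S}(\widehat{\varphi}-\mathrm{id})\subseteq\mathcal{R}_{\widehat{g}}$. Applying Lemma~\ref{three} to this parabolic $\widehat{\varphi}$, we immediately obtain $\widehat{\varphi}^{-1}\in\widehat{\mathcal{L}}_{2}$ together with the inclusion $\mathcal{S}(\widehat{\varphi}^{-1}-\mathrm{id})-(1,0,0)\subseteq R$, where $R$ is the sub-semigroup of $\mathbb{R}_{\geq 0}\times\mathbb{Z}\times\mathbb{Z}$ generated by the shifted support $\bigl\{(\beta-1,p,q):(\beta,p,q)\in\mathcal{S}(\widehat\varphi-\mathrm{id})\bigr\}$ together with $(0,1,0)$ and $(0,1,1)$.

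It then remains to verify the containment $R\subseteq \widetilde{\mathcal{R}_{\widehat{g}}}$, since by definition $\mathcal{R}_{\widehat{g}}=\widetilde{\mathcal{R}_{\widehat g}}+(1,0,0)$. The key observation is that $\widetilde{\mathcal{R}_{\widehat g}}$ is itself a sub-semigroup, and $\mathcal{S}(\widehat{\varphi}-\mathrm{id})-(1,0,0)\subseteq\widetilde{\mathcal{R}_{\widehat g}}$ by Lemma~\ref{twwo}. So the first family of generators of $R$ already lies in $\widetilde{\mathcal{R}_{\widehat g}}$. For the two remaining generators, I would directly decompose them using the generators $(0,1,-1)$ and $(0,0,1)$ of $\widetilde{\mathcal{R}_{\widehat g}}$:
\[
(0,1,0)=(0,1,-1)+(0,0,1),\qquad (0,1,1)=(0,1,-1)+2(0,0,1).
\]
Since $\widetilde{\mathcal{R}_{\widehat g}}$ is closed under addition, both belong to $\widetilde{\mathcal{R}_{\widehat g}}$, hence $R\subseteq\widetilde{\mathcal{R}_{\widehat g}}$ and therefore $\mathcal{S}(\widehat\varphi^{-1}-\mathrm{id})\subseteq R+(1,0,0)\subseteq\mathcal{R}_{\widehat g}$.

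For the case $m=0$, exactly the same argument works in $\mathbb{R}_{\geq 0}\times\mathbb{Z}$: Lemma~\ref{three}(2) gives that the relevant semigroup is generated by the shifted support and $(0,1)$, and here $(0,1)$ is one of the generators of $\widetilde{\mathcal{R}_{\widehat g}}$ directly. No real obstacle is expected; the proof is essentially a bookkeeping verification that the generators produced by Lemma~\ref{three} match the generators built into the definition of $\widetilde{\mathcal{R}_{\widehat g}}$. The only slightly delicate point is remembering that the auxiliary generators $(0,1,-1)$ and $(0,0,1)$ in the definition of $\widetilde{\mathcal{R}_{\widehat{g}}}$ were put there precisely to absorb the derivatives of $\boldsymbol\ell$ and $\boldsymbol\ell_2$ that appear when computing $\boldsymbol\ell\circ\widehat g$ and $\boldsymbol\ell_2\circ\widehat g$, which is the same mechanism that produces $(0,1,0)$ and $(0,1,1)$ in Lemma~\ref{three}.
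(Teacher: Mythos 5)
Your proof is correct and follows essentially the same route as the paper's: the paper's own proof is just the chain ``$\widehat{\varphi}$ parabolic by \eqref{comp}, $\mathcal{S}(\widehat{\varphi}-\mathrm{id})\subseteq\mathcal{R}_{\widehat{g}}$ by Lemma~\ref{twwo}, and $\mathcal{S}(\widehat{\varphi}^{-1}-\mathrm{id})-(1,0,0)\subseteq\widetilde{\mathcal{R}_{\widehat{g}}}$ by Lemma~\ref{three}.'' The only difference is that you spell out the containment $R\subseteq\widetilde{\mathcal{R}_{\widehat{g}}}$ via the decompositions $(0,1,0)=(0,1,-1)+(0,0,1)$ and $(0,1,1)=(0,1,-1)+2(0,0,1)$, a bookkeeping step the paper leaves implicit; your verification is accurate.
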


\begin{proof} By \eqref{comp}, $\widehat{\varphi}$ is parabolic.
By Lemma~\ref{twwo}, $\mathcal{S}(\widehat{\varphi}-\mathrm{id})\subseteq\mathcal{R}_{\widehat{g}}$.
By Lemma~\ref{three}, $\mathcal{S}(\widehat{\varphi}^{-1}-\mathrm{id})-(1,0,0)\subseteq \widetilde{\mathcal{R}_{\widehat{g}}}$. \end{proof}
\begin{proof}[Proof of Proposition \ref{th1}]
\emph{}

\emph{The case $m\neq0$.} By \eqref{inv}, we have that 
\[
\widehat{g}^{-1}=\widehat{\varphi}^{-1}\circ \widehat g_{\alpha,m}^{-1}.
\]
By Lemma~\ref{onne} and Corollary~\ref{cor:four}, $\widehat{g}^{-1}\in\widehat{\mathcal{L}}_{2}$.
Let us analyze the support. 
We have, using Lemma~\ref{onne}: 
\begin{align}
 \widehat{g}^{-1}(x)&=\widehat g_{\alpha,m}^{-1}(x)+\big(\widehat{\varphi}^{-1}-\mathrm{id}\big)\big(\widehat g_{\alpha,m}^{-1}(x)\big)=\label{formi}\\
 &=(a\alpha^{m})^{-\frac{1}{\alpha}}\big(x\boldsymbol{\ell}^{-m}\big)^{\frac{1}{\alpha}}\big(1+\widehat F(\boldsymbol{\ell}_2,\frac{\boldsymbol{\ell}}{\boldsymbol{\ell}_{2}})\big)+\nonumber\\
&\qquad\qquad\quad +\big(\widehat{\varphi}^{-1}-\mathrm{id}\big)\Big((a\alpha^{m})^{-\frac{1}{\alpha}}\big(x\boldsymbol{\ell}^{-m}\big)^{\frac{1}{\alpha}}\big(1+\widehat F(\boldsymbol{\ell}_2,\frac{\boldsymbol{\ell}}{\boldsymbol{\ell}_{2}})\big)\Big).\nonumber 
\end{align}
Let $x^{\gamma}\boldsymbol{\ell}^{r}\boldsymbol{\ell}_{2}^{s}\in\mathcal{S}(\widehat{\varphi}^{-1}-\mathrm{id})$.
By Corollary~\ref{cor:four}, $(\gamma,r,s)\in\mathcal{R}_{\widehat{g}}$.
We compute: 
\begin{align}
(x^{\gamma}\boldsymbol{\ell}^{r}\boldsymbol{\ell}_{2}^{s})\circ g_{\alpha,m}^{-1}(x)=(a\alpha^{m})^{-\frac{\gamma}{\alpha}}\alpha^{r}\cdot\big(x\boldsymbol{\ell}^{-m}\big)^{\frac{\gamma}{\alpha}}\cdot\boldsymbol{\ell}^{r}\boldsymbol{\ell}_{2}^{s}\cdot\Big(1+\widehat F_{5}\big(\boldsymbol{\ell}_{2},\frac{\boldsymbol{\ell}}{\boldsymbol{\ell}_{2}}\big)\Big),\label{monom}
\end{align}
where $\widehat F_{5}$ is the Taylor expansion of an analytic germ $F_5$ vanishing at the origin.
In the computation we use the following: 
\begin{align}\label{ellappl}
 & \boldsymbol{\ell}(\widehat g_{\alpha,m}^{-1})=\alpha\cdot\boldsymbol{\ell}\cdot\Big(1+\widehat F_{6}\big(\boldsymbol{\ell}_2,\frac{\boldsymbol{\ell}}{\boldsymbol{\ell}_{2}}\big)\Big),\nonumber\\
 & \boldsymbol{\ell}_{2}(\widehat g_{\alpha,m}^{-1})=\boldsymbol{\ell}_{2}\cdot\Big(1+\widehat F_{7}\big(\boldsymbol{\ell}_2,\frac{\boldsymbol{\ell}}{\boldsymbol{\ell}_{2}}\big)\Big),
\end{align}
where $\widehat F_{6},\ \widehat F_{7}$ are Taylor expansions of analytic germs $F_6,\ F_7$ vanishing at the origin.
Combining \eqref{formi} and \eqref{monom}, we get the statement in the case $m\neq 0$.
\medskip

\emph{The case $m=0$.} Lemmas~\ref{onne} - \ref{three} are simpler
in this case. The proof is a simple exercise. 
\end{proof}

\subsection{The inverse of a Dulac series and of a Dulac germ.}\label{sub:Dulac}

\noindent Let $g\in \mathcal G_{AN}$ be a Dulac germ and $\widehat g\in\widehat{\mathcal L}$ its Dulac series. Put
$$
\widehat g(x)=x^\alpha P_m(\boldsymbol\ell^{-1})+o(x^\alpha), 
$$
where $P_m$ is a polynomial of degree $m\in\mathbb N_0$. Let $\widetilde {\mathcal A}\subset \mathbb R_{\geq 0}$ be a sub-semigroup generated by $\{\beta-\alpha:\ \beta\in\mathcal S_x(\widehat g)\}$. Here, $\mathcal S_x$ denotes the support of $\widehat g$ with respect to powers of $x$ only. Put  
\begin{equation}\label{eq:aa}
\mathcal A=\{\gamma+1:\ \gamma\in\widetilde{\mathcal A}\}\subset \mathbb R_{>0}.
\end{equation}
Note that $\widetilde{\mathcal A}$ and thus also $\mathcal A$ are countable, of order type $\omega$ (finitely generated), or finite.
\smallskip

We compute in Proposition~\ref{prop:refin} the formal inverse $\widehat g^{-1}$ of a \emph{Dulac
series} $\widehat{g}\in\widehat{\mathcal{L}}$. We refine the statement
of Proposition~\ref{th1} in this special case, using the polynomial
form of the coefficient functions in Dulac series. 

We show further in Proposition~\ref{lem:haa} that the formal inverse $\widehat g^{-1}$ is the sectional asymptotic expansion with respect to integral sections of the inverse $g^{-1}$ of the Dulac germ $g$. The results will
be used in the proof of Theorem~B.

\begin{prop}[A refinement of Proposition~\ref{th1} for Dulac series]\label{prop:refin}
Let $\widehat{g}\in\widehat{\mathcal{L}}$, $\widehat g(x)=ax^\alpha\boldsymbol\ell^{-m}+\textrm{h.o.t.}$, $\alpha>0,\ m\in\mathbb N_0$, be a Dulac series. 
\smallskip

1. $m\neq 0$. Then the formal inverse
$\widehat{g}^{-1}$ belongs to $\widehat{\mathcal{L}}_{2}$. More precisely,
\begin{align}
\widehat g^{-1}(x)=\sum_{i=1}^{\infty} &\widehat f_{\beta_i} (\boldsymbol\ell) x^{\frac{\beta_i}{\alpha}}, \ \beta_i\in\mathcal A, \text{ with convergent coefficients}\nonumber\\
&\widehat{f}_{\beta_i}(y)=y^{-M_{\beta_i}+\frac{m\beta_i}{\alpha}}\widehat G_{\beta_i}\Big(\boldsymbol{\ell}(y),\frac{y}{\boldsymbol{\ell}(y)}\Big)\in \widehat{\mathcal{L}}^\infty,\ M_{\beta_i}\in\mathbb{N}_0.\label{Dulaccoef}
\end{align}
Here, $\widehat G_{\beta_i}$ are Taylor expansions of analytic germs $G_{\beta_i}$ of two variables. The leading term of $\widehat g^{-1}(x)$ does not contain $\boldsymbol\ell_2$ $($the double logarithm$)$.
\smallskip

2. $m=0$. Then
$\widehat{g}^{-1}\in\widehat{\mathcal{L}}$ is a \emph{Dulac series}.  In particular, the coefficients
$\widehat{f}_{\beta_i}(y)\in\widehat{\mathcal{L}}_{0}^{\infty}$ 
are \emph{polynomial} in $y^{-1}$.
\end{prop}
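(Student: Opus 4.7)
My plan is to start from the decomposition used in the proof of Proposition~\ref{th1} and to refine the support analysis by exploiting the Dulac hypothesis on $\widehat g$. Writing $\widehat g = g_{\alpha,-m}\circ\widehat\varphi$ with $g_{\alpha,-m}(x)=ax^\alpha\boldsymbol\ell^{-m}$ and $\widehat\varphi=g_{\alpha,-m}^{-1}\circ\widehat g$ parabolic, Proposition~\ref{th1} already gives $\widehat g^{-1}=\widehat\varphi^{-1}\circ g_{\alpha,-m}^{-1}\in\widehat{\mathcal L}_2$ (resp.\ $\widehat{\mathcal L}$ when $m=0$), with the semigroup $\widetilde{\mathcal R_{\widehat g}}$ controlling the support. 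By Lemma~\ref{onne}, $g_{\alpha,-m}^{-1}(x)=(a\alpha^{-m})^{-1/\alpha}x^{1/\alpha}\boldsymbol\ell^{m/\alpha}\bigl(1+\widehat F(\boldsymbol\ell_2,\boldsymbol\ell/\boldsymbol\ell_2)\bigr)$, with $\widehat F$ the Taylor expansion of an analytic germ of two variables vanishing at $(0,0)$, which reduces to $a^{-1/\alpha}x^{1/\alpha}$ when $m=0$. It therefore remains to show that the extra Dulac structure, namely polynomial $\boldsymbol\ell$-coefficients of bounded degree at each power of $x$, propagates through both compositions to yield the refined form \eqref{Dulaccoef} with exponents $\beta_i\in\mathcal A$.

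For the case $m=0$, the map $g_{\alpha,0}^{-1}$ is a pure power, so $\widehat\varphi=x\bigl(1+\widehat T(x)\bigr)^{1/\alpha}$ with $\widehat T\in\widehat{\mathcal L}$ Dulac; expanding the binomial series preserves the Dulac property, as multiplications of series polynomial in $\boldsymbol\ell$ at each power of $x$ remain polynomial in $\boldsymbol\ell$, with degree bounded additively. The Schröder-operator expansion \eqref{eq:svei} for $\widehat\varphi^{-1}$ involves only differentiations and multiplications, each of which preserves the polynomial-in-$\boldsymbol\ell$ property at a fixed $x$-power, so $\widehat\varphi^{-1}$ is itself a Dulac series. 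Final composition with $g_{\alpha,0}^{-1}$ rescales $\boldsymbol\ell$ by a factor $\alpha$ (preserving polynomial degree) and substitutes $x\mapsto x^{1/\alpha}$, which together with the shift $+(1,0)$ in $\mathcal R_{\widehat g}$ produces exactly the powers $x^{\beta_i/\alpha}$, $\beta_i\in\mathcal A$. This establishes case~(2).

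For the case $m\neq 0$, I would track each monomial $x^\gamma\boldsymbol\ell^r\boldsymbol\ell_2^s$ of $\widehat\varphi^{-1}-\mathrm{id}$ (whose exponents lie in $\mathcal R_{\widehat g}$ by Corollary~\ref{cor:four}) through the substitution $x\mapsto g_{\alpha,-m}^{-1}(x)$, using the expansions \eqref{ellappl}. By \eqref{monom}, each such monomial contributes to $\widehat g^{-1}$ a factor of the form $x^{\gamma/\alpha}\boldsymbol\ell^{r+m\gamma/\alpha}\boldsymbol\ell_2^s\bigl(1+\widehat F_5(\boldsymbol\ell_2,\boldsymbol\ell/\boldsymbol\ell_2)\bigr)$. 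Regrouping contributions by the common $x$-exponent $\beta_i/\alpha$ and invoking the Dulac bound on $\boldsymbol\ell$-degrees of $\widehat g$ at each $x$-power, the possible $(r,s)$ range over a structured set whose $\boldsymbol\ell$-exponents are bounded below, so that the sum factors as $\boldsymbol\ell^{-M_{\beta_i}+m\beta_i/\alpha}$ times the Taylor expansion of an analytic germ $G_{\beta_i}$ in the variables $\boldsymbol\ell_2$ and $\boldsymbol\ell/\boldsymbol\ell_2$, matching \eqref{Dulaccoef}. The leading-term claim is then immediate: the top term of $g_{\alpha,-m}^{-1}$ is $(a\alpha^{-m})^{-1/\alpha}x^{1/\alpha}\boldsymbol\ell^{m/\alpha}$, $\widehat F$ vanishes at $(0,0)$, and the $\widehat\varphi^{-1}-\mathrm{id}$ contribution is of strictly higher order in $x$. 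The main obstacle will be the combinatorial bookkeeping in this last step: showing that the a priori transfinite collection of monomials at a fixed $\beta_i/\alpha$ regroups into a coefficient of precisely the claimed shape, with $M_{\beta_i}\in\mathbb N_0$ controlled uniformly by the Dulac degree of $\widehat g$ and $\widehat G_{\beta_i}$ genuinely the Taylor expansion of an analytic two-variable germ, which forces one to track analyticity through the iterated Taylor substitutions arising from the Schröder series for $\widehat\varphi^{-1}$.
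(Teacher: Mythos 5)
There is a genuine gap in your treatment of case 1 ($m\neq 0$), and it sits exactly at the step you flag as ``the main obstacle.'' You take over the decomposition of Proposition~\ref{th1}, i.e.\ you factor out only the leading \emph{monomial} $g_{\alpha,-m}(x)=ax^{\alpha}\boldsymbol\ell^{-m}$. But for a Dulac series the coefficient of $x^{\alpha}$ is a whole polynomial $P_m(\boldsymbol\ell^{-1})$ of degree $m$, and its lower-degree terms $x^{\alpha}\boldsymbol\ell^{-m+1},\dots,x^{\alpha}$ force the resulting $\widehat\varphi=g_{\alpha,-m}^{-1}\circ\widehat g$ to contain terms $cx\boldsymbol\ell^{k}$ with $k\geq 1$ and $c\neq 0$: computing $\widehat g_{\alpha,-m}(x+cx\boldsymbol\ell^{k}+\dots)=ax^{\alpha}\boldsymbol\ell^{-m}+acx^{\alpha}\boldsymbol\ell^{-m+k}+\dots$ shows these terms are precisely what reproduces the rest of $P_m$. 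So your $\widehat\varphi$ is parabolic but \emph{not strictly} parabolic. Consequently, in the Neumann/Schr\"oder series for $\widehat\varphi^{-1}$, the iterates $H_{\widehat\varphi}^{n}\cdot\mathrm{id}$ all contribute at the \emph{same} power $x^{1}$ (at orders $x\boldsymbol\ell^{nk}$), so the coefficient of a fixed power of $x$ in $\widehat\varphi^{-1}$ is an \emph{infinite} sum of blocks $P(\boldsymbol\ell^{-1})\widehat G(\boldsymbol\ell_2,\boldsymbol\ell/\boldsymbol\ell_2)$. Your regrouping argument, which needs each coefficient to be a finite sum of such blocks in order to factor it as $\boldsymbol\ell^{-M_{\beta_i}}$ times the Taylor expansion of a single analytic two-variable germ, does not go through; and that analyticity is not cosmetic, since it is what makes the coefficients convergent in the sense needed later (Proposition~\ref{lem:haa}, Theorem~B).

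The paper avoids this by changing the decomposition: it factors out the entire leading \emph{block} $g_{\alpha}(x)=x^{\alpha}P_m(\boldsymbol\ell^{-1})$ and then proves (by the computation sketched above, run as a contradiction) that the resulting $\widehat\varphi=\widehat g_{\alpha}^{-1}\circ\widehat g$ is \emph{strictly} parabolic, i.e.\ $\mathrm{ord}_x(\widehat\varphi-\mathrm{id})>1$. Strict parabolicity guarantees that only finitely many terms of the Neumann series contribute to any fixed power of $x$, so each coefficient of $\widehat\varphi^{-1}$ is a finite sum of terms $P_i(\boldsymbol\ell^{-1})\widehat F_i(\boldsymbol\ell_2,\boldsymbol\ell/\boldsymbol\ell_2)$ and regroups into the form \eqref{help2}; composing with $\widehat g_{\alpha}^{-1}$ then gives \eqref{Dulaccoef}. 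Your case 2 ($m=0$) is essentially unaffected, because there the leading block and the leading monomial coincide and $\widehat\varphi$ is automatically strictly parabolic; that part of your argument matches the paper's. To repair case 1, replace the monomial factorization by the block factorization and supply the strict-parabolicity argument.
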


\begin{proof} Denote by $g_\alpha(x):=x^{\alpha}P_m(\boldsymbol{\ell}^{-1})$, $P_m$ a polynomial of degree $m\in\mathbb N_0$, the \emph{leading block} of $g$. Put 
\begin{equation}\label{eq:new}
\widehat{g}=g_{\alpha}\circ  \widehat \varphi.
\end{equation}

\emph{1. The case $m\neq 0$.}
Similarly as in the proof of Lemma~\ref{onne}, we get the following:
\begin{align}
g_{\alpha}^{-1}(x)&=(a\alpha^{-m})^{-\frac{1}{\alpha}}\cdot x^{\frac{1}{\alpha}}\,\boldsymbol{\ell}^{\frac{m}{\alpha}}\Big(1+F\big(\boldsymbol{\ell}_2,\frac{\boldsymbol{\ell}}{\boldsymbol{\ell}_{2}}\big)\Big)\in\mathcal G_{AN},\nonumber\\
\widehat g_{\alpha}^{-1}(x)&=(a\alpha^{-m})^{-\frac{1}{\alpha}}\cdot x^{\frac{1}{\alpha}}\,\boldsymbol{\ell}^{\frac{m}{\alpha}}\Big(1+\widehat F\big(\boldsymbol{\ell}_2,\frac{\boldsymbol{\ell}}{\boldsymbol{\ell}_{2}}\big)\Big)\in\widehat {\mathcal{L}}_{2}.\label{eq:galpha}
\end{align}
Here $F(t,s)$ is a germ of two variables analytic at $(0,0)$, $F(0,0)=0$, with Taylor expansion $\widehat F$.
Since $\widehat g_\alpha^{-1}$ is of the same form as $\widehat g_{\alpha,m}^{-1}$ from Lemma~\ref{onne}, by the proof of Lemma~\ref{twwo} we immediately get that $\widehat{\varphi}:=\widehat g_\alpha^{-1}\circ \widehat g$ belongs to $\widehat{\mathcal{L}}_{2}$ and is
parabolic. Moreover, it follows that:
\begin{equation}\label{phij}
\widehat{\varphi}(x)=x\cdot\Big(1+\widehat F_{1}(\boldsymbol{\ell}_{2},\frac{\boldsymbol{\ell}}{\boldsymbol{\ell}_{2}},\widehat{T})\Big).
\end{equation}
Here, $\widehat F_{1}$ is the Taylor expansion of an analytic germ of three variables vanishing at $0$. The improvement of Lemma~\ref{twwo} in the Dulac case is the special form
of $\widehat{T}$ in \eqref{phij}. Since $\widehat{g}(x)=ax^{\alpha}\boldsymbol{\ell}^{-m}\cdot(1+\widehat{T}(x))$, $\widehat T(x)=o(1)$,
is a Dulac series, we get: 
\begin{align}
\widehat{T}(x) & =\boldsymbol{\ell}^{m}P_{0}(\boldsymbol{\ell}^{-1})+\sum_{i=1}^{\infty}x^{\alpha_{i}-\alpha}\boldsymbol{\ell}^{m}P_{i}(\boldsymbol{\ell}^{-1}).\label{oh}
\end{align}
Here, $P_0$ is a polynomial of degree strictly smaller than $m$, $\left(P_{i}\right)_{i\ge1}$ is a sequence of polynomials, and
$\alpha_{i}>\alpha$, $i\in\mathbb N$. 

We now show that $\widehat \varphi:=\widehat g_\alpha^{-1}\circ\widehat g$ is \emph{strictly} parabolic: that is, the order of $\widehat\varphi-\text{id}$ in $x$ is \emph{strictly} bigger than $1$. Indeed, suppose the contrary, that: 
$$
\widehat\varphi (x)=x+cx\boldsymbol\ell^k+\text{h.o.t.},
$$
where $c\neq 0$ and $k\in \mathbb N$. Computing the formal composition $\widehat g_\alpha (x+cx\boldsymbol\ell^k+\text{h.o.t.})$, we obtain
\begin{align*}
\widehat g_\alpha (x+cx\boldsymbol\ell^k+&\text{h.o.t.})=(x+cx\boldsymbol\ell^k+\text{h.o.t.})^\alpha P_m\Big(-\log\big(x+cx\boldsymbol\ell^k+\text{h.o.t.}\big)\Big)=\\
&=x^\alpha (1+c\boldsymbol\ell^k+o(\boldsymbol\ell^k))^\alpha\cdot P_m\Big(\boldsymbol\ell^{-1}+c\boldsymbol\ell^k+o(\boldsymbol\ell^k)\Big)=\\
&=x^\alpha  (1+c\boldsymbol\ell^k+o(\boldsymbol\ell^k))^\alpha \Big(P_m(\boldsymbol\ell^{-1})+P_m'(\boldsymbol\ell^{-1})(c\boldsymbol\ell^k+\text{h.o.t.})+...\Big)=\\
&=x^\alpha P_m(\boldsymbol\ell^{-1})+acx^\alpha\boldsymbol\ell^{-m+k} +\text{h.o.t.}
\end{align*}
Since by \eqref{eq:new} this composition should equal $\widehat g(x)$, which is the sum of $x^\alpha P_m(\boldsymbol\ell^{-1})$ and the terms of strictly higher order than $x^\alpha$ in $x$, it necessarily follows that $c=0.$

We now invert formally $\widehat{\varphi}\in\widehat{\mathcal L}_2$, which is \emph{strictly} parabolic. 
By Lemma~\ref{three} and Corollary~\ref{cor:four}, $\widehat\varphi^{-1}\in\widehat{\mathcal L}_2$, and the support in $x$ follows from Corollary~\ref{cor:four}. To estimate the precise form of series in $\boldsymbol\ell,\ \boldsymbol\ell_2$ multiplying  any power of $x$ in $\widehat{\varphi}^{-1}$ in the Dulac case,
we use the Neumann inverse series formula \eqref{eq:svei}, as in the proof
of Lemma~\ref{three}: 
\begin{align}\label{eq:how}
\widehat{\varphi}^{-1}=&\,(\mathrm{Id}+H_{\widehat{\varphi}})^{-1}\cdot\mathrm{id}:=\sum_{k=0}^{\infty}(-1)^{k}H_{\widehat{\varphi}}^{k}\cdot\mathrm{id}=\nonumber\\
=&\,\textrm{id}-\widehat h+\Big(\widehat h\circ \widehat\varphi -\widehat h\Big)+\Big(\big(\widehat h\circ \widehat\varphi -\widehat h\big)\circ\widehat\varphi-\big(\widehat h\circ \widehat\varphi -\widehat h\big)\Big)+\ldots
\end{align}
Here, $H_{\widehat{\varphi}}\cdot\widehat{f}=\widehat{f}\circ\widehat{\varphi}-\widehat{f},\ \widehat{f}\in\widehat{\mathcal{L}}_{2}$, and $\widehat h:=\widehat \varphi-\mathrm{id}$.
Due to the \emph{strict} parabolicity of $\widehat\varphi$, the above series converges in the \emph{formal topology}. 

It can be seen from \eqref{phij} and \eqref{oh} that the coefficients in front of any power of $x$ in $\widehat\varphi(x)$, that is, in $\widehat h(x)$, are  finite sums of terms of the form:
\begin{equation}\label{eq:koef}
P(\boldsymbol\ell^{-1}) \widehat G(\boldsymbol\ell_2,\frac{\boldsymbol\ell}{\boldsymbol\ell_2}),
\end{equation}
where $P$ is a polynomial and $\widehat G$ a Taylor expansion of an analytic germ $G$ of two variables. Consequently, by \eqref{eq:how}, computing formal compositions with $\widehat\varphi$ strictly parabolic, we conclude that the \emph{coefficient} of
a fixed power $x^{\beta}$ in $\widehat{\varphi}^{-1}-\mathrm{id}$
is of the same form: 
$$
\sum_{i=1}^{n_{\beta}}P_{i}(\boldsymbol{\ell}^{-1})\cdot \widehat F_{i}(\boldsymbol{\ell}_{2},\frac{\boldsymbol{\ell}}{\boldsymbol{\ell}_{2}}),\ \beta\geq1,\ n_{\beta}\in\mathbb{N},
$$
Here, $P_i$ are polynomials and $\widehat F_i$ are the Taylor expansions of analytic germs $F_i$ of two variables.
Putting $M_\beta:=\max_{j=1\ldots n_\beta}\deg(P_j)$, the coefficient can be written in a reduced form:
\begin{equation}\label{help2}
\boldsymbol\ell^{-M_\beta}\cdot \widehat G_\beta(\boldsymbol{\ell}_{2},\frac{\boldsymbol{\ell}}{\boldsymbol{\ell}_{2}}),\ M_\beta\in\mathbb N_0.
\end{equation}
Here, $\widehat G_\beta$ is the Taylor expansion of an analytic germ $G_\beta$ of two variables.

Finally, $$\widehat{g}^{-1}=\widehat{\varphi}^{-1}\circ \widehat g_{\alpha} ^{-1}=\widehat g_{\alpha} ^{-1}+(\widehat{\varphi}^{-1}-\mathrm{id})\circ \widehat g_\alpha^{-1}.$$ Note that, since $\widehat\varphi$ and thus $\widehat\varphi^{-1}$ are strictly parabolic, the leading block of  $\widehat g^{-1}$ is exactly $\widehat g_\alpha^{-1}$. The support in $x$ of $\widehat g^{-1}-\widehat g_\alpha^{-1}$ is obtained from the more general Proposition~\ref{th1}. We analyze the blocks in the particular Dulac case. Using \eqref{ellappl},\ \eqref{eq:galpha} and \eqref{help2}, we conclude
that $\widehat{g}^{-1}-\widehat g_\alpha^{-1}$ contains blocks of the type: 
\[
\boldsymbol{\ell}^{-M_\beta}\widehat H_{\beta}\big(\boldsymbol{\ell}_2,\frac{\boldsymbol{\ell}}{\boldsymbol{\ell}_{2}}\big)\cdot\big(x\boldsymbol{\ell}^{m}\big)^{\frac{\beta}{\alpha}}=\boldsymbol{\ell}^{-M_\beta+\frac{m\beta}{\alpha}}\widehat H_{\beta}\big(\boldsymbol{\ell}_2,\frac{\boldsymbol{\ell}}{\boldsymbol{\ell}_{2}}\big)\cdot x^{\frac{\beta}{\alpha}},\ \beta\in \mathcal A,\ \ M_\beta\in\mathbb N_0.
\]
Here, $\widehat H_\beta$ are Taylor expansions of  analytic germs $H_\beta$ of two variables. The exponents $\beta$ belong to the support of $x$ in $\widehat \varphi^{-1}-\mathrm{id}$, which is by Corollary~\ref{cor:four} described by $\mathcal A$ in \eqref{eq:aa}. 

Since the first block of $\widehat g^{-1}$ is $\widehat g_\alpha^{-1}$  given in \eqref{eq:galpha}, $\widehat g^{-1}$ does not contain the double logarithm in the leading term. 
\medskip

\emph{2. The case $m=0$.} We follow the same steps, but the computation is easier. In this case, $g_\alpha(x)=ax^\alpha,\ a\in\mathbb R$, and $\widehat\varphi$ is strictly parabolic and Dulac. It follows that $\widehat\varphi^{-1}$  and consequently $\widehat g^{-1}=\widehat \varphi^{-1}\circ \widehat g_\alpha^{-1}$ are also Dulac.
\end{proof}

\begin{prop}\label{lem:haa} Let $g\in \mathcal G_{AN}$ be a Dulac germ and $\widehat g\in \widehat{\mathcal L}$ its Dulac expansion. Then the formal inverse $\widehat g^{-1}\in\widehat{\mathcal L}_2$ from \eqref{Dulaccoef} is the sectional asymptotic expansion of  the inverse germ $g^{-1}\in\mathcal G_{AN}$ with respect to any integral section. That is,
$$
\widehat{g^{-1}}=\widehat g^{-1}.
$$
More precisely, if $m\neq 0$, for every $n\in\mathbb N$, 
\begin{align}\label{eq:gmoins}
g^{-1}(x)-\sum_{i=1}^{n} & f_{\beta_i} (\boldsymbol\ell) x^{\frac{\beta_i}\alpha}=o(x^{\frac{\beta_n}\alpha}), \ x\to 0, \nonumber\\
&{f}_{\beta_i}(y)=y^{-M_{\beta_i}+\frac{m\beta_i}{\alpha}} G_{\beta_i}\Big(\boldsymbol{\ell}(y),\frac{y}{\boldsymbol{\ell}(y)}\Big)\in \mathcal G_{AN},\ M_{\beta_i}\in\mathbb{N}_0,
\end{align}
where $\beta_i\in\mathcal A$, $i\in\mathbb N$, are as in \eqref{Dulaccoef} and $G_{\beta_i}$ are analytic counterparts of $\widehat G_{\beta_i}$ from \eqref{Dulaccoef}.
\end{prop}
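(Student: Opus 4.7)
\smallskip

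The plan is to lift the formal decomposition from the proof of Proposition~\ref{prop:refin} to the analytic level, then read off the sectional asymptotic expansion of $g^{-1}$ by inverting each factor in $\mathcal{G}_{AN}$. Set $g_\alpha(x):=ax^\alpha P_m(\boldsymbol{\ell}^{-1})$, which is analytic on $(0,d)$ for $d>0$ small (it is a finite $\mathbb{R}$-linear combination of analytic monomials), and define $\varphi:=g_\alpha^{-1}\circ g\in\mathcal{G}_{AN}$. From the formal computation \eqref{eq:new}--\eqref{phij} applied to Dulac expansions, $\varphi$ is strictly parabolic and its Dulac expansion is the series $\widehat{\varphi}$ from Proposition~\ref{prop:refin}. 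Thus the analytic factorization $g=g_\alpha\circ\varphi$ mirrors the formal factorization $\widehat{g}=g_\alpha\circ\widehat{\varphi}$.

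Next, invert each factor analytically. Applying the analytic implicit function theorem as in the proof of Lemma~\ref{onne} yields
$$
g_\alpha^{-1}(x)=(a\alpha^{-m})^{-1/\alpha}x^{1/\alpha}\boldsymbol{\ell}^{m/\alpha}\bigl(1+F(\boldsymbol{\ell}_2,\boldsymbol{\ell}/\boldsymbol{\ell}_2)\bigr),
$$
with $F$ analytic at $(0,0)$. This germ has the shape \eqref{intsum}, so it is the integral sum of $\widehat{g}_\alpha^{-1}$ in \eqref{eq:galpha} and, by Definition~\ref{def:is}, its sectional expansion with respect to any integral section $\mathbf s$ equals $\widehat{g}_\alpha^{-1}$. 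On the other hand, since $\varphi$ is an analytic strictly parabolic germ in $\mathcal{G}_{AN}$, the inverse function theorem on $(0,d)$ provides an analytic inverse $\varphi^{-1}\in\mathcal{G}_{AN}$. The Neumann iteration \eqref{eq:how} applied to $\varphi$ at the analytic level converges block-by-block in powers of $x$: each analytic coefficient in front of a fixed power $x^\beta$ is again of the shape \eqref{help2}, with the polynomial part and the analytic germ $G_\beta(\boldsymbol{\ell}_2,\boldsymbol{\ell}/\boldsymbol{\ell}_2)$ being the convergent counterparts of the formal polynomial and the formal Taylor expansion $\widehat G_\beta$. Thus each such coefficient is an integral sum of its formal counterpart, and the sectional asymptotic expansion of $\varphi^{-1}$ with respect to $\mathbf s$ is the formal inverse $\widehat{\varphi}^{-1}$.

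Finally, we compose: $g^{-1}=\varphi^{-1}\circ g_\alpha^{-1}$ holds analytically on $(0,d)$, and since the inner germ $g_\alpha^{-1}$ tends to $0$ and all transseries involved lie in $\widehat{\mathfrak L}$, the sectional asymptotic expansion of a composition is the formal composition of the expansions. Hence the sectional asymptotic expansion of $g^{-1}$ with respect to $\mathbf s$ equals $\widehat{\varphi}^{-1}\circ\widehat{g}_\alpha^{-1}=\widehat{g}^{-1}$, giving the desired identity \eqref{eq:gmoins} with the analytic blocks $f_{\beta_i}$ being exactly the integral sums of the formal blocks $\widehat f_{\beta_i}$ from \eqref{Dulaccoef}. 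The case $m=0$ is strictly simpler, since $g_\alpha(x)=ax^\alpha$ inverts to an analytic Dulac germ and the Neumann iteration stays in the Dulac class.

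The main obstacle is the intermediate step for $\varphi^{-1}$: one must verify that, at each step of the Neumann iteration, the analytic coefficient of every power $x^\beta$ is genuinely the integral sum of its formal counterpart. This relies on (i)~the polynomial-in-$\boldsymbol{\ell}^{-1}$ structure of $\widehat g$ (so that the coefficients \eqref{eq:koef} are convergent in the appropriate variables), (ii)~the fact that integral summation commutes with analytic composition in the variables $(\boldsymbol{\ell}_2,\boldsymbol{\ell}/\boldsymbol{\ell}_2)$, and (iii)~uniqueness of integral sums in $\widehat{\mathcal L}_1^I$ up to exponentially small error (Remark~\ref{rem:helpdif}), which is absorbed in the $o(x^{\beta_n/\alpha})$ remainder of \eqref{eq:gmoins}.
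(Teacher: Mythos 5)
Your overall strategy is exactly the paper's: factor $g=g_\alpha\circ\varphi$ with $g_\alpha(x)=x^\alpha P_m(\boldsymbol{\ell}^{-1})$, invert $g_\alpha$ by the analytic implicit function theorem, invert the strictly parabolic $\varphi$ via the Neumann series, and compose. The inversion of $g_\alpha$ and the final composition are fine as you present them, and your observation that the blocks are convergent (so coherence of integral sections settles the choice of sums) is the right reduction.

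There is, however, one genuine gap, and it sits precisely where you wave your hands with ``converges block-by-block.'' At the formal level the Neumann series $\sum_k(-1)^kH_{\widehat\varphi}^k\cdot\mathrm{id}$ converges in the formal topology and \emph{is} $\widehat\varphi^{-1}$ by construction. At the germ level the partial sums $S_n=\sum_{k=0}^{n}(-1)^{k}H_{\varphi}^{k}\cdot\mathrm{id}$ are honest germs in $\mathcal G_{AN}$, but nothing you have said relates them to the actual inverse germ $\varphi^{-1}$ produced by the inverse function theorem. Strict parabolicity gives you that only finitely many Neumann terms contribute below any fixed power $x^\gamma$, but it does not by itself give $S_{n}(x)-\varphi^{-1}(x)=O(x^{\gamma})$, which is what \eqref{eq:gmoins} requires. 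The paper closes this by the telescoping identity
\begin{equation*}
S_{n}\bigl(\varphi(x)\bigr)=(\mathrm{Id}+H_{\varphi})\cdot S_{n}=x+(-1)^{n}H_{\varphi}^{n+1}\cdot\mathrm{id},
\end{equation*}
together with the inductive estimate $H_{\varphi}^{n+1}\cdot\mathrm{id}=O(x^{\gamma})$ for $n$ large (using $H_\varphi\cdot\mathrm{id}=o(x^{1+\delta})$), whence $S_{n}\circ\varphi=\mathrm{id}+O(x^{\gamma})$ and therefore $S_{n}=\varphi^{-1}+O(x^{\gamma})$. You list three ``main obstacles,'' but all three concern the shape of the coefficients; the missing item is this approximation estimate, without which the identification of the asymptotic expansion of the germ $\varphi^{-1}$ with the formal Neumann series $\widehat\varphi^{-1}$ is unproven. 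A minor secondary point: your appeal to Remark~\ref{rem:helpdif} (exponentially small ambiguity of integral sums) is unnecessary here, since the blocks $\widehat f_{\beta_i}$ in \eqref{Dulaccoef} are convergent and coherent sections assign them their unique sums.
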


\noindent The proof is in the Appendix.

\section{Proof of Theorem~A}

\label{sec:proof_theorem_A}
\begin{proof}[Proof of Theorem~A]
 Let $\widehat{g}=\mathrm{id}-\widehat{f}$. The formal continuous
time length of $\varepsilon$-neighborhoods of orbits for $\widehat{f}$
is given by the formula: 
\begin{equation}\label{eq:flaforA}
\widehat{A}_{\widehat{f}}^{c}(\varepsilon)=\widehat{g}^{-1}(2\varepsilon)+2\varepsilon\cdot\widehat{\Psi}\big(\widehat{g}^{-1}(2\varepsilon)\big).
\end{equation}
By Proposition~\ref{th1}, we have that $\widehat{g}^{-1}\in\widehat{\mathcal{L}}_{2}$ and does not contain a double logarithm in the leading term.

By \cite[Proposition~4.3]{MRRZ2Fatou},
the formal Fatou coordinate $\widehat{\Psi}$ of $\widehat{f}$ exists in $\widehat{\mathcal{L}}_{2}^{\infty}$
and is unique (up to an additive constant) in $\widehat{\mathfrak{L}}$.
Furthermore, by \cite[Remark~7.1]{MRRZ2Fatou} there exists $\rho\in\mathbb{R}$
such that 
\[
\widehat{\Psi}-\rho\boldsymbol\ell_2^{-1}\in\widehat{\mathcal{L}}^{\infty}.
\]
By formal composition it now easily follows that $\widehat{\Psi}\circ\widehat{g}^{-1}\in\widehat{\mathcal{L}}_{2}^{\infty}$. The conclusion of  Theorem~A follows. 
\end{proof}
\medskip

The following remark provides more precise information on $\widehat{A}_{\widehat{f}}^{c}(\varepsilon)$:
\begin{obs} \label{obs:normal} Let $\widehat f\in\widehat{\mathcal L}$ parabolic, $\widehat g=\mathrm{id}-\widehat f$. Let $$\widehat g(x)=ax^\alpha\boldsymbol\ell^m+\mathrm{h.o.t.},\ a\neq 0,\ (\alpha,m)\succ (1,0),$$
where $\succ$ denotes the lexicographical order on $\mathbb{R}^2$.\\ 

(1) (The residual term of $\widehat{A}_{\widehat{f}}^{c}(\varepsilon)$) If $\widehat{g}$ does not contain a logarithm
in the leading term, then by Proposition~\ref{th1} $\widehat g^{-1}\in\widehat{\mathcal L}$, without logarithm in the leading term. By \cite[Remark~7.1]{MRRZ2Fatou}, there exists $\rho\in\mathbb R$ such that $\widehat\Psi-\rho\boldsymbol\ell_2^{-1}\in \widehat{\mathcal L}^\infty$. Consequently, by \eqref{eq:flaforA},
$$\widehat{A}_{\widehat{f}}^{c}(\varepsilon)-\rho\cdot2 \varepsilon\boldsymbol{\ell}_{2}(\varepsilon)^{-1}\in\widehat{\mathcal{L}}.$$ 
We will call $\rho\cdot2 \varepsilon\boldsymbol{\ell}_{2}(\varepsilon)^{-1}$ \emph{the residual term} of $\widehat{A}_{\widehat{f}}^{c}(\varepsilon)$. See Example~\ref{dubl} in Section~\ref{sec:examples}.

Notice that, in the general case where $\widehat g$ contains logarithm in the leading term, $\widehat g^{-1}\in\widehat{\mathcal L}_2$ and there may be other terms with double logarithms in $\widehat{A}_{\widehat{f}}^{c}(\varepsilon)$. See Section~\ref{sec:ptC} for a detailed analysis.
\smallskip

(2) (The leading term of $\widehat{A}_{\widehat{f}}^{c}(\varepsilon)$) Inverting formally as in Proposition~\ref{th1}, we get 
\begin{equation*}
\widehat{g}^{-1}(2\varepsilon)=(2/a)^{1/\alpha}\alpha^{-m/\alpha}\varepsilon^{1/\alpha}\boldsymbol{\ell}(\varepsilon)^{-m/\alpha}+\mathrm{h.o.t.}
\end{equation*}
By \cite[Remark~7.1]{MRRZ2Fatou} (or directly computing the first term of the Fatou coordinate $\widehat \Psi$ from the Abel equation):
\begin{equation*}
\widehat{\Psi}(x)=\begin{cases}-\frac{1}{a}\frac{1}{1-\alpha}x^{-\alpha+1}\boldsymbol{\ell}^{-m}+\text{h.o.t.},& \alpha\neq 1,\\
\frac{1}{a(m+1)}\boldsymbol\ell^{-m-1}+\mathrm{h.o.t.},& \alpha=1,\ m\in\mathbb N.
\end{cases}
\end{equation*}
 Therefore, by \eqref{eq:flaforA},
$$
\widehat{A}_{\widehat{f}}^{c}(\varepsilon)=\begin{cases}c \varepsilon^{1/\alpha}\boldsymbol\ell(\varepsilon)^{-m/\alpha}+\mathrm{h.o.t.},\ c\in\mathbb R,&\ \alpha\neq 1,\\
\frac{1}{a(m+1)}\varepsilon\boldsymbol\ell(\varepsilon)^{-m-1}+\mathrm{h.o.t.},&\ \alpha=1,\ m\in\mathbb N.\end{cases}
$$
\end{obs}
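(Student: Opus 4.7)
The plan is to deduce both parts from the defining identity
$$
\widehat{A}_{\widehat{f}}^{c}(\varepsilon)=\widehat{g}^{-1}(2\varepsilon)+2\varepsilon\cdot\widehat{\Psi}\!\big(\widehat{g}^{-1}(2\varepsilon)\big),
$$
combining Proposition~\ref{th1} on the structure of $\widehat{g}^{-1}$ with the decomposition $\widehat{\Psi}=\rho\,\boldsymbol{\ell}_{2}^{-1}+\widehat{\Psi}_{0}$, $\widehat{\Psi}_{0}\in\widehat{\mathcal{L}}^{\infty}$, recalled from \cite[Remark~7.1]{MRRZ2Fatou}. The strategy is to isolate the unique genuinely double-logarithmic monomial and absorb every remaining piece into $\widehat{\mathcal{L}}$ by exponent bookkeeping.

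For part~(1), the hypothesis $m=0$ triggers Proposition~\ref{th1}(2), giving $\widehat{g}^{-1}\in\widehat{\mathcal{L}}$, so the first summand of the identity already lies in $\widehat{\mathcal{L}}$. Writing $\widehat{\Psi}=\rho\boldsymbol{\ell}_{2}^{-1}+\widehat{\Psi}_{0}$ splits the remaining $2\varepsilon\widehat{\Psi}$-piece into $2\rho\varepsilon\boldsymbol{\ell}_{2}(\widehat{g}^{-1}(2\varepsilon))^{-1}$ and $2\varepsilon\widehat{\Psi}_{0}(\widehat{g}^{-1}(2\varepsilon))$. The latter is the composition of $\widehat{\Psi}_{0}\in\widehat{\mathcal{L}}^{\infty}$ (which by definition of $\widehat{\mathcal{L}}^{\infty}$ contains no $\boldsymbol{\ell}_{2}$) with an infinitesimal element of $\widehat{\mathcal{L}}$; since the leading order of $\widehat{\Psi}_{0}\circ\widehat{g}^{-1}(2\varepsilon)$ in $\varepsilon$ is $(1-\alpha)/\alpha$ with $\alpha>1$, the prefactor $\varepsilon$ lifts it to the positive exponent $1/\alpha$, placing the piece in $\widehat{\mathcal{L}}$. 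For the former, I use $\boldsymbol{\ell}_{2}(x)^{-1}=-\log\boldsymbol{\ell}(x)$ and the expansion $\boldsymbol{\ell}(\widehat{g}^{-1}(2\varepsilon))=\alpha\,\boldsymbol{\ell}(\varepsilon)\,(1+\widehat{H})$, where $\widehat{H}\in\widehat{\mathcal{L}}^{\infty}$ has positive order in $\boldsymbol{\ell}$ and no $\boldsymbol{\ell}_{2}$ (a direct consequence of $\widehat{g}^{-1}(2\varepsilon)\sim(2/a)^{1/\alpha}\varepsilon^{1/\alpha}$), yielding
$$
\boldsymbol{\ell}_{2}\!\big(\widehat{g}^{-1}(2\varepsilon)\big)^{-1}=\boldsymbol{\ell}_{2}(\varepsilon)^{-1}-\log\alpha-\log(1+\widehat{H}).
$$
Neither $\log\alpha$ nor $\log(1+\widehat{H})$ involves $\boldsymbol{\ell}_{2}$; multiplying by $2\rho\varepsilon$ and subtracting the residual $2\rho\varepsilon\boldsymbol{\ell}_{2}(\varepsilon)^{-1}$ thus leaves terms in $\widehat{\mathcal{L}}$, establishing the claim.

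Part~(2) is a direct leading-order computation. Lemma~\ref{onne} supplies the leading monomial of $\widehat{g}^{-1}(2\varepsilon)$ (using $\boldsymbol{\ell}(2\varepsilon)\sim\boldsymbol{\ell}(\varepsilon)$ at leading order), while the leading term of $\widehat{\Psi}$ follows from formally integrating the truncated Abel equation $\widehat{\Psi}'\sim -1/\widehat{g}$; the required antidifferentiations reduce to the identities $(x^{1-\alpha}\boldsymbol{\ell}^{-m})'=(1-\alpha)x^{-\alpha}\boldsymbol{\ell}^{-m}+\mathrm{h.o.t.}$ for $\alpha\neq 1$ and $(\boldsymbol{\ell}^{-m-1})'=-(m+1)x^{-1}\boldsymbol{\ell}^{-m}$ for $\alpha=1$, $m\in\mathbb{N}$. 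Substituting these leading terms back into the defining identity and collecting like monomials (the two summands contribute at the same order $\varepsilon^{1/\alpha}\boldsymbol{\ell}^{-m/\alpha}$ when $\alpha\neq 1$, while $\varepsilon\,\boldsymbol{\ell}^{-m-1}$ dominates $\varepsilon\,\boldsymbol{\ell}^{-m}$ when $\alpha=1$) yields the stated expressions.

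The main obstacle is in part~(1): ensuring that $2\rho\varepsilon\boldsymbol{\ell}_{2}(\varepsilon)^{-1}$ is the \emph{unique} double-logarithmic monomial in the entire transfinite expansion of $\widehat{A}_{\widehat{f}}^{c}$. This depends critically on the refined structural result $\widehat{\Psi}-\rho\boldsymbol{\ell}_{2}^{-1}\in\widehat{\mathcal{L}}^{\infty}$ from \cite[Remark~7.1]{MRRZ2Fatou} (ruling out hidden $\boldsymbol{\ell}_{2}$-terms introduced by $\widehat{\Psi}_{0}$ after composition) and on the iterated-logarithm expansion of $\boldsymbol{\ell}_{2}\circ\widehat{g}^{-1}$ via $\boldsymbol{\ell}_{2}^{-1}=-\log\boldsymbol{\ell}$, which is the technical core of the argument.
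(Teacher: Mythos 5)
Your argument is correct and follows essentially the same route as the paper: Proposition~\ref{th1} for the structure of $\widehat g^{-1}$, the decomposition $\widehat\Psi=\rho\boldsymbol\ell_2^{-1}+\widehat\Psi_0$ from \cite[Remark~7.1]{MRRZ2Fatou}, and substitution into \eqref{eq:flaforA}, with your explicit expansion $\boldsymbol\ell_2(\widehat g^{-1}(2\varepsilon))^{-1}=\boldsymbol\ell_2(\varepsilon)^{-1}-\log\alpha-\log(1+\widehat H)$ merely spelling out the step the paper leaves implicit. (As a side note, tracking constants in the case $\alpha=1$ your computation actually yields $\frac{2}{a(m+1)}\varepsilon\boldsymbol\ell^{-m-1}$, so the coefficient printed in the remark appears to be off by a factor of $2$; this does not affect the validity of your reasoning.)
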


\section{Proof of Theorem~B}\label{sec:proof_theorem_B}

\noindent We first state and prove Definition~\ref{def:vari} and Lemmas~\ref{lem:aepsi}--\ref{lema:triB} used in the proof of Theorem~B.

\begin{defi}\label{def:vari} \

$(1)$ We say that a transseries $\widehat f\in\widehat{\mathcal L}^\infty$ is a \emph{Laurent transseries in $\widehat {\mathcal L}^\infty$} if its terms can be regrouped in the form:
\begin{equation}\label{def:gL}
\widehat f(y):= y^{\gamma}\widehat F\Big(\boldsymbol\ell(y),\frac{y}{\boldsymbol\ell(y)}\Big),
\end{equation}
where $\gamma\in\mathbb R$ and $\widehat F$ is the Taylor expansion of a germ $F$ of two variables analytic at $(0,0)$.
\smallskip

$(2)$ We say that a transseries $\widehat f\in\widehat{\mathcal L}^\infty$ is a \emph{generalized Laurent transseries in $\widehat {\mathcal L}^\infty$}, if it can be written as a \emph{finite} sum of Laurent transseries.
\end{defi}

Note that generalized Laurent transseries are \emph{convergent} in the sense of \cite[Definition~3.7]{MRRZ2Fatou}. The sum of Laurent transseries $\widehat f$ from \eqref{def:gL} is thus unique: 
$$f(y):=y^\gamma F\Big(\boldsymbol\ell(y),\frac{y}{\boldsymbol\ell(y)}\Big)\in\mathcal G_{AN}.$$ The sum of a generalized Laurent transseries is analogously the sum of its Laurent summands, thus unique. 

Note further that all derivatives of generalized Laurent transseries are again generalized Laurent, therefore \emph{convergent}. Moreover, the sums and the derivatives commute.

\begin{lem}\label{lem:aepsi}
Let $f(x)=x-ax^\alpha\boldsymbol\ell^m+o(x^\alpha\boldsymbol\ell^m)$, $\alpha>1,\ m\in\mathbb N_0,\ a>0$, be a Dulac germ. There exists $\delta>0$ such that $A_f^c(\varepsilon,x_0)-A_f(\varepsilon,x_0)=O(\varepsilon^{1+\delta})$. More precisely, it holds that $$A_f^c(\varepsilon,x_0)-A_f(\varepsilon,x_0)=\varepsilon^{2-\frac{1}{\alpha}}\boldsymbol\ell^{\frac{m}{\alpha}}k(\varepsilon),$$
where $k(\varepsilon)=O(1)$ is a high-amplitude oscillatory function $($and consequently, does not admit an asymptotic behavior, as $\varepsilon\to 0)$. That is, there exist two sequences $(\varepsilon_n^1)\to 0$ and $(\varepsilon_n^2)\to 0$ and $A,\ B\in\mathbb R,\ A<B,$ such that $h(\varepsilon_n^1)<A<B<h(\varepsilon_n^2),\ n\in\mathbb N$.   
\end{lem}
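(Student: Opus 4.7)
The plan is to express the difference $A_f^c(x_0,\varepsilon)-A_f(x_0,\varepsilon)$ as a one-parameter flow remainder and then expand it via a Lie series. First I would set $\theta_\varepsilon := n_\varepsilon(x_0)-\tau_\varepsilon(x_0)\in[0,1)$, which is well-defined by Proposition~\ref{prop:integer_part}, and put $z := g^{-1}(2\varepsilon)$, so that $f^{\tau_\varepsilon}(x_0)=z$ and $f^{n_\varepsilon}(x_0)=f^{\theta_\varepsilon}(z)$. Subtracting \eqref{nhood} from \eqref{Ac} then gives
\begin{equation*}
A_f^c(x_0,\varepsilon)-A_f(x_0,\varepsilon) \;=\; R(\theta_\varepsilon), \qquad R(\theta) := \big(z-f^\theta(z)\big)-\theta\,g(z).
\end{equation*}
The key observation is that $R(0)=0=R(1)$, so $R$ vanishes at both endpoints of $[0,1]$; I then aim to show that its leading non-zero asymptotic coefficient is $\tfrac{1}{2}\xi'(z)\xi(z)\cdot\theta(1-\theta)$.

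To derive this expansion, I would use the Lie series $f^\theta(z) = \sum_{k\geq 0}\tfrac{\theta^k}{k!}L^{k-1}(\xi)(z)$, where $L = \xi\,\partial_x$ and $L^{-1}(\xi):=\mathrm{id}$. Matching at $\theta=1$ forces the structural identity $-\xi = g+\tfrac{1}{2}L(\xi)+\tfrac{1}{6}L^2(\xi)+\cdots$, and a direct substitution yields
\begin{equation*}
R(\theta) \;=\; \sum_{k\geq 2}\frac{\theta-\theta^k}{k!}\,L^{k-1}(\xi)(z) \;=\; \frac{\theta(1-\theta)}{2}\,\xi'(z)\xi(z) \;+\; \sum_{k\geq 3}\frac{\theta-\theta^k}{k!}\,L^{k-1}(\xi)(z).
\end{equation*}
From $\xi = -g + O(gg')$ I get $\xi'(z)\xi(z) = g'(z)g(z)(1+o(1)) = 2\varepsilon\,g'(z)(1+o(1))$. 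Plugging in the asymptotics $z\sim(2\varepsilon/(a\alpha^m))^{1/\alpha}\boldsymbol\ell(\varepsilon)^{-m/\alpha}$ and $\boldsymbol\ell(z)\sim\alpha\boldsymbol\ell(\varepsilon)$ from Proposition~\ref{th1}, a direct calculation gives $2\varepsilon\,g'(z) = 2C_0\,\varepsilon^{2-1/\alpha}\boldsymbol\ell(\varepsilon)^{m/\alpha}(1+o(1))$ for an explicit constant $C_0>0$ depending only on $(a,\alpha,m)$. Hence, uniformly for $\theta\in[0,1]$,
\begin{equation*}
R(\theta) = C_0\,\theta(1-\theta)\,\varepsilon^{2-1/\alpha}\boldsymbol\ell(\varepsilon)^{m/\alpha}(1+o(1)),\qquad \varepsilon\to 0,
\end{equation*}
which yields the claimed form with $k(\varepsilon) = C_0\,\theta_\varepsilon(1-\theta_\varepsilon)(1+o(1)) = O(1)$.

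For the high-amplitude oscillation, I will exploit the fact that $\tau_\varepsilon = \Psi(g^{-1}(2\varepsilon))-\Psi(x_0)$ is continuous, strictly monotone and tends to $+\infty$ as $\varepsilon\to 0^+$. Consequently, for every sufficiently large integer $n$ I can solve $\tau_{\varepsilon_n^1}=n$ and $\tau_{\varepsilon_n^2}=n-\tfrac{1}{2}$, producing two sequences $\varepsilon_n^1,\varepsilon_n^2\to 0$ with $\theta_{\varepsilon_n^1}=0$ and $\theta_{\varepsilon_n^2}=\tfrac{1}{2}$ respectively. Thus $k(\varepsilon_n^1)\to 0$ while $k(\varepsilon_n^2)\to C_0/4$, and choosing $A := C_0/16$ and $B := C_0/8$ gives the strong separation $k(\varepsilon_n^1)<A<B<k(\varepsilon_n^2)$ for $n$ large. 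Because $k(\varepsilon)$ admits two distinct subsequential limits, it cannot have any power-log asymptotic behaviour as $\varepsilon\to 0$.

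The main obstacle I foresee is the \emph{uniform} control of the Lie-series tail $\sum_{k\geq 3}\tfrac{\theta-\theta^k}{k!}L^{k-1}(\xi)(z)$ over $\theta\in[0,1]$, and verifying that it is indeed of strictly smaller asymptotic order than $\xi'(z)\xi(z)$ in the Dulac scale. At each additional step one acquires a factor $g'(z)\sim\varepsilon^{(\alpha-1)/\alpha}\boldsymbol\ell(\varepsilon)^{m/\alpha}$, which is genuinely infinitesimal thanks to $\alpha>1$; combined with the bound $L^k(\xi)(z) = O\big(g(z)(g'(z))^k\big) + \mathrm{h.o.t.}$, this provides the required strict decrease. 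Making this rigorous amounts to a careful but routine comparison of the Taylor coefficients of $\theta\mapsto f^\theta(z)$ with the Dulac scale, after which the lemma follows.
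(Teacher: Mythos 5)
Your proposal is correct and is, in substance, the same argument as the paper's: the paper likewise reduces the difference to a second-order Taylor expansion along the flow between the continuous critical time $\tau_\varepsilon$ and $n_\varepsilon=\lceil\tau_\varepsilon\rceil$, obtains the leading coefficient $\tfrac12\xi'\xi\big(g^{-1}(2\varepsilon)\big)\sim \varepsilon\, g'\big(g^{-1}(2\varepsilon)\big)$ with the same constant $a^{1/\alpha}\alpha^{1+m/\alpha}2^{1-1/\alpha}$, and produces the oscillation from the fractional part $h(\varepsilon)=n_\varepsilon-\tau_\varepsilon$ through the factor $h(1-h)$, using exactly your two sequences ($h=0$ at the critical values, $h=\tfrac12$ in between). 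The only real difference is how the remainder is controlled: the paper expands $\Psi^{-1}$ at $\tau_\varepsilon+\Psi(x_0)$ with the exact Lagrange form of the second-order remainder, so no infinite tail ever appears, whereas you invoke the full Lie series in $\theta$ and correctly flag the uniform control of $\sum_{k\ge3}$ as the remaining obstacle. You can close that gap the same way the paper does: since $\frac{d^2}{d\theta^2}f^\theta(z)=\xi'\xi\big(f^\theta(z)\big)$, write $R(\theta)$ with a Lagrange remainder evaluated at an intermediate point of the segment $[f^{\theta}(z),z]$, on which $\xi'\xi$ is uniformly asymptotic to $\xi'\xi(z)$; this replaces the tail estimate by a single exact two-term Taylor formula. (Incidentally, your orientation $A_f^c-A_f\ge0$ is the consistent one; the final display of the paper's proof states the difference with the opposite sign relative to its own first display, which is immaterial for the statement of the lemma.)
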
 
                    
Note that Lemma~\ref{lem:aepsi} implies that $\varepsilon\mapsto A_f(\varepsilon,x_0)$ indeed does not have a complete asymptotic expansion in a power-logarithm scale. By Lemma~\ref{lem:aepsi}, since $\varepsilon\mapsto A_f^c(\varepsilon,x_0)$ has a complete asymptotic expansion in the power-log scale, the asymptotic expansion of $\varepsilon\mapsto A_f(x_0,\varepsilon)$ still exists at all power-logarithmic terms of order \emph{strictly smaller} than $\varepsilon^{2-\frac{1}{\alpha}}$ in $\varepsilon$. 

\begin{proof}
We compute:
\begin{align}\label{eq:gere}
A_f^c(\varepsilon,x_0)-&A_f(\varepsilon,x_0)=x_{\tau_\varepsilon}-x_{n_\varepsilon}+2\varepsilon (\tau_\varepsilon-n_\varepsilon)=\nonumber\\
&=\Psi^{-1}(\tau_\varepsilon+\Psi(x_0))-\Psi^{-1}(n_\varepsilon+\Psi(x_0))+2\varepsilon(\tau_\varepsilon-n_\varepsilon)=\nonumber\\
&=-\Big((\Psi^{-1})'\big(\tau_\varepsilon+\Psi(x_0)\big)+\frac{1}{2}(\Psi^{-1})''(\eta(\varepsilon))(n_\varepsilon-\tau_\varepsilon)+2\varepsilon\Big)(n_\varepsilon-\tau_\varepsilon).
\end{align}
Here, $\eta(\varepsilon)$ lies between $\tau_\varepsilon+\Psi(x_0)$ and $n_\varepsilon+\Psi(x_0)$. Note that
$n_\varepsilon=\lceil \tau_\varepsilon \rceil$, so $n_\varepsilon-\tau_\varepsilon=O(1)$. Additionaly, let $(\varepsilon_n)\to 0$ be the critical sequence: $\varepsilon_n:=\frac{x_n-x_{n+1}}{2}$, $n\in\mathbb N$. Denote $h(\varepsilon):=n_\varepsilon-\tau_\varepsilon$. It holds that $h$ is continuous on $(\varepsilon_{n+1},\varepsilon_n)$ and $h(\varepsilon_n-)=h(\varepsilon_n)=1, \ h(\varepsilon_n+)=0$, $n\in\mathbb N$. This shows that $h(\varepsilon)=O(1)$ does not have an asymptotic behavior in any continuous scale and does not tend to $0$. So $O(1)$ is indeed the optimal estimate. 

Let $X=\xi\frac{d}{dx}$ be the vector field whose time-one map is $f$. Then $\Psi'=\frac{1}{\xi}$ and $f=\text{Exp}(\xi\frac{d}{dx}).\mathrm{id}=\mathrm{id}+\xi+\frac{1}{2}\xi'\xi+\ldots$, for details see \cite[Section 4]{MRRZ2Fatou}. Since $\alpha>1$, the orders of the terms in the expansion are strictly increasing. Therefore,
$$
\xi(x)=-g(x)-\frac{1}{2}\xi'(x)\xi(x)+\ldots
$$
Thus, $\xi(x)\sim -ax^\alpha\boldsymbol\ell^m$, $\xi'(x)\sim -a\alpha x^{\alpha-1}\boldsymbol\ell^m$, as $x\to 0$. By Proposition~\ref{th1} $$g^{-1}(2\varepsilon)\sim (a\alpha^m)^{-1/\alpha}2^{\frac{1}{\alpha}}\cdot \varepsilon^{1/\alpha} \boldsymbol\ell(\varepsilon)^{-\frac{m}{\alpha}},\ \varepsilon\to 0.$$

\noindent We now estimate the leading terms in \eqref{eq:gere}:
\begin{align}\label{eq:firstone}
(\Psi^{-1})'(\tau_\varepsilon+\Psi(x_0))&=\frac{1}{\Psi'(\Psi^{-1}(\tau_\varepsilon+\Psi(x_0)))}=\xi(g^{-1}(2\varepsilon))=\nonumber\\
&=-g(g^{-1}(2\varepsilon))-\frac{\xi(g^{-1}(2\varepsilon))\xi'(g^{-1}(2\varepsilon))}{2}+\ldots=\nonumber\\
&=-2\varepsilon-a^{1/\alpha}\alpha^{1+m/\alpha}2^{1-1/\alpha}\varepsilon^{2-\frac{1}{\alpha}}\boldsymbol\ell^{\frac{m}{\alpha}}+o(\varepsilon^{2-\frac{1}{\alpha}}\boldsymbol\ell^{\frac{m}{\alpha}}).
\end{align}

\noindent Also, it can easily be computed that:
\begin{align}\label{eq:firsttwo}
(\Psi^{-1})''(\eta(\varepsilon))&\sim \xi(g^{-1}(2\varepsilon))\xi'(g^{-1}(2\varepsilon))\nonumber\\
&=a^{1/\alpha}\alpha^{1+m/\alpha}2^{2-1/\alpha}\varepsilon^{2-\frac{1}{\alpha}}\boldsymbol\ell^{\frac{m}{\alpha}}+o(\varepsilon^{2-\frac{1}{\alpha}}\boldsymbol\ell^{\frac{m}{\alpha}}).
\end{align}

\noindent Finally, putting \eqref{eq:firstone} and \eqref{eq:firsttwo} into \eqref{eq:gere}, we get:  
$$
A_f(\varepsilon,x_0)-A_f^c(\varepsilon,x_0)=a^{1/\alpha}\alpha^{1+m/\alpha}2^{1-1/\alpha}\varepsilon^{2-\frac{1}{\alpha}}\boldsymbol\ell^{\frac{m}{\alpha}}\cdot\big(h(\varepsilon)-h^2(\varepsilon)+o(1)\big),\ \varepsilon\to 0.
$$
Put $k(\varepsilon):=a^{1/\alpha}\alpha^{1+m/\alpha}2^{1-1/\alpha}\big(h(\varepsilon)-h^2(\varepsilon)\big)$. Obviously, $k(\varepsilon)=O(1)$ (since $h$ is) and is high-amplitude oscillatory by the properties of $h$. Indeed, there exists a sequence $(\varepsilon_n^1)\to 0$ such that $h(\varepsilon_n)=\frac{1}{2}$. Then $k(\varepsilon_n^1)=a^{1/\alpha}\alpha^{1+m/\alpha}2^{-1-1/\alpha}>0$ and $k(\varepsilon_n)=0$, $n\in\mathbb N$.
\end{proof}   

\begin{lem}\label{lema:prvaB} Let $f\in\mathcal G_{AN}$ be a Dulac germ and let $\widehat f$ be its Dulac expansion. Then the formal continuous length of $\varepsilon$-neighborhoods of orbits  $\widehat A_{\widehat f}^c(\varepsilon)$ belongs to $\widehat{\mathcal L}_2$. It can moreover be written in the form:
\begin{equation}\label{eq:ahah}
\widehat{A}_{\widehat{f}}^{c}(\varepsilon)=\sum_{j\in\mathbb N} \widehat F_j(\boldsymbol\ell(\varepsilon))\varepsilon^{\beta_j},
\end{equation}
where $\beta_j>0,\ j\in\mathbb N$, form a strictly increasing sequence tending to $+\infty$ or finite, and $\widehat F_j\in\widehat{\mathcal L}_1^I\subset \widehat{\mathcal L}_1^\infty$ are \emph{integrally summable} in the sense of Definition~\ref{diifi}.
\end{lem}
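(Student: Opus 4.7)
\textbf{Proof plan for Lemma \ref{lema:prvaB}.} The starting point is the explicit formula
\[
\widehat{A}_{\widehat{f}}^{c}(\varepsilon)=\widehat{g}^{-1}(2\varepsilon)+2\varepsilon\,\widehat{\Psi}\bigl(\widehat{g}^{-1}(2\varepsilon)\bigr),
\]
together with Theorem~A, which already places $\widehat{A}_{\widehat{f}}^{c}(\varepsilon)$ in $\widehat{\mathcal{L}}_{2}$. Consequently the transseries regroups uniquely as $\sum_j \widehat F_j(\boldsymbol\ell(\varepsilon))\,\varepsilon^{\beta_j}$ with a strictly increasing sequence $(\beta_j)\subset\mathbb R_{>0}$; the remaining task is therefore to identify each coefficient $\widehat F_j$ as an element of $\widehat{\mathcal{L}}_{1}^{I}$. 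My strategy is to show that every $\widehat F_j$ is in fact a \emph{generalized Laurent} transseries (Definition~\ref{def:vari}), hence convergent in the sense of \cite[Definition~3.7]{MRRZ2Fatou}, and thus integrally summable via the trivial decomposition in \eqref{dvaa} with $\widehat G_1\equiv 0$.

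The first summand is handled by Proposition~\ref{prop:refin}: up to the substitution $x\mapsto 2\varepsilon$, it expresses $\widehat g^{-1}(2\varepsilon)$ as $\sum_i \widehat f_{\beta_i}(\boldsymbol\ell(\varepsilon))\,\varepsilon^{\beta_i/\alpha}$, with each $\widehat f_{\beta_i}(y)=y^{-M_{\beta_i}+m\beta_i/\alpha}\widehat G_{\beta_i}\bigl(\boldsymbol\ell(y),y/\boldsymbol\ell(y)\bigr)$ a single Laurent term — so these coefficients are immediately generalized Laurent. For the second summand I decompose $\widehat\Psi=\widehat\Psi_0+\rho\,\boldsymbol\ell_2^{-1}$ with $\widehat\Psi_0\in\widehat{\mathcal{L}}^\infty$, as guaranteed by \cite[Remark~7.1]{MRRZ2Fatou}. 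Each monomial $x^{\gamma}\boldsymbol\ell^{p}$ of $\widehat\Psi_0$, composed with $\widehat g^{-1}(2\varepsilon)$, gives (using the $\boldsymbol\ell\circ \widehat g^{-1}$ identities \eqref{ellappl} in the Dulac refinement of Lemma~\ref{onne}) a contribution of the form $\varepsilon^{\gamma/\alpha}\cdot(\text{generalized Laurent in }\boldsymbol\ell(\varepsilon))$; summing over the monomials of $\widehat\Psi_0$ and multiplying by $2\varepsilon$ keeps each fixed $\varepsilon$-power coefficient generalized Laurent.

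The only delicate piece is the residual term $\rho\,\boldsymbol\ell_2^{-1}\bigl(\widehat g^{-1}(2\varepsilon)\bigr)$. Writing $\boldsymbol\ell_2^{-1}(x)=-\log\boldsymbol\ell(x)$ and using the expansion $\boldsymbol\ell\bigl(\widehat g^{-1}(2\varepsilon)\bigr)=\alpha\,\boldsymbol\ell(\varepsilon)\bigl(1+\widehat R\bigr)$, where $\widehat R$ is a generalized Laurent transseries in $\bigl(\boldsymbol\ell_2(\varepsilon),\boldsymbol\ell(\varepsilon)/\boldsymbol\ell_2(\varepsilon)\bigr)$ plus $\varepsilon$-power corrections, I expand
\[
\boldsymbol\ell_2^{-1}\bigl(\widehat g^{-1}(2\varepsilon)\bigr)=\boldsymbol\ell_2^{-1}(\varepsilon)-\log\alpha-\log(1+\widehat R)
\]
and use the formal Taylor series of $\log(1+\cdot)$, which converges in the product topology because $\widehat R$ has order $\succ 0$. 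The outcome, multiplied by $2\rho\varepsilon$, contributes to the coefficients of $\varepsilon^{1}$ and higher a finite sum of generalized Laurent transseries in $\boldsymbol\ell(\varepsilon)$ involving the single additional symbol $\boldsymbol\ell_2(\varepsilon)$ in the required analytic form.

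Assembling the three contributions by powers of $\varepsilon$, each coefficient $\widehat F_j(\boldsymbol\ell(\varepsilon))$ is a \emph{finite} sum of generalized Laurent transseries — finiteness per fixed $\varepsilon$-exponent follows from the well-orderedness of the supports in $\widehat{\mathcal{L}}_{2}$ together with the small-operator (Schr\"oder) arguments used in the proof of Lemma~\ref{three}. Hence $\widehat F_j$ is itself a generalized Laurent transseries, therefore convergent, and so lies in $\widehat{\mathcal{L}}_{1}^{I}$ via the decomposition \eqref{dvaa} with $\widehat G_{1}\equiv 0$. The main obstacle in carrying this out is precisely the bookkeeping at Step~3 above: one must verify that the double-logarithmic dependence generated by $\rho\,\boldsymbol\ell_2^{-1}\circ\widehat g^{-1}$ and by $\boldsymbol\ell(\widehat g^{-1})^{p}$ always enters through an analytic function of $\bigl(\boldsymbol\ell_2(\varepsilon),\boldsymbol\ell(\varepsilon)/\boldsymbol\ell_2(\varepsilon)\bigr)$, so that the resulting coefficients are truly generalized Laurent and not merely in $\widehat{\mathcal{L}}_1^\infty$.
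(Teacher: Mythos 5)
There is a genuine gap in your treatment of the term $2\varepsilon\,\widehat\Psi\bigl(\widehat g^{-1}(2\varepsilon)\bigr)$, and it is the heart of the lemma. You write $\widehat\Psi=\widehat\Psi_0+\rho\,\boldsymbol\ell_2^{-1}$ with $\widehat\Psi_0\in\widehat{\mathcal L}^\infty$ and then argue monomial by monomial, concluding that every coefficient $\widehat F_j$ is a \emph{convergent} generalized Laurent transseries, so that one may take $\widehat G_1\equiv 0$ in the decomposition \eqref{dvaa}. But membership in $\widehat{\mathcal L}^\infty$ says nothing about convergence of the coefficient series: by \cite[Proposition~6.4]{MRRZ2Fatou} (equation \eqref{eq:psijedan} in the paper), $\widehat\Psi_1=\widehat\Psi-\rho\boldsymbol\ell_2^{-1}=\sum_j\widehat f_j(\boldsymbol\ell)x^{\alpha_j}$ where the $\widehat f_j$ belong to $\widehat{\mathcal L}_0^I$ and are in general \emph{divergent} power series in $\boldsymbol\ell$, only integrally summable. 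For a fixed exponent $\gamma$ of $x$ there are infinitely many monomials $x^\gamma\boldsymbol\ell^p$ in $\widehat\Psi_1$, and after substituting $\widehat g^{-1}(2\varepsilon)$ their contributions to a fixed power of $\varepsilon$ do not resum to anything convergent. The paper isolates exactly this: in the Taylor expansion \eqref{dva} the derivative terms $\widehat H_i^{(k)}$, $k\ge 1$, have convergent coefficients (a defining feature of $\widehat{\mathcal L}_0^I$), but the zeroth-order term
\[
\widehat H_i\bigl(\varepsilon^{\beta_1}\widehat g_1(\boldsymbol\ell(\varepsilon))\bigr)
=\widehat f_i\Bigl(\boldsymbol\ell\bigl(\varepsilon^{\beta_1}\widehat g_1(\boldsymbol\ell(\varepsilon))\bigr)\Bigr)\bigl(\varepsilon^{\beta_1}\widehat g_1(\boldsymbol\ell(\varepsilon))\bigr)^{\alpha_i}
\]
carries the divergent $\widehat f_i$ composed with $\boldsymbol\ell\bigl(e^{-\gamma/y}\widehat h(y)\bigr)$, which is precisely the non-trivial $\widehat G_1\cdot\widehat f\bigl(\boldsymbol\ell(e^{-\gamma/y}\widehat h(y))\bigr)$ piece of Definition~\ref{diifi}, with $\widehat G_1=\widehat g_1^{\,\alpha_i}\not\equiv 0$ (see \eqref{joj}).

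A quick sanity check that your conclusion cannot be right: if every $\widehat F_j$ were convergent, its sum would be unique, and the whole discussion in Lemma~\ref{lema:triB} and Theorem~B~$(ii)$ about different integral sections producing different constants $K$ in the $K\varepsilon$ term would be vacuous; that ambiguity arises exactly because the coefficients with exponent of integration $\alpha_i<0$ are divergent and their integral sums differ by exponentially small terms. Your handling of $\widehat g^{-1}(2\varepsilon)$ via Proposition~\ref{prop:refin} and of the residual term $\rho\,\boldsymbol\ell_2^{-1}\circ\widehat g^{-1}$ is correct (those parts genuinely are convergent generalized Laurent), but the argument must be repaired for $\widehat\Psi_1\circ\widehat g^{-1}$ by working with the blocks $\widehat H_i(x)=\widehat f_i(\boldsymbol\ell)x^{\alpha_i}$ as wholes, using the convergence of their formal derivatives, and exhibiting the resulting coefficients in the genuinely non-trivial form \eqref{dvaa} rather than claiming convergence.
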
      

\begin{proof} The fact that  $\widehat A_{\widehat f}^c(\varepsilon)\in\widehat{\mathcal L}_2$ has already been proven in Theorem~A in more generality (for all parabolic transseries in $\widehat{\mathcal L}$). Therefore it can be written in the form \eqref{eq:ahah}, where $\widehat {F}_j\in \widehat{\mathcal{L}}_1^\infty$. 

The only non-trivial thing to prove is that $\widehat {F}_j$ belong to $\widehat{\mathcal{L}}_1^I$, that is, they are integrally summable. Recall that $$
\widehat{A}_{\widehat{f}}^{c}(\varepsilon)=\widehat{g}^{-1}(2\varepsilon)+2\varepsilon\cdot\widehat{\Psi}\big(\widehat{g}^{-1}(2\varepsilon)\big).$$

\noindent We have shown in Subsection~\ref{sub:Dulac} in Proposition~\ref{prop:refin} that 
\begin{equation}\label{eq:gprv}
\widehat g^{-1}(x)=\sum_{i\in\mathbb N} \widehat g_i(\boldsymbol\ell)x^{\beta_i},\ \beta_i>0,\end{equation} with $\widehat g_i\in\widehat {\mathcal L}^\infty$  \emph{Laurent} (as in Definition~\ref{def:gL}), and that the first coefficient $\widehat g_1(\boldsymbol\ell)$ is without double logarithm in the leading term. Put 
\begin{equation}\label{decc}
\widehat\Psi_1:=\widehat\Psi-\rho\boldsymbol\ell_2^{-1}.
\end{equation}It follows from \cite[Proposition 6.4]{MRRZ2Fatou} that \begin{equation}\label{eq:psijedan}\widehat\Psi_1(x)=\sum_{j\in\mathbb N} \widehat{f_j}(\boldsymbol\ell)x^{\alpha_j},\ \alpha_j\in\mathbb R,\ \widehat f_j\in\widehat{\mathcal L}_0^{I}.\end{equation} By formal composition, 
\begin{equation}\label{eq:finaly}
\widehat\Psi_1\circ \widehat{g}^{-1}(2\varepsilon)=\sum_{j\in\mathbb N}  \widehat h_j\big(\boldsymbol\ell(\varepsilon)\big) \varepsilon^{\gamma_j}\in\widehat{\mathcal L}_2^\infty,\ \gamma_j\in\mathbb R.
\end{equation}
We prove now that $\widehat h_j\in\widehat{\mathcal L}_1^I$ by analyzing the form of these \emph{coefficients}. 
\smallskip


By Proposition~\ref{prop:refin}, $\widehat g^{-1}(2\varepsilon)=\varepsilon^{\beta_1}\cdot \widehat g_1\big(\boldsymbol\ell(\varepsilon)\big)+O(\varepsilon^{\beta_1+\delta})$, $\beta_1>0$, $\delta>0$. Here, $\widehat g_1\in\widehat{\mathcal L}^\infty$ is a Laurent transseries with no double logarithm in the first term:
\begin{equation}\label{gfirst}
\widehat g_1(\boldsymbol\ell)=a\boldsymbol\ell^\gamma (1+\widehat R(\boldsymbol\ell_2,\frac{\boldsymbol\ell}{\boldsymbol\ell_2})),\ \gamma\in\mathbb R,\ 
\end{equation}
where $\widehat R$ is the Taylor expansion of an analytic germ $R$ at $(0,0)$, vanishing at $0$.
Note that $O(\varepsilon^{\beta_1+\delta})\in\widehat {\mathcal L}_2$ with Laurent coefficients in $\widehat{\mathcal L}^\infty$. 

Put $$\widehat H_i(x):=\widehat f_i(\boldsymbol\ell)x^{\alpha_i},\ i\in\mathbb N.$$ 
Then $\widehat H_i'(x)=\widehat R_i(\boldsymbol\ell)x^{\alpha_i-1}$, with $\widehat R_i$ convergent in $\widehat{\mathcal L}_0^\infty$. Also, all other (formal) derivatives are of the form:
\begin{equation}\label{eqi}
\widehat H_i^{(k)}(x)=\widehat R_{i,k}(\boldsymbol\ell)x^{\alpha_i-k}, \ k\geq 2,\ \text{$\widehat R_{i,k}$ convergent in $\widehat{\mathcal L}_0^{\infty}$.}\end{equation}

Now, \begin{equation}\label{jedan}\widehat\Psi_1\big(\widehat g^{-1}(2\varepsilon)\big)=\widehat H_1\big(\widehat g^{-1}(2\varepsilon)\big)+\widehat H_2\big(\widehat g^{-1}(2\varepsilon)\big)+\widehat H_3\big(\widehat g^{-1}(2\varepsilon)\big)+\ldots\end{equation}


By the Taylor expansion,
\begin{align}\label{dva}
&\widehat H_i\big(\widehat g^{-1}(2\varepsilon)\big)=\widehat H_i\Big(\varepsilon^{\beta_1} \widehat g_1(\boldsymbol\ell(\varepsilon))\cdot(1+O(\varepsilon^{\delta}))\Big)=\\
&=\widehat H_i\big(\varepsilon^{\beta_1} \widehat g_1(\boldsymbol\ell(\varepsilon))\big)+\widehat H_i'\big(\varepsilon^{\beta_1} \widehat g_1(\boldsymbol\ell(\varepsilon))\big)\cdot O(\varepsilon^{\delta+\beta_1})+\frac{1}{2!}\widehat H_i''\big(\varepsilon^{\beta_1} \widehat g_1(\boldsymbol\ell(\varepsilon))\big)\cdot O(\varepsilon^{2(\delta+\beta_1)})+\ldots\nonumber
\end{align}

We have:
\begin{align}\label{tri}
&\widehat H_i\big(\varepsilon^{\beta_1} \widehat g_1(\boldsymbol\ell(\varepsilon))\big)=\widehat f_i\big(\boldsymbol\ell(\varepsilon^{\beta_1} \widehat g_1(\boldsymbol\ell(\varepsilon)))\big)\big(\varepsilon^{\beta_1} \widehat g_1(\boldsymbol\ell(\varepsilon))\big)^{\alpha_i},\\
&\widehat H_i^{(k)}\big(\varepsilon^{\beta_1} \widehat g_1(\boldsymbol\ell(\varepsilon))\big)=\widehat R_{i,k}\big(\boldsymbol\ell(\varepsilon^{\beta_1} \widehat g_1(\boldsymbol\ell(\varepsilon)))\big)\big(\varepsilon^{\beta_1} \widehat g_1(\boldsymbol\ell(\varepsilon))\big)^{\alpha_i-k},\ k\in\mathbb N,\nonumber
\end{align}
$\widehat R_{i,k}$ convergent. Note, using \eqref{gfirst}, that $\widehat g_1(\boldsymbol\ell)^{\alpha_i-k}$ is a Laurent transseries in $\widehat{\mathcal L}^\infty$. It is easy to see by \eqref{gfirst} that $\widehat R_{i,k}\big(\boldsymbol\ell(x^{\beta_1} \widehat g_1(\boldsymbol\ell))\big)$ is also a Laurent transseries in $\widehat{\mathcal L}^\infty$.
\smallskip

Since by \eqref{decc} $\widehat\Psi=\widehat\Psi_1+\rho\boldsymbol\ell_2^{-1}$, it is left to analyze the formal composition $\boldsymbol\ell_2^{-1}\big(\widehat g^{-1}(2\varepsilon)\big)$. By the formal Taylor expansion, we have:
\begin{align}\label{eq:eldva}
\boldsymbol\ell_2^{-1}(g^{-1}(2\varepsilon))=\boldsymbol\ell_2^{-1}\big(\varepsilon^{\beta_1} \widehat g_1(\boldsymbol\ell(\varepsilon))\big)&+\big(\boldsymbol\ell_2^{-1}\big)'\big(\varepsilon^{\beta_1} \widehat g_1(\boldsymbol\ell(\varepsilon))\big)\cdot O(\varepsilon^{\beta_1+\delta})+\nonumber\\
&+\frac{1}{2}\big(\boldsymbol\ell_2^{-1}\big)''\big(\varepsilon^{\beta_1} \widehat g_1(\boldsymbol\ell(\varepsilon))\big)\cdot O(\varepsilon^{2{\beta_1}+2\delta})+\ldots
\end{align}
Using $(\boldsymbol\ell_2^{-1})^{(k)}(x)=x^{-k}P_k(\boldsymbol\ell)$, where $P_k$ is a polynomial of degree $k$, $k\in\mathbb N$, we get:
\begin{align}\label{eq:eldva1}
\boldsymbol\ell_2^{-1}\big(\varepsilon^{\beta_1} \widehat g_1(\boldsymbol\ell(\varepsilon))\big)&= \boldsymbol\ell_2^{-1}(\varepsilon)+\widehat R_1\Big(\boldsymbol\ell(\varepsilon),\frac{\boldsymbol\ell(\varepsilon)}{\boldsymbol\ell_2(\varepsilon)}\Big)\\
(\boldsymbol\ell_2^{-1})^{(k)}\big(\varepsilon^{\beta_1} \widehat g_1(\boldsymbol\ell(\varepsilon))\big)&=P_k\big(\boldsymbol\ell(\varepsilon^{\beta_1} \widehat g_1(\boldsymbol\ell(\varepsilon)))\big)\cdot\big(\varepsilon^{\beta_1} \widehat g_1(\boldsymbol\ell(\varepsilon))\big)^{-k},\ k\in\mathbb N.\nonumber
\end{align}
Here, $\widehat R_1$ is a Taylor expansion of an analytic germ of two variables at $(0,0)$. Note that $\boldsymbol\ell_2^{-1}\big(x^\beta \widehat g_1(\boldsymbol\ell)\big)$ is generalized Laurent transseries in $\widehat{\mathcal L}^\infty$.
As before, $\widehat{g}_1(\boldsymbol\ell)^{-k}$ and $P_k\big(\boldsymbol\ell(\varepsilon^{\beta_1} \widehat g_1(\boldsymbol\ell(\varepsilon)))\big)$, $k\in\mathbb N$, are Laurent transseries in $\widehat{\mathcal L}^\infty$. 

\smallskip
Finally, putting \eqref{tri} in \eqref{dva} and then in \eqref{jedan}, and \eqref{eq:eldva1} in \eqref{eq:eldva}, and grouping the terms in $\varepsilon\widehat\Psi\big(\widehat g^{-1}(\varepsilon)\big)$ with the same power of $\varepsilon$, we get the summands in $\widehat A_{\widehat f}^c\in\widehat{\mathcal L}_2$ of two possible types, with respect to the power of $\varepsilon$:
\begin{equation}\label{joj}
\begin{cases}
\varepsilon^{1+\beta_1\alpha_i}\Big(\widehat g_1(\boldsymbol\ell(\varepsilon))^{\alpha_i}\cdot \widehat f_i\big(\boldsymbol\ell(\varepsilon^{\beta_1} \widehat g_1(\boldsymbol\ell(\varepsilon)))\big)+\widehat H(\boldsymbol\ell(\varepsilon))\Big),&\text{for $\alpha_i\in\mathcal{S}_x(\widehat\Psi),$}\\
\varepsilon^{1+\gamma} \widehat G(\boldsymbol\ell(\varepsilon)),&\text{if $\gamma\neq \beta_1\alpha_i$, $\forall\alpha_i\in\mathcal{S}_x(\widehat\Psi)$,}
\end{cases}
\end{equation}
where $\widehat G,\ \widehat H\in\widehat{\mathcal L}^\infty$ are generalized Laurent transseries, and $\alpha_i$ is the exponent of integration of $\widehat f_i\in\widehat{\mathcal L}_0^I$.

The \emph{coefficients} $\widehat F_j$ from \eqref{eq:ahah} obviously belong to $\widehat{\mathcal L}_1^I$ as defined in Definition~\ref{diifi} and one decomposition \eqref{dvaa} is given in \eqref{joj}. 
\end{proof}                

\begin{lem}\label{lema:dvaB} Let $f$ be a Dulac map, $\widehat f$ its Dulac expansion and let the formal length $\widehat A_{\widehat f}^c(\varepsilon)$ be as in \eqref{eq:ahah}. Then the continuous time length of the $\varepsilon$-neighborhood of an orbit $\varepsilon\mapsto A^c_f(x_0,\varepsilon)$ satisfies, up to an additive term $\varepsilon K$, $K\in\mathbb R$, the following asymptotics in powers of $\varepsilon$:
\begin{equation}\label{eq:choicesum}
A^c_f(x_0,\varepsilon)=\sum_{j=1}^n F_j(\boldsymbol\ell(\varepsilon))\varepsilon^{\beta_j}+o(\varepsilon^{\beta_n}),\ n\in\mathbb N,\ \varepsilon\to 0,
\end{equation}
where $F_j\in\mathcal G_{AN}$ are (any) integral sums of $\widehat F_j$. That is, the formal length $\widehat{A}_{\widehat{f}}^{c}(\varepsilon)$ from \eqref{eq:ahah} is the sectional asymptotic expansion with respect to integral sections of $A^c_f(x_0,\varepsilon)$, up to $\varepsilon K$, as $\varepsilon\to 0$.
\end{lem}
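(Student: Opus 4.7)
The plan is to lift each step of the \emph{formal} computation of Lemma~\ref{lema:prvaB} to the \emph{analytic} level, using the results already available for the two building blocks: the inverse $\widehat g^{-1}$ of the Dulac series (Proposition~\ref{lem:haa}) and the formal Fatou coordinate $\widehat\Psi$ of $\widehat f$ (the main Theorem of \cite{MRRZ2Fatou}). From the defining equality
\[
A^c_f(x_0,\varepsilon)=g^{-1}(2\varepsilon)+2\varepsilon\cdot\bigl[\Psi\bigl(g^{-1}(2\varepsilon)\bigr)-\Psi(x_0)\bigr],
\]
the term $-2\varepsilon\,\Psi(x_0)$ is already of the form $\varepsilon K$ and is therefore absorbed in the claimed $\varepsilon K$ ambiguity. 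It thus suffices to show that $g^{-1}(2\varepsilon)$ admits $\widehat g^{-1}(2\varepsilon)$ as its sectional asymptotic expansion with respect to any integral section $\mathbf s$, and that $\Psi\bigl(g^{-1}(2\varepsilon)\bigr)$ admits $\widehat\Psi\bigl(\widehat g^{-1}(2\varepsilon)\bigr)$ as its sectional asymptotic expansion with respect to $\mathbf s$, up to an additive constant. Multiplying the latter by $2\varepsilon$ will then turn the leftover constant into the announced $\varepsilon K$.

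First, I would invoke Proposition~\ref{lem:haa} to conclude that for every $n\in\mathbb N$,
\[
g^{-1}(2\varepsilon)=\sum_{i=1}^{n} f_{\beta_i}\!\bigl(\boldsymbol\ell(\varepsilon)\bigr)\,(2\varepsilon)^{\beta_i/\alpha}+o(\varepsilon^{\beta_n/\alpha}),
\]
where the $f_{\beta_i}\in\mathcal G_{AN}$ are integral sums of the coefficients $\widehat f_{\beta_i}$ of $\widehat g^{-1}$. For the Fatou coordinate, the main Theorem of \cite{MRRZ2Fatou} gives a decomposition $\Psi=\Psi_1+\rho\,\boldsymbol\ell_2^{-1}+c$, where $\Psi_1\in\mathcal G_{AN}$ has sectional expansion $\widehat\Psi_1\in\widehat{\mathcal L}^\infty$ as in~\eqref{eq:psijedan}, with coefficients $\widehat f_j\in\widehat{\mathcal L}_0^I$ that are integrally summed by~$\mathbf s$. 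Substituting the expansion of $g^{-1}(2\varepsilon)$ into $\Psi_1$ and into $\boldsymbol\ell_2^{-1}$ and using the (convergent) formal Taylor expansions~\eqref{jedan}--\eqref{eq:eldva1} from the proof of Lemma~\ref{lema:prvaB}, each actual composition $H_i\bigl(g^{-1}(2\varepsilon)\bigr)$ and $\boldsymbol\ell_2^{-1}\bigl(g^{-1}(2\varepsilon)\bigr)$ admits a finite Taylor expansion whose remainder is of strictly smaller order in $\varepsilon$; grouping by common powers $\varepsilon^{\beta_j}$ (exactly as in the regrouping leading to~\eqref{joj}) produces coefficient germs $F_j\in\mathcal G_{AN}$ which are precisely integral sums (in the sense of Definition~\ref{diifi}) of the $\widehat F_j\in\widehat{\mathcal L}_1^I$ from Lemma~\ref{lema:prvaB}.

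Next, I would verify the transfinite Poincar\'e algorithm for the obtained partial sums. Fix $n$, truncate at $\varepsilon^{\beta_n}$ in~\eqref{eq:choicesum}, and observe that the difference $A^c_f(x_0,\varepsilon)-\sum_{j=1}^n F_j(\boldsymbol\ell(\varepsilon))\varepsilon^{\beta_j}$ is a \emph{finite} combination of composed tails, each of which is $o(\varepsilon^{\beta_n})$ by the uniform Taylor bounds on $\Psi_1$, $\boldsymbol\ell_2^{-1}$ and by Proposition~\ref{lem:haa} applied to $g^{-1}$. Thus~\eqref{eq:choicesum} is indeed a valid sectional expansion, and by the definition of integral sections (Definition~\ref{def:is}) combined with Proposition~\ref{integralsects} from the Appendix it coincides with $\widehat A_{\widehat f}^c(\varepsilon)$.

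Finally, I would track the ambiguity. The additive constant $c$ in $\Psi$ contributes a term $2\varepsilon c$ to $A^c_f$, which is exactly $\varepsilon K$; and by Remark~\ref{rem:helpdif}, two integral sections $\mathbf s_1,\mathbf s_2$ may give integral sums of a divergent $\widehat f_j$ differing by an exponentially small term, which after composition with $g^{-1}(2\varepsilon)$ and multiplication by $2\varepsilon$ contributes only to the coefficient of $\varepsilon^1$, hence again lies inside the $\varepsilon K$ ambiguity. The \textbf{main obstacle} I anticipate is precisely this last bookkeeping step: one must check that the exponentially small non-uniqueness of each integral sum, once composed with $\widehat g^{-1}(2\varepsilon)$ and multiplied by $2\varepsilon$, produces \emph{only} terms of the form $\varepsilon K$ (and not, say, contributions to higher-order coefficients). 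This follows from the explicit form of the non-uniqueness in Remark~\ref{rem:helpdif}, namely $c\,G_1(y)\bigl(e^{-\gamma/y}h(y)\bigr)^{-\alpha}$, which is exponentially flat at the origin and thus invisible to every monomial in the power-iterated-logarithm scale, leaving only its constant-term contribution at order $\varepsilon^1$.
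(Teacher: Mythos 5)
Your proposal is correct and follows essentially the same route as the paper: invoke Proposition~\ref{lem:haa} for $g^{-1}(2\varepsilon)$, use the decomposition $\Psi=\Psi_1+\rho\boldsymbol\ell_2^{-1}$ from \cite{MRRZ2Fatou}, and lift the Taylor-expansion/regrouping computation of Lemma~\ref{lema:prvaB} verbatim to the germ level so that the coefficients $F_j$ are exactly integral sums of the $\widehat F_j$. Your final bookkeeping of the exponentially small non-uniqueness of the integral sums (which, after substituting $y=\boldsymbol\ell(\varepsilon)$, becomes the power $\varepsilon^{-\beta_1\alpha_i}$ and hence contributes only $K\varepsilon$) is precisely the content the paper places in the separate Lemma~\ref{lema:triB}, so you have merely reorganized, not changed, the argument.
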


\begin{proof}
We now show the asymptotics \eqref{eq:choicesum} of $\varepsilon\mapsto A^c_f(x_0,\varepsilon)$. The construction of $\Psi\in\mathcal G_{AN}$ in the Theorem in \cite{MRRZ2Fatou} follows the same term-by-term algorithm as the formal construction of $\widehat\Psi$. Let $\Psi_1=\Psi-\rho\boldsymbol\ell_2^{-1}$. We have, by the proof of the Theorem in \cite{MRRZ2Fatou}:
\begin{equation}\label{eq:st1}
\Psi_1(x)-\sum_{j=1}^{n} x^{\alpha_j}f_j(\boldsymbol\ell)=o(x^{\alpha_{n}}),\ n\in\mathbb N,
\end{equation} 
where $f_j\in\mathcal G_{AN}$ is an integral sum (in the sense of Definition~\ref{defi}), unique up to a constant, of $\widehat f_j\in\widehat{\mathcal L}_0^I$ from \eqref{eq:psijedan}, $j\in\mathbb N$. The sequence $(\alpha_j)_j$ is strictly increasing to $+\infty$ or finite, the same as in \eqref{eq:psijedan}. 
Put $
H_i(x):=x^{\alpha_i}f_i(\boldsymbol\ell),\ i\in\mathbb N
$. Then $H_i\in\mathcal G_{AN}$, $H_i'(x)=x^{\alpha_i-1} R_i(\boldsymbol\ell)$, and $$H_i^{(k)}(x)=R_{i,k}(\boldsymbol\ell)x^{\alpha_i-k}, \ k\geq 2,$$
where $R$, $R_{i,k}$ are the sums of convergent Laurent series $\widehat R,\ \widehat R_{i,k}\in\widehat{\mathcal L}_0^\infty$ from the formal construction \eqref{eqi}.  

On the other hand, we have proved in Proposition~\ref{lem:haa} in Section~\ref{sec:inverse} the following expansion:
\begin{equation}\label{eq:st2}
g^{-1}(2\varepsilon)-\sum_{i=1}^{k}g_i\big(\boldsymbol\ell(\varepsilon)\big) \varepsilon^{\beta_i}=o(\varepsilon^{\beta_{k}}),\ k\in\mathbb N,
\end{equation}
where $g_i\in\mathcal G_{AN}$ are the sums of convergent Laurent transseries $\widehat g_i\in\widehat{\mathcal L}^\infty$ from \eqref{eq:gprv}.

As in the formal counterpart above, using \eqref{eq:st1} and \eqref{eq:st2}, we have: $$\Psi_1(g^{-1}(2\varepsilon))=\sum_{j=1}^{n} H_j(g^{-1}(\varepsilon))+o\big((g^{-1}(2\varepsilon))^{\alpha_{n}}\big),\ n\in\mathbb N.$$ As above, we repeat the Taylor expansion for $H_j(g^{-1}(\varepsilon))$, $j\in\mathbb N$ and $\boldsymbol\ell_2(g^{-1}(2\varepsilon))$. By the correspondence of the procedure with the formal one above, we finally get:
$$
\Psi(g^{-1}(2\varepsilon))-\sum_{j=1}^{n} h_j\big(\boldsymbol\ell(\varepsilon)\big) \varepsilon^{\gamma_j}=o(\varepsilon^{\gamma_{j}}),\ n\in\mathbb N.
$$
where $\gamma_j\in\mathbb R$ are strictly increasing to $+\infty$, as in \eqref{eq:finaly} and $h_j\in\mathcal G_{AN}$ are exactly the integral sums of $\widehat h_j\in\mathcal L_1^I$ from \eqref{eq:finaly}, as in Definition~\ref{def:is}. The asymptotics \eqref{eq:choicesum} now follows.
\end{proof}

\begin{lem}\label{lema:triB} Let $f$ be a Dulac map. The asymptotic expansions of the continuous time length of the $\varepsilon$-neighborhood of an orbit $\varepsilon\mapsto A^c_f(x_0,\varepsilon)$ with respect to different integral sections differ only by an additive term $K\varepsilon$, $K\in\mathbb R$.
\end{lem}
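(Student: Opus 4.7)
My plan is to reduce the section-dependence of the expansion of $A_f^c(x_0,\varepsilon)$ to the known section-dependence of the Fatou coordinate $\Psi$. By Proposition~\ref{cor}, $A_f^c(x_0,\varepsilon)=g^{-1}(2\varepsilon)+2\varepsilon\bigl[\Psi(g^{-1}(2\varepsilon))-\Psi(x_0)\bigr]$, so I will analyze separately how the sectional asymptotic expansion of each summand depends on the choice of integral section.

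First I would show that the expansion of $g^{-1}(2\varepsilon)$ is the same for every integral section. By Proposition~\ref{lem:haa} combined with Proposition~\ref{prop:refin}, this expansion is $\widehat{g}^{-1}(2\varepsilon)=\sum_i \widehat{g}_i(\boldsymbol{\ell}(\varepsilon))\varepsilon^{\beta_i}$, where each coefficient $\widehat{g}_i\in\widehat{\mathcal{L}}^\infty$ is a Laurent transseries in the sense of Definition~\ref{def:vari}. Laurent transseries are convergent in the sense of \cite[Definition~3.7]{MRRZ2Fatou}, so by the coherence requirement every integral section sums each $\widehat{g}_i$ to the same germ, making this contribution section-independent. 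Next, by the main Theorem of \cite{MRRZ2Fatou}, the Fatou coordinate $\Psi\in\mathcal{G}_{AN}$ admits, with respect to any integral section, a sectional asymptotic expansion equal to the formal Fatou coordinate $\widehat{\Psi}$ up to an additive real constant. Consequently, if $\mathbf{s}_1$ and $\mathbf{s}_2$ are two integral sections yielding expansions $\widehat{\Psi}^{(1)}$ and $\widehat{\Psi}^{(2)}$ of the same germ $\Psi$, then $\widehat{\Psi}^{(1)}-\widehat{\Psi}^{(2)}=C$ for some $C\in\mathbb{R}$.

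To conclude, I will assemble the pieces. Formal substitution of the section-independent expansion $\widehat{g}^{-1}(2\varepsilon)$ into $\widehat{\Psi}^{(i)}$ preserves constant differences, so the sectional asymptotic expansions of $\Psi(g^{-1}(2\varepsilon))$ with respect to $\mathbf{s}_1$ and $\mathbf{s}_2$ still differ by the same constant $C$; multiplication by $2\varepsilon$ turns this into a difference of $2C\varepsilon$, while the remaining summands $g^{-1}(2\varepsilon)$ and $-2\varepsilon\Psi(x_0)$ contribute identically in both expansions. Hence the two sectional asymptotic expansions of $A_f^c(x_0,\varepsilon)$ differ by exactly $K\varepsilon$ with $K=2C\in\mathbb{R}$, as required. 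The only point requiring genuine care, and therefore the main (mild) obstacle, is the claim that substitution of a section-independent transseries into two transseries differing by a constant preserves that constant difference; this is immediate since the constant does not depend on the variable being substituted.
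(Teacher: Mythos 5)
Your decomposition of $A_f^c$ via Proposition~\ref{cor}, and the observations that the expansion of $g^{-1}(2\varepsilon)$ is section-independent (convergent Laurent coefficients plus coherence) and that the term $-2\varepsilon\Psi(x_0)$ only shifts the coefficient of $\varepsilon$, are all fine. The gap is in the step where you transfer the constant $C$ from the expansion of $\Psi$ to the expansion of $\Psi(g^{-1}(2\varepsilon))$. The sectional asymptotic expansion of the germ $\varepsilon\mapsto\Psi(g^{-1}(2\varepsilon))$ is produced by the transfinite Poincar\'e algorithm \emph{in the variable $\varepsilon$}: at limit ordinal steps the section is applied to the coefficients $\widehat F_j\in\widehat{\mathcal L}_1^I$ of the powers $\varepsilon^{\beta_j}$ (these are the transseries exhibited in \eqref{joj} of Lemma~\ref{lema:prvaB}), not to the coefficients $\widehat f_j\in\widehat{\mathcal L}_0^I$ of the expansion of $\Psi$ in powers of $x$. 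Definition~\ref{def:is} only requires an integral section to assign to each element of $\widehat{\mathcal L}_1^I$ \emph{some} integral sum; nothing ties that choice to the choice the same section makes for $\widehat f_j$ when expanding $\Psi$ in $x$. So the ambiguity of the $\varepsilon$-expansion is not the single constant $C$ inherited from $\Psi$; a priori it is one independent free constant for each divergent coefficient $\widehat F_j$ whose underlying $\widehat f_i$ has negative exponent of integration. In particular the identity $K=2C$ you assert is unjustified, and the step you flag as "immediate" (formal substitution preserving constant differences) is not where the difficulty lies.

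What actually closes the argument — and what the paper's proof does — is a direct computation of the non-uniqueness of the integral sums of the $\widehat F_j$. By the explicit form \eqref{joj} together with Remark~\ref{rem:helpdif}, two integral sections can change the sum of $\widehat F_j$ only by a term $c_j\,g_1(\boldsymbol\ell(\varepsilon))^{\alpha_i}\big(\varepsilon^{\beta_1}g_1(\boldsymbol\ell(\varepsilon))\big)^{-\alpha_i}=c_j\varepsilon^{-\beta_1\alpha_i}$, and this occurs only for $\alpha_i\in\mathcal S_x(\widehat\Psi)$ with $\alpha_i<0$; multiplied by the accompanying power $\varepsilon^{1+\beta_1\alpha_i}$ this collapses to $c_j\varepsilon$. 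Since the formal Fatou coordinate has only finitely many negative exponents $\alpha_i$, the total discrepancy is a finite sum of such terms, i.e.\ $K\varepsilon$. All other coefficients are generalized Laurent, hence convergent with unique sums. Your proof never performs this cancellation $\varepsilon^{1+\beta_1\alpha_i}\cdot\varepsilon^{-\beta_1\alpha_i}=\varepsilon$, which is the actual reason the ambiguity lands exactly on the monomial $\varepsilon$ rather than elsewhere; without it the lemma is not established.
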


\begin{proof} We show that different choices of integral sums $F_j$ by Definition~\ref{def:is} correspond to different choices of the integral section $\mathbf s$. 
If $\widehat{A}_{\widehat{f}}^{c}(\varepsilon)$ is the sectional asymptotic expansion of $A_f^c(x_0,\varepsilon)$ with respect to an integral section $\mathbf s$ and if we choose a different integral section $\mathbf s_1$, then there exists $K\in\mathbb R$ such that $A^c_f(x_0,\varepsilon)+K\varepsilon$ admits $\widehat{A}_{\widehat{f}}^{c}(\varepsilon)$ as its sectional asymptotic expansion with respect to $\mathbf s_1$.

By the precise form of the coefficients $\widehat F_j\in\widehat{\mathcal L}_1^I$ given in \eqref{joj}, we see that their integral sums $F_j\in\mathcal G_{AN}$ are non-unique by the following terms:

1. In front of the power $\varepsilon^{\beta_1\alpha_i+1}$ for $\alpha_i\in\mathcal S_x(\widehat\varphi)$ and $\alpha_i<0$, the integral sum $F_j\in\mathcal G_{AN}$ is by Definition~\ref{diifi} and Remark~\ref{rem:helpdif} unique only up to an additive term:
$$
K\, \big(g_1(\boldsymbol\ell(\varepsilon))\big)^{\alpha_i}e^{\frac{\alpha_i}{\boldsymbol\ell(\varepsilon^\beta g_1(\boldsymbol\ell(\varepsilon)))}}=K\, \big(g_1(\boldsymbol\ell(\varepsilon))\big)^{\alpha_i} \cdot \big(\varepsilon^\beta g_1(\boldsymbol\ell(\varepsilon))\big)^{-\alpha_i}=K\varepsilon^{-\beta\alpha_i},\ c\in\mathbb R.
$$
For $\alpha_i\in\mathcal S_x(\widehat\varphi)$ and $\alpha_i\geq 0$, the integral sum $F_j$ in front of the power $\varepsilon^{\beta\alpha_i+1}$ is unique.

2. In front of all other powers of $\varepsilon$, $\widehat F_j\in\widehat{\mathcal L}_1^I$ are generalized Laurent, therefore \emph{convergent}, and have thus unique sums $F_j\in\mathcal G_{AN}$. 

Note that in the formal Fatou coordinate there are only finitely many $\alpha_i$ such that $\alpha_i<0$ (see the construction of the formal Fatou coordinate in the Theorem in \cite{MRRZ2Fatou}). Therefore, depending on the choice of the integral sums, that is, on the choice of the integral section $\mathbf s$, a fixed $\varepsilon\mapsto A^c_f(x_0,\varepsilon)$ admits a fixed $\widehat A_{\widehat f}^c$ as its sectional asymptotic expansion with respect to $\mathbf s$ only up to a term $K\varepsilon$, $K\in\mathbb R$.
\end{proof}
\smallskip

\begin{proof}[Proof of Theorem~B]\

\noindent $(i)$ Lemma~\ref{lema:prvaB}.
\smallskip

\noindent $(ii)$  Recall that $$
\widehat{A}_{\widehat{f}}^{c}(\varepsilon)=\widehat{g}^{-1}(2\varepsilon)+2\varepsilon\cdot\widehat{\Psi}\big(\widehat{g}^{-1}(2\varepsilon)\big).$$
Here, $\widehat\Psi\in\widehat{\mathcal L}_2^\infty$ is the formal Fatou coordinate for the Dulac expansion $\widehat f$ of $f$ constructed in \cite{MRRZ2Fatou}, unique in $\widehat{\mathfrak L}$ up to a constant term $K\in\mathbb R$. The formal inverse $\widehat g^{-1}$ constructed in Section~\ref{sec:inverse} is unique. Therefore, $\widehat{A}_{\widehat{f}}^{c}(\varepsilon)$ is uniquely defined up to $\varepsilon K$, $K\in\mathbb R$, depending on the choice of the constant in $\widehat\Psi$. The statement follows by Lemmas~\ref{lema:prvaB}--\ref{lema:triB}.

\smallskip

\noindent $(iii)$ Lemma~\ref{lem:aepsi}.
\hfill $\Box$

\section{Proof of Theorem~C}\label{sec:ptC}  

Let $f(x)=x-x^\alpha\boldsymbol\ell^m+o(x^\alpha\boldsymbol\ell^m),\ \alpha>1,\ m\in\mathbb N_0^-,$ be a prenormalized parabolic Dulac map. Let $\widehat f$ be its Dulac expansion. 

In the proof, by $[.]$, $$[x^\beta\boldsymbol\ell^r\boldsymbol\ell_2^s]\,\widehat h(x),$$ we will denote the coefficient in front of the monomial $x^\beta\boldsymbol\ell^r\boldsymbol\ell_2^s$ in $\widehat h\in\widehat{\mathcal L}_2^\infty$ (may be also $0$). 
\medskip 

\noindent In the proof, we need the following lemmas. 

\begin{lem}\label{lem:dva} Let $f(x)=x-x^\alpha\boldsymbol\ell^m+o(x^\alpha\boldsymbol\ell^m)$ be a (prenormalized) Dulac germ with the Dulac expansion $\widehat f$. Let $\widehat g=\mathrm{id}-\widehat f$. For a given $f$, let $A_{r,s}(f)$ denote the coefficients of monomials $y\boldsymbol\ell(y)^r\boldsymbol\ell_2(y)^s$ in $\widehat g^{-1}(y)$, for integer $r,\,s\in\mathbb Z$. That is,
\begin{equation}\label{eq:ars}A_{r,s}(f):=[\boldsymbol\ell^{r}\boldsymbol\ell_2^{s}]\Big(\frac{\widehat g^{-1}(t)}{t}\Big),\ r,\ s \in\mathbb Z.\end{equation}

\noindent Then the formal invariant $\rho$ of the Dulac germ $f$ can be expressed as the sum:                                                                        
\begin{equation}\label{eq:roo1}
\rho=\sum_{r=-\infty}^{1}\sum_{s=-\infty}^{-r+1} a_{rs}(r,s,\alpha,m)\cdot A_{r,s}(f).
\end{equation}
Here, for a given $f$, only finitely many of $A_{r,s}(f)$ are non-zero, so the sum is finite. The coefficients $a_{rs}\in\mathbb R$ are explicit universal functions of $\alpha,\ m$ and $r$ and $s$, independent of the germ $f$ other than through $\alpha$ and $m$.
\end{lem}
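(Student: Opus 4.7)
The plan is to combine the explicit inversion formula of Proposition~\ref{prop:refin} with the universal description of the formal normalization procedure from \cite[Theorem~A]{mrrz2}. By the latter, the residual invariant $\rho$ is a universal polynomial, with coefficients depending only on $\alpha$ and $m$, in finitely many coefficients $c_i=[x^{\alpha_i}\boldsymbol{\ell}^{m_i}]\widehat{f}$ of the Dulac series, indexed by the intermediate monomials satisfying $(\alpha,m)\prec(\alpha_i,m_i)\preceq(2\alpha-1,2m+1)$ in the lexicographic order (the set is finite because the support of $\widehat{f}$ is finitely generated). The task reduces to expressing each such $c_i$ as a universal finite linear combination of the coefficients $A_{r,s}(f)$.

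Following Proposition~\ref{prop:refin}, I would write $\widehat{g}=g_\alpha\circ\widehat{\varphi}$ with $g_\alpha(x)=x^\alpha\boldsymbol{\ell}^m$ and $\widehat{\varphi}=\widehat{g}_\alpha^{-1}\circ\widehat{g}$ strictly parabolic, so that
\[
\widehat{g}^{-1}(y)=\widehat{g}_\alpha^{-1}(y)+\bigl(\widehat{\varphi}^{-1}-\mathrm{id}\bigr)\bigl(\widehat{g}_\alpha^{-1}(y)\bigr).
\]
The leading block $\widehat{g}_\alpha^{-1}(y)=\alpha^{-m/\alpha}y^{1/\alpha}\boldsymbol{\ell}^{-m/\alpha}(1+\widehat{F}(\boldsymbol{\ell}_2,\boldsymbol{\ell}/\boldsymbol{\ell}_2))$ contains no monomial of $y$-order exactly $1$ (since $\alpha>1$). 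Since a monomial $x^\gamma\boldsymbol{\ell}^p\boldsymbol{\ell}_2^q$ in $\widehat{\varphi}^{-1}$ produces, after the substitution $x=\widehat{g}_\alpha^{-1}(y)$, only terms of $y$-order $\gamma/\alpha$, every monomial $y\boldsymbol{\ell}^r\boldsymbol{\ell}_2^s$ of $\widehat{g}^{-1}$ comes from the coefficients $b_{p,q}:=[x^\alpha\boldsymbol{\ell}^p\boldsymbol{\ell}_2^q]\widehat{\varphi}^{-1}$. Using the expansions~\eqref{ellappl} and $x^\alpha=\alpha^{-m}y\boldsymbol{\ell}^{-m}(1+\widehat{F})^\alpha$, substitution yields
\[
x^\alpha\boldsymbol{\ell}(x)^p\boldsymbol{\ell}_2(x)^q\big|_{x=\widehat{g}_\alpha^{-1}(y)}=\alpha^{p-m}\,y\,\boldsymbol{\ell}^{p-m}\boldsymbol{\ell}_2^q\cdot\widehat{E}_{p,q}(\boldsymbol{\ell}_2,\boldsymbol{\ell}/\boldsymbol{\ell}_2),
\]
with $\widehat{E}_{p,q}$ a universal Taylor expansion at the origin depending only on $\alpha,m,p,q$. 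Writing $\boldsymbol{\ell}_2^a(\boldsymbol{\ell}/\boldsymbol{\ell}_2)^b=\boldsymbol{\ell}^b\boldsymbol{\ell}_2^{a-b}$ with $a,b\ge 0$ and collecting monomials produces a triangular universal linear relation
\[
A_{r,s}(f)=\sum_{(p,q)}U^{\alpha,m}_{(r,s),(p,q)}\,b_{p,q},
\]
where the nontrivial contributions satisfy $p\le r+m$ and $p+q\le r+s+m$. This system is upper-triangular on finite down-sets in $(p,q)$ and hence invertible, expressing each $b_{p,q}$ as a finite universal linear combination of the $A_{r,s}(f)$.

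Combining the three ingredients---the universal polynomial formula for $\rho$ in terms of the $c_i$; the linear expression of the $c_i$ in terms of the $b_{p,q}$ that follows by inversion of $\widehat{g}=g_\alpha\circ\widehat{\varphi}$; and the triangular inversion $b_{p,q}\leftrightarrow A_{r,s}$---yields the desired formula \eqref{eq:roo1}. A careful tracking of the indices shows that the relevant $(p,q)$ lie in a finite cone $p\le m+1,\,p+q\le m+1$, which by the triangular relation translates to the range $r\le 1,\,s\le -r+1$ in \eqref{eq:roo1}. The main obstacle will be this last bookkeeping step: verifying the precise bounds $r\le 1$ and $s\le -r+1$ (rather than a strictly larger cone) requires a careful propagation of monomial orders through the two compositions $\widehat{\varphi}=\widehat{g}_\alpha^{-1}\circ\widehat{g}$ and $\widehat{g}^{-1}=\widehat{\varphi}^{-1}\circ\widehat{g}_\alpha^{-1}$, in the spirit of the computations establishing \eqref{help2}. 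Once the bookkeeping is set up, the universality of the coefficients $a_{rs}(r,s,\alpha,m)$ follows immediately from the universality of $\widehat{F}$, of the $\widehat{E}_{p,q}$, and of the polynomial supplied by \cite[Theorem~A]{mrrz2}.
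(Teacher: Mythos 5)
Your route is genuinely different from the paper's, and it has a gap that I do not think can be repaired without importing the paper's key identity. The paper's proof rests on the residue formula of Proposition~\ref{prop:rho}, $\rho=\big[\tfrac{\boldsymbol\ell}{x}\big]\tfrac{1}{\widehat g(x)}$, transported to the inverse by a change of variables and integration by parts (Proposition~\ref{lem:jen}), which yields
$\rho=\alpha[\boldsymbol\ell]\tfrac{\widehat g^{-1}(t)}{t}\big|_{t=x^\alpha\boldsymbol\ell^m}+m[1]\tfrac{\widehat g^{-1}(t)}{t}\big|_{t=x^\alpha\boldsymbol\ell^m}$. This makes $\rho$ \emph{manifestly linear} in the coefficients of $\widehat g^{-1}(t)/t$, and the explicit substitution $\boldsymbol\ell(t)^r\boldsymbol\ell_2(t)^s\big|_{t=x^\alpha\boldsymbol\ell^m}$ (formula \eqref{eq:ekspl}) immediately forces $r,s\in\mathbb Z$ and the admissible cone $r\le 1$, $s\le -r+1$. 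Your chain of reductions cannot deliver the statement in this form: the normalization procedure of \cite[Theorem~A]{mrrz2} gives $\rho$ as a \emph{polynomial} (not linear) function of the Dulac coefficients $c_i$; the passage from the $c_i$ to the coefficients $b_{p,q}$ of $\widehat\varphi^{-1}$ is also polynomial (formal inversion is nonlinear in the coefficients), not the ``linear expression'' you assert; so at best you would obtain $\rho$ as a polynomial in the $A_{r,s}(f)$, which is weaker than the linear formula \eqref{eq:roo1} being proved. (It would suffice for the polynomial statement of Theorem~C, but not for this lemma.)

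Two further concrete problems. First, at a fixed power $x^\alpha$ the coefficient of $\widehat\varphi^{-1}-\mathrm{id}$ is an infinite series in $\boldsymbol\ell,\boldsymbol\ell_2$ (cf.\ \eqref{help2}), so your relation $A_{r,s}=\sum U^{\alpha,m}_{(r,s),(p,q)}b_{p,q}$ involves infinitely many $b_{p,q}$ and the claimed triangular invertibility needs justification that you do not give. Second, you explicitly leave the determination of the index range $r\le 1$, $s\le -r+1$ as ``the main obstacle''; in the paper this range is not bookkeeping layered on top of the argument but falls out of the single computation \eqref{eq:ekspl}, precisely because the residue identity reduces everything to asking which monomials $\boldsymbol\ell(t)^r\boldsymbol\ell_2(t)^s$ can produce $[\boldsymbol\ell]$ or $[1]$ after $t=x^\alpha\boldsymbol\ell^m$. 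Without an analogue of Propositions~\ref{prop:rho} and \ref{lem:jen}, your proposal does not establish the lemma.
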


In the proof of Lemma~\ref{lem:dva}, we use Proposition~\ref{lem:jen}.
\begin{prop}\label{lem:jen}  Let $f(x)=x-x^\alpha\boldsymbol\ell^m+o(x^\alpha\boldsymbol\ell^m)$ be a (prenormalized) Dulac germ with the Dulac expansion $\widehat f$. Let $\widehat g=\mathrm{id}-\widehat f$. The formal invariant $\rho$ of $f$ can be expressed as a linear combination of coefficients of $\widehat g^{-1}$ in the following way:
\begin{equation}\label{eq:rooo}
\rho=\alpha[\boldsymbol\ell]\frac{\widehat g^{-1}(t)}{t}\big|_{t=x^\alpha\boldsymbol\ell^m}+m[1]\frac{\widehat g^{-1}(t)}{t}\big|_{t=x^\alpha\boldsymbol\ell^m}.
\end{equation}
\end{prop}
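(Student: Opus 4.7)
The plan is to exploit the factorization $\widehat{g} = g_{\alpha,m}\circ\widehat{\varphi}$ used in the proof of Proposition~\ref{prop:refin}, where $g_{\alpha,m}(x) = x^\alpha\boldsymbol\ell^m$ and $\widehat\varphi\in\widehat{\mathcal L}_2$ is strictly parabolic. The critical observation is that under the formal substitution $t = g_{\alpha,m}(x) = x^\alpha\boldsymbol\ell^m$ one has $g_{\alpha,m}^{-1}(t) = x$, and hence
\[
\frac{\widehat{g}^{-1}(t)}{t}\Big|_{t=x^\alpha\boldsymbol\ell^m} \;=\; \frac{\widehat{\varphi}^{-1}(x)}{x^\alpha\boldsymbol\ell^m}.
\]
Under this identification, reading the extractions $[\boldsymbol\ell]$ and $[1]$ at the $x^0$ level on the right-hand side is equivalent to reading the coefficients of $x^\alpha\boldsymbol\ell^{m+1}$ and $x^\alpha\boldsymbol\ell^m$ (with no $\boldsymbol\ell_2$ factor) in $\widehat{\varphi}^{-1}(x)$, respectively.

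Next I compute these two coefficients explicitly. Writing $\widehat g(x) = x^\alpha\boldsymbol\ell^m(1 + T(x))$ and using a first-order Taylor expansion of $g_{\alpha,m}^{-1}$ around $y_0 = x^\alpha\boldsymbol\ell^m$, together with the elementary identity $g_{\alpha,m}'(x) = x^{\alpha-1}\boldsymbol\ell^m(\alpha + m\boldsymbol\ell)$ (which follows from $\boldsymbol\ell'(x)=\boldsymbol\ell^2/x$), yields
\[
\widehat{\varphi}(x) \;=\; x \;+\; \frac{x\,T(x)}{\alpha + m\boldsymbol\ell} \;+\; \mathrm{h.o.t.},
\]
the higher-order Taylor contributions being of order $x^{2\alpha-1}$ or larger. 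Since $\alpha>1$, they contribute nothing at the $x^\alpha$ level. Setting $c_l := [x^{2\alpha-1}\boldsymbol\ell^l]\widehat g$ and expanding the geometric series $(\alpha + m\boldsymbol\ell)^{-1} = \alpha^{-1}\sum_{k\ge 0}(-m/\alpha)^k\boldsymbol\ell^k$, I collect coefficients to obtain
\[
[x^\alpha\boldsymbol\ell^{m+r}]\widehat{\varphi}(x) \;=\; \frac{1}{\alpha}\sum_{l\le 2m+r}c_l\Big(-\frac{m}{\alpha}\Big)^{2m+r-l},\qquad r=0,1.
\]
Because $\widehat{\varphi}$ is strictly parabolic with first nontrivial correction at order $x^\alpha$, the corresponding coefficients of $\widehat{\varphi}^{-1}$ are obtained by flipping signs.

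Combining with coefficients $\alpha$ and $m$ respectively and reindexing the two sums, a telescoping cancellation produces
\[
\alpha\,[x^\alpha\boldsymbol\ell^{m+1}]\widehat{\varphi}^{-1} \;+\; m\,[x^\alpha\boldsymbol\ell^m]\widehat{\varphi}^{-1} \;=\; -c_{2m+1}.
\]
The final step is the identification $-c_{2m+1} = \rho$, which is supplied by the formal normal form theory of \cite{mrrz2}. A direct Lie-bracket computation with an infinitesimal change $x\mapsto x+\xi$, $\xi\in\widehat{\mathcal L}$, shows that the first-order variation of $\widehat g$ equals $g'\xi - \xi'g$; for $\xi = sx^\beta\boldsymbol\ell^l$ and $g$ replaced by its leading block $x^\alpha\boldsymbol\ell^m$ this variation produces $s(\alpha-\beta)x^{\alpha+\beta-1}\boldsymbol\ell^{m+l} + s(m-l)x^{\alpha+\beta-1}\boldsymbol\ell^{m+l+1}$, which vanishes identically on the resonant monomial $x^{2\alpha-1}\boldsymbol\ell^{2m+1}$. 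Higher-order perturbations in $\xi$ and cross-terms with the subleading blocks of $\widehat g$ are of order at least $x^{2\alpha}$ and hence cannot contribute at the $x^{2\alpha-1}$ level, so $c_{2m+1}$ is invariant under every formal change of variable in $\widehat{\mathcal L}$. This identifies $-c_{2m+1}$ with the coefficient of the resonant term in the normal form $\widehat g_0 = x^\alpha\boldsymbol\ell^m - \rho x^{2\alpha-1}\boldsymbol\ell^{2m+1}$, i.e.\ with $\rho$.

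The main obstacle will be that last identification. While the resonance vanishing of the Lie bracket on $x^{2\alpha-1}\boldsymbol\ell^{2m+1}$ is essentially algebraic, one must carefully verify the two cutoffs -- nonexistence of second-order contributions $\sim\xi^2$ at the $x^{2\alpha-1}$ level, and nonexistence of contributions from the coupling of subleading blocks of $\widehat g$ with $\xi$ at the same level -- both of which rely crucially on the standing hypothesis $\alpha>1$ characterizing parabolic Dulac germs.
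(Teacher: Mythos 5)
Your opening reduction is correct: since $\widehat g=g_{\alpha,m}\circ\widehat\varphi$ gives $\widehat g^{-1}(t)\big|_{t=x^\alpha\boldsymbol\ell^m}=\widehat\varphi^{-1}(x)$, the claim becomes $\rho=\alpha[x^\alpha\boldsymbol\ell^{m+1}]\widehat\varphi^{-1}+m[x^\alpha\boldsymbol\ell^{m}]\widehat\varphi^{-1}$. Everything after that, however, silently assumes that the $x^\alpha$-block of $\widehat g$ is the single monomial $x^\alpha\boldsymbol\ell^m$. For $m<0$ it is a whole polynomial block $x^\alpha P_{-m}(\boldsymbol\ell^{-1})$, so $T$ contains terms $\boldsymbol\ell^j$ with $j\ge 1$ and $\widehat\varphi-\mathrm{id}$ contains terms $x\boldsymbol\ell^j$: your $\widehat\varphi$ is parabolic but \emph{not} strictly parabolic, and its first nontrivial correction sits at order $x^1$, not $x^\alpha$. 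This breaks three of your steps. (a) The second Taylor term $\tfrac12 g_{\alpha,m}''(x)u^2$ is not $O(x^{2\alpha-1})$: the cross products of the $x\boldsymbol\ell^j$ and $x^\alpha\boldsymbol\ell^{\bullet}$ pieces of $u$ land exactly on $x^\alpha\boldsymbol\ell^{m+r}$, so your displayed formula for $[x^\alpha\boldsymbol\ell^{m+r}]\widehat\varphi$ is incomplete. (b) ``Flipping signs'' to pass to $\widehat\varphi^{-1}$ fails for the same reason: $\widehat h'\widehat h$ contributes at the level $x^\alpha\boldsymbol\ell^{m+1}$. (c) Most importantly, $c_{2m+1}$ is not a formal invariant: you only bracket $\xi$ against the leading monomial, but the bracket of $\xi=sx^\alpha\boldsymbol\ell^{m-j}$ with the block term $x^\alpha\boldsymbol\ell^{m+j}$ of $\widehat g$ equals $2js\,x^{2\alpha-1}\boldsymbol\ell^{2m+1}+\dots$, nonzero for $j\ge1$. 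A concrete counterexample to your conclusion: for $\widehat g=x^\alpha\boldsymbol\ell^m+p\,x^\alpha\boldsymbol\ell^{m+1}+q\,x^{2\alpha-1}\boldsymbol\ell^{2m}$ one has $c_{2m+1}=0$, yet $[\boldsymbol\ell/x](1/\widehat g)=2pq$, so $\rho=2pq$ by Proposition~\ref{prop:rho}; redoing your computation \emph{with} the omitted second-order and inversion terms indeed returns $2pq$, not $-c_{2m+1}=0$. This is precisely the ``spreading'' of $\rho$ over several coefficients discussed in the example following Corollary~\ref{cor:nolog}. Your argument is valid only when the leading block is a single monomial, i.e.\ essentially in the case $m=0$ of Corollary~\ref{cor:nolog}.

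The paper avoids this bookkeeping entirely: Proposition~\ref{prop:rho} first proves the residue formula $\rho=[\boldsymbol\ell/x](1/\widehat g)=-[\boldsymbol\ell_2^{-1}]\int^x \mathrm ds/\widehat g(s)$, whose invariance under parabolic changes of variables is obtained by a change of variables inside the integral (which is where all the cross-contributions you drop get absorbed); formula \eqref{eq:rooo} then follows by substituting $s=\widehat g^{-1}(2t)$, integrating by parts, and replacing the upper limit $\widehat g(x)/2$ by $\tfrac12x^\alpha\boldsymbol\ell^m$ via a Taylor argument. If you want to rescue your route, either work with the full leading block $g_\alpha=x^\alpha P_{-m}(\boldsymbol\ell^{-1})$ as in Proposition~\ref{prop:refin} (so that $\widehat\varphi$ really is strictly parabolic, at the price of a much less explicit $g_\alpha^{-1}$), or keep $g_{\alpha,m}$ and track all the quadratic terms; in either case the identity $\rho=-c_{2m+1}$ must be abandoned in favour of a genuinely invariant expression.
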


\noindent The proof is in the Appendix. Note that in the special case when $m=0$, we have simply
\begin{equation}\label{eq:sc}
\rho=\alpha[\boldsymbol\ell]\frac{\widehat g^{-1}(t)}{t}\big|_{t=x^\alpha}=[\boldsymbol\ell]\frac{\widehat g^{-1}(x)}{x}.
\end{equation}
\medskip

\noindent\emph{Proof of Lemma~\ref{lem:dva}.} By Proposition~\ref{lem:jen}, we analyze which monomials in $\frac{\widehat g^{-1}(t)}{t}$ contribute to the coefficients $[\boldsymbol\ell]$ and $[1]$ in $\frac{\widehat g^{-1}(t)}{t}\big|_{t=x^\alpha\boldsymbol\ell^m}$. Take a general monomial $Ct^\beta\boldsymbol\ell^r\boldsymbol\ell_2^s$, $\beta>-1$, $r,\,s\in\mathbb R$, $C\in\mathbb R$, from $\frac{\widehat  g^{-1}(t)}{t}\in\widehat{\mathcal L}_2$. 

If $\beta\neq 0$, after composing with $x^\alpha\boldsymbol\ell^m$ it can easily be seen that it contributes only to monomials of $\frac{\widehat g^{-1}(t)}{t}\big|_{t=x^\alpha\boldsymbol\ell^m}$ beginning with non-zero power of $x$, $x^{\beta\alpha}$, and therefore neither to $[\boldsymbol\ell]$ nor to $[1]$.

The monomials contributing to $[\boldsymbol\ell]$ or $[1]$ are therefore of the form $\boldsymbol\ell(t)^r\boldsymbol\ell_2(t)^s$, $r,\,s\in\mathbb R$. We compute:
\begin{align}\label{eq:ekspl}
\boldsymbol\ell(t)&^r\boldsymbol\ell_2(t)^s\Big|_{t=x^\alpha\boldsymbol\ell^m}=\nonumber\\
&=\alpha^{-r}r^{-s}\boldsymbol\ell^r\boldsymbol\ell_2^s \Big(\frac{1}{1+\frac{m}{\alpha}\frac{\boldsymbol\ell}{\boldsymbol\ell_2}}\Big)^r \Big(\frac{1}{1+\log\alpha\boldsymbol\ell_2+\boldsymbol\ell_2\log\big(1+\frac{m}{\alpha}\frac{\boldsymbol\ell}{\boldsymbol\ell_2}\big)}\Big)^s.
\end{align}
We now see that the monomials that contribute to $[\boldsymbol\ell]$ or $[1]$ necessarily have integer $r,\,s\in\mathbb Z$. Moreover, that the \emph{admissible} set of exponents (in the sense that no other exponents can contribute to $[\boldsymbol\ell]$ or $[1]$) is:
\begin{equation}\label{eq:admis}
(r,s)\in\Big\{(-k,s): s=k+1,\,k,\,k-1,\ldots,\ k\in \{-1\}\cup \{0\}\cup \mathbb N\Big\}\subseteq \mathbb Z\times\mathbb Z.
\end{equation}
This admissible set \emph{does not depend on the germ $f$}.
The set $\textrm{supp}(\frac{\widehat g^{-1}(t)}{t})$ is well-ordered, therefore only finitely many $r$ and $s$ exist in the support for a specific $f$, and for others the coefficient is equal to $0$. Finally, the coefficients that contribute to $[\boldsymbol\ell]$ and $[1]$ in $\frac{\widehat g^{-1}(t)}{t}\big|_{t=x^\alpha\boldsymbol\ell^m}$, that is, to $\rho$, are $$A_{r,s}(f)=[\boldsymbol\ell^{r}\boldsymbol\ell_2^{s}]\Big(\frac{\widehat g^{-1}(t)}{t}\Big),\ r=-\infty \ldots 1,\ s=-\infty\ldots (-r+1),$$ where, for a specific $f$, only finitely many of them are non-zero. By \eqref{eq:ekspl},
\begin{align}\label{eq:ali}
[\boldsymbol\ell]\big(\frac{g^{-1}(t)}{t}\Big|_{t=x^\alpha\boldsymbol\ell^m}\big)=\sum_{r=-\infty}^1\sum_{j=-\infty}^{-r+1} A_{r,s}(f) b_{rs}(r,s,\alpha,m),\\
[1]\big(\frac{g^{-1}(t)}{t}\Big|_{t=x^\alpha\boldsymbol\ell^r}\big)=\sum_{r=-\infty}^1\sum_{s=-\infty}^{-r+1} A_{r,s}(f) c_{rs}(r,s,\alpha,m).\nonumber
\end{align}
Here, $b_{rs}(r,s,\alpha,m)$ and $c_{rs}(r,s,\alpha,m)$ are universal functions only of $r,\ s,\ \alpha, \ m$ that can be computed explicitely, independent of $f$ other than through $\alpha$ and $m$, while $A_{r,s}(f)$ depend on the specific $f$ and for a specific $f$ only finitely many of them are non-zero (and thus the given sum is finite). Now putting \eqref{eq:ali} in \eqref{eq:rooo} we get \eqref{eq:roo1}, where $a_{rs}:=\alpha b_{rs}+m c_{rs}$. 
\end{proof}

\begin{obs} Note that the admissible set in \eqref{eq:admis} is estimated very coarsely. In fact, one can compute the optimal admissible set which is smaller. The optimality is in the sense that $\boldsymbol\ell^r\boldsymbol\ell_2^s\Big|_{x^\alpha\boldsymbol\ell^m}$ indeed contain monomials $\boldsymbol\ell$ or $1$, for $m\neq 0$.
$$
(r,s)\in \{(0,s):\,s\in\mathbb N_0^-\}\cup\{(1,s):\,s\in\mathbb N_0^-\}\cup \bigcup_{k\in\mathbb N} \{(-k,s), 1\leq s\leq k\}.
$$
For all other $(r,s)$ we have $a_{rs}(r,s,\alpha,m)=0$ in \eqref{eq:roo1}.
\end{obs}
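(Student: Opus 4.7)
The plan is to analyse \eqref{eq:ekspl} monomial by monomial. Put $\Phi_1:=1+\tfrac{m}{\alpha}\boldsymbol{\ell}\boldsymbol{\ell}_2^{-1}$ and $A:=\log\alpha\cdot\boldsymbol{\ell}_2+\boldsymbol{\ell}_2\log\Phi_1$, so the substitution reads
\[
\boldsymbol{\ell}(t)^r\boldsymbol{\ell}_2(t)^s\big|_{t=x^\alpha\boldsymbol{\ell}^m}=\alpha^{-r}\boldsymbol{\ell}^r\boldsymbol{\ell}_2^s\cdot\Phi_1^{-r}\cdot(1+A)^{-s}.
\]
A short formal expansion shows that the monomials of $\Phi_1^{-r}$ are of the form $c_k\boldsymbol{\ell}^k\boldsymbol{\ell}_2^{-k}$ with $k\in\{0,\ldots,-r\}$ when $r\leq 0$ and $k\geq 0$ arbitrary otherwise, while the monomials of $A$ have the shape $c_j\boldsymbol{\ell}^j\boldsymbol{\ell}_2^{1-j}$ for $j\geq 0$. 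Hence the monomials of $(1+A)^{-s}$ are $\boldsymbol{\ell}^J\boldsymbol{\ell}_2^{n-J}$ with $n,J\geq 0$, constrained by $n\leq -s$ when $s\leq 0$. Multiplying, a typical monomial of the substituted expression is $\alpha^{-r}\boldsymbol{\ell}^{r+k+J}\boldsymbol{\ell}_2^{s-k+n-J}$.

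Imposing that this equal $\boldsymbol{\ell}^0\boldsymbol{\ell}_2^0$ (case $[1]$, coefficient $c_{rs}$) or $\boldsymbol{\ell}^1\boldsymbol{\ell}_2^0$ (case $[\boldsymbol{\ell}]$, coefficient $b_{rs}$) yields two linear equations whose sum forces $n=-r-s$ or $n=1-r-s$ respectively. Combining them with the four inequalities $J\geq 0$, $n\geq 0$, $k\leq -r$ (if $r\leq 0$) and $n\leq -s$ (if $s\leq 0$), I split into regimes. The regimes $r\in\{0,1\}$ force $k=0$; the bound $n\leq -s$ then forces $s\in\mathbb{N}_0^-$ and produces the two horizontal strips. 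The regime $r\leq -1$ requires $s\geq 1$ (otherwise $n\leq -s$ is violated), and the remaining inequalities collapse to the triangular block $(-k,s)$, $1\leq s\leq k$. For every $(r,s)$ outside the union of these three pieces the linear system admits no admissible triple $(k,n,J)$, so $b_{rs}=c_{rs}=0$, whence $a_{rs}=\alpha b_{rs}+mc_{rs}=0$ in \eqref{eq:roo1}; this already disposes of the second half of the statement.

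The more delicate half is proving optimality, namely that on each piece of the claimed set at least one of $b_{rs},c_{rs}$ is nonzero when $m\neq 0$. On the strips $r\in\{0,1\}$ the unique admissible triple is $(k,n,J)=(0,-s,0)$, so the coefficient collapses to a nonzero constant multiple of $(\log\alpha)^{-s}$, which is nonzero because $\alpha>1$ forces $\log\alpha\neq 0$. On the triangular block $(-k,s)$, $1\leq s\leq k$, several triples contribute, and the total reads
\[
\alpha^{k}\sum_{k'=0}^{k}\binom{k}{k'}\Big(\tfrac{m}{\alpha}\Big)^{k'}\binom{-s}{k-s}\cdot\bigl[\boldsymbol{\ell}^{k-k'}\boldsymbol{\ell}_2^{k'-s}\bigr]A^{k-s}.
\]
Using $A=\log\alpha\cdot\boldsymbol{\ell}_2+\sum_{j\geq 1}\tfrac{(-1)^{j+1}}{j}(\tfrac{m}{\alpha})^j\boldsymbol{\ell}^j\boldsymbol{\ell}_2^{1-j}$, a direct count shows that every summand carries the same total power $m^{k}$, so after factoring $m^{k}$ one obtains a universal polynomial in $\log\alpha$; I would then single out the $k'=k$ summand, a nonzero constant multiple of $(\log\alpha)^{k-s}$, and argue it cannot be cancelled by the other summands, which live in a strictly lower filtration of the $\boldsymbol{\ell}/\boldsymbol{\ell}_2$-expansion. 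This combinatorial non-vanishing is the real obstacle; the preceding case enumeration, although lengthy, is elementary once the monomial bookkeeping above is in place.
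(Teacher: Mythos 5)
The paper states this remark without proof, so your argument has to stand on its own. Its bookkeeping is correct: starting from \eqref{eq:ekspl} (whose prefactor should indeed be $\alpha^{-r}$ only; the printed $r^{-s}$ is a typo), the monomials of $\Phi_1^{-r}$ and of $(1+A)^{-s}$ are as you describe, a generic monomial of the substituted expression is $\boldsymbol\ell^{r+k+J}\boldsymbol\ell_2^{s-k+n-J}$, and your case discussion — including the observation that $J\geq 1$ forces $n\geq 1$, which is what eliminates the boundary points $(0,1)$ and $(-k,k+1)$ still present in the coarse set \eqref{eq:admis} — correctly establishes the half of the statement asserting $a_{rs}=0$ outside the displayed set. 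The two horizontal strips are also fully handled, since there the admissible triple $(k,n,J)=(0,-s,0)$ is unique and the resulting coefficient $\alpha^{-r}(\log\alpha)^{-s}$ is nonzero because $\alpha>1$.

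The gap is exactly where you place it, and the filtration argument you sketch does not close it. Grading by the power of $\log\alpha$ shows only that the $k'=k$ summand is the unique one of top degree $k-s$, hence that $c_{-k,s}$ (and similarly $b_{-k,s}$) equals $m^{k}$ times a \emph{nonzero polynomial} in $\log\alpha$ with universal rational coefficients. That proves non-vanishing for \emph{generic} $\alpha$, not for the germ at hand: nothing prevents the polynomial from vanishing at the particular value of $\log\alpha$. The danger is real: for $(r,s)=(-2,1)$ one computes $c_{-2,1}=m^{2}\bigl(-\tfrac{3}{2}-\log\alpha\bigr)$ and $b_{-2,1}=\tfrac{m^{3}}{\alpha}\bigl(1+\tfrac{2}{3}\log\alpha\bigr)$, which vanish \emph{simultaneously} at $\log\alpha=-\tfrac{3}{2}$; only the standing hypothesis $\alpha>1$ rescues the claim there. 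So to finish the pointwise version of the optimality statement you must actually use $\alpha>1$ — for instance by showing that all coefficients of the relevant polynomial in $\log\alpha$ share one sign, so it cannot vanish for $\log\alpha>0$ — or else settle for the weaker reading that the set cannot be shrunk \emph{universally} in $(\alpha,m)$, which is all that your degree argument yields and which already suffices for the final sentence of the remark, since $a_{rs}=0$ outside the set is proved unconditionally.
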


\begin{lem}\label{lem:tri} Let $f$ be a (prenormalized) Dulac germ and let $\widehat A_{\widehat f}^c(\varepsilon)$ be its formal continuous length of $\varepsilon$-neighborhoods of orbits. For $p,\ q\in\mathbb Z$, $p^2+q^2\neq 0,$ let
$$B_{p,q}:=[\varepsilon\boldsymbol\ell^p\boldsymbol\ell^q]\widehat A_{\widehat f}^c\Big(\frac{\varepsilon}{2}\Big).$$
Then the admissible coefficients $A_{r,s}(f)$ from \eqref{eq:ars}, $r\leq 1$, $s\leq -r$, can be expressed $($by solving finitely many triangular systems$)$ as polynomial functions of $B_{p,q}$, $p,\ q\in\mathbb Z$, $(p,q)\prec (0,0)$, with coefficients independent of the germ $f$.  
\end{lem}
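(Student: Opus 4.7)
The plan is to match coefficients in the identity $\widehat A_{\widehat f}^c(\varepsilon/2)=\widehat g^{-1}(\varepsilon)+\varepsilon\,\widehat\Psi\bigl(\widehat g^{-1}(\varepsilon)\bigr)$. From the definition of $A_{p,q}(f)$ in~\eqref{eq:ars} one has $[\varepsilon\boldsymbol\ell^p\boldsymbol\ell_2^q]\widehat g^{-1}(\varepsilon)=A_{p,q}(f)$, so
\[
B_{p,q}=A_{p,q}(f)+C_{p,q},\qquad C_{p,q}:=[\boldsymbol\ell^p\boldsymbol\ell_2^q]\bigl([\varepsilon^0]\widehat\Psi(\widehat g^{-1}(\varepsilon))\bigr).
\]
The task reduces to showing that $C_{p,q}$, viewed as a polynomial in the coefficients of $\widehat g^{-1}$, depends on the admissible variables $A_{r,s}(f)$ in a triangular fashion for a suitable well-ordering on the admissible index set, with an inhomogeneous term depending only on the leading block of $\widehat g$.

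I would first use Proposition~\ref{prop:refin} to decompose $\widehat g^{-1}(\varepsilon)=\widehat g_\alpha^{-1}(\varepsilon)+\widehat H(\varepsilon)$, where $\widehat g_\alpha^{-1}(\varepsilon)\sim (a\alpha^{-m})^{-1/\alpha}\varepsilon^{1/\alpha}\boldsymbol\ell^{m/\alpha}$ depends only on the leading block of $\widehat g$ and $\widehat H(\varepsilon)$ collects the higher $\varepsilon$-power blocks, the $\varepsilon^1$-block among them being $\widehat f_\alpha(\boldsymbol\ell)\,\varepsilon=\varepsilon\sum_{r,s}A_{r,s}\boldsymbol\ell^r\boldsymbol\ell_2^s$. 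Then I would apply the formal Taylor expansion
\[
\widehat\Psi(\widehat g^{-1}(\varepsilon))=\widehat\Psi(\widehat g_\alpha^{-1}(\varepsilon))+\sum_{k\ge 1}\frac{1}{k!}\widehat\Psi^{(k)}(\widehat g_\alpha^{-1}(\varepsilon))\,\widehat H(\varepsilon)^k.
\]
The zeroth summand contributes a universal inhomogeneity $\Theta_{p,q}$ computable from the explicit forms in Lemma~\ref{onne} and Remark~\ref{obs:normal}, independent of the $A_{r,s}$'s. Each higher summand is a polynomial in the coefficients of $\widehat H$ of degree $k$; since $\widehat\Psi^{(k)}(\widehat g_\alpha^{-1}(\varepsilon))$ has an explicit $\varepsilon$-block expansion and $\widehat H$ starts at an $\varepsilon$-power strictly greater than $1/\alpha$, only finitely many $k$ and finitely many block combinations can produce an $\varepsilon^0$ monomial $\boldsymbol\ell^p\boldsymbol\ell_2^q$, so $C_{p,q}$ is a finite polynomial in the coefficients of $\widehat H$.

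The triangular structure rests on two observations. Firstly, the admissible variables $A_{r,s}$ enter $C_{p,q}$ linearly at order $k=1$: the $\varepsilon^{-1}$-block of $\widehat\Psi'(\widehat g_\alpha^{-1}(\varepsilon))$ is a fixed Laurent transseries $\Xi(\boldsymbol\ell,\boldsymbol\ell_2)\in\widehat{\mathcal L}^\infty$ determined by the leading-block data, and its product with $\widehat f_\alpha(\boldsymbol\ell)\,\varepsilon$ yields the convolution $\sum_{(r,s)}\Xi_{p-r,q-s}A_{r,s}(f)$. Secondly, the support of $\widehat f_\alpha$ is well-ordered in $\widehat{\mathcal L}^\infty$ and lex-bounded above on the admissible region $\{r\le1,\,s\le-r\}$, so only finitely many admissible $A_{r,s}(f)$'s are non-zero. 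Combining these, the restriction of the equations $B_{p,q}=A_{p,q}(f)+C_{p,q}$ to the finitely many relevant admissible pairs $(p,q)\prec(0,0)$ becomes a finite polynomial system whose linear part is a convolution by $\Xi$; its Laurent support makes this linear part triangular with non-vanishing diagonal on the admissible index set, and back-substitution expresses each admissible $A_{p,q}(f)$ as a polynomial in the $B_{p',q'}$, $(p',q')\prec(0,0)$, with coefficients depending only on $\alpha$ and $m$.

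The main obstacle is verifying that the convolution kernel $\Xi$ together with the higher-order corrections $\widehat\Psi^{(k)}(\widehat g_\alpha^{-1})\,\widehat H^k$ for $k\ge 2$ couples the admissible $A_{r,s}$'s in a genuinely triangular pattern with non-zero diagonal, and that non-admissible contributions feed only into the universal inhomogeneous part $\Theta_{p,q}$. This requires an explicit computation of the leading $\varepsilon^{-1}$-block of $\widehat\Psi'(\widehat g_\alpha^{-1}(\varepsilon))$ from the explicit forms of $\widehat g_\alpha^{-1}$ (Lemma~\ref{onne}) and $\widehat\Psi$ (Remark~\ref{obs:normal}), together with a degree-by-degree bookkeeping of the Taylor expansion. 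The finite well-orderedness of supports in $\widehat{\mathcal L}_2$ makes this bookkeeping manageable in finitely many explicit formal manipulations.
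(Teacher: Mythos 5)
There is a genuine gap, and it sits exactly where you flag your ``main obstacle.'' Your scheme $B_{p,q}=A_{p,q}(f)+C_{p,q}$ only yields the Lemma if $C_{p,q}$ depends on the germ $f$ \emph{solely} through the admissible $A_{r,s}(f)$ (and through $\alpha,m$), with universal coefficients; otherwise inverting the system produces expressions whose coefficients are not independent of $f$. Your decomposition does not deliver this. First, the ``universal inhomogeneity'' $\Theta_{p,q}$ coming from $\widehat\Psi(\widehat g_\alpha^{-1}(\varepsilon))$ is not universal: the $x^0$-block of $\widehat\Psi$ (the terms $x^0\boldsymbol\ell^j$ and $\rho\boldsymbol\ell_2^{-1}$) is the part that lands on the $\varepsilon^0$-block after composition with $\widehat g_\alpha^{-1}(\varepsilon)\sim c\,\varepsilon^{1/\alpha}\boldsymbol\ell^{m/\alpha}$, and its coefficients depend on the full germ, not just on the leading block (Remark~\ref{obs:normal} only controls the leading term of $\widehat\Psi$). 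Second, in the $k\ge1$ Taylor terms, $\widehat\Psi^{(k)}(\widehat g_\alpha^{-1}(\varepsilon))$ carries correction blocks at orders $\varepsilon^{-k+j/\alpha}$ whose coefficients involve non-leading data of $\widehat\Psi$, and $\widehat H$ carries blocks at orders other than $\varepsilon^1$ whose coefficients are non-admissible coefficients of $\widehat g^{-1}$; products of these land on the $\varepsilon^0$-block and hence pollute $C_{p,q}$ with quantities that are neither universal nor admissible. That all of these contributions nevertheless collapse into a universal \emph{linear} function of the admissible $A_{r,s}(f)$ alone is a nontrivial cancellation coming from the fact that $\widehat\Psi$ and $\widehat g$ are not independent ($\widehat\Psi'=1/\widehat\xi$ with $\widehat\xi=-\widehat g-\widehat g\,\widehat g'+o(x^{2\alpha-1+\delta})$). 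Your proposal never invokes this relation, so the required universality cannot be established along the route you describe.

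The paper resolves precisely this point with Proposition~\ref{prop:izvod}: integrating by parts in $\widehat\Psi(\widehat g^{-1}(\varepsilon))=\int^{\varepsilon}(\widehat g^{-1})'(t)\,dt/\widehat\xi(\widehat g^{-1}(t))$ and using the expansion of $\widehat\xi$ in terms of $\widehat g$ gives $\widehat A^c_{\widehat f}(\varepsilon/2)=-\varepsilon\int^{\varepsilon}\widehat g^{-1}(t)t^{-2}\,dt-\varepsilon\boldsymbol\ell(\varepsilon)^{-1}+o(\varepsilon^{1+\delta})$, a \emph{universal linear integral operator} applied to $\widehat g^{-1}$. From there the explicit primitives of $\boldsymbol\ell^{r}\boldsymbol\ell_2^{s}/t$ give the system \eqref{eq:jaod} directly, and one sees that it is not of the form ``identity plus correction'' as in your ansatz: e.g.\ $B_{0,q}=\tfrac1q A_{1,q+1}(f)$ contains no $A_{0,q}$ term at all, the indices being shifted by the integration. (Relatedly, the linear map is not a convolution by a fixed kernel $\Xi$ --- the coefficients $-\tfrac{1}{r-1},\tfrac{s}{(r-1)^2},\dots$ depend on $(r,s)$ and not only on $(p-r,q-s)$.) To repair your argument you would essentially have to prove Proposition~\ref{prop:izvod}, or an equivalent identity tying the $\varepsilon^0$-block of $\widehat\Psi(\widehat g^{-1}(\varepsilon))$ to $\widehat g^{-1}$ alone; once that is in place, your finiteness-by-well-ordering and back-substitution steps are sound and match the paper's.
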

\begin{obs}
It is worth noticing that in the statement of Lemma~\ref{lem:tri} $B_{0,0}=[\varepsilon^1]\widehat A_{\widehat f}^c(\varepsilon)$ is not used for obtaining admissible $A_{r,s}(f)$ and then by Lemma~\ref{lem:dva} for expressing $\rho$ . This is coherent, since the coefficient of $\varepsilon^1$ in $\widehat A_{\widehat f}^c(\varepsilon)$ is not uniquely defined.
\end{obs}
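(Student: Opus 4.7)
The plan is to verify the two assertions of the remark separately: that $B_{0,0}$ is absent from the expressions in Lemma~\ref{lem:tri}, and that this absence is forced by the non-uniqueness of the coefficient of $\varepsilon^1$ in $\widehat A_{\widehat f}^c(\varepsilon)$.

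The first point is immediate from the statement of Lemma~\ref{lem:tri}, where the admissible coefficients $A_{r,s}(f)$ are expressed as polynomial functions of the $B_{p,q}$ with $(p,q)\prec(0,0)$ in the lexicographic order; in particular $(p,q)=(0,0)$ is excluded. Combining this with Lemma~\ref{lem:dva}, the formal invariant $\rho$ is therefore obtained without ever invoking $B_{0,0}$.

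For the second point, I would derive the non-uniqueness of $B_{0,0}$ directly from Definition~\ref{fa} and the main result of \cite{MRRZ2Fatou}. The formal Fatou coordinate $\widehat\Psi$ is determined by $\widehat f$ only up to an additive constant $C\in\mathbb R$. Substituting $\widehat\Psi\mapsto\widehat\Psi+C$ into
\begin{equation*}
\widehat A_{\widehat f}^c(\varepsilon)=\widehat g^{-1}(2\varepsilon)+2\varepsilon\cdot\widehat\Psi\big(\widehat g^{-1}(2\varepsilon)\big)
\end{equation*}
modifies $\widehat A_{\widehat f}^c(\varepsilon)$ by the single term $2C\varepsilon$, hence modifies $\widehat A_{\widehat f}^c(\varepsilon/2)$ by $C\varepsilon$. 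Since this term carries neither $\boldsymbol\ell$ nor $\boldsymbol\ell_2$, only the coefficient of the monomial $\varepsilon^1\boldsymbol\ell^0\boldsymbol\ell_2^0$, namely $B_{0,0}$, is affected, whereas every $B_{p,q}$ with $(p,q)\neq(0,0)$ is invariant under the shift. This shows that $B_{0,0}$ depends on the arbitrary constant in $\widehat\Psi$ and is not an invariant of the germ~$f$.

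Coherence of the two statements then follows from a short invariance argument. By Proposition~\ref{prop:refin}, each admissible $A_{r,s}(f)$ is a coefficient of the uniquely defined formal inverse $\widehat g^{-1}$ and is thus a formal invariant of $f$; similarly $\rho$ is a formal invariant by \cite[Theorem~A]{mrrz2}. Any expression of such an invariant as a polynomial in the $B_{p,q}$ must be invariant under the shift $B_{0,0}\mapsto B_{0,0}+C$, which forces the coefficient of $B_{0,0}$ to vanish. Hence the restriction $(p,q)\prec(0,0)$ in Lemma~\ref{lem:tri} is not only consistent with, but dictated by, the non-uniqueness of $B_{0,0}$. The only subtlety is to confirm that $K\varepsilon$ is indeed the unique monomial in $\widehat{\mathfrak L}$ affected by the additive constant in $\widehat\Psi$; this is clear since $\varepsilon^1\boldsymbol\ell^0\boldsymbol\ell_2^0$ is the unique pure power of $\varepsilon$ without logarithmic factors.
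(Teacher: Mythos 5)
Your proposal is correct and follows the same reasoning the paper relies on: the first assertion is read off directly from the statement of Lemma~\ref{lem:tri}, and the second follows from the fact, already noted after Definition~\ref{fa}, that $\widehat\Psi$ is unique only up to an additive constant, so $\widehat A_{\widehat f}^c(\varepsilon/2)$ is unique only up to $C\varepsilon$, which perturbs exactly the coefficient $B_{0,0}$ and no other $B_{p,q}$. Your closing invariance argument (that any polynomial expression of the invariants $A_{r,s}(f)$ or $\rho$ in the $B_{p,q}$ must have vanishing dependence on $B_{0,0}$) is a correct and slightly more explicit packaging of the same point the remark makes informally.
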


\noindent \emph{Proof of Lemma~\ref{lem:tri}.} By Proposition~\ref{prop:izvod} in the Appendix, there exists $\delta>0$ such that:
\begin{equation}\label{eq:jaoj}
\widehat{A}^c_{\widehat f}(\frac{\varepsilon}{2})=-\varepsilon\int^\varepsilon \frac{\widehat g^{-1}(t)}{t^2}\,dt-\varepsilon\boldsymbol\ell(\varepsilon)^{-1}+o(\varepsilon^{1+\delta}),\ \varepsilon\to 0.
\end{equation}
By \eqref{eq:ars}, $A_{r,s}(f)=[\varepsilon \boldsymbol\ell(\varepsilon)^r\boldsymbol\ell_2(\varepsilon)^s]\widehat g^{-1}(\varepsilon)$, $r,\,s\in\mathbb Z,$ $r=-\infty\ldots 1,\ s=-\infty\ldots (-r+1)$. For a particular $f$, only finitely many of admissible $A_{r,s}(f)$ are non-zero, due to well-orderedness of $\widehat g^{-1}$. 

By \eqref{eq:jaoj}, to express $B_{p,q}=[\varepsilon\boldsymbol\ell^p\boldsymbol\ell^q]\widehat A_{\widehat f}^c\big(\frac{\varepsilon}{2}\big)$ by $A_{r,s}(f)=[\boldsymbol\ell^r\boldsymbol\ell_2^s]\frac{\widehat g^{-1}(t)}{t}$, $r,\, s\in\mathbb Z$, we express the coefficient 
$$
[\boldsymbol\ell^p \boldsymbol\ell_2^q]\Big(\int^x \frac{\frac{\widehat g^{-1}(t)}{t}}{t}\,dt\Big)
$$
by $A_{r,s}(f)$. First note that only coefficients of monomials of $\frac{\widehat g^{-1}(t)}{t}$ with zero power of $t$ and integer powers of $\boldsymbol\ell$, $\boldsymbol\ell_2$ contribute to $[\boldsymbol\ell^p \boldsymbol\ell_2^q]\Big(\int^x \frac{\frac{\widehat g^{-1}(t)}{t}}{t}\,dt\Big)$, $p,\,q\in\mathbb Z$. That is, exactly $A_{r,s}(f),\ r,\,s\in\mathbb Z$. To determine the contribution of each such monomial, we compute:
\begin{align*}
&\int^x \frac{A_{r,s}(f)\boldsymbol\ell^r\boldsymbol\ell_2^s}{t}=-A_{r,s}(f)\int ^{\boldsymbol\ell} u^{r-2}\boldsymbol\ell(u)^s \,du=\\
&\ =
\small{
\begin{cases}
-\frac{1}{r-1}A_{r,s}(f)\boldsymbol\ell^{r-1}\boldsymbol\ell_2^s+\frac{s}{(r-1)^2}A_{r,s}(f)\boldsymbol\ell^{r-1}\boldsymbol\ell_2^{s+1}-\frac{s(s+1)}{(r-1)^3}A_{r,s}(f)\boldsymbol\ell^{r-1}\boldsymbol\ell_2^{s+2}+\ldots,& r\neq 1,\\
-\frac{A_{1,s}(f)}{s-1}\boldsymbol\ell_2^{s-1},&r=1,\ s\neq 1,\\
-A_{1,1}(f)\log(\boldsymbol\ell_2), &r=1,\,s=1.
\end{cases}}
\end{align*}

\noindent Therefore,
\begin{align*}
&[\boldsymbol\ell^p \boldsymbol\ell_2^q]\int^x \frac{\frac{g^{-1}(t)}{t}}{t}\,dt=\\
&=\small{ \begin{cases}
-\frac{1}{p}A_{p+1,q}(f)+\frac{q-1}{p^2}A_{p+1,q-1}(f)-\frac{(q-1)(q-2)}{p^3}A_{p+1,q-2}(f)+\frac{(q-1)(q-2)(q-3)}{p^4}A_{p+1,q-3}(f)&+\ldots,\\
&p\neq 0,\\[0.2cm]
-\frac{1}{q}A_{1,q+1}(f),&\!\!\!\!\!\!\!\!\!\!\!\! p=0,\,q\neq 0.\\
\end{cases}}
\end{align*}
For $p=q=0$, the coefficient $[1]\int^x \frac{\frac{g^{-1}(t)}{t}}{t}\,dt$ is not well-defined (we may add any constant). Note also that the above sum is finite, since $\widehat g^{-1}$ is well-ordered: for every $p\in\mathbb Z$ there exists $q$ such that $A_{p+1,r}=0$, for all $r<q.$
\smallskip

\noindent Finally,
\begin{align}\label{eq:jaod}
&B_{p,q}=[\varepsilon\boldsymbol\ell(\varepsilon)^p\boldsymbol\ell_2(\varepsilon)^q]\widehat A^c_{\widehat f}\big(\frac{\varepsilon}{2}\big)=\nonumber\\
&=
\small{
\begin{cases}
\frac{1}{p}A_{p+1,q}(f)-\frac{q-1}{p^2}A_{p+1,q-1}(f)+\frac{(q-1)(q-2)}{p^3}A_{p+1,q-2}(f)-\frac{(q-1)(q-2)(q-3)}{p^4}A_{p+1,q-3}(f)+\ldots,\\
&\hspace{-2cm} p\neq -1,0,\\[0.2cm]
-A_{0,q}(f)-(q-1)A_{0,q-1}(f)-(q-1)(q-2)A_{0,q-2}(f)-(q-1)(q-2)(q-3)A_{0,q-3}(f)+\ldots,\\&\hspace{-2cm} p=-1,\ q\neq 0,\\
-1-A_{0,0}(f)+A_{0,-1}(f)-2A_{0,-2}(f)+3! A_{0,-3}(f)+\ldots,&\hspace{-2cm}p=-1,\ q= 0,\\[0.2cm]
\frac{1}{q}A_{1,q+1}(f),&\hspace{-2cm}p=0,\,q\neq 0.
\end{cases}}
\end{align}
\smallskip

The following algorithm expresses the admissible coefficients $A_{r,s}(f)$, $r,\,s\in\mathbb Z,$ $r\leq 1,\ s\leq (-r+1)$, from coefficients $B_{r,s}$, $(r,s)\prec (0,0)$, of the formal continuous length. It is based on solving a \emph{finite number of upper-triangular systems} which we derive from \eqref{eq:jaod}.

Note that for a given $f$, due to well-orderedness of $\widehat g^{-1}$, there exists $R\in\mathbb Z$, such that $A_{p,q}(f)=0$, for every $p\leq R$ and for every $q\in\mathbb Z$. Let $R$ be the biggest such, that is, there exists a $q\in\mathbb Z$ such that $A_{R+1,q}(f)\neq 0$. Also, for every $p\in\mathbb Z$, $p>R$, there exists $q\in\mathbb Z$ such that $A_{p,r}(f)=0$, for every $r<q$. 

We separately solve special levels $p=-1$ and $p=0$. Note that we do not need  to consider levels $p>0$, since $r\leq 1$ for the admissible set of $A_{r,s}(f)$.
\medskip

\noindent \emph{The algorithm.}

\emph{1. Level $p=0$ $($expressing admissible $A_{1,q}(f),\ q\leq 0$$)$.} 

\noindent Let $q_1$ be the lowest $q$ such that $A_{1,q_1}(f)$ does not vanish. From \eqref{eq:jaod}, we see that $B_{0,q}=0$ for $q< q_1-1$ and $B_{0,q_1-1}\neq 0$. The admissible coefficients that we express on this level are $A_{1,0}(f),\,A_{1,-1}(f),\ldots,A_{1,q_1}(f)$. In the asymptotic expansion of the length of the $\varepsilon$-neighborhood of an orbit, we read the coefficients 
$B_{0,-1},\,B_{0,-2}\,\ldots,\,B_{0,q_1-1}$. We start with $B_{0,-1}$. The stopping condition is the point $q_1-1$ which is recognized since there are no more lower non-zero coefficients $B_{0,q}$, $q<q_1-1$, on this level. Recall that $\widehat A_{\widehat f}^c$ is well-ordered, so this must eventually happen. By \eqref{eq:jaod}, we have: $$A_{1,q+1}(f)=q B_{0,q},\ q_1-1\leq q\leq -1.$$
Note that $B_{0,0}$ is not needed for expressing admissible $A_{1,q}(f)$, since $A_{1,1}(f)$ does not belong to the admissible set.
\smallskip

\emph{2. Levels $p<-1$ $($expressing admissible $A_{p+1,q}(f),\ q\leq -p$$)$.}
 
\noindent Let $p<-1$ be any level such that there exists $q\leq -p$ such that $B_{p,q}\neq 0$. Otherwise, this level needs not to be considered, since by \eqref{eq:jaod} all admissible $A_{p,q}(f)$ on this level are necessarily zero. Due to well-orderedness, there are only finitely many such levels. 

On a level $p<-1$, in the expansion of the length of the $\varepsilon$-neighborhood of an orbit read $B_{p,-p}$, $B_{p,-p-1},\ \ldots.$ Continue until the biggest $q_{p+1}$ such that all further coeficients on this level vanish, that is, $B_{p,q}=0$, $q<q_{p+1}$ (well-orderedeness). This is the stopping condition. The admissible coefficients that we need to express by $B_{p,q}$ are $A_{p+1,q}(f),\ q_{p+1}\leq q\leq -p$. Indeed, by \eqref{eq:jaod}, $B_{p,q}=0$ for $q<q_{p+1}$ implies that $A_{p+1,q}(f)=0$ for $q<q_{p+1}$. By \eqref{eq:jaod}, we solve the upper-triangular system:
$$
\scriptsize{
\begin{cases}
\begin{array}{lrrrrr}
B_{p,-p}=&\frac{1}{p}A_{p+1,-p}(f)&+\frac{p+1}{p^2}A_{p+1,-p-1}(f)&+\frac{(p+1)(p+2)}{p^3}A_{p+1,-p-2}(f)&+\ldots &+\frac{(p+1)\cdots(-q_{p+1})}{p^{-q_{p+1}-p+1}} A_{p+1,q_{p+1}}(f),\\
B_{p,-p-1}=&&\frac{1}{p}A_{p+1,-p-1}(f)&+\frac{p+2}{p^2}A_{p+1,-p-2}(f)&+\ldots& +\frac{(p+2)\cdots(-q_{p+1})}{p^{-q_{p+1}-p}} A_{p+1,q_{p+1}}(f),\\
B_{p,-p-2}=&&&\frac{1}{p}A_{p+1,-p-2}(f)&+\ldots&+\frac{(p+3)\cdots(-q_{p+1})}{p^{-q_{p+1}-p-1}} A_{p+1,q_{p+1}}(f),\\
&\vdots&&\vdots&&\\
B_{p,q_{p+1}}=&&&&&\frac{1}{p}A_{p+1,q_{p+1}}(f).
\end{array}
\end{cases}
}
$$
\emph{3. Level $p=-1$ $($expressing admissible $A_{0,q}(f),\ q\leq 1$$)$.}

\noindent By a similar analysis, we solve an upper-triangular system to express (finitely many due to well-orderedness) admissible $A_{0,q}(f),\ q\leq 1$ by finitely many $B_{-1,q}$, $q\leq 1$.

Finally, we see that all $B_{r,s}$ used for expressing the admissible $A_{p,q}$ satisfy $(r,s)\prec (0,0)$.
\hfill $\Box$
\bigskip

\noindent {\emph{Proof of Theorem~C.}} By Theorem~B $(iii)$, the initial part of the asymptotic expansion of the standard length $\varepsilon\mapsto A_f(x_0,\varepsilon)$ for any orbit coincides with the formal length $\widehat A_{\widehat f}^c (\varepsilon)$ up to the order $O(\varepsilon^{1+\delta})$, for some $\delta>0$. Therefore, we can work with the formal length and analyse its initial terms. The leading term of $\widehat A_{\widehat f}^c(\frac\varepsilon 2)$ is given by $c\varepsilon^{\frac{1}{\alpha}}\boldsymbol\ell^{-\frac{m}{\alpha}},\ c\in\mathbb R$, by Remark~\ref{obs:normal}. From the exponents, we read the formal invariants $\alpha,\,m$. For the invariant $\rho$, the statement of Theorem~C follows directly from Lemmas~\ref{lem:dva} and \ref{lem:tri}.

\hfill $\Box$

\medskip

\noindent \emph{Proof of Corollary~\ref{cor:nolog}}. By \cite[Remark 7.1]{MRRZ2Fatou}, the formal Fatou coordinate $\widehat{\Psi}\in\widehat{\mathcal L}_2^\infty$ of any Dulac germ $f(x)=x-x^\alpha\boldsymbol\ell^m+o(x^\alpha\boldsymbol\ell^m),\ \alpha>1,\ m\in\mathbb N_0^-$, with formal invariants $(\alpha,\rho)$, contains the double logarithm in \emph{only one} term: $\big(\frac{m}{2}+\rho\big)\boldsymbol\ell_2^{-1}$. On the other hand, if $\widehat g(x)=x^\alpha+o(x^\alpha)$ ($g$ does not contain a logarithm in the leading term), then it can be shown as in the proof of Proposition~\ref{th1} that its inverse $\widehat g^{-1}(\varepsilon)=\varepsilon^{\frac{1}{\alpha}}+o\big(\varepsilon^{\frac{1}{\alpha}}\big)$ belongs to $\widehat{\mathcal L}_1$, i.e., does not contain double logarithms. Since $\widehat g^{-1}(\varepsilon)$ contains no double logarithms and since its leading term does not contain a logarithm, composing $\widehat\Psi (\widehat g^{-1}(\varepsilon))$ we see that the term $\rho\boldsymbol\ell_2^{-1}$ is the only term in $\widehat\Psi (\widehat g^{-1}(\varepsilon))$ which contains the double logarithm. Consequently, $\rho\cdot \varepsilon\boldsymbol\ell_2^{-1}$ is the only term in $\widehat A_{\widehat f}^c(\varepsilon/2)$ containing the double logarithm. The formal invariant $\rho$ is explicitely lisible from its coefficient. 
\smallskip

There is yet another, more illustrative way to see that in the case $m=0$ the formal invariant $\rho$ is the coefficient in front of $\varepsilon\boldsymbol\ell_2^{-1}$. We use Proposition~\ref{lem:jen} for $m=0$ and Lemma~\ref{lem:tri}. Putting \eqref{eq:sc} in \eqref{eq:jaoj}, the result follows immediately.
\hfill $\Box$

\section{Appendix}
\label{sec:appendix}
\noindent The following proposition is necessary for Definition~\ref{def:is} of \emph{integral sections}.

\begin{prop}\label{integralsects} Let $\mathbf s$ be any section such that $\mathbf s\Big|_{\widehat {\mathcal S}_0}$ is coherent. Then $\widehat {\mathcal L}_1^I\subset \widehat{\mathcal S}_0 \cup \widehat{\mathcal S}_1^{\mathbf s}$.
\end{prop}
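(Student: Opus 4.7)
The plan is to exhibit, for each $\widehat F\in\widehat{\mathcal L}_1^I$, an explicit germ $F\in\mathcal G_{AN}$ such that the transfinite Poincar\'e algorithm applied to $F$ with respect to $\mathbf s$ reproduces $\widehat F$. The natural candidate is the integral sum
\[
F(y)=G_1(y)\,f\bigl(\boldsymbol\ell(e^{-\gamma/y}h(y))\bigr)+G_0(y)
\]
given by \eqref{intsum}, where $G_0,G_1,h$ are the unique analytic sums of the convergent transseries $\widehat G_0,\widehat G_1,\widehat h$ in an arbitrary decomposition \eqref{dvaa} and $f$ is an integral sum of $\widehat f\in\widehat{\mathcal L}_0^I$ in the sense of Definition~\ref{defi}. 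First I would check that $F\in\mathcal G_{AN}$: since $\gamma>0$ and $\widehat h$ has no $\boldsymbol\ell(y)$ in its leading term, $h$ is bounded and bounded away from $0$ on a right neighborhood of $0$, hence $e^{-\gamma/y}h(y)\to 0^+$ as $y\to 0^+$, so $\boldsymbol\ell(e^{-\gamma/y}h(y))\to 0$ and the composition $f\circ\boldsymbol\ell(\cdot)$ is analytic there.

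Next I would run the transfinite Poincar\'e algorithm on $F$ and verify that the successor ordinal steps reproduce $\widehat F$ monomial by monomial. The essential ingredients are: the power asymptotic expansion of $f$ is $\widehat f$ (by definition of integral sum); the expansions of $G_0,G_1,h$ coincide with their convergent transseries; and the convergent expansion
\[
\boldsymbol\ell(e^{-\gamma/y}h(y))=\frac{y}{\gamma}\Bigl(1+\frac{y}{\gamma}\log h(y)+\cdots\Bigr)
\]
matches its formal counterpart exactly. Using these, the leading monomial of the current tail of $F$ agrees with that of the current tail of $\widehat F$ up to a remainder of strictly smaller order, so no section choice intervenes at successor steps.

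The hard part will be the limit ordinal steps, where the algorithm must attribute a germ to the accumulated tail via $\mathbf s$. The key observation to establish is that any such tail can again be written in the form \eqref{dvaa}, namely as a tail of $\widehat G_1$ multiplied by the same $\widehat f\bigl(\boldsymbol\ell(e^{-\gamma/y}h(y))\bigr)$, plus a tail of $\widehat G_0$ together with finitely many convergent contributions collected along the way; in particular the tail lies again in $\widehat{\mathcal L}_1^I$. Its convergent part is assigned its unique analytic sum by the coherence of $\mathbf s$ on $\widehat{\mathcal S}_0$ applied to each power-series level in $\boldsymbol\ell$, while the non-convergent contribution $\widetilde G_1(y)f(\boldsymbol\ell(e^{-\gamma/y}h(y)))$ is determined up to the exponentially small ambiguity highlighted in Remark~\ref{rem:helpdif}. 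This ambiguity does not obstruct the algorithm, since it is flatter than every power of $y$ and therefore invisible to the successor monomials; it matches exactly the non-uniqueness already built into the integral sum $F$ itself. Iterating through all transfinite ordinals then yields $\widehat F$ as the sectional asymptotic expansion of $F$ with respect to $\mathbf s$, placing $\widehat F$ in $\widehat{\mathcal S}_0$ when $\widehat F\in\widehat{\mathcal L}_0^\infty$ and in $\widehat{\mathcal S}_1^{\mathbf s}$ otherwise.
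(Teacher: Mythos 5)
Your overall route is the paper's: take the integral sum $F$ from \eqref{intsum} and show that the transfinite Poincar\'e algorithm applied to $F$ with respect to $\mathbf s$ returns $\widehat F$. The verification that $F\in\mathcal G_{AN}$ and the successor steps are fine in spirit. The gap is in your treatment of the limit ordinal steps, which is the heart of the statement. You claim that the tail at a limit ordinal step contains a ``non-convergent contribution $\widetilde G_1(y)f(\boldsymbol\ell(e^{-\gamma/y}h(y)))$'' which ``is determined up to the exponentially small ambiguity'' of Remark~\ref{rem:helpdif}. But $\mathbf s$ is only assumed coherent on $\widehat{\mathcal S}_0$, i.e.\ it is forced to return the standard sum only on \emph{convergent} series; on a divergent series it may return \emph{any} germ admitting that series as expansion, with an error that need not be exponentially small. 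So if a divergent series in $\boldsymbol\ell$ ever appeared as the coefficient to be summed at a limit ordinal step, nothing in the hypotheses would pin down the section's value there, and the algorithm applied to $F$ could drift away from $\widehat F$. Your argument does not rule this out; it implicitly assumes the section acts like an integral section on divergent data, which is exactly what is not given.

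The missing idea — and the actual content of the paper's proof — is that no divergent summation is ever required at a limit ordinal step: one must regroup $\widehat F(y)=\sum_i \widehat g_i(\boldsymbol\ell(y))\,y^{\alpha_i}$ with \emph{every} coefficient $\widehat g_i$ convergent, and simultaneously prove the germ-level estimate $F(y)-\sum_{i\le n}g_i(\boldsymbol\ell(y))\,y^{\alpha_i}=o(y^{\alpha_n})$ for the standard sums $g_i$. The divergence of $\widehat f$ lives entirely in the $y$-grading, because its argument $\boldsymbol\ell(e^{-\gamma/y}\widehat h(y))=\tfrac{y}{\gamma}+\widehat k(y)$ is comparable to $y$ (not to $\boldsymbol\ell(y)$): Taylor-expanding, $\widehat f\bigl(\boldsymbol\ell(e^{-\gamma/y}\widehat h(y))\bigr)=\widehat f(\tfrac{y}{\gamma})+\widehat f'(\tfrac{y}{\gamma})\widehat k(y)+\cdots$, and the powers of $\boldsymbol\ell$ at each fixed power of $y$ come only from the convergent data $\widehat G_0,\widehat G_1,\widehat h,\widehat k$. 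To make the germ-level estimate work you also need that each derivative $f^{(k)}$ admits $\widehat f^{(k)}$ as power asymptotic expansion, which follows from differentiating the defining relation \eqref{deff} and \eqref{eq:joj} using that $\widehat R$ is a convergent Laurent series. Without establishing the convergence of the $\boldsymbol\ell$-blocks, the conclusion for an \emph{arbitrary} section coherent on $\widehat{\mathcal S}_0$ does not follow.
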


\begin{proof}
Take $\widehat F\in\widehat{\mathcal L}_1^I$ and let $F\in\mathcal G$ be its one integral sum, as in \eqref{intsum}. It is sufficient to verify that the algorithm of Poincar\' e applied to $F$ with respect to any section $\mathbf s$ coherent on $\widehat{\mathcal S}_0$ gives the asymptotic expansion $\widehat F$. 

\noindent Due to coherence of $\mathbf s$ on $\widehat{\mathcal S}_0$, it is sufficient to prove the following:

1. That the terms of $\widehat F\in\widehat{\mathcal L}_1^I$ from \eqref{dvaa} can be grouped as:
\begin{equation}\label{eq:forma}
\widehat F(y)=\sum_{i=1}^{\infty} \widehat g_i\big(\boldsymbol\ell(y)\big)  y^{\alpha_i},
\end{equation}
where $(\alpha_i)$ is a strictly increasing sequence of real numbers tending to $+\infty$ or finite and $\widehat g_i$ are \emph{convergent}.

2.  At the same time, that the integral sum $F$ of $\widehat F$ given in \eqref{intsum} satisfies:
\begin{equation}\label{eq:germi}
F(y)-\sum_{i=1}^{n} g_i\big(\boldsymbol\ell(y)\big)y^{\alpha_i}=o(y^{\alpha_n}),\ y\to 0,\ n\in\mathbb N,
\end{equation}
where $g_i$ are the sums of the \emph{convergent} transseries $\widehat g_i$ and $\alpha_i$ are the same as in 1. 

First, by \emph{Fubini's theorem} and absolute convergence of $\widehat G_i$, $i=0,1,$ and $\widehat h$ in \eqref{dvaa}, we have that:
\begin{align}
&\widehat G_i(y)=\sum_{j=1}^{\infty}\widehat g_j^i\big(\boldsymbol\ell(y)\big) y^{\beta_j^i},\ i=0,1,\nonumber\\
&\widehat h(y)=a y^{\gamma_1}+\sum_{j=2}^{\infty} \widehat h_j\big(\boldsymbol\ell(y)\big) y^{\gamma_j},\ a\in\mathbb R.\label{eq:fia}
\end{align}
Here, $(\beta_j^i)_{j}$ and $(\gamma_j)_j$ are strictly increasing and tending to $+\infty$ or finite, 
and $\widehat g_j^i\in\widehat{\mathcal L}_0^\infty$, $i=0,1,$ and $\widehat h_j\in\widehat{\mathcal L}_0^\infty$, $j\in\mathbb N$, are convergent with the sums $g_j^i$, $h_j$ respectively. For the sums $G_i\in\mathcal G,\ i=0,1,$ and $h\in\mathcal G$ from \eqref{intsum} it holds that:
\begin{align}
&G_i(y)-\sum_{j=1}^{n}g_j^i\big(\boldsymbol\ell(y)\big) y^{\beta_j^i}=o(x^{\beta_n^i}),\ n\in\mathbb N,\ i=0,1,\nonumber\\
&h(y)-a y^{\gamma_1}-\sum_{j=2}^{n} h_j\big(\boldsymbol\ell(y)\big) y^{\gamma_j}=o(y^{\gamma_n}),\ y\to 0,\ n\in\mathbb N.\label{eq:se}
\end{align}
\smallskip

It is easy to see, with $\widehat h$ and $h$ as above, that: 
\begin{equation}\label{eq:ka}
\boldsymbol\ell(e^{-\frac{\gamma}{y}}\widehat h(y))=\frac{y}{\gamma}+\widehat k(y),
\end{equation}
as well as that
$$
\boldsymbol\ell(e^{-\frac{\gamma}{y}} h(y))=\frac{y}{\gamma}+ k(y),
$$
where $\widehat k\in\widehat{\mathcal L}$ is a convergent transseries with the sum $k\in\mathcal G$. In particular,
\begin{align*}
\widehat k(y)=c y^{\delta_0}+\sum_{i=2}^{\infty}\widehat k_i(\boldsymbol\ell(y)) y^{\delta_i},\ 
k(y)-cy^{\delta_0}- \sum_{i=2}^{n} k_i(\boldsymbol\ell(y)) y^{\delta_i}=o(y^{\delta_n}),\ n\in\mathbb N,
\end{align*}
where $(\delta_i)_i$ are strictly increasing to $+\infty$ or finite, $\delta_0>1$, $c \in\mathbb R$, and $\widehat k_i\in\widehat{\mathcal L}_0^\infty$ are convergent power asymptotic expasions of $k_i\in\mathcal G$, $i\in\mathbb N$.
\medskip

Now suppose that in \eqref{dvaa} $\widehat f(y)=\sum_{k=N}^{\infty} a_k y^k \in\widehat{\mathcal L}_0^I$, $N\in\mathbb Z$, with integral factor $\alpha$. Then
$$\frac{d}{dy}\Big(y^\alpha \widehat f\big(\boldsymbol\ell(y)\big)\Big)=y^{\alpha-1}\widehat R\big(\boldsymbol\ell(y)\big),$$
with $\widehat R\in\widehat {\mathcal L}_0^\infty$ convergent Laurent.
 Let $f$ be its integral sum. Then, by \cite[Remark 3.13]{MRRZ2Fatou}, $f$ admits $\widehat f$ as its power asympotic expansion. Moreover, differentiating \eqref{deff} and \eqref{eq:joj} and since $\widehat R$ is a convergent Laurent series, inductively it follows that $f^{(k)}$ admits the formal derivative $\widehat f^{(k)}$, $k\in\mathbb N$, as its power asymptotic expansion. Indeed, inductively, $\widehat f^{(k)}$ is a finite combination of $\widehat f$, $\widehat R$, and the formal derivatives $\widehat R'$, $\ldots$, $\widehat R^{(k-1)}$, $k\in\mathbb N$. The same combination holds for the germ counterparts.
\medskip

Using \eqref{eq:ka}, we have the following Taylor expansions (formal and for germs):
\begin{align}
\widehat f\big(\boldsymbol\ell(e^{-\frac{\gamma}{y}}\widehat h(y))\big)=\widehat f\big(\frac{y}{\gamma}\big)+\widehat f'\big(\frac{y}{\gamma}\big)\widehat k(y)+\frac{1}{2!}\widehat f''\big(\frac{y}{\gamma}\big)\widehat k(y)^2+\ldots,\label{eq:gio1}\\
f\big(\boldsymbol\ell(e^{-\frac{\gamma}{y}} h(y))\big)=f\big(\frac{y}{\gamma}\big)+ f'\big(\frac{y}{\gamma}\big) k(y)+\frac{1}{2!} f''\big(\frac{y}{\gamma}\big) k(y)^2+\ldots\label{eq:gio2}.
\end{align}

Combining $\eqref{eq:fia}$ with \eqref{eq:gio1}, as well as on the other hand $\eqref{eq:se}$ with \eqref{eq:gio2}, we conclude \eqref{eq:forma} formally for $\widehat F\in\widehat{\mathcal L}_1^I$ and analogously \eqref{eq:germi} for its sum $F$. Here, $g_i\in\mathcal G$ are exactly the sums of convergent series $\widehat g_i$, $i\in\mathbb N$, since they are given as the same finite combinations of convergent series, respectively their sums. 
\end{proof}

\medskip

\begin{prop}[Uniqueness of the integral sum]\label{difi} Let $\widehat F\in\widehat{\mathcal L}_1^I$.
Let 
\begin{equation}\label{eq:F}
\widehat F(y)=\widehat G_1(y)\cdot \widehat f\big(\boldsymbol\ell(e^{-\frac{\gamma}{y}} \widehat h_1(y))\big)+\widehat G_0(y)
\end{equation}
be a decomposition of the form \eqref{dvaa}, not necessarily unique. Let $\alpha\in\mathbb R$ be the exponent of integration of $\widehat f$.

1. If $\alpha<0$, then the integral sum $F\in\mathcal G_{AN}$ corresponding to this decomposition is unique up to an additive term $c G_1(y)\cdot \big(e^{-\frac{\gamma}{y}} h_1(y)\big)^{-\alpha}$, $c\in\mathbb R$. Here, $h_1\in\mathcal G_{AN}$ is the sum of $\widehat h_1$.

2. If $\alpha\geq 0$, the integral sum $F\in\mathcal G_{AN}$ is unique.
\medskip

\noindent Morever, let \begin{equation}\label{eq:G}\widehat F(y)=\widehat H_1(y)\cdot \widehat g\big(\boldsymbol\ell(e^{-\frac{\delta}{y}} \widehat h_2(y))\big)+\widehat H_0(y)\end{equation}
be another decomposition \eqref{dvaa} of the same $\widehat F\in\widehat{\mathcal L}_1^I$, with the exponent of integration $\beta\in\mathbb R$ of $\widehat g$ not necessarily equal to $\alpha$. Then its sum is again equal to $F$, up to an additive term $c G_1(y)\cdot \big(e^{-\frac{\gamma}{y}} h_1(y)\big)^{-\alpha}$, $c\in\mathbb R$.
\end{prop}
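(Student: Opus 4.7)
The strategy is to isolate and quantify each independent source of non-uniqueness in Definition~\ref{diifi}, then to verify that changing the decomposition cannot introduce any new flat term beyond the one already identified.

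For claims (1) and (2), I would first observe that in \eqref{eq:F} the requirement of Definition~\ref{diifi} forces $\widehat G_0,\widehat G_1,\widehat h_1$ to be convergent in $\widehat{\mathcal L}_1^\infty$ in the sense of \cite[Definition~3.7]{MRRZ2Fatou}, so their sums $G_0,G_1,h_1\in\mathcal G_{AN}$ are uniquely determined. Thus the only possible ambiguity in \eqref{intsum} is inherited from the integral sum $f$ of $\widehat f\in\widehat{\mathcal L}_0^I$, governed by Definition~\ref{defi}. If $\widehat f$ is convergent ($\alpha=0$) the sum is unique; if $\alpha>0$ integrability at $s=0$ forces $d=0$ in \eqref{eq:joj}, again giving a unique $f$; if $\alpha<0$ two choices $d_1,d_2>0$ yield
\[
f_1(y)-f_2(y)=\Bigl(\int_{d_2}^{d_1}s^{\alpha-1}R(\boldsymbol\ell(s))\,ds\Bigr)e^{\alpha/y}=Ce^{\alpha/y},
\]
and $C$ ranges over all reals (since $\widehat f$ being divergent prevents $R\circ\boldsymbol\ell$ from vanishing identically on any interval).

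To transport this ambiguity through the substitution I would use the identity $1/\boldsymbol\ell(u)=-\log u$, which yields $e^{\alpha/\boldsymbol\ell(u)}=u^{-\alpha}$. Applying it with $u=e^{-\gamma/y}h_1(y)$ gives
\[
f_1\bigl(\boldsymbol\ell(e^{-\gamma/y}h_1(y))\bigr)-f_2\bigl(\boldsymbol\ell(e^{-\gamma/y}h_1(y))\bigr)=C\bigl(e^{-\gamma/y}h_1(y)\bigr)^{-\alpha},
\]
and multiplying by $G_1(y)$ produces the claimed additive term $cG_1(y)\bigl(e^{-\gamma/y}h_1(y)\bigr)^{-\alpha}$ with $c=C$. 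Since $\gamma>0$ and $-\alpha>0$ when $\alpha<0$, this term is exponentially flat at $y=0$; in the cases $\alpha\geq 0$ the ambiguity collapses to $0$ and $F$ is unique, which settles (1) and (2).

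For the \emph{Moreover} part, let $F_1,F_2$ be integral sums of \eqref{eq:F} and \eqref{eq:G} respectively. By Proposition~\ref{integralsects} applied to both decompositions, each admits $\widehat F$ as its unique sectional asymptotic expansion with respect to every coherent section, through the same transfinite Poincar\'e algorithm; hence $F_1-F_2\in\mathcal G_{AN}$ is flat at $y=0$. My plan is to invoke the Fubini-type reorganization from the proof of Proposition~\ref{integralsects} to rewrite both decompositions in the common normal form $\sum_j\widehat g_j(\boldsymbol\ell(y))y^{\alpha_j}$ with convergent coefficients, and then match coefficient by coefficient using the well-ordered support of $\widehat F$. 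Since the sum of a convergent transseries in $\widehat{\mathcal L}_1^\infty$ is unique, the matching coefficient-germs agree termwise; the only surviving freedom is the exponentially flat summand $Ce^{\alpha/y}$ attached to the divergent factor $\widehat f$ in \eqref{eq:F}, which pulls back exactly to $cG_1(y)\bigl(e^{-\gamma/y}h_1(y)\bigr)^{-\alpha}$.

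The main obstacle is this last step: showing that rewriting \eqref{eq:G} into a normal form compatible with \eqref{eq:F} does not produce any \emph{additional} flat discrepancy involving $\beta,\delta,\widehat h_2,\widehat H_1$. I expect this to follow from the intrinsic characterization of $\alpha$ (up to sign it can be detected from the slowest-decaying flat germ realisable as a difference of two sums of $\widehat F$), but the careful bookkeeping, especially when $\alpha\neq\beta$, will be the technical core of the proof: a putative flat term not of the allowed form would contradict either the uniqueness of sums of convergent transseries or the fact that $\widehat F$ has a well-defined sectional asymptotic expansion.
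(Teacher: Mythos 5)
Your treatment of claims (1) and (2) is correct and matches the paper's: the only ambiguity in one fixed decomposition comes from the constant of integration $d$ in \eqref{eq:joj}, the difference of two choices is $Ce^{\alpha/y}$, and the identity $e^{\alpha/\boldsymbol\ell(u)}=u^{-\alpha}$ transports this to the claimed flat term; for $\alpha\geq 0$ the ambiguity vanishes.

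The \emph{Moreover} part, however, has a genuine gap, and it sits exactly where you flag "the main obstacle." Your proposed mechanism --- rewrite both decompositions in the normal form $\sum_j\widehat g_j(\boldsymbol\ell(y))y^{\alpha_j}$ and match coefficients using well-orderedness --- can only show that $F_1-F_2$ is flat; it is structurally incapable of showing that this flat germ is proportional to $G_1(y)\bigl(e^{-\gamma/y}h_1(y)\bigr)^{-\alpha}$ rather than to some other exponentially small germ built from $\beta,\delta,\widehat H_1,\widehat h_2$. Asymptotic expansions are blind to all flat terms, so no amount of coefficient matching closes this. What is needed, and what the paper supplies, is an exact algebraic relation between the data of the two decompositions: subtracting \eqref{eq:F} from \eqref{eq:G}, multiplying by $(x^\gamma\widehat h_1(\boldsymbol\ell))^\alpha$, and differentiating formally turns the divergent factors $\widehat f,\widehat g$ into the convergent Laurent series $\widehat R_1,\widehat R_2$ of \eqref{deff}; one then argues that if $\frac{\mathrm d}{\mathrm dx}\bigl(\frac{\widehat H_1}{\widehat G_1}\frac{(x^\gamma\widehat h_1)^\alpha}{(x^\delta\widehat h_2)^\beta}\bigr)\neq 0$, then $\widehat g(\boldsymbol\ell(x^\delta\widehat h_2(\boldsymbol\ell)))$ would be a quotient of convergent transseries, contradicting the divergence of $\widehat g$ (checked by evaluation at small $\boldsymbol\ell$). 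This forces $\gamma\alpha=\delta\beta$ and $H_1h_1^\alpha/(G_1h_2^\beta)=C$, which is precisely the statement that the flat ambiguities of the two decompositions span the same one-dimensional space. Without this relation (or an equivalent "intrinsic characterization" of $\alpha$, which you mention only as an expectation), the conclusion of the \emph{Moreover} part does not follow. The paper then still has to integrate back at the level of germs, using that all auxiliary series are convergent and that integration reintroduces exactly one constant, yielding the term $D\,G_1(\boldsymbol\ell)(x^\gamma h_1(\boldsymbol\ell))^{-\alpha}$; this step is also absent from your plan.
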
	

\begin{proof} If $\alpha=0$, then necessarily $\beta=0$ (the case $\widehat F$ convergent), and the sum $F\in\mathcal G_{AN}$ is unique. We therefore suppose in the proof that $\alpha,\ \beta\neq 0$.
\smallskip

The first statement of the proposition follows directly from Definition~\ref{diifi} of the integral sum and Remark~\ref{rem:helpdif}. Therefore, the integral sum corresponding to decomposition \eqref{eq:F} is unique if $\alpha>0$ and unique up to $c G_1(y)\cdot \big(e^{-\frac{\gamma}{y}} h_1(y)\big)^{-\alpha}$, $c\in\mathbb R,$ if $\alpha<0$. Analogously, the integral sum corresponding to decomposition \eqref{eq:G} is unique if $\beta>0$ and unique up to $d H_1(y)\cdot \big(e^{-\frac{\delta}{y}} h_2(y)\big)^{-\beta}$, $d\in\mathbb R$, if $\beta<0$. We show below that if \eqref{eq:F} and \eqref{eq:G} are two decompositions of the same $\widehat F$, then formally in $\widehat{\mathcal L}_2^\infty$: \begin{align}
&\frac{\widehat H_1(\boldsymbol\ell)}{\widehat G_1(\boldsymbol\ell)}\cdot \frac{(\widehat h_1(\boldsymbol\ell))^\alpha}{(\widehat h_2(\boldsymbol\ell))^\beta}=Cx^{\delta\beta-\gamma\alpha},\ \text{for some constant } C\in\mathbb R.\label{eq:firststep}
\end{align}
It follows that $\gamma\alpha=\delta\beta$. Moreover, since  $\widehat H_1,\ \widehat G_1,\ \widehat h_1,\ \widehat h_2\in\widehat{\mathcal L}^\infty$ are convergent, their sums are unique, and it follows that:
\begin{equation}\label{uh}\frac{H_1(y)}{G_1(y)}\cdot \frac{(h_1(y))^\alpha}{(h_2(y))^\beta}=C,\ \text{for some constant } C\in\mathbb R.\end{equation}
 Consequently, the additive term in the integral sum of $\widehat F\in\widehat{\mathcal L}_1^I$ is the same for all decompositions of $\widehat F$. 
\medskip

We prove now that the integral sums of both decompositions \eqref{eq:F} and \eqref{eq:G} of $\widehat F$ are equal, up to the above mentioned additive term. Substracting \eqref{eq:F} and \eqref{eq:G} and using \eqref{eq:firststep}, we get:
\begin{equation}\label{huh}
\widehat f\big(\boldsymbol\ell(x^\gamma \widehat h_1(\boldsymbol\ell))\big)-C\frac{(x^\delta \widehat h_2(\boldsymbol\ell))^\beta}{(x^\gamma \widehat h_1(\boldsymbol\ell))^\alpha}\widehat g\big(\boldsymbol\ell(x^\delta \widehat h_2(\boldsymbol\ell))\big)=\frac{\widehat H_0(\boldsymbol\ell)-\widehat G_0(\boldsymbol\ell)}{\widehat G_1(\boldsymbol\ell)}.
\end{equation}
We multiply \eqref{eq:jen} by $\big(x^\gamma \widehat h_1(\boldsymbol\ell)\big)^\alpha\in\widehat{\mathcal L}_2^\infty$ and differentiate formally in $\widehat{\mathcal L}_2^\infty$ by $\frac{\mathrm d}{\mathrm dx}$. By \eqref{deff}, $$\frac{\mathrm d}{\mathrm dx}\big(x^\alpha \widehat f(\boldsymbol\ell)\big)=x^{\alpha-1}\widehat R_1(\boldsymbol\ell),\ \frac{\mathrm d}{\mathrm dx}\big(x^\beta \widehat g(\boldsymbol\ell)\big)=x^{\beta-1}\widehat R_2(\boldsymbol\ell),$$where $\widehat R_1,\ \widehat R_2\in\widehat{\mathcal L}_0^\infty$ are convergent Laurent series. We get formally in $\widehat{\mathcal L}_2^\infty$:
\begin{align}
&\big(x^\gamma \widehat h_1(\boldsymbol\ell)\big)^{\alpha-1}\widehat R_1\big(\boldsymbol\ell(x^\gamma \widehat h_1(\boldsymbol\ell))\big)\cdot \frac{\mathrm d}{\mathrm dx}(x^\gamma \widehat h_1(\boldsymbol\ell))-\label{eq:jen}\\
&-C\big(x^\delta \widehat h_2(\boldsymbol\ell)\big)^{\beta-1}\widehat R_2\big(\boldsymbol\ell(x^\delta \widehat h_2(\boldsymbol\ell))\big)\cdot \frac{\mathrm d}{\mathrm dx}(x^\delta\widehat h_2(\boldsymbol\ell))=\frac{\mathrm d}{\mathrm dx}\Big(\frac{\widehat H_0(\boldsymbol\ell)-\widehat G_0(\boldsymbol\ell)}{\widehat G_1(\boldsymbol\ell)}\cdot \big(x^\gamma \widehat h_1(\boldsymbol\ell)\big)^\alpha\Big).\nonumber
\end{align}
Since $\widehat h_{1,2},\ \widehat G_{0,1},\ \widehat H_0$ are convergent in $\widehat{\mathcal L}^\infty$ and their derivatives commute with the sums, and since $\widehat R_{1,2}$ are also convergent, we may \emph{remove the hats} and  get the following analogue of \eqref{eq:jen} in $\mathcal G_{AN}$:
\begin{align}\label{eq:fui}
&\big(x^\gamma h_1(\boldsymbol\ell)\big)^{\alpha-1}R_1\big(\boldsymbol\ell(x^\gamma  h_1(\boldsymbol\ell))\big)\cdot \frac{\mathrm d}{\mathrm dx}(x^\gamma  h_1(\boldsymbol\ell))-\\
&-C\big(x^\delta  h_2(\boldsymbol\ell)\big)^{\beta-1}R_2\big(\boldsymbol\ell(x^\delta h_2(\boldsymbol\ell))\big)\cdot \frac{\mathrm d}{\mathrm dx}(x^\delta h_2(\boldsymbol\ell))=\frac{\mathrm d}{\mathrm dx}\Big(\frac{ H_0(\boldsymbol\ell)-G_0(\boldsymbol\ell)}{G_1(\boldsymbol\ell)}\cdot \big(x^\gamma  h_1(\boldsymbol\ell)\big)^\alpha\Big).\nonumber
\end{align}
The goal is to get the equivalent of \eqref{huh} for germs. So, once we have reached the equation \eqref{eq:fui} for germs by \emph{removing hats} due to convergence, we reverse the procedure. We integrate \eqref{eq:fui} in $\mathcal G_{AN}$ with respect to $\int_d^{x} \,\mathrm dt$ or, equivalently,\\ $\int_d^{x^\gamma h_1(\boldsymbol\ell)} \mathrm d\big(t^\gamma h_1(\boldsymbol\ell(t))\big)$, where $d\geq 0$ and $d=0$ iff $\alpha,\ \beta>0$. The notation means: $$\frac{\mathrm d\big(t^\gamma h_1(\boldsymbol\ell(t))\big)}{\mathrm dt}=t^{\gamma-1}\big(\gamma h_1(\boldsymbol\ell(t))+h_1(\boldsymbol\ell(t))\boldsymbol\ell^2(t)\big).$$ By \eqref{eq:joj}, for any two integral sums $f,\ g\in\mathcal G_{AN}$ of $\widehat f, \ \widehat g\in\widehat{\mathcal L}_0^\infty$, there exist $d_1,\ d_2\geq 0$ such that:
\begin{align}
&f\big(\boldsymbol\ell(x^\gamma h_1)\big)(x^\gamma h_1)^\alpha=\int_{d_1}^{x^\gamma h_1(\boldsymbol\ell)} R_1\big(\boldsymbol\ell(s) \big)s^{\alpha-1}\,\mathrm ds,\nonumber\\
&g(\boldsymbol\ell(x^\delta h_2))(x^\delta h_2)^\beta=\int_{d_2}^{x^\delta h_2(\boldsymbol\ell)}R_2\big(\boldsymbol\ell(s)\big)s^{\beta-1}\,\mathrm ds,\label{eq:al}
\end{align}
where $d_1=0$ and $d_2=0$ if and only if $\alpha,\ \beta>0$, otherwise $d_1,\ d_2>0$. The integral sums $f$ and $g$ are obviously uniquely defined by \eqref{eq:al} up to an additive constant of integration. Since by \eqref{uh} $CG_1(x^\delta h_2)^\beta=H_1(x^\gamma h_1)^{\alpha}$, using \eqref{eq:al} in \eqref{eq:fui} after integration, we get:
\begin{equation}\label{fui}
f\big(\boldsymbol\ell(x^\gamma h_1)\big)-C\frac{(x^\delta h_2)^\beta}{(x^\gamma h_1)^\alpha}g\big(\boldsymbol\ell(x^\delta h_	2)\big)=\frac{H_0(\boldsymbol\ell)-G_0(\boldsymbol\ell)}{G_1(\boldsymbol\ell)}+ D \big(x^\gamma h_1(\boldsymbol\ell)\big)^{-\alpha},\ D\in\mathbb R.
\end{equation}
Comparing \eqref{eq:F}, \eqref{eq:G} with \eqref{fui}, we conclude that the integral sum of $\widehat F$ is unique up to $D G_1(\boldsymbol\ell) (x^\gamma h_1(\boldsymbol\ell))^{-\alpha}$, $D\in\mathbb R$. Note that this term is not dependent on decomposition, and that $G_1$, $\alpha,\ \gamma,\ h_1$ are elements of an \emph{arbitrarily chosen} decomposition.
\medskip

It remains only to prove \eqref{eq:firststep}. We have (formally in $\widehat {\mathcal L}^\infty$):
\begin{equation}\label{eq:eq}
\widehat F(\boldsymbol\ell)=\widehat G_1(\boldsymbol\ell)\cdot \widehat f\big(\boldsymbol\ell(x^\gamma \widehat h_1(\boldsymbol\ell)))\big)+\widehat G_0(\boldsymbol\ell)=\widehat H_1(\boldsymbol\ell)\cdot \widehat g\big(\boldsymbol\ell(x^\delta \widehat h_2(\boldsymbol\ell)))\big)+\widehat H_0(\boldsymbol\ell).
\end{equation}
It is not possible that $\widehat G_1\equiv 0$ and $\widehat H_1\equiv\!\!\!\!\!\!/ \ 0$. Indeed, in that case, we would have that $\boldsymbol\ell\mapsto\widehat g\big(\boldsymbol\ell(x^\delta \widehat h_2(\boldsymbol\ell))\big)$ is convergent in $\widehat{\mathcal L}^\infty$ as a quotient of convergent transseries:
\begin{equation}\label{divergence}
\widehat g\big(\boldsymbol\ell(x^\delta h_2(\boldsymbol\ell))\big)=\frac{G_0(\boldsymbol\ell)-H_0(\boldsymbol\ell)}{H_1(\boldsymbol\ell)}.
\end{equation}
We denote convergent transseries in $\widehat{\mathcal L}^\infty$ without hats. Note that $\widehat g$ is divergent close to $0$. Take $\boldsymbol\ell$ sufficiently small so that the convergent transseries on the right-hand side, as well as $\widehat h_2(\boldsymbol\ell)$, evaluated at $\boldsymbol\ell$, converge.  On the other hand, since $\boldsymbol\ell(x^\delta h_2(\boldsymbol\ell))=\boldsymbol\ell(1+o(1)),\ \boldsymbol\ell\to 0$
, by taking $\boldsymbol\ell$ sufficiently small, we may ensure that $\widehat g$ evaluated at $\boldsymbol\ell(x^\delta h_2(\boldsymbol\ell))$ diverges. This is a contradiction with the equality \eqref{divergence}. Therefore, either both $\widehat G_1$ and $\widehat H_1$ are zero or both are different from zero. In the first case it trivially follows that $G_0\equiv H_0$ (both are convergent) and the decompositions \eqref{eq:F} and \eqref{eq:G} are exactly the same. 

Suppose now without loss of generality that $\widehat G_1\equiv\!\!\!\!\!\!/ \ 0$. 
Dividing both sides of the equality \eqref{eq:eq} by $G_1$, we get formally in $\widehat{\mathcal L}_2^\infty$:
$$
\widehat f(\boldsymbol\ell(x^\gamma h_1(\boldsymbol\ell)))-\frac{H_1(\boldsymbol\ell)}{ G_1(\boldsymbol\ell)}\widehat g(\boldsymbol\ell(x^\delta h_2(\boldsymbol\ell)))=\frac{H_0(\boldsymbol\ell)-G_0(\boldsymbol\ell)}{G_1(\boldsymbol\ell)}.
$$
Multiplying by $(x^\gamma h_1(\boldsymbol\ell))^\alpha\in\widehat{\mathcal L}_2^\infty$ and differentiating formally by $\frac{\mathrm d}{\mathrm dx}$, we get (in $\widehat{\mathcal L}_2^\infty$):
\begin{align*}
\frac{\mathrm d}{\mathrm dx}\Big((x^\gamma  h_1(\boldsymbol\ell))^\alpha &\widehat f\big(\boldsymbol\ell(x^\gamma h_1(\boldsymbol\ell))\big)\Big)-\frac{\mathrm d}{\mathrm dx}\Big(\widehat g\big(\boldsymbol\ell(x^\delta  h_2(\boldsymbol\ell))\big) (x^\delta h_2(\boldsymbol\ell))^\beta \cdot \frac{H_1(\boldsymbol\ell)}{ G_1(\boldsymbol\ell)}\frac{(x^\gamma h_1(\boldsymbol\ell))^\alpha}{(x^\delta  h_2(\boldsymbol\ell))^\beta}\Big)\\
&=\frac{\mathrm d}{\mathrm dx}\Big(\frac{H_0(\boldsymbol\ell)-G_0(\boldsymbol\ell)}{G_1(\boldsymbol\ell)}(x^\gamma h_1(\boldsymbol\ell))^\alpha\Big).
\end{align*}
Differentiating, dividing by $x^{\alpha\gamma-1}$ and grouping the convergent transseries we get that
$$
\boldsymbol\ell\mapsto\widehat g\big(\boldsymbol\ell(x^\delta h_2(\boldsymbol\ell))\big)h_2(\boldsymbol\ell)^\beta\cdot\frac{\frac{\mathrm d}{\mathrm dx}\Big(\frac{H_1(\boldsymbol\ell)}{G_1(\boldsymbol\ell)}\frac{(x^\gamma  h_1(\boldsymbol\ell))^\alpha}{(x^\delta  h_2(\boldsymbol\ell))^\beta}\Big)}{x^{\gamma\alpha-1-\delta\beta}}
$$
is a convergent transseries in $\widehat{\mathcal L}^\infty$. If the derivative in the parenthesis is different from $0$, it  follows that
$
\boldsymbol\ell\mapsto\widehat g(\boldsymbol\ell(x^\delta h_2(\boldsymbol\ell)))
$ is a convergent transseries in $\widehat{\mathcal L}^\infty$. This is a contradiction, as already explained in detail above. Therefore, it necessarily holds that the derivative is $0$, that is:
$$
\frac{H_1(\boldsymbol\ell)}{G_1(\boldsymbol\ell)}\frac{(x^\gamma  h_1(\boldsymbol\ell))^\alpha}{(x^\delta  h_2(\boldsymbol\ell))^\beta}=C,\ C\in\mathbb R.
$$
Now \eqref{eq:firststep} directly follows.
\end{proof}
\medskip

\noindent \emph{Proof of Proposition~\ref{lem:haa}.}\

The blocks of the formal inverse $\widehat g^{-1}$ are by \eqref{Dulaccoef} \emph{convergent transseries}. Since every integral section is coherent (respects convergence), it is sufficient to prove that $g^{-1}$ can be expanded in increasing powers in $x$ in the form \eqref{eq:gmoins}, where $f_{\beta_i}$ are the \emph{sums} of convergent $\widehat f_{\beta_i}$. Coarsely, this is proven by repeating the same steps of construction as described in the proof of Proposition~\ref{prop:refin}, but this time on germs in $\mathcal G_{AN}$. 

Let $g_\alpha(x)=x^\alpha P_m(\boldsymbol\ell^{-1})$ be the first block in the Dulac expansion. Then
$$
g=g_\alpha\circ \varphi,\ g^{-1}=\varphi^{-1}\circ g_\alpha^{-1}.
$$
Computing as in the formal case, we get:
\begin{align}
&g_{\alpha}^{-1}(x)=(a\alpha^{-m})^{-\frac{1}{\alpha}}\cdot x^{\frac{1}{\alpha}}\,\boldsymbol{\ell}^{\frac{m}{\alpha}}\Big(1+F\big(\boldsymbol{\ell}_2,\frac{\boldsymbol{\ell}}{\boldsymbol{\ell}_{2}}\big)\Big),\label{geealpha}\\
&\varphi(x)=g_\alpha^{-1}\big(g(x)\big)=x\cdot \big(1+F_2(\boldsymbol\ell_2,\frac{\boldsymbol\ell}{\boldsymbol\ell_2},T(x))\big).\label{eq:fi}
\end{align}
Here, $F,\ F_2$ are analytic germs in two variables, with Taylor expansions $\widehat F$ and $\widehat F_2$ from $\widehat g_\alpha^{-1}$ resp. $\widehat\varphi$. The germ $T\in\mathcal G_{AN}$ is defined by $g(x)=ax^\alpha\boldsymbol\ell^{-m}(1+T(x))$. Since $g$ is a Dulac germ with Dulac expansion $\widehat g$, it follows that:
\begin{equation}\label{eq:te}
T(x)-\boldsymbol\ell^m P_0(\boldsymbol\ell^{-1})-\sum_{i=1}^{n} x^{\alpha_i-\alpha}\boldsymbol\ell^m P_i(\boldsymbol\ell^{-1})=o(x^{\alpha_n-\alpha}),\ \forall n\in\mathbb N,
\end{equation}
with $P_i$ as in \eqref{oh}.

Write $\widehat\varphi(x)=\sum_{i=1}^{\infty} x^{\beta_i}\boldsymbol\ell^{m_{\beta_i}}\widehat G_{\beta_i}(\boldsymbol\ell_2,\frac{\boldsymbol\ell}{\boldsymbol\ell_2})$, $\beta_i>0$ and strictly increasing, $m_{\beta_i}\in\mathbb Z$, $i\in\mathbb N$. Putting \eqref{eq:te} in \eqref{eq:fi}, and expanding $F_2$, we get immediately that:
\begin{align}
\varphi(x)=&\sum_{i=1}^{n} x^{\beta_i}\boldsymbol\ell^{m_{\beta_i}} G_{\beta_i}(\boldsymbol\ell_2,\frac{\boldsymbol\ell}{\boldsymbol\ell_2})+o(x^{\beta_n}),\ n\in\mathbb N,\label{fgcorresp}
\end{align}
where $G_{\beta_i}$ are analytic germs of two variables with Taylor expansion $\widehat G_{\beta_i}$, $i\in\mathbb N$.

In particular, as was the case for $\widehat\varphi$, the leading term of $\varphi(x)-x$ is of power strictly bigger than $1$ in $x$ ($\varphi$ is strictly parabolic).

We now analyze the \emph{blocks} in the asymptotic expansion of $\varphi^{-1}$ by increasing powers in $x$, using the Neumann inverse series. We prove that they are the sums of the corresponding  convergent blocks (see \eqref{eq:koef}) of $\widehat\varphi^{-1}$. By coherence of integral sections, this implies that $\widehat\varphi^{-1}$ is the sectional asymptotic expansion of $\varphi^{-1}$ with respect to any integral section. 

Recall the \emph{Schröder operator} $\widehat \Phi_{\varphi}$ from Lemma~\ref{three},
used for obtaining the formal inverse $\widehat\varphi^{-1}$ of  $\widehat{\varphi}$.
We define similarly here the linear operator $\Phi_{\varphi}$ acting
on $\mathcal G_{AN}$, $\Phi_{\varphi}\in L(\mathcal G_{AN})$, by: 
\[
\Phi_{\varphi}\cdot f=f\circ\varphi,\ f \in\mathcal G_{AN}.
\]
Denote here $h=\varphi-\mathrm{id}\in\mathcal G_{AN}$.
Furthermore, let us introduce the linear operator $H_{\varphi}:=\Phi_{\varphi}-\mathrm{Id}\in L(\mathcal G_{AN})$,
\[
H_{\varphi}\cdot f=f\circ\varphi-f,\ f\in \mathcal G_{AN}.
\]
Let us consider the \emph{Neumann series}: 
\begin{equation*}
\sum_{k=0}^{\infty}(-1)^{k}H_{\varphi}^{k}\cdot id.
\end{equation*}
Denote its partial sums by 
\[
S_{n}:=\sum_{k=0}^{n}(-1)^{k}H_{\varphi}^{k}\cdot id\in\mathcal G_{AN},\ n\in\mathbb{N}.\]
We prove that the Neumann partial sums $S_n$ \emph{approximate} $\varphi^{-1}$, as $n\to\infty$.
More precisely, we prove that, for every $\gamma>0$, there exists
$n_{\gamma}\in\mathbb{N}$ such that 
\begin{equation*}
S_{n_{\gamma}}(x)=\varphi^{-1}(x)+O(x^{\gamma}),\ x\to 0.
\end{equation*}
In other words, we prove that: 
\[
S_{n_{\gamma}}\big(\varphi(x)\big)=x+O(x^{\gamma}),\ x\to 0.
\]
Indeed, 
\begin{align}
S_{n_{\gamma}}\big(\varphi(x)\big)&=\big(\Phi_{\varphi}\cdot S_{n_{\gamma}}\big)(x)= \big(\mathrm{Id}+H_{\varphi}\big)\cdot \big(\sum_{k=0}^{n_{\gamma}}(-1)^{k}H_{\varphi}^{k}\cdot\mathrm{id}\big)=\label{eq:first}\\
= & \sum_{k=0}^{n_{\gamma}}(-1)^{k}H_{\varphi}^{k}\cdot\mathrm{id}+\sum_{k=0}^{n_{\gamma}}(-1)^{k}H_{\varphi}^{k+1}\cdot\mathrm{id}=x+(-1)^{n_{\gamma}}H_{\varphi}^{n_{\gamma}+1}\cdot\mathrm{id}.\nonumber 
\end{align}
Since $\varphi$ is \emph{strictly} parabolic, there exists some $\delta>0$ such that $H_{\varphi}\cdot\mathrm{id}=o(x^{1+\delta})$.
Inductively, there exists $n_{\gamma}\in\mathbb{N}$ such that $H_{\varphi}^{n_{\gamma}+1}\cdot\mathrm{id}=O(x^{\gamma})$.
Now \eqref{eq:first} transforms to: 
\[
S_{n_{\gamma}}\big(\varphi(x)\big)=x+O(x^{\gamma}),
\]
that is 
\begin{equation}
S_{n_{\gamma}}(x)=\varphi^{-1}(x)+O\big((\varphi^{-1}(x))^{\gamma}\big)=\varphi^{-1}(x)+O(x^{\gamma}).\label{eq:conclu}
\end{equation}
By \eqref{eq:conclu}, we have, for $\gamma\to\infty$, the following expansion of $\varphi^{-1}$ in strictly increasing powers of $x$:
\begin{align}
\varphi^{-1}=&\,\mathrm{id}- h+\Big( h\circ \varphi - h\Big)+\Big(\big( h\circ \varphi -h\big)\circ\varphi-\big( h\circ \varphi -h\big)\Big)+\ldots+O(x^\gamma),\label{eq:final}
\end{align} 
as compared with its formal analogue \eqref{eq:how}.
In \eqref{eq:final}, for a fixed $\gamma>0$, the number of summands up to the order $O(x^\gamma)$ is finite and equal to $n_\gamma$. We now expand the compositions in summands of $\varphi^{-1}$ in increasing powers of $x$, using expansion for $\varphi$ given in \eqref{fgcorresp} and the fact that $\varphi$ is \emph{strictly} parabolic. Since $\varphi$ is strictly parabolic, the order of $x$ in the consecutive brackets of $\varphi^{-1}$ is strictly increasing. Thus only finitely many terms contribute to a block with a fixed power of $x$. The blocks in $x$ of $\varphi^{-1}$ are the sums of the corresponding convergent blocks of $\widehat\varphi^{-1}$. 

Finally, we analyze the blocks with increasing powers of $x$ in the composition $g^{-1}=\varphi^{-1}\circ g_\alpha^{-1}$. Using the expansion \emph{by blocks} of $\varphi^{-1}$ and \eqref{geealpha}, we show that they are the sums of the corresponding convergent blocks of  $\widehat g^{-1}=\widehat \varphi^{-1}\circ \widehat g_\alpha^{-1}$.
\hfill $\Box$
\medskip

\begin{prop}\label{prop:rho}
Let $\widehat f\in\widehat{\mathcal L}$. Then the formal invariant $\rho$ of $\widehat f$ is given by 
$$
\rho=\Big[\frac{\boldsymbol\ell}{x}\Big]\frac{1}{\widehat g(x)}=-[\boldsymbol\ell_2^{-1}]\int^x\frac{ds}{\widehat g(s)}.
$$
\end{prop}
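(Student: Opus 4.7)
I would handle the two claimed equalities separately.

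For the second equality, a direct monomial-by-monomial analysis of formal integration in $\widehat{\mathcal L}_2$ will do. The identity $(\boldsymbol\ell_2^{-1})'=-\boldsymbol\ell/x$ shows that $\boldsymbol\ell/x$ is the unique monomial whose formal primitive contributes a pure $\boldsymbol\ell_2^{-1}$ term; for any other monomial $x^a\boldsymbol\ell^b\boldsymbol\ell_2^c$ the primitive has $x$-exponent $a+1\neq 0$ when $a\neq-1$, and when $a=-1$ the change of variables $u=\boldsymbol\ell(x)$ reduces the question to $\int u^{b-2}\boldsymbol\ell(u)^c\,du$, which produces $\log u=-\boldsymbol\ell_2^{-1}(x)$ only in the single case $(b,c)=(1,0)$. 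Applied to $\widehat F=1/\widehat g$, this yields exactly the second equality.

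For the first equality, the strategy would be to pass through the formal Fatou coordinate $\widehat\Psi\in\widehat{\mathfrak L}$ of $\widehat f$, available by \cite[Proposition~4.3]{MRRZ2Fatou}. Taylor-expanding the Abel equation $\widehat\Psi(x-\widehat g(x))-\widehat\Psi(x)=1$ in powers of $\widehat g$ and solving recursively for $\widehat\Psi'$ would give
\begin{equation*}
\widehat\Psi'=-\frac{1}{\widehat g}+\frac{\widehat g'}{2\widehat g}+R,
\end{equation*}
with residual $R$ collecting the contributions of all terms $\widehat g^k\widehat\Psi^{(k+1)}$ for $k\geq 2$. For $\alpha>1$, each such correction has leading $x$-exponent $(k-1)\alpha-k>-1$, so $R$ contains no $\boldsymbol\ell/x$-monomial. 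Since $\widehat g'/\widehat g=(\log\widehat g)'$ has $\boldsymbol\ell/x$-coefficient equal to $m$ (from $\log\widehat g=\log a+\alpha\log x+m\log\boldsymbol\ell+\mathrm{h.o.t.}$ together with $(\log\boldsymbol\ell)'=\boldsymbol\ell/x$), I would read off
\begin{equation*}
[\boldsymbol\ell/x]\widehat\Psi'=-[\boldsymbol\ell/x]\tfrac{1}{\widehat g}+\tfrac{m}{2}.
\end{equation*}
Integrating both sides and invoking the second equality transforms this into
\begin{equation*}
[\boldsymbol\ell_2^{-1}]\widehat\Psi=[\boldsymbol\ell/x]\tfrac{1}{\widehat g}-\tfrac{m}{2}.
\end{equation*}
To finish, I would verify $[\boldsymbol\ell_2^{-1}]\widehat\Psi=\rho-m/2$ through a direct computation on the formal normal form $\widehat f_0=x-x^\alpha\boldsymbol\ell^m+\rho x^{2\alpha-1}\boldsymbol\ell^{2m+1}$ (explicitly solving for the vector field $\widehat\xi_0$ and integrating $1/\widehat\xi_0$), and transfer the identity to an arbitrary $\widehat f\in\widehat{\mathcal L}$ via the conjugation rule $\widehat\Psi\mapsto\widehat\Psi\circ\widehat\varphi$: since any parabolic $\widehat\varphi\in\widehat{\mathcal L}$ contains no $\boldsymbol\ell_2$, composition with $\widehat\varphi$ preserves the $\boldsymbol\ell_2^{-1}$-coefficient of $\widehat\Psi$.

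The hard part will be the control of the residual $R$. The $x$-order bound $(k-1)\alpha-k>-1$ is tight at $\alpha=1$, where every correction $\widehat g^k\widehat\Psi^{(k+1)}$ has leading $x$-exponent exactly $-1$ and may in principle contribute to $[\boldsymbol\ell/x]$. The parabolic case $\alpha=1$ with $m>0$, allowed in $\widehat{\mathcal L}$ though not in the Dulac regime, will therefore require either a finer accounting of cancellations in $R$ or a separate argument for the invariance of $[\boldsymbol\ell/x]1/\widehat g$ under parabolic conjugation. The other ingredients --- the monomial analysis of primitives, the explicit computation on the normal form, and the conjugation-invariance of $[\boldsymbol\ell_2^{-1}]\widehat\Psi$ --- should follow essentially automatically from the structural features of $\widehat{\mathcal L}$.
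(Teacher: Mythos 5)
Your argument is correct in substance but follows a genuinely different route from the paper's. The paper never touches the Fatou coordinate: it checks $\big[\frac{\boldsymbol\ell}{x}\big]\frac{1}{\widehat g_0}=\rho$ directly on the normal form by expanding $\frac{1}{\widehat g_0}$, and then proves that $\big[\frac{\boldsymbol\ell}{x}\big]\frac{1}{\widehat g}$ is invariant under elementary parabolic changes of variables $\widehat\varphi=x+cx^\beta\boldsymbol\ell^r$ by substituting $s=\widehat\varphi(t)$ in $\int^x\frac{ds}{\widehat g_1(s)}$ and Taylor-expanding the antiderivative $\widehat P$ of $\frac{1}{\widehat g}$ in powers of $\mathrm{id}-\widehat\varphi^{-1}$; since $\widehat P^{(k)}\in\widehat{\mathcal L}_1$ for $k\geq 1$, no $\boldsymbol\ell_2^{-1}$ can appear in the difference. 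You instead interpolate through $\widehat\Psi$: the Abel-equation expansion gives $[\boldsymbol\ell/x]\widehat\Psi'=-[\boldsymbol\ell/x]\frac{1}{\widehat g}+\frac m2$, the normal-form computation of $\widehat\xi_0$ gives $[\boldsymbol\ell_2^{-1}]\widehat\Psi_0=\rho-\frac m2$ (your signs check out; compare Example~\ref{dubl}, where $m=0$ and $[\boldsymbol\ell_2^{-1}]\widehat\Psi=b=\rho$), and the identity $\widehat\Psi=\widehat\Psi_0\circ\widehat\varphi$ with $\widehat\varphi\in\widehat{\mathcal L}$ parabolic transports the $\boldsymbol\ell_2^{-1}$-coefficient. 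Both proofs share the skeleton ``normal form plus conjugation invariance''; what your detour buys is the identification of $\rho$ with the double-logarithm coefficient of the Fatou coordinate, which the paper needs anyway (it is the content of the cited Remark~7.1 of \cite{MRRZ2Fatou}, used in the proof of Corollary~\ref{cor:nolog}), and your invariance step is arguably cleaner than the paper's integral manipulation. What it costs is exactly the difficulty you flag: at $\alpha=1$ every correction $\widehat\Psi^{(k+1)}\widehat g^{k}$ sits at $x$-order $-1$ and your control of $R$ breaks down, whereas the paper's argument needs only the absence of $\boldsymbol\ell_2$ in $\widehat P^{(k)}$ and in $\widehat\varphi$, not any order bound, so it passes through $\alpha=1$ uniformly. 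Since the proposition is stated for all parabolic $\widehat f\in\widehat{\mathcal L}$ (and Remark~\ref{obs:normal} explicitly entertains $\alpha=1$, $m\in\mathbb N$), a complete proof along your lines would have to close that case --- though for the Dulac applications in Proposition~\ref{lem:jen} and Theorem~C one always has $\alpha>1$, so your argument suffices there.
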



\begin{proof} The second equality is obvious. We prove the first equality in two steps:

1. For the formal normal form $\widehat f_0(x)=x-x^{\alpha}\boldsymbol\ell^m+\rho x^{2\alpha-1}\boldsymbol\ell^{2m+1}$, it obviously holds that:
 $$\Big[\frac{\boldsymbol\ell}{x}\Big]\frac{1}{\widehat g_0(x)}=\rho.$$ This can easily be seen expanding $\frac{1}{\widehat g_0(x)}.$

2. Let $\widehat f_1=\widehat \varphi\circ \widehat f\circ \widehat\varphi^{-1}$, $\widehat\varphi(x)=x+cx^\beta\boldsymbol\ell^r\in\widehat{\mathcal L},\ c\in\mathbb R,\ \beta\in\mathbb R,\ r\in\mathbb Z,$ such that $(\beta,r)\succ (1,0)$. We prove that:$$\Big[\frac{\boldsymbol\ell}{x}\Big]\frac{1}{\widehat g(x)}=\Big[\frac{\boldsymbol\ell}{x}\Big]\frac{1}{\widehat g_1(x)}.$$ That is, we prove that the coefficient $\big[\frac{\boldsymbol\ell}{x}\big]\frac{1}{\widehat g(x)}$ does not change by formal changes of variables. 
 
Indeed, we estimate the difference:
\begin{align}
&\Big[\frac{\boldsymbol\ell}{x}\Big]\frac{1}{\widehat g(x)}-\Big[\frac{\boldsymbol\ell}{x}\Big]\frac{1}{\widehat g_1(x)}=
[\boldsymbol\ell_2^{-1}]\int^x \frac{ds}{\widehat g_1(s)}-\Big[\boldsymbol\ell_2^{-1}]\int^x\frac{ds}{\widehat g(s)}=\nonumber\\
&=[\boldsymbol\ell_2^{-1}] \Big(\int^x\frac{ds}{\widehat g_1(s)}-\int^x\frac{ds}{\widehat g(s)}\Big).\label{eq:prvvi}
\end{align}
Compute:
\begin{align}
\int^x \frac{ds}{\widehat g_1(s)}&=\int^x \frac{ds}{s-\widehat f_1(s)}=\Big|s=\widehat\varphi(t)\Big|=\int^{\widehat\varphi^{-1}(x)} \frac{\widehat\varphi'(t)}{\widehat\varphi(t)-\widehat{\varphi}(\widehat f(t))}\, dt=\nonumber\\
&=\int^{\widehat\varphi^{-1} (x)}\frac{dt}{\widehat g(t)}\cdot \frac{\widehat\varphi'(t)\widehat g(t)}{\widehat\varphi(t)-\widehat\varphi(\widehat f(t))}=\int^{\widehat\varphi^{-1}(x)}\frac{dt}{\widehat g(t)}\cdot\Big(1+\frac{1}{2}\frac{\widehat\varphi''(t)\widehat g(t)}{\widehat\varphi'(t)}+\ldots \Big)=\nonumber\\
&=\int^{\widehat\varphi^{-1}(x)}\frac{dt}{\widehat g(t)}+\frac{1}{2}\int^{\widehat\varphi^{-1}(x)}\frac{d}{dt}(\log\widehat\varphi'(t))dt+\ldots=\int^{\widehat\varphi^{-1}(x)}\frac{dt}{\widehat g(t)}+o(1).\label{eq:drrugi}
\end{align}
Here, $o(1)$ denotes infinitesimal terms in the formal series. Furthermore, since $\widehat\varphi\in \widehat{\mathcal L}$ and parabolic, it can be seen that
\begin{equation}\label{eq:trreci}
\int^{\widehat\varphi^{-1}(x)}\frac{dt}{\widehat g(t)}-\int^x \frac{dt}{\widehat g(t)}\in\widehat{\mathcal L}_1,
\end{equation}
that is, the difference does not contain the double logarithm. Indeed, if we denote by $\widehat P$ the formal antiderivative of $\frac{1}{\widehat g}$, we get that the difference is equal to $\widehat P'(x)\widehat h(x)+\frac{1}{2}\widehat P''(x)(\widehat h(x))^2+\ldots$, where $\widehat h=\mathrm{id}-\widehat \varphi^{-1}$. Obviously, $\widehat P^{(k)}\in\widehat{\mathcal L}_1$, for all $k\in\mathbb N$.

Using \eqref{eq:drrugi} and \eqref{eq:trreci}, we conclude in \eqref{eq:prvvi} that
$$
\Big[\frac{\boldsymbol\ell}{x}\Big]\frac{1}{\widehat g(x)}-\Big[\frac{\boldsymbol\ell}{x}\Big]\frac{1}{\widehat g_1(x)}=0.
$$
\end{proof}

\noindent \emph{Proof of Proposition~\ref{lem:jen}.}\

By Proposition~\ref{prop:rho} in the Appendix, by the change of variables in the integral and by integration by parts, we get:
\begin{align}\label{eq:ma}
\rho=\Big[\frac{\boldsymbol\ell}{x}\Big]\frac{1}{\widehat g(x)}=-[\boldsymbol\ell_2^{-1}]\int^x \frac{ds}{\widehat g(s)}=\Big|s=\widehat g^{-1}(2t)\Big|=&-[\boldsymbol\ell_2^{-1}]\int^{\widehat g(x)/2} \frac{(\widehat g^{-1})'(2t)}{t}dt=\nonumber\\
=&-[\boldsymbol\ell_2^{-1}]\int^{\widehat g(x)/2} \frac{\widehat g^{-1}(2t)}{2t^2}dt.
\end{align}
We show now, in the similar way as in the proof of Proposition~\ref{prop:rho} in the Appendix, that
$$
[\boldsymbol\ell_2^{-1}]\int^{\widehat g(x)/2} \frac{\widehat g^{-1}(2t)}{2t^2}dt=[\boldsymbol\ell_2^{-1}]\int^{\frac{1}{2}x^\alpha\boldsymbol\ell^m} \frac{\widehat g^{-1}(2t)}{2t^2}dt.
$$
That is, we put $\widehat P$ to be the formal antiderivative of $\frac{\widehat g^{-1}(2x)}{2x^2}$ and prove that the difference $$\widehat P\big(\frac {\widehat g(x)} 2\big)-\widehat P\big(\frac {1} {2} x^\alpha\boldsymbol\ell^m\big)=\widehat P'\big(\frac {\widehat g(x)} 2\big)\big(\frac {\widehat g(x)- x^\alpha\boldsymbol\ell^m} 2\big)+\frac{1}{2}\widehat P''\big(\frac {\widehat g(x)} 2\big)\big(\frac {\widehat g(x)- x^\alpha\boldsymbol\ell^m} 2 \big)^2+\ldots$$ belongs to $\widehat{\mathcal L}_1$. We prove that it converges formally and does not contain a double logarithm.

Therefore, by \eqref{eq:ma}:
\begin{align}\label{eq:roo}
\rho=-[\boldsymbol\ell_2^{-1}]\int^{\frac{1}{2}x^\alpha\boldsymbol\ell^m} \frac{\widehat g^{-1}(2t)}{2t^2}dt&=\big[\frac{\boldsymbol\ell}{x}\big]\frac{\widehat g^{-1}(x^\alpha\boldsymbol\ell^m)}{x^\alpha\boldsymbol\ell^m}\frac{d}{dx}\log\big(x^\alpha\boldsymbol\ell^m\big)=\nonumber\\
&=\big[\frac{\boldsymbol\ell}{x}\big]\frac{\widehat g^{-1}(x^\alpha\boldsymbol\ell^m)}{x^\alpha\boldsymbol\ell^m}\big(\frac{\alpha}{x}+m\frac{\boldsymbol\ell}{x}\big)=\nonumber\\
&=\alpha[\boldsymbol\ell]\frac{\widehat g^{-1}(x^\alpha\boldsymbol\ell^m)}{x^\alpha\boldsymbol\ell^m}+m[1]\frac{\widehat g^{-1}(x^\alpha\boldsymbol\ell^m)}{x^\alpha\boldsymbol\ell^m}=\nonumber\\
&=\alpha[\boldsymbol\ell]\frac{\widehat g^{-1}(t)}{t}\big|_{t=x^\alpha\boldsymbol\ell^m}+m[1]\frac{\widehat g^{-1}(t)}{t}\big|_{t=x^\alpha\boldsymbol\ell^m}.
\end{align}
\hfill $\Box$

\begin{prop}\label{prop:izvod} Let $f$ be a (prenormalized) Dulac germ. Then there exists $\delta>0$ such that:
\begin{equation}\label{eq:aepsimp}
\widehat{A}^c_{\widehat f}(\frac{\varepsilon}{2})=-\varepsilon\int^\varepsilon \frac{\widehat g^{-1}(t)}{t^2}\,dt-\varepsilon\boldsymbol\ell(\varepsilon)^{-1}+o(\varepsilon^{1+\delta}),\ \varepsilon\to 0.
\end{equation}
\end{prop}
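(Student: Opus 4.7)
The plan is to transform the right-hand side of \eqref{eq:aepsimp} via formal integration by parts followed by a change of variable, reducing the claim to an asymptotic identity for the formal Fatou coordinate $\widehat\Psi$. First, applying integration by parts to the formal antiderivative $\int^\varepsilon \widehat g^{-1}(t)/t^2\,dt$ (with $u=\widehat g^{-1}(t)$ and $dv=t^{-2}\,dt$), and then substituting $t=\widehat g(s)$ so that $(\widehat g^{-1})'(t)\,dt=ds$ and $1/t=1/\widehat g(s)$, gives, up to a constant of integration absorbed in a term $K\varepsilon$,
\[
-\varepsilon\int^\varepsilon \frac{\widehat g^{-1}(t)}{t^2}\,dt = \widehat g^{-1}(\varepsilon) - \varepsilon\int^{\widehat g^{-1}(\varepsilon)}\frac{ds}{\widehat g(s)} + K\varepsilon.
\]
Combined with $\widehat A^c_{\widehat f}(\varepsilon/2)=\widehat g^{-1}(\varepsilon)+\varepsilon\widehat\Psi(\widehat g^{-1}(\varepsilon))$ from Definition~\ref{fa}, the common term $\widehat g^{-1}(\varepsilon)$ cancels, so \eqref{eq:aepsimp} reduces to the formal asymptotic identity
\[
\widehat\Psi\bigl(\widehat g^{-1}(\varepsilon)\bigr) + \int^{\widehat g^{-1}(\varepsilon)} \frac{ds}{\widehat g(s)} = -\boldsymbol\ell(\varepsilon)^{-1} + K' + o(\varepsilon^\delta), \qquad \varepsilon\to 0,
\]
for some $\delta>0$ and some $K'\in\mathbb R$.

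To establish the reduced identity, I would exploit the characterization $\widehat\Psi'=1/\widehat\xi$, where $\widehat\xi$ is the formal infinitesimal generator of the unique flow embedding of $\widehat f$ (see \cite[Section~4]{MRRZ2Fatou}). Inverting the defining relation $\widehat f=\mathrm{Exp}(\widehat\xi\,d/dx)\cdot\mathrm{id}=\mathrm{id}+\widehat\xi+\tfrac{1}{2}\widehat\xi\widehat\xi'+\mathrm{h.o.t.}$ together with $\widehat g=\mathrm{id}-\widehat f$ yields the expansion $\widehat\xi=-\widehat g-\tfrac{1}{2}\widehat g\widehat g'+\mathrm{h.o.t.}$, whence by a geometric-series expansion
\[
\frac{1}{\widehat\xi}+\frac{1}{\widehat g}=\frac{\widehat g'}{2\,\widehat g}+\mathrm{h.o.t.}
\]
Formal integration in $y$ then produces $\widehat\Psi(y)+\int^y ds/\widehat g(s)$ as an expansion whose largest term comes from $\int \widehat g'/(2\widehat g)\,dy=\tfrac12 \log\widehat g(y)+\mathrm{const}$, plus subleading terms of strictly positive $y$-order. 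Evaluation at $y=\widehat g^{-1}(\varepsilon)$ converts the log-term into a constant multiple of $\log\varepsilon=-\boldsymbol\ell(\varepsilon)^{-1}$ (together with further purely logarithmic discrepancies between $\log\widehat g^{-1}(\varepsilon)$ and $\log\varepsilon$ that involve only $\boldsymbol\ell$ and $\boldsymbol\ell_2$ factors at $\varepsilon$-power zero, absorbed in $K'$), giving the asserted principal term modulo the inherent constant ambiguity of the Fatou coordinate.

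The main obstacle will be the quantitative remainder bound $o(\varepsilon^{1+\delta})$. The subleading blocks of $1/\widehat\xi+1/\widehat g$ beyond $\widehat g'/(2\widehat g)$ arise from the higher iterates $(\widehat\xi\,d/dx)^k(\widehat\xi)/(k+1)!$ in the exponential expansion of $\widehat f$, and each produces, after formal integration, a term of strictly positive $y$-order in $\widehat\Psi(y)+\int^y ds/\widehat g(s)$. By Proposition~\ref{th1}, $\widehat g^{-1}(\varepsilon)\sim \varepsilon^{1/\alpha}\boldsymbol\ell^{-m/\alpha}$ up to iterated-logarithm factors, so any correction of $y$-order $\gamma>0$ becomes, after substitution $y=\widehat g^{-1}(\varepsilon)$ and multiplication by $\varepsilon$, of $\varepsilon$-order at least $\varepsilon^{1+\gamma/\alpha}$ up to logarithms. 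Since $\alpha>1$ in the Dulac setting, the smallest such correction, coming from the subleading $\widehat g\,\widehat g'$ contribution of $y$-order $2\alpha-1$ relative to the leading $y^\alpha$ term in $\widehat\xi$, gives $\gamma\ge\alpha-1$, and any $\delta<(\alpha-1)/\alpha$ will serve.

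The technically delicate point is the systematic bookkeeping of these subleading contributions through the three successive operations, namely inverting $\widehat\xi$ into the expansion of $1/\widehat\xi$, formal integration to recover $\widehat\Psi$, and composition with $\widehat g^{-1}$, checking at each step that the resulting $x$- and $\boldsymbol\ell$-exponents, once transferred to the $\varepsilon$ variable, remain strictly above the $\varepsilon^{1+\delta}$ threshold, and verifying that no accidental log-cancellation spoils the estimate. Once this bookkeeping is in place, combining the integration-by-parts formula, the change of variable, and the expansion of $\widehat\Psi+\int 1/\widehat g$ immediately yields \eqref{eq:aepsimp}.
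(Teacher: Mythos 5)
Your overall route is essentially the paper's proof read backwards: the paper starts from $\widehat\Psi(\widehat g^{-1}(\varepsilon))=\int^{\widehat g^{-1}(\varepsilon)}\frac{ds}{\widehat\xi(s)}$, changes variables $s=\widehat g^{-1}(t)$, uses the relation between $\widehat\xi$ and $\widehat g$ coming from $\widehat f=\mathrm{Exp}\big(\widehat\xi\frac{d}{dx}\big)\cdot\mathrm{id}$ to replace $\frac{(\widehat g^{-1})'(t)}{\widehat\xi(\widehat g^{-1}(t))}$ by $-\frac{(\widehat g^{-1})'(t)}{t}$ plus an explicit $\frac{1}{t}$-type correction, and then integrates by parts; you perform the same integration by parts and change of variable on the right-hand side of \eqref{eq:aepsimp} instead, reducing to the identity for $\frac{1}{\widehat\xi}+\frac{1}{\widehat g}$. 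The error analysis you sketch (corrections of strictly positive $y$-order at least $\alpha-1$, hence of $\varepsilon$-order at least $1+\frac{\alpha-1}{\alpha}$ after composing with $\widehat g^{-1}$ and multiplying by $\varepsilon$) is exactly the paper's.

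The one genuine gap is the coefficient of $\varepsilon\boldsymbol\ell(\varepsilon)^{-1}$, which you leave as ``a constant multiple of $\log\varepsilon$''. The proposition asserts a specific coefficient, and that coefficient is used downstream (it produces the ``$-1$'' in the $p=-1$, $q=0$ case of the triangular systems in the proof of Theorem~C), so it cannot be left unspecified. Worse, if you carry your own expansion through, $\int^y\frac{\widehat g'(s)}{2\widehat g(s)}\,ds=\frac12\log\widehat g(y)$ evaluated at $y=\widehat g^{-1}(\varepsilon)$ is exactly $\frac12\log\varepsilon=-\frac12\boldsymbol\ell(\varepsilon)^{-1}$, i.e.\ you get the coefficient $-\frac12$, not the $-1$ of \eqref{eq:aepsimp}. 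The paper's proof obtains $-1$ by writing $-\widehat g=\widehat\xi+\widehat\xi'\widehat\xi+\cdots$, i.e.\ without the factor $\frac12$ in front of $\widehat\xi'\widehat\xi$ that the Lie-exponential formula requires and that the paper itself uses in Example~\ref{dubl} and in the proof of Lemma~\ref{lem:aepsi}. So either your (correct) $\frac12$ stands and the principal logarithmic term should read $-\frac{\varepsilon}{2}\boldsymbol\ell(\varepsilon)^{-1}$, or you must justify dropping it; as written, your argument does not establish \eqref{eq:aepsimp} with the coefficient it claims, and you need to resolve this factor of two explicitly rather than hide it in an unspecified constant.
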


\begin{proof}
Recall that
\begin{equation}\label{eq:accf}
\widehat A^c_{\widehat f}(\frac\varepsilon 2)=\widehat g^{-1}(\varepsilon)+\varepsilon+\varepsilon\widehat\Psi\big(\widehat g^{-1}(\varepsilon)\big).
\end{equation}
We have:
\begin{equation}\label{Psii}
\widehat\Psi(\widehat g^{-1}(\varepsilon))=\int^{\widehat g^{-1}(\varepsilon)}\frac{ds}{\widehat{\xi} (s)}=\int^\varepsilon \frac{(\widehat g^{-1})'(t)dt}{\widehat \xi(\widehat g^{-1}(t)).}
\end{equation}
Here, $X=\widehat{\xi}\frac{d}{dx}$ is the formal vector field such that $\widehat f$ is its time-one map, that is, $\widehat f=\text{Exp}\Big(\widehat\xi \frac{d}{dx}\Big).\mathrm{id}.$ Then $\widehat\Psi'=\frac{1}{\widehat\xi}$.
Let $P_{-m}$ be the polynomial of degree $-m$ such that $\widehat g(x)=x^\alpha P_{-m}(\boldsymbol\ell^{-1})+o(x^{\alpha+\delta})$, for some $\delta>0$. Since $-\widehat g=\widehat\xi+\widehat\xi'\widehat\xi+...$, we have that $$\widehat \xi(x)=-x^\alpha P_{-m}(\boldsymbol\ell^{-1})+o(x^{\alpha+\delta}),\text{ for some $\delta>0,$}$$ and 
$$\widehat\xi'(x)\widehat\xi(x)=\widehat g'(x)\widehat g(x)+o(x^{2\alpha-1+\delta}),\ \text{ for some }\delta>0.$$ 
Therefore,
\begin{equation}\label{eq:gore}
-\widehat g(x)=\widehat\xi(x)+\widehat g\cdot \widehat g'(x)+o(x^{2\alpha-1+\delta}), \text{ for some $\delta>0.$}
\end{equation}
Putting $\widehat g^{-1}(t)=\alpha^{-\frac{m}{\alpha}}t^{\frac{1}{\alpha}}\boldsymbol\ell^{-\frac{m}{\alpha}}+h.o.t.$ obtained in Section~\ref{sec:inverse} in \eqref{eq:gore}, we have:
\begin{align*}
&\widehat\xi(\widehat g^{-1}(t))=-\widehat g(\widehat g^{-1}(t))-\widehat g(\widehat g^{-1}(t))\cdot \widehat g'(\widehat g^{-1}(t))+o(t^{2-\frac{1}{\alpha}+\delta}),\\
&\widehat \xi(\widehat g^{-1}(t))=-t-t\cdot \widehat g'(\widehat g^{-1}(t))+o(t^{2-\frac{1}{\alpha}+\delta}),
\end{align*}
for some $\delta>0$. It follows that
\begin{align*}
\frac{(\widehat g^{-1})'(t)}{\widehat \xi(\widehat g^{-1}(t))}&=\frac{(\widehat g^{-1})'(t)}{-t\Big(1+\widehat g'\big(\widehat g^{-1}(t)\big)+o(t^{1-\frac{1}{\alpha}+\delta})\Big)}=\\
&=-\frac{(\widehat g^{-1})'(t)}{t}\Big(1-\widehat g'\big(\widehat g^{-1}(t)\big)+o(t^{1-\frac{1}{\alpha}+\delta})\Big)=\\
&=-\frac{(\widehat g^{-1})'(t)}{t}+\frac{1}{t}+o(t^{-1+\delta}).
\end{align*}
Now \eqref{Psii} becomes, using integration by parts:
\begin{align}\label{eq:zaa}
\widehat\Psi(\widehat g^{-1}(\varepsilon))=&-\int^{\varepsilon}\frac{(\widehat g^{-1})'(t)}{t}dt+\int^\varepsilon\frac{dt}{t}+\int^{\varepsilon}o(t^{-1+\delta})\,dt\nonumber\\
&=-\frac{\widehat g^{-1}(\varepsilon)}{\varepsilon}-\int^{\varepsilon} \frac{\widehat g^{-1}(t)}{t^2}\,dt-\boldsymbol\ell(\varepsilon)^{-1}+o(\varepsilon^\delta),
\end{align}
for some $\delta>0$. Putting \eqref{eq:zaa} in \eqref{eq:accf}, the statement follows.
\end{proof}

\emph{Address:}$\quad$$^{1}$ and $^{3}$ : Universit\'e de Bourgogne,
D\'epartment de Math\'ematiques, Institut de Math\'ematiques de
Bourgogne, B.P. 47 870-21078-Dijon Cedex, France 

$^{2}$ : University of Zagreb, Faculty of Science, Department of Mathematics, Bijeni\v cka 30, 10000 Zagreb, Croatia

$^{4}$ : University of Zagreb, Faculty of Electrical Engineering and Computing, Department of Applied
Mathematics, Unska
3, 10000 Zagreb, Croatia

\begin{thebibliography}{10}

	
\bibitem{cherkas} L. A. {\v C}erkas, \emph{Structure of the sequence function in the neighborhood of a
separatrix cycle during perturbation of an analytic autonomous
system on the plane}, Differentsial'nye Uravneniya, 1981, no 3, 469--478
	
\bibitem{dries} L.~van den Dries, A.~Macintyre, D.~Marker, \emph{Logarithmic-exponential
series}. Proceedings of the International Conference ``Analyse \&
Logique'' (Mons, 1997). Ann. Pure Appl. Logic 111 (2001), no. 1-2,
61-113. 



\bibitem{Dulac} H. Dulac, \emph{Sur les cycles limites}, Bull. Soc.
Math. France 51 (1923), 45-188.



\bibitem{elezovic_zupanovic_zubrinic} N.~Elezovi{ć}, V.~{Ž}upanovi{ć},
and D.~{Ž}ubrini{ć}. \newblock Box dimension of trajectories
of some discrete dynamical systems. \newblock {\em Chaos Solitons
Fractals}, 34(2):244--252, 2007.


\bibitem{falconer} K. Falconer, {\em Fractal geometry. Mathematical Foundations and Applications.} John Wiley \& Sons (2003)


\bibitem{ilya} Y.~Ilyashenko, S.~Yakovenko, {\em Lectures on
Analytic Differential Equations, Graduate Studies in Mathematics},
86. American Mathematical Society, Providence, RI, xiv+625 pp (2008)

\bibitem{ilyalim} Y.~Ilyashenko, {\em Finiteness theorems for
limit cycles}, Russ. Math. Surv. 45 (2), 143--200 (1990)  Finiteness
theorems for limit cycles, Transl. Amer. Math. Soc. 94 (1991).



\bibitem{mrz} P.~Mardešić, M.~Resman, V.~Županović, \textit{Multiplicity
of fixed points and $\varepsilon$-neighborhoods of orbits}, J. Differ.
Equ. \textbf{253} (2012), 2493--2514

\bibitem{mrrz2} P.~Mardešić, M.~Resman, J.~P.~Rolin,\ V.~Županović,
\textit{Normal forms and embeddings for power-log transseries}, Adv. Math. \textbf{303} (2016), 888--953

\bibitem{MRRZ2Fatou} P.~Mardešić, M.~Resman, J.~P.~Rolin,\ V.~Županović,
\textit{The Fatou coordinate for parabolic Dulac germs}, a preprint, arXiv:1710.01268 (2017)


\bibitem{mourtada} A.~Mourtada, \emph{Bifurcation de cycles limites au voisinage de polycycles hyperboliques et g\'en\'eriques \`a trois sommets}, Ann. Fac. Sci. Toulouse Math., \textbf{6} no 3 (1994), 259--292 

\bibitem{resman} M.~Resman, \textit{$\varepsilon$-neighborhoods
of orbits and formal classification of parabolic diffeomorphisms},
Discrete Contin.\ Dyn.\ Syst.\ \textbf{33}, 8 (2013), 3767--3790

\bibitem{nonlin} M. Resman, \emph{$\varepsilon$-neighborhoods of
orbits of parabolic diffeomorphisms and cohomological equations}.
Nonlinearity 27 (2014), 3005--3029


\bibitem{roussarie_number} R.~Roussarie, \emph{On the number of limit cycles which appear by perturbation of separatrix loop of planar vector
fields}, Bol. Soc. Brasil Math., \textbf{17}, no 2 (1986), 67--101  



\bibitem{roussarie} R.~Roussarie, \emph{Bifurcations of planar vector
fields and Hilbert's sixteenth problem}, Birkhäuser Verlag, Basel
(1998)





\bibitem{tricot} C. Tricot, \emph{Curves and fractal dimension},
Springer-Verlag, New York, (1995)

\bibitem{belgproc} D. {Ž}ubrinić, V. Županović, \textit{Poincar{é}
map in fractal analysis of spiral trajectories of planar vector fields},
Bull. Belg. Math. Soc. S. Stevin, \textbf{15} (2008) 947--960


\end{thebibliography}
\end{document}